\title{Dehn-Seidel Twist, $C^0$ symplectic topology and barcodes}
\author{Alexandre Jannaud }
\date{January 2021}
\newcommand{\D}{\mathbb{D}}
\newcommand{\R}{\mathbb{R}}
\newcommand{\C}{\mathbb{C}}
\newcommand{\K}{\mathbb{K}}
\newcommand{\Z}{\mathbb{Z}}
\newcommand{\T}{\mathbb{T}}
\newcommand{\N}{\mathbb{N}}
\newcommand{\Ham}{\mathrm{Ham}}
\newcommand{\Symp}{\mathrm{Symp}}
\newcommand{\Diff}{\mathrm{Diff}}
\newcommand{\Spec}{\mathrm{Spec}}
\def\Id{\mathrm{Id}}
\DeclareMathOperator{\im}{Im}
\DeclareRobustCommand\openone{\leavevmode\hbox{\small1\normalsize\kern-.33em1}}
\def\calA{\mathcal{A}}
\def\calB{\mathcal{B}}
\def\calD{\mathcal{D}}
\def\calI{\mathcal{I}}
\def\calL{\mathcal{L}}
\newtheorem{thm}{Theorem}[section]
\newtheorem{lemma}[thm]{Lemma}
\newtheorem{prop}[thm]{Proposition}
\newtheorem{dfn}[thm]{Definition}
\newtheorem{cor}[thm]{Corollary}
\newtheorem{conj}[thm]{Conjecture}
\newtheorem{question}{Question}
\newtheorem{thmi}{Theorem}
\newtheorem{thmx}{Theorem}
\crefname{thmx}{Theorem}{Theorems}
\newtheorem{corx}[thmx]{Corollary}
\crefname{corx}{Corollary}{Corollaries}
\theoremstyle{definition}
\newtheorem{rk}[thm]{Remark}
\theoremstyle{definition}
\newtheorem{rki}[thmi]{Remark}
\begin{document}

\maketitle

\begin{abstract}
 We initiate the study of the $C^0$ symplectic mapping class group, i.e. the group of isotopy classes of symplectic homeomorphisms. 
We prove that none of the different powers of the square of the Dehn-Seidel twist belong to the same connected component of the group of symplectic homeomorphisms of certain Liouville domains. This generalizes to the $C^0$ setting a celebrated result of Seidel. In other words, we obtain the non-triviality of the $C^0$ symplectic mapping class group in these domains and in fact an element of infinite order.

For that purpose, we develop a method coming from Floer theory and the theory of barcodes. This builds on recent developments of $C^0$-symplectic topology. In particular, we adapt and generalize to our context results by Buhovsky-Humilière-Seyfaddini and Kislev-Shelukhin.
\end{abstract}
\setcounter{tocdepth}{1}
\tableofcontents
\section*{Introduction}\label{introduction}

\addcontentsline{toc}{section}{Introduction}

We will work with a symplectic manifold $(M^{2n},\omega)$. The group of symplectomorphisms will be denoted $\Symp(M,\omega)$, the group of symplectomorphisms isotopic to the identity in $\Symp(M,\omega)$ will be denoted by $\Symp_0(M,\omega)$ and the group of Hamiltonian diffeomorphisms $\Ham(M,\omega)$.

\subsubsection*{$C^0$ symplectic topology}

$C^0$ symplectic topology was born with the famous Gromov-Eliashberg theorem \cite{El87} stating that if a sequence of symplectomorphisms $C^0$-converges to a diffeomorphism, then this diffeomorphism is a symplectomorphism as well.

Considering this theorem, \emph{symplectic homeomorphisms} were naturally defined as the $C^0$-closure of symplectomorphisms.
\begin{dfn}
Let $(M,\omega)$ be a symplectic manifold. A homeomorphism $\varphi$ of $M$ is called a \emph{symplectic homeomorphism} if it is the uniform limit of a sequence of symplectic diffeomorphisms.
\end{dfn}
The main goal in $C^0$-symplectic topology is then to understand whether it is possible or not to do symplectic topology with continuous objects. By $C^0$-topology we mean the topology induced by, for $\varphi$ and $\psi$ homeomorphisms,
$$d(\varphi,\psi)=\max\left\{\sup\limits_{p\in M}d(\varphi(p),\psi(p)),\quad\sup\limits_{p\in M}d(\varphi^{-1}(p),\psi^{-1}(p))\right\},$$
for an arbitrary Riemannian metric $d$ in $M$.

Laudenbach and Sikorav \cite{LaudSik94} proved an analogue of the Gromov-Eliashberg theorem, but with Lagrangian submanifolds replacing symplectomorphisms. 

More than a decade later, $C^0$-symplectic topology took a step forward, when Oh and Müller \cite{MulOh07} introduced a notion of Hamiltonian homeomorphisms, which they called hameomorphisms. These maps have the property of being generated in some sense by continuous Hamiltonians, hence appearing as good $C^0$ generalizations of Hamiltonian diffeomorphisms. This notion renewed the interest for $C^0$ symplectic topology.

More recently, $C^0$ symplectic topology took a second step forward. Humilière-Leclercq-Seyfaddini proved a result of coisotropic rigidity in \cite{HLS15} and a reduction result in \cite{HLS16}, both papers proving that, in many aspects, symplectic homeomorphisms tend to behave as symplectic diffeomorphisms. At the same time, Buhovsky-Opshtein \cite{BO16} exhibited, among other rigidity results, the first flexibility behaviour for symplectic homeomorphisms: a symplectic homeomorphism leaving invariant a smooth symplectic submanifold $V$, and whose restriction to $V$ is smooth but not symplectic. It was shortly followed by the counter-example to the Arnold conjecture by Buhovsky-Humilière-Seyfaddini \cite{BHS18}, which is another beautiful example of $C^0$-symplectic flexibility. 

In parallel, much progress has been made regarding the barcodes and action selectors which are the main tools used to study these homeomorphisms. The main results concern the $C^0$-continuity for action selectors, started by Seyfaddini \cite{Sey13} with his $\varepsilon$-shift trick, and followed by Buhovsky-Humilière-Seyfaddini \cite{BHS19}. Seyfaddini \cite{Sey13}, Buhovsky-Humilière-Seyfaddini \cite{BHS19}, Kawamoto \cite{Kaw19}, Shelukhin \cite{Shel18,Shel19} proved the $C^0$-continuity of the action selectors in various settings. Using a result of Kislev-Shelukhin \cite{KS18}, this implies the $C^0$-continuity of barcodes in the same settings. Le Roux-Seyfaddini-Viterbo \cite{LRSV18} proved the continuity of barcodes for Hamiltonians on surfaces, without using Kislev-Shelukhin's result.

\subsubsection*{Dehn twists and mapping class groups}

\bigskip
\begin{figure}[h]
    \centering
    \label{fig Dehn Twist}
\includegraphics[scale=0.5]{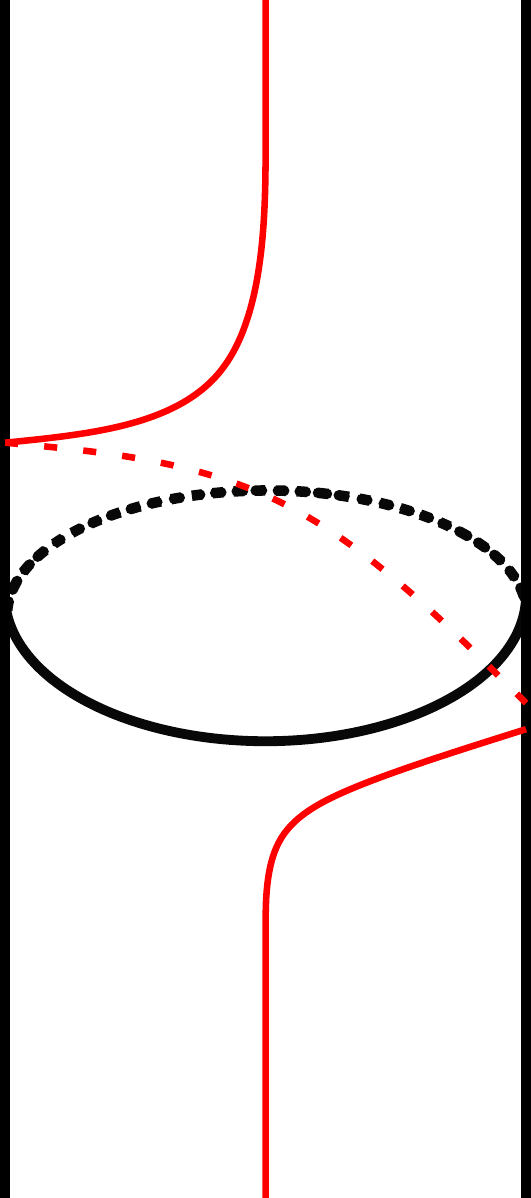}
\caption{Dehn Twist in $T^*S^1$.\\ The red curve represents the image by the Dehn twist of a fiber of $T^*S^1$.}
\centering
\end{figure}

Dehn twists are diffeomorphisms supported in the neighbourhood of a simple loop in surfaces.

Let us first describe the local model. We consider the annulus $S^1\times [-1,1]=T^*_1S^1$. We denote $\tau:T_1^*S^1\rightarrow T_1^*S^1$ the map given by
$$\tau(\theta,t)=(\theta+2\pi f(t),t),$$
where $f:[-1,1]\rightarrow \R^+$ is a smooth function equal to $0$ near $-1$ and equal to $1$ near $1$.
This map is called a twist map.
Now that we have our model, we can describe the Dehn twist for surfaces. It consists of a map which agrees with our local model on the neighbourhood of a given loop $l$ and is equal to the identity away from this loop. It is called the \emph{Dehn twist along $l$} and it is denoted $\tau_l$.
One can prove that the isotopy class of $\tau_l$ only depends on the isotopy class of $l$. If the loop along which the Dehn twist is defined is not contractible, then the Dehn twist is not isotopic to the identity.

The Dehn twists are of particular interest when studying the mapping class group of surfaces. Let us recall that the \emph{mapping class group} is defined, in the case of a smooth oriented manifold $M$ by
$$\mathrm{MCG}(M)=\pi_0(\Diff^+(M)).$$
Let $\Sigma$ be an oriented smooth surface and denote $\omega$ an associated symplectic form on $\Sigma$. We denote by $\mathrm{MCG}^{\omega}(\Sigma)$ the mapping class group for area-preserving diffeomorphisms. This $\mathrm{MCG}^{\omega}(\Sigma)$ is nothing but $\pi_0(\Symp(\Sigma,\omega))$. Let us also denote by $\mathrm{MCG}(\Sigma,C^0)=\pi_0(\mathrm{Homeo}^+(\Sigma))$ the mapping class group for homeomorphisms and by $\mathrm{MCG}^{\omega}(\Sigma,C^0)=\pi_0(\mathrm{Homeo}^{+,\omega}(\Sigma))$ the mapping class group for area-preserving homeomorphisms.

One can prove that the mapping class group $\mathrm{MCG}(\Sigma)$ is generated by Dehn twists. We actually have the following isomorphisms:
\begin{equation}\label{iso MCG}
\mathrm{MCG}^{\omega}(\Sigma)\cong \mathrm{MCG}(\Sigma) \cong \mathrm{MCG}(\Sigma,C^0) \cong \mathrm{MCG}^{\omega}(\Sigma,C^0).    
\end{equation}
The first isomorphism is a consequence of Moser's trick. The surjectivity of the second one comes from the fact that any homeomorphism is a limit of diffeomorphisms, which is for example proven in \cite{LRSV18}, together with the fact that the group of homeomorphisms is locally contractible \cite{Che08}. Its injectivity comes from the local contractibility of the group of diffeomorphisms \cite{LR10}. Finally, the third isomorphism is due to Fathi \cite{Fat80}.

In symplectic geometry, the mapping class group we are interested in is of course related to symplectomorphisms:
$$\mathrm{MCG}^{\omega}(M)=\pi_0(\Symp(M,\omega)).$$

These Dehn twists have been generalized to higher dimensions by Arnold \cite{Arn95} and they have been then intensively studied by Seidel in his PhD thesis \cite{SeiPhD} and in \cite{Seidel99,Seidel00,Seidel01}. We call these higher dimensional maps \emph{generalized Dehn twist}, or \emph{Dehn-Seidel twists}. They are defined in the neighbourhood of a Lagrangian sphere $L$, and thus will be denoted $\tau_L$. Let us briefly present Seidel's description of these maps. As in dimension $2$, we start by describing a local model in the cotangent bundle of a sphere. We denote
$$T^*_1S^n=\{\xi\in T^*S^n,\quad |\xi|\leq 1\},$$
where $|\cdot|$ denotes the dual of the standard round metric on $S^n$. In coordinates we have
$$T_1^*S^n=\{(u,v)\in\R^{n+1}\times\R^{n+1},~~|u|\leq 1,|v|=1,\langle u,v\rangle=0\},$$
and $\omega_{T_1^*S^n}=\sum_i du_i\wedge dv_i$.
We set
$$\sigma_t(u,v)=\left(\cos(2\pi t)u-\sin(2\pi t)v |u|, \cos(2\pi t)v+\sin(2\pi t)\frac{u}{|u|}\right),$$
for $t\in[0,1]$, and $(u,v)\in T^*S^n\setminus S^n$. When $t=1/2$, $\sigma$ corresponds to the antipodal map: $\sigma_{1/2}(u,v)=(-u,-v)$. Note that this antipodal map extends continuously to the zero-section. We choose a cut-off function $\rho:[0,1]\rightarrow\R$ such that $\rho$ is equal to $\frac{1}{2}$ near $0$ and equal to $0$ near $1$. We can now define $\tau$ by
$$\tau(\xi)=\sigma_{\rho(|\xi|)}(\xi).$$
This map is a symplectomorphism equal to the antipodal map on the zero-section and equal to the identity near the boundary of $T_1^*S^n$. When $n=1$, it is isotopic to the model Dehn twist on surfaces described above.

We now want to embed our local model into a symplectic manifold, matching the zero-section with a Lagrangian sphere. Let $(M,\omega)$ be a symplectic manifold with boundary, together with a Lagrangian embedding $l:S^n\rightarrow M$. Using Weinstein's neighbourhood theorem, we may implant this local model in the neighbourhood of the Lagrangian sphere $l(S^n)=L$. The isotopy class in $\Symp(M,\omega)$ of the resulting map $\tau_l$ only depends on $l$. This map is called \emph{the generalized Dehn}, or \emph{Dehn-Seidel twist along $l$}.

In his PhD thesis \cite{SeiPhD}, Seidel proved that in dimension $4$, the square of a  Dehn-Seidel twist is isotopic to the identity through smooth diffeomorphisms but is not through symplectomorphisms. He later generalized the last part of this result to higher dimensions using the technology of Lagrangian Floer cohomology in \cite{Seidel00}.

Using Seidel's notations, we start by describing what an $(A_k)$-configuration is. Let $M$ be a $2n$-dimensional compact symplectic manifold.
\begin{dfn}\label{def Ak}
An $(A_k)$-configuration in $M$ is a family of Lagrangian spheres $(l_1,... l_k)$ with images $(L_1,...L_k)$ such that
\begin{itemize}
    \item they are pairwise transverse
    \item for $2 \le j\le k-1$, $|L_i\cap L_{j}|=1$ if $i=j\pm 1$ and $|L_i\cap L_j|=\emptyset$ else.
\end{itemize}
\end{dfn}

Seidel proved \cite{Seidel99} that the affine hypersurface $(H,\omega)$ in $\C^{n+1}$ equipped with the standard symplectic form satisfying the equation
$$z_1^2+z^2_2+\cdots+z^2_n=z_{n+1}^{m+1}+\frac{1}{2}$$
contains an $(A_m)$-configuration of Lagrangian $n$-spheres. The name comes from the fact that these hypersurfaces are the Milnor fibres of type $(A_m)$ -singularities.

Following Seidel's paper \cite{Seidel99}, we briefly describe these Lagrangian spheres for $n=2$. Let us denote $\pi:H\rightarrow \C^2$ the projection onto the $(z_1,z_2)$ complex plane and $\sigma$ the map defined by $\sigma(z_1,z_2,z_3)=(z_1,z_2,e^{2i\pi/(m+1)}z_3)$. The projection is an $(m+1)$-fold covering branched along $C=\{z_1^2+z_2^2=\frac{1}{2},\quad (z_1,z_2)\in \C^2\}$ whose covering group is generated by $\sigma$. We now consider the map $f:S
^2\subset\R^3\rightarrow \C^2$ defined by
$$f(x_1,x_2,x_3)=(x_2(1+ix_1),x_3(1+ix_1)).$$ For all $x\in S^2$, we have $f(x)\in \C^2\setminus C$. This map is an immersion with one double point: $f(1,0,0)=f(-1,0,0)$. Let us denote $\tilde{f}:S^2\rightarrow H$ a lift of $f$. One can show that $\tilde{f}(S^2)$ and $\sigma\circ\tilde{f}(S^2)$ have only one intersection point, at $\tilde{f}(1,0,0)$. In the same way, the family
$$(\tilde{f}(S^2),\sigma\tilde{f}(S^2),...,\sigma^{m-1}\tilde{f}(S^2))$$
satisfies the intersection conditions of the previous definition. Finally one can choose a $2$-form $\omega_0$ on $H$, diffeomorphic to $\omega$, such that these spheres are Lagrangians, and thus $(H,\omega_0)$ admits a $(A_m)$-configuration. The fact that these two $2$-forms are diffeomorphic tells that $(H,\omega)$ contains such a configuration as well.

This allows us to have such configurations inside a Liouville domain. These objects were intensively studied by Khovanov-Seidel \cite{KovSeid00}, Seidel-Thomas \cite{ST01}, Seidel \cite{Sei08}, Keating \cite{Kea12}...

The theorem of Seidel which interests us in this paper is the following (\cite{Seidel00})
\begin{thmi}[Seidel \cite{Seidel00}]\label{thm Seidel intro}
Let $(M^{2n},\omega)$ be a compact symplectic manifold with contact type boundary, with n even, which satisfies $[\omega]=0$ and $2c_1(M,\omega)=0$. Assume that $M$ contains an $A_3$-configuration $(l_{\infty},l',l)$ of Lagrangian spheres.  Then $M$ contains infinitely many symplectically knotted Lagrangian spheres. More precisely, if one defines $L'^{(k)}=\tau_l^{2k}(L')$ for $k\in \Z$, then all the $L'^{(k)}$ are isotopic as smooth submanifolds of M, but no two of them are isotopic as Lagrangian submanifolds.
\end{thmi}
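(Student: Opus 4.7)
The plan is to treat the two statements separately, using classical differential topology for the smooth part and Lagrangian Floer cohomology as the main tool for the Lagrangian part. The hypotheses $[\omega]=0$ (so $M$ is an exact Liouville domain) and $2c_1(M,\omega)=0$, together with the fact that a Lagrangian sphere of dimension $n\ge 2$ is simply connected (hence automatically an exact, orientable Lagrangian), guarantee that the Floer cohomology $HF(K_0,K_1;\Z/2)$ is well-defined, $\Z$-graded, invariant under Lagrangian isotopy of either argument, and behaves well under symplectomorphisms, i.e.\ $HF(\varphi K_0,\varphi K_1)=HF(K_0,K_1)$ for any $\varphi\in\Symp(M,\omega)$. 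To produce infinitely many Lagrangian isotopy classes it then suffices to exhibit one auxiliary Lagrangian $X$ so that $\dim HF(X,{L'}^{(k)})$ genuinely depends on $k$.

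The smooth-isotopy claim is independent and classical: the local model of the Dehn-Seidel twist on $T_1^*S^n$ is described by a loop in the rotation group $SO(n+1)$ acting on $S^n$, and for $n$ even the square of this loop is null-homotopic inside $\Diff(S^n)$. Since $\tau_l^2$ is compactly supported in a Weinstein neighbourhood of $L$, this null-homotopy extends by the identity to produce a smooth isotopy from $\tau_l^{2k}$ to $\id_M$, and hence carries ${L'}^{(k)}$ to $L'$ smoothly in $M$.

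For the Lagrangian non-isotopy I would use Seidel's long exact sequence: for a Lagrangian sphere $L$ and Lagrangians $K_0,K_1$, there is a distinguished triangle
$$HF(K_0,L)\otimes HF(L,K_1)\longrightarrow HF(K_0,K_1)\longrightarrow HF(K_0,\tau_L(K_1))\stackrel{[1]}{\longrightarrow}.$$
Applied iteratively with $L=l$, $K_0=l'$ and $K_1=\tau_l^{j}(l')$, this computes $HF(l',{L'}^{(k)})=HF(l',\tau_l^{2k}(l'))$ step by step. Two inputs drive the recursion. First, $HF(l',l)=\Z/2$ because $l'\cap l$ is a single transverse point. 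Second, $HF(l,\tau_l^{j}(l'))=HF(l,l')=\Z/2$ for every $j$, which follows from the symplectomorphism invariance of $HF$ applied with $\varphi=\tau_l^{-j}$, combined with the fact that $\tau_l$ preserves $l$ setwise. Hence each application of the exact triangle contributes at most rank one, which gives the upper bound $\dim HF(l',{L'}^{(k)})\le 2+2|k|$, starting from $\dim HF(l',l')=\dim H^*(S^n;\Z/2)=2$.

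The main obstacle is the matching lower bound: one must show that the new generator produced at each step really survives, so that $\dim HF(l',{L'}^{(k)})$ is strictly monotonic in $|k|$. The idea is to exploit the $\Z$-grading: the Dehn-Seidel twist shifts Maslov degrees by a definite integer (this is where $n$ even is used), so the fresh generator appearing at each iteration sits in a graded slot distinct from every previously occupied slot, which forces the connecting map in the triangle to vanish on that slot for degree reasons. The third Lagrangian sphere $l_\infty$ of the $A_3$-configuration is used precisely to pin down a canonical grading/framing of the whole configuration so that the degree bookkeeping is unambiguous; carrying this out carefully is the technical heart of \cite{Seidel00}. Once strict monotonicity of $\dim HF(l',{L'}^{(k)})$ in $|k|$ is established, Lagrangian-isotopy invariance of Floer cohomology forbids any two ${L'}^{(k)}$ and ${L'}^{(j)}$ with $k\ne j$ from being Lagrangian isotopic, which completes the proof.
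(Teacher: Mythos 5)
This statement is cited as Seidel's theorem and the paper does not reprove it — it is taken as a black box (and later restated verbatim in Section 4.1 as \autoref{thm Seidel Dtwist}). So there is no internal proof to compare against. What the paper \emph{does} prove is a related consequence (\autoref{prop rk cohom Dehn twist}), which computes $\mathrm{rk}\,HF^*(\tau_l^{2m}(L'),L')=2m$ via Keating's version of the exact triangle; that piece of the paper tracks the same idea as the second half of your plan, with $L'$ itself as the test Lagrangian rather than $l_\infty$.

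Your strategy for the Lagrangian non-isotopy part — iterate Seidel's exact triangle, get the upper bound $2+2|k|$, and then use the grading shift $\tilde\tau_l(\tilde L)=\tilde L[1-n]$ to force the connecting maps to vanish for degree reasons — is indeed the core of Seidel's argument. Be aware, however, that the rank alone is not strictly monotone in $|k|$: by the paper's \autoref{prop rk cohom Dehn twist} the sequence of ranks of $HF^*(\tau_l^{2m}(L'),L')$ is $2,2,4,6,\dots$, so the step from $k=0$ to $k=1$ produces a cancellation in the triangle. Your parenthetical remark that the grading is what really pins things down is therefore essential, not merely a refinement, and the role of $l_\infty$ in Seidel's $A_3$ hypothesis is exactly to fix the gradings; as the paper notes, the grading bookkeeping can in fact be done with an $A_2$-configuration only.

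The genuine gap is in the smooth-isotopy part. You claim that because the local twist is built from a loop in $SO(n+1)$ whose square is null-homotopic, $\tau_l^2$ is smoothly isotopic (through compactly supported diffeomorphisms) to the identity, and hence so is $\tau_l^{2k}$. This is not established by that observation, and it is false in general: the paper itself cites Kauffman--Krylov \cite{KK05} and Krylov \cite{K07}, who show that for $n$ even (and $n\neq 4$) the Dehn--Seidel twist has order $2$, $4$, or $8$ in $\pi_0(\mathrm{Diff}_c(T^*S^n))$, and only for $n=2$ and $n=6$ is that order $2$. For most even $n$, $\tau^2$ is therefore \emph{not} in the identity component of $\mathrm{Diff}_c(T^*S^n)$. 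A null-homotopy of a loop in $SO(n+1)$ does not extend to a compactly supported isotopy of $T^*_1S^n$ rel boundary — that is precisely the obstruction these results measure. Seidel's proof of the smooth isotopy of the spheres $L'^{(k)}$ does not go through "$\tau_l^2$ is smoothly isotopic to the identity"; it uses that $\tau_l^2$ acts trivially on homology (Picard--Lefschetz for $n$ even) together with a direct argument on the embeddings themselves, and for $n=2$ a separate argument from his thesis. As written, your justification of the smooth-isotopy claim does not hold up, and a different argument is needed.
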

Here, $c_1(M,\omega)$ denotes the first Chern class of the tangent bundle $TM$. This theorem immediately implies that $\tau^2_L$ is not isotopic to the identity in $\Symp(M,\omega)$. Historically, this is the first higher dimensional result on the symplectic mapping class group.

\begin{rki}
In Seidel's theorem, it is assumed that $n$ is even. Indeed, for $n$ odd, one can prove that the square of the Dehn-Seidel twist acts non-trivially on homology making the previous result irrelevant. However, in the same paper \cite{Seidel00}, Seidel also proved an odd-dimensional counterpart of this theorem in which one should consider a composition of non-isotopic Dehn-Seidel twists.
\end{rki}

Seidel's result is deeply related to Picard-Lefschetz theory and thus to homological mirror symmetry. Nevertheless, this is an entirely different subject that will not be addressed here. However, many progress have been made on more related topics. For example Evans \cite{EV11} and Li-Li-Wu \cite{LLW15} showed that the symplectic mapping class group of some specific blow-ups of $\C \mathbb{P}^2$ is generated by Dehn-Seidel twists. Khovanov-Seidel \cite{KovSeid00} and Seidel-Thomas \cite{ST01}  proved that if two Lagrangian spheres intersect transversely at a single point, their associated Dehn twists satisfy a braid relation. This result was generalized in \cite{Kea12} by Keating for more general pairs of Lagrangians. In some specific cases Evans \cite{EV11} and Wu \cite{Wu13} proved that there is a weak homotopy equivalence between the group of compactly supported symplectomorphisms and a braid group on the disk. Moreover, Dimitroglou-Rizell and Evans \cite{DGE14} constructed from Dehn twists non-contractible families of symplectomorphisms.

As shown by Seidel's result, these questions are closely related to Lagrangian isotopy questions. For instance Coffey \cite{Cof05} showed that under specific conditions, on a $4$-dimensional manifold $M$ together with a (very) specific Lagrangian submanifold $L$, $\Symp(M)$ is homotopy equivalent to the space of Lagrangian embeddings of $L$.

\subsubsection*{Dehn-Seidel twist and $C^0$ symplectic mapping class group}

We now turn our attention to the core of this paper. Inspired by the pioneering work of Seidel on the group of symplectomorphisms, we would like to study the topology of the group $\overline{\Symp}(M,\omega)$ of symplectic homeomorphisms. In particular, we would like to understand the $C^0$ symplectic mapping class group, i.e. the group $\pi_0(\overline{\Symp}(M,\omega)).$

There is a priori no reason for this group to be non trivial. Indeed, the flexibility results such as the $C^0$-counter example to the Arnold conjecture (\cite{BHS18}) show that sometimes symplectic homeomorphisms behave very differently than their smooth counter parts. This lead Ivan Smith to ask \footnote{in a private discussion with V. Humilière} the following question.
\begin{question}\label{question IS}
Is the square of the Dehn-Seidel twist connected to the identity in $\overline{\Symp}(M,\omega)$, where $(M,\omega)$ is a symplectic manifold as in Seidel's \autoref{thm Seidel intro}?
\end{question}

Answering this question would help to understand the relation between the symplectic mapping class group and the $C^0$ symplectic mapping class group. It would show that the natural map induced by the inclusion
\begin{equation}\label{formule J}
\pi_0(\Symp(M,\omega))\overset{J}\longrightarrow \pi_0(\overline{\Symp}(M,\omega))    
\end{equation}
is non-trivial. Here, $\overline{\Symp}(M,\omega)$, which denotes the set of symplectic homeomorphisms, is equipped with the $C^0$-topology, whereas $\Symp(M,\omega)$ is equipped with $C^\infty$-topology.

The main o ofbjective of this paper is to answer \autoref{question IS}, which is achieved by proving the following theorem.
\begin{thmx}\label{thm A}
Let $(M^{2n},\omega)$ be a $2n$-dimensionnal Liouville domain with n even and $2c_1(M,\omega)=0$. Assume that M contains an $A_2$-configuration of Lagrangian spheres $(l,l')$.

Then, for all $k\in\Z\setminus \{0\}$, $\tau_l^{2k}$ is not in the connected component of the identity in $\overline{\Symp}(M,\omega)$.
\end{thmx}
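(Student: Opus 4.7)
Argue by contradiction: suppose $\tau_l^{2k}$ lies in the connected component of $\id$ in $\overline{\Symp}(M,\omega)$, and pick a $C^0$-continuous path $(\varphi_t)_{t\in[0,1]}$ of symplectic homeomorphisms with $\varphi_0=\id$ and $\varphi_1=\tau_l^{2k}$. The plan is to attach to each $\varphi$ a Floer-theoretic barcode that is $C^0$-continuous in $\varphi$, and to exploit Seidel's Floer-theoretic computations to show that $\calB(\id)$ and $\calB(\tau_l^{2k})$ are incompatible endpoints of a continuous family of barcodes.

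The natural barcode to use is the Lagrangian Floer barcode $\calB(\varphi):=\calB(L',\varphi(L'))$, built by filtering the Lagrangian Floer complex $CF^*(L',\varphi(L'))$ in the Liouville domain $M$ by its action functional. Exactness of $M$ together with exactness of $L'$ makes the action filtration well defined, and the hypotheses $n$ even and $2c_1(M,\omega)=0$ supply the $\Z$-grading and coherent orientations. The central technical step is the $C^0$-continuity estimate
\[
d_{C^0}(\varphi_n,\varphi)\longrightarrow 0 \quad\Longrightarrow\quad d_B\bigl(\calB(\varphi_n),\calB(\varphi)\bigr)\longrightarrow 0,
\]
which I would establish by adapting Seyfaddini's $\varepsilon$-shift trick \cite{Sey13} and the continuity arguments of Buhovsky-Humili\`ere-Seyfaddini \cite{BHS19} to the Lagrangian setting, and combining them with the Kislev-Shelukhin mechanism \cite{KS18} promoting spectral continuity into full bottleneck-distance stability of the barcode. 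This extends $\calB$ by continuity to $\overline{\Symp}(M,\omega)$, making $t\mapsto\calB(\varphi_t)$ a continuous path in the space of barcodes.

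For the final contradiction I invoke the Floer-theoretic content of Seidel's argument: the Dehn twist long exact sequence yields a recursive computation of $HF^*(L',\tau_l^{2k}(L'))$ and distinguishes it from $HF^*(L',L')\cong H^*(S^n)$ for $k\neq 0$. This distinction manifests itself as a quantitative, $C^0$-stable feature of $\calB(\tau_l^{2k})$ (such as a count of bars of length above a generic threshold $\varepsilon>0$, or the total dimension of the Floer homology) that rigidly differs from the corresponding feature of $\calB(\id)$. By the continuity of $t\mapsto\calB(\varphi_t)$ in bottleneck distance, such a discrete feature must remain locally constant along the path, contradicting the gap between its endpoint values. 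The main obstacle is the $C^0$-continuity statement above: existing continuity results are formulated for Hamiltonian barcodes controlled by the Hofer distance, whereas here the perturbation is purely symplectic and $M$ has boundary, so the crux is to show that a $C^0$-small symplectic perturbation of $\varphi$ only slightly perturbs the action spectrum of the moving Lagrangian $\varphi(L')$.
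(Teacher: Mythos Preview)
Your overall strategy matches the paper's: define the Lagrangian Floer barcode $\hat B(\varphi(L'),L')$, establish its $C^0$-continuity by adapting Buhovsky--Humili\`ere--Seyfaddini and Kislev--Shelukhin to the Lagrangian setting, and then use the rank of $HF^*(\tau_l^{2k}(L'),L')$ (computed via Seidel's long exact sequence) as the discrete invariant obstructing a connection between $\id$ and $\tau_l^{2k}$.

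There is, however, one genuine slip. You begin by choosing a continuous \emph{path} $(\varphi_t)$ in $\overline{\Symp}(M,\omega)$ from $\id$ to $\tau_l^{2k}$. This presupposes that the connected component of the identity in $\overline{\Symp}(M,\omega)$ is path-connected, which is not known and which the paper explicitly flags as an open question. As written, your argument only shows that $\tau_l^{2k}$ is not \emph{isotopic} to the identity in $\overline{\Symp}(M,\omega)$, a strictly weaker conclusion than the statement of the theorem.

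The fix is immediate from the ingredients you already assemble: once the barcode map $\varphi\mapsto \hat B(\varphi(L'),L')$ is shown to extend continuously to $\overline{\Symp}(M,\omega)$, it sends connected sets to connected sets, with no path ever needed. The paper then uses that the connected components of the barcode space $\hat\calB$ are classified by the graded number of semi-infinite bars, i.e.\ the graded rank of Floer cohomology. One further detail: since $\mathrm{rk}\,HF^*(\tau_l^{2}(L'),L')=2=\mathrm{rk}\,HF^*(L',L')$, the total rank alone does not separate $k=1$ from $k=0$; the paper handles this by observing that if $\tau_l^2$ lay in the identity component then so would $\tau_l^4$, reducing to the already-settled case $k=2$.
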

Unlike in Seidel's theorem, we only assume that $M$ contains an $A_2$-configuration. It was probably known that Seidel's \autoref{thm Seidel intro} holds for an $A_2$-configuration as well, but we were not able to find an appropriate reference.
This theorem implies that the group $\pi_0(\overline{\Symp}(M,\omega))$ is not trivial.
Of course, an immediate consequence of the previous theorem is the following corollary answering \autoref{question IS}.
\begin{corx}\label{cor thm A}
Under the same assumptions as \autoref{thm A}, the map $\tau_l^2$ is an element of infinite order in $\pi_0(\overline{\Symp}(M,\omega))$.
\end{corx}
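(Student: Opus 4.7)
The plan is to derive the corollary as an immediate group-theoretic consequence of \autoref{thm A}. The connected component of the identity in $\overline{\Symp}(M,\omega)$ is a normal subgroup (as the identity component of a topological group always is), so $\pi_0(\overline{\Symp}(M,\omega))$ inherits a group structure and the class $[\tau_l^2]$ makes sense as a group element. By definition, $[\tau_l^2]$ has infinite order if and only if, for every $k \in \Z\setminus\{0\}$, the power $[\tau_l^2]^k = [\tau_l^{2k}]$ differs from the identity class, i.e.\ $\tau_l^{2k}$ does not belong to the connected component of $\id$ in $\overline{\Symp}(M,\omega)$.

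This is exactly the content of \autoref{thm A}: it asserts that for every nonzero integer $k$, the symplectomorphism $\tau_l^{2k}$ fails to lie in the identity component of the group of symplectic homeomorphisms of $M$. Applying the theorem to each $k \in \Z\setminus\{0\}$ therefore shows that no nonzero power of $[\tau_l^2]$ is trivial in $\pi_0(\overline{\Symp}(M,\omega))$, which is the definition of infinite order.

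There is no substantive obstacle in this deduction; all the analytic and Floer-theoretic difficulty of the statement is absorbed into \autoref{thm A} itself, which is established via the $C^0$-barcode machinery developed later in the paper. The corollary is essentially a rewording: ``no power is trivial'' $\iff$ ``the element has infinite order.'' In particular, \autoref{cor thm A} also implies, via the natural map $J$ of \eqref{formule J}, that the image of $[\tau_l^2]$ in $\pi_0(\overline{\Symp}(M,\omega))$ is nonzero, so that $J$ is itself nontrivial in these examples.
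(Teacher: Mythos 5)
Your argument is correct and matches the paper's approach: the paper also treats \autoref{cor thm A} as an immediate consequence of \autoref{thm A}, with the same observation that $[\tau_l^{2}]^k=[\tau_l^{2k}]$ being nontrivial for all $k\neq 0$ is precisely the definition of infinite order in $\pi_0(\overline{\Symp}(M,\omega))$. Your added justification that the identity component is normal, so $\pi_0$ carries a group structure, is a small clarification the paper leaves implicit, but there is no substantive difference.
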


This result comes in contrast with the work of Kauffman-Krylov \cite{KK05} and Krylov \cite{K07} which states that the Dehn-Seidel twist is of finite order in $\pi_0(\mathrm{Homeo_c}(T^*\mathbb{S}^n))$: it is $2$ when $n$ equals $2$ or $6$ and of order $4$ for all other $n$ even and of order $2$. They also proved that it is of finite order in $\pi_0(\mathrm{Diff_c}(T^*\mathbb{S}^n))$, when $n$ even, $n\neq 4$: of order $2$ when $n$ equals $2$ or $6$ and of order $4$ or $8$ for other $n$. Of course, if the Dehn-Seidel twist has order $2$ or $4$ in $\pi_0(\mathrm{Homeo_c}(T^*\mathbb{S}^n))$, then it must be of order at least than $2$ or $4$ in $\pi_0(\overline{\mathrm{Symp}}_c(T^*\mathbb{S}^n))$. \autoref{thm A} is consequently an interesting example of $C^0$ symplectic rigidity in the sense that it shows a different behaviour from what happens outside a symplectic setting: it is has infinite order, and the orders are the same for the $C^0$ context and the smooth one.

We have to discuss the relation between \autoref{thm A} and its \autoref{cor thm A}. In smooth symplectic geometry, the two results would be equivalent: indeed connected components and path connected components coincide in the $C^{\infty}$ setting. However, in $C^0$-symplectic geometry, there is no reason for this equivalence to hold and it is actually related to an important question. It is the question of the local path-connectedness of $\overline{\Ham}$, the $C^0$ closure of Hamiltonian diffeomorphisms, or $\overline{\Symp}$, which can be formulated in the following way.
\begin{question}\label{question local connected}
Given an arbitrary neighbourhood $U$ of the identity in $\overline{\Ham}(M,\omega)$ or in $\overline{\Symp}(M,\omega)$, is there a neighbourhood $V$ contained in $U$ such that every element in $V$ can be connected to the identity using a path in $V$?
\end{question}
Consequently, whether $\overline{\Symp}(M,\omega)$ is locally path-connected in dimension greater or equal to $4$ remains an open question. It is unknown whether the connected component of the identity is equal to the path-connected component of the identity in $\overline{\Symp}(M,\omega)$. 

This question is particularly complex. One could think of it as an analogue of the Nearby Lagrangian conjecture, but for symplectomorphism isotopies instead of Lagrangian isotopies. 

The nearby Lagrangian conjecture was proposed by Arnold. It states that given a cotangent bundle $T^*L$, any closed exact Lagrangian submanifold $L'\subset T^*L$ is Hamiltonian isotopic to the zero-section. This conjecture is exceptionally difficult to prove. However, important progress has been made. It was proved for $T^*\mathbb{S}^2$ by Hind \cite{H04} and $T^*\T^2$ by Goodman-Ivrii-Rizell \cite{RGI16}. On general cotangent bundles, a series of works by Fukaya-Seidel-Smith \cite{FSI07}, Abouzaid \cite{Ab10}, Kragh \cite{Kr13} and Abouzaid-Kragh \cite{AK18} led to the fact that for any closed exact Lagrangian $L'$, the projection of $L'$ onto the zero section $L$ is a homotopy equivalence.
Even if this conjecture is not proven in its full generality, those results have already been used. For instance, in Shelukhin's proof of the Viterbo conjecture \cite{Shel19}, it allows him to extend his results to all exact Lagrangian submanifolds.

\bigskip

Moreover, note that \autoref{thm A} also implies the following corollary since $\overline{\Ham}(M,\omega)$ is connected (as the closure of a connected space).
\begin{corx}\label{cor thm A Ham}
Under the hypothesis of \autoref{thm A}, $ \tau_{l}^{2k}$ does not belong to $\overline{\Ham}(M,\omega)$, for every integer $k\neq 0$.
\end{corx}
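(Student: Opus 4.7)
The plan is to derive this corollary directly from \autoref{thm A} via a simple connectedness argument. The essential observation is that $\overline{\Ham}(M,\omega)$ is a connected subset of $\overline{\Symp}(M,\omega)$ which contains the identity. Indeed, $\Ham(M,\omega)$ itself is path-connected in the $C^0$-topology: any Hamiltonian diffeomorphism is, by definition, the time-one map of a smooth Hamiltonian isotopy $(\varphi_t)_{t\in[0,1]}$ starting at the identity, and such an isotopy provides a $C^0$-continuous path from $\mathrm{Id}$ to the given element. Since the $C^0$-closure of a connected set is connected, $\overline{\Ham}(M,\omega)$ is connected.

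Consequently, $\overline{\Ham}(M,\omega)$ is entirely contained in the connected component of the identity in $\overline{\Symp}(M,\omega)$. By \autoref{thm A}, for every integer $k\neq 0$, the element $\tau_l^{2k}$ fails to lie in this connected component, and therefore $\tau_l^{2k}\notin \overline{\Ham}(M,\omega)$. There is no real obstacle in this argument: it consists of the elementary topological observation that any connected subset containing the identity is contained in the identity component of the ambient space, combined with the non-triviality already established in \autoref{thm A}. The whole content of the corollary is thus imported from \autoref{thm A} itself, which is where the substantial Floer-theoretic and barcode-theoretic work is concentrated.
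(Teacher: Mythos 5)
Your argument is exactly the one the paper uses: $\overline{\Ham}(M,\omega)$ is connected as the closure of the connected group $\Ham(M,\omega)$, hence lies in the identity component of $\overline{\Symp}(M,\omega)$, so \autoref{thm A} immediately excludes $\tau_l^{2k}$ from it. Correct, and same route as the paper.
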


Denoting $\Symp_c(M,\omega)$ the set of compactly supported symplectomorphisms in $(M,\omega)$, the most explicit corollary may be the following one:
\begin{corx}
The Dehn-Seidel-twist along $\mathbb{S}^n$ and all its powers are in different connected components in $\overline{Symp_c}(T^*\mathbb{S}^n,\omega)$.
\end{corx}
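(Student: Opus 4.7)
The plan is to deduce the corollary from \autoref{thm A} by embedding $T^*S^n$ inside a Liouville domain satisfying Seidel's hypotheses, and then exploiting the topological-group structure of $\overline{\Symp_c}(T^*S^n,\omega)$.

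Concretely, I would take $M$ to be the Milnor fiber $\{z_1^2+\cdots+z_n^2=z_{n+1}^{3}+1/2\}\subset\C^{n+1}$ from Seidel's construction recalled in the introduction, viewed as a Liouville domain: for $n$ even, it is exact, satisfies $2c_1(M,\omega)=0$, and carries an $A_2$-configuration $(l,l')$ of Lagrangian spheres, so \autoref{thm A} applies. By Weinstein's neighborhood theorem, the sphere $L=l(S^n)$ has a symplectic neighborhood in $M$ identified with $T^*_\epsilon S^n$. Up to rescaling, the Dehn--Seidel twist $\tau$ on $T^*S^n$ can be represented with support inside this neighborhood, and then extension by the identity realizes $\tau$ as the Dehn--Seidel twist $\tau_l$ of $M$ along $L$, and $\tau^k$ as $\tau_l^k$.

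Since composition and inversion are $C^0$-continuous, $\overline{\Symp_c}(T^*S^n,\omega)$ is a topological group, its connected components are left-cosets of the identity component $G_0$, and two powers $\tau^a,\tau^b$ lie in the same component if and only if $\tau^{a-b}\in G_0$. The corollary thus reduces to showing $\tau^k\notin G_0$ for every $k\in\Z\setminus\{0\}$. I would argue by contradiction: if $\tau^k\in G_0$, extension by the identity through the Weinstein embedding would place $\tau_l^k$ in the identity component of $\overline{\Symp}(M,\omega)$. When $k$ is even and nonzero, this directly contradicts \autoref{thm A}. When $k$ is odd, the identity component is closed under squaring, hence $\tau_l^{2k}$ also lies in the identity component, and since $2k\neq 0$ is even, this again contradicts \autoref{thm A}.

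The main difficulty is to make the extension-by-identity step rigorous on all of $\overline{\Symp_c}(T^*S^n,\omega)$: a general element of this closure need not have support inside the Weinstein neighborhood of $L$, so naive extension by the identity is not defined at the boundary of that neighborhood. I would deal with this by first using the Liouville flow on $T^*S^n$ to conjugate a witnessing path from $\id$ to $\tau^k$ into one whose supports are rescaled into $T^*_\epsilon S^n$ (two Dehn--Seidel twist representatives with supports of different sizes being mutually isotopic in $\Symp_c$), and then approximating each intermediate element by compactly supported symplectomorphisms with support in this neighborhood. The extension of such approximating sequences is $C^0$-continuous and lands in $\overline{\Symp}(M,\omega)$, which is precisely what is needed to transport the connectedness hypothesis from $T^*S^n$ to $M$ and derive the contradiction via \autoref{thm A}.
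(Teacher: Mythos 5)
Your overall strategy---realize the twist as $\tau_l$ inside a Milnor fibre $M$ via a Weinstein embedding of $T^*_\epsilon\mathbb{S}^n$, invoke \autoref{thm A}, and use the topological-group structure of $\overline{\Symp_c}$ to reduce odd powers to even ones---is the route the paper has in mind when it remarks ``by Weinstein's neighbourhood theorem.'' The odd-to-even reduction via squaring in $G_0$ is correct and in fact necessary, since \autoref{thm A} only addresses $\tau_l^{2k}$; the paper leaves this step implicit.

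The genuine gap is in the extension-by-identity step, and it lands exactly on the distinction this paper is otherwise very careful about. You argue from ``a witnessing path from $\id$ to $\tau^k$,'' but \autoref{thm A} and the corollary speak of \emph{connected} components, and the paper explicitly cautions (see \autoref{question local connected} and the two bullets of \autoref{thm barcodes connected}) that connected and path-connected components of $\overline{\Symp}$ are not known to agree; so no such path is available. Moreover, even if a path existed, a $C^0$-continuous path in $\overline{\Symp_c}(T^*\mathbb{S}^n,\omega)$ need not have all its supports contained in any single $T^*_R\mathbb{S}^n$---there is no compactness controlling supports of $C^0$-limits of compactly supported symplectomorphisms---so conjugating the whole family by one Liouville-flow time does not bring it into $T^*_\epsilon\mathbb{S}^n$, and one cannot $C^0$-approximate a homeomorphism not supported in $T^*_\epsilon\mathbb{S}^n$ by symplectomorphisms that are. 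What your argument really needs is a continuous basepoint-preserving map defined on the whole connected component of $\id$ in $\overline{\Symp_c}(T^*\mathbb{S}^n,\omega)$ sending $\tau^{2k}$ to $\tau_l^{2k}$; the construction you sketch only yields this on the subgroup $\overline{\Symp_c}(T^*_\epsilon\mathbb{S}^n)$, whose inclusion into $\overline{\Symp_c}(T^*\mathbb{S}^n,\omega)$ has no obvious reason to be injective on $\pi_0$. Supplying that injectivity (or some replacement for the witnessing path) is the missing piece.
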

By Weinstein's neighbourhood theorem, the $C^\infty$-counterpart of this corollary was a consequence of Seidel's \autoref{thm Seidel intro}.

In dimension $4$, me may use the nearby Lagrangian conjecture in $T^*\mathbb{S}^2$ \cite{H04} to give a simpler proof that $\tau_l$ has infinite order in $\pi_0(\overline{\mathrm{Symp}}(M,\omega))$. This is done in Appendix \ref{sec dim 4}.

\bigskip

Let us say a few words on the map $J$ defined by (\ref{formule J}). This map is very poorly understood and we have the following open question.
\begin{question}
For a general symplectic manifold $(M,\omega)$, is the map $J$ injective? Is it surjective?
\end{question}
Note that \autoref{thm A} implies that, at least, this map is non-trivial on some manifolds. Moreover, a positive answer to \autoref{question local connected} would imply the surjectivity of the map $J$.

However, in some specific cases, some results exist. As mentioned earlier in (\ref{iso MCG}), we know that, for surfaces, this map is an isomorphism.

The case of the $2n$-ball is also very interesting. Let us denote $\Symp_c(B^{2n},\omega)$ the group of compactly supported symplectomorphisms of $B^{2n}\subset\R^{2n}$. Using Alexander's trick, i.e. conjugating by $x\mapsto t\cdot x$, one gets that $\overline{\Symp}_c(B^{2n},\omega)$, the group of compactly supported symplectic homeomorphisms, is contractible. Consequently, we have that $\mathrm{MCG}^{\omega}(B^{2n},C^0)$ is trivial and so is the map $J$. On the other hand, it is not known whether the group $\Symp_c(B^{2n},\omega)$ is connected, except when $n=1\text{ or }2$. Indeed, in this case, this group is contractible. The case $n=2$ was proven by Gromov.

As this example shows, it could well be that the $C^0$ symplectic mapping class group turns out to be simpler to study in general than the smooth symplectic mapping class group.

\subsection*{Techniques involved}

Seidel's proof cannot directly be adapted to symplectic homeomorphisms. Indeed, it is based on Floer homology which only applies to smooth objects. However, we will see that barcodes form a rich enough invariant that can be defined for symplectic homeomorphisms and that offers a good substitute to Floer homology.

\subsubsection*{Floer homology}

Since the introduction of Floer Homology by Floer in \cite{Flo87}, many other Floer (co)homologies have been defined. The one we are particularly interested in is the Lagrangian intersection Floer cohomology.

We will be working with exact Lagrangian submanifolds. In an exact symplectic manifold $(M,\omega=d\lambda)$, an \emph{exact Lagrangian submanifold} $L$ is a Lagrangian submanifold such that the restriction $\lambda_{|L}$ of the $1$-form $\lambda$ is exact.

Let $L,L'$ be two closed exact Lagrangian submanifolds in an exact symplectic manifold $(M,\omega)$. We assume that their intersections are transverse. The Floer complex is generated by the intersection points $\chi(L,L')$ of the two Lagrangian submanifolds $L$ and $L'$. To define the differential, we have to count $J$-holomorphic strips, for a chosen almost complex structure $J$, between two intersection points, with boundaries on both Lagrangian submanifolds. Of course, for all the objects at stake to be well-defined, some perturbations are required. Once this Floer cohomology is defined, we have the following theorem.
\begin{thmi}[Floer \cite{Floer88}]\label{thm HF(L,L)=H(L)}
Let $(M,\omega)$ be a symplectically aspherical symplectic manifold, together with a closed weakly-exact Lagrangian submanifold $L$. Then,
$$HF^*(L,L;\Z/2)\cong H^*(L,\Z/2).$$
\end{thmi}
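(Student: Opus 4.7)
The plan is to compute $HF^*(L,L;\Z/2)$ via a small Hamiltonian perturbation of $L$ and to identify the resulting Floer complex with a Morse complex on $L$. By Weinstein's neighbourhood theorem, a tubular neighbourhood of $L$ in $M$ is symplectomorphic to a disk subbundle of $T^*L$, with $L$ corresponding to the zero section. I would choose a Morse function $f\colon L\to\R$ and, for a small parameter $\epsilon>0$, extend $\epsilon f$ to a compactly supported Hamiltonian $H_\epsilon$ on $M$; set $L_\epsilon=\phi^1_{H_\epsilon}(L)$. For $\epsilon$ small, $L_\epsilon$ is graphically close to $\mathrm{graph}(\epsilon\,df)\subset T^*L$, so $L\cap L_\epsilon$ is in bijection with $\Crit(f)$ and all intersections are transverse.

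Next I would set up the Floer complex $CF^*(L,L_\epsilon;\Z/2)$ generated by these intersection points, with differential counting mod $2$ the isolated $J$-holomorphic strips $u\colon\R\times[0,1]\to M$ with $u(\cdot,0)\in L$ and $u(\cdot,1)\in L_\epsilon$. The weakly-exact hypothesis on $L$ rules out $J$-holomorphic disks with boundary on $L$, and symplectic asphericity rules out $J$-holomorphic spheres, so no bubbling occurs and the moduli spaces of Floer strips admit the standard Gromov-style compactifications. Consequently $d^2=0$ and $HF^*(L,L_\epsilon;\Z/2)$ is well-defined; by the standard Hamiltonian invariance of Lagrangian Floer cohomology it agrees with $HF^*(L,L;\Z/2)$.

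The core of the argument is Floer's original comparison between low-energy strips and negative gradient lines of $f$. By the monotonicity lemma, any strip connecting two points of $L\cap L_\epsilon$ has energy $O(\epsilon)$ and therefore remains inside the Weinstein neighbourhood once $\epsilon$ is small enough. Performing Floer's adiabatic rescaling of the fiber coordinate by $1/\sqrt{\epsilon}$, the Floer equation degenerates, as $\epsilon\to 0$, to the negative gradient flow equation of $f$ for the metric on $L$ induced by $J$. A standard package of uniform elliptic estimates, Gromov compactness, and an implicit function theorem argument then produces, for all sufficiently small $\epsilon$, an index-preserving bijection between isolated Floer strips and isolated Morse trajectories of $f$. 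Therefore $(CF^*(L,L_\epsilon;\Z/2),d_F)$ is isomorphic, as a chain complex, to the Morse cochain complex of $f$, whose cohomology is $H^*(L;\Z/2)$.

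The main obstacle is the adiabatic rescaling step: upgrading the $C^1$-closeness of low-energy Floer strips to Morse trajectories into a bijection of moduli spaces requires a careful analytic treatment, including uniform elliptic bounds, removal of singularities in the presence of Lagrangian boundary, and a quantitative implicit function theorem with constants independent of $\epsilon$. Once these analytic estimates are in place, the identification $HF^*(L,L;\Z/2)\cong H^*(L;\Z/2)$ follows formally from Morse theory.
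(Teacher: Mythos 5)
Your proof follows the standard Floer approach, which is exactly what the paper sketches in its preliminaries: perturb $L$ by a $C^2$-small Hamiltonian $H=f\circ\pi$ extending a Morse function $f$ on $L$, confine the relevant Floer strips to a Weinstein neighbourhood via energy estimates, rule out bubbling using asphericity and weak exactness, and identify low-energy Floer trajectories with Morse gradient lines to conclude $CF(L,\phi^1_H(L))\cong CM(L,f)$. The paper does not reprove this theorem but attributes it to Floer \cite{Floer88} and cites \cite{Flo89} for the Morse--Floer comparison, so your outline is fully consistent with what the paper relies on.
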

Many generalizations have been proved since then by Oh \cite{Oh95}, Fukaya-Oh-Ohta-Ono \cite{FOOO09}...

One of the many interesting properties of this cohomology is its Hamiltonian invariance, i.e. let $\phi$ be a Hamiltonian diffeomorphism on $M$, then
$$HF^*(L,L')\cong HF^*(L,\phi(L')).$$
When $L=L'$, we denote $HF(L,H)=HF(L,\varphi_H^1(L))$ and for all $K,H\in\Ham(M,\omega)$, we have $HF(L,H)\cong HF(L,K)$.
The invariance property makes this cohomology a great tool to study Hamiltonian diffeomorphisms.

Moreover, the structure of this cohomology is very rich. Indeed, given three closed exact Lagrangian submanifolds $L_0,L_1,L_2$ in $(M,\omega)$, counting pseudo-holomorphic curves between three intersection points, one can define a product structure
$$\mu^2:HF(L_0,L_1)\otimes HF(L_2,L_0)\rightarrow HF(L_2,L_1).$$
This product equips $HF(L,L)$ with a ring structure and the isomorphism of \autoref{thm HF(L,L)=H(L)} is a ring isomorphism.
Given more Lagrangian submanifolds, we can also define higher products $\mu^k, k\in\N$.

\subsubsection*{Action selectors and Barcodes}

Action selectors were introduced by Viterbo \cite{Vit92} for Lagrangian submanifolds in a cotangent bundle using generating functions theory. After this construction, it was adapted to many contexts by Oh \cite{Oh04}, Schwarz \cite{Sch00}, Leclercq \cite{Lec07} and others... They contributed to the definition of many useful tools, such as the spectral norm \cite{Vit92}, or the study of other ones such as the Hofer norm, defined for Hamiltonian diffeomorphisms \cite{Hof90}. These action selectors are fundamental symplectic invariants and are thus deeply studied. Since these objects will be discussed in much more detail later on, we will be brief here.

Given a non-zero homology class $\alpha\in HF(H)$ (respectively a cohomology class in $HF(L,H)$), the associated action selector $l(\alpha,H)$ is the minimal action above (respectively maximal action under) which this class is represented in homology.
These action selectors have been subject to a lot of works and have been shown to satisfy many interesting properties. For instance, one relevant result for us is that Buhovsky-Humilière-Seyfaddini \cite{BHS19} proved that they are locally $C^0$-Lipschitz in the Hamiltonian Floer homology case.

Thanks to \autoref{thm HF(L,L)=H(L)}, one can define the spectral norm $\gamma$ by
$$\gamma(L,H)=l([L],H)-l([pt],H).$$ 

This spectral norm is continuous with respect to a certain distance, the Hofer distance.

\bigskip

Barcodes come from a totally different area of mathematics: topological data analysis. A barcode is a collection of intervals (called bars) used to represent certain algebraic structures called persistence modules. They were introduced by Edelsbrunner et al. \cite{Edel&al00} and, for example, found applications in image recognition with the work of Carlson et al. \cite{Carlson&al04}.

The terminology of barcodes was brought into symplectic topology by Polterovich and Shelukhin \cite{PolShel14} although germs of this theory were already present in the work of Barannikov \cite{Bar94} and Usher \cite{Ush11,Ush14}. Indeed, they observed that Floer theories carry natural persistence module structures, coming from the action filtration.

The space of barcodes may be equipped with a distance, called the \emph{bottleneck distance}. One can associate a barcode to a Morse function, and this barcode is $C^0$-continuous with respect to the Morse function. They satisfy many more properties that will be discussed in much more details later.

Barcodes are of particular interest since they carry the information on the action filtration in Floer (co)homology. Given two exact Lagrangian submanifolds $L,L'$ in a symplectic manifold $(M,\omega)$, this filtration is given by the cohomology of the following subcomplexes. For all $\kappa\in\R$, we define
$$CF^{*,\kappa}(L,L')=\mathrm{span}_{\Z/2}\left\{z\in\chi(L,L'),~~\calA_{L,L'}(z)<\kappa\right\}\subset CF^*(L,L'),$$
where $\chi(L,L')$ denotes the generators of the Floer complex $CF(L,L')$, and $\calA_{L,L'}$ the action functional associated to the pair of Lagrangians. When the parameter $\kappa$ increases, some classes appear while some other ones vanish. The bars of the associated barcode encode the levels at which classes appear and disappear.

But maybe the most interesting property of the space of barcodes is that, being equipped with a distance, it has a topology. This allows us to state our following main tool-theorem, giving a local $C^0$-Lipschitz continuity of the barcodes.
\begin{thmx}\label{thmx loc lip}
Let $M$ be a Liouville domain. Let $L$ and $L'$ be two closed exact Lagrangian submanifolds, and assume that $H^1(L',\R)=0$. The map
$$\varphi\in\Symp(M,\omega)\mapsto \hat{B}(\varphi(L'),L),$$
where $\hat{B}(L,L')$ denotes the barcodes associated to the exact Lagrangian submanifolds $L$ and $L'$, is locally Lipschitz continuous with respect to the $C^0$-distance and extends continuously to $\overline{\Symp}(M,\omega)$.
\end{thmx}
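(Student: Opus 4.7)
The hypothesis $H^1(L',\R)=0$ ensures that $\varphi(L')$ is an exact Lagrangian for every $\varphi\in \Symp(M,\omega)$: the $1$-form $(\varphi^*\lambda-\lambda)|_{L'}$ is closed, hence exact by the cohomological vanishing, and since $\lambda|_{L'}$ is exact so is $(\varphi^*\lambda)|_{L'}$. In particular $\hat{B}(\varphi(L'),L)$ is well defined on all of $\Symp(M,\omega)$. To prove local Lipschitz continuity near a fixed $\varphi_0\in\Symp$, I would write $\varphi=\varphi_0\circ\eta$ with $\eta:=\varphi_0^{-1}\varphi$ and invoke the symplectic invariance of Lagrangian Floer theory to rewrite
$$\hat{B}(\varphi(L'),L)\;=\;\hat{B}(\eta(L'),L_0),\qquad L_0:=\varphi_0^{-1}(L).$$
The inequality $d_{C^0}(\eta,\id)\leq d_{C^0}(\varphi,\varphi_0)$, which follows directly from the symmetric definition of $d_{C^0}$ used in the paper, then reduces the task to bounding the bottleneck distance between $\hat{B}(L',L_0)$ and $\hat{B}(\eta(L'),L_0)$ by a multiple of $d_{C^0}(\eta,\id)$ for $\eta\in\Symp$ close to $\id$.

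\textbf{Core estimate.} For such $\eta$, both $L'$ and $\eta(L')$ are closed exact Lagrangians connected by an exact Lagrangian isotopy (any smooth path from $\id$ to $\eta$ in the identity component of $\Symp$ provides one, exactness being preserved by the same argument as above), and in a Liouville domain exact isotopy of Lagrangians is Hamiltonian: there exists a compactly supported Hamiltonian $H$ with $\varphi_H^1(L')=\eta(L')$. The plan is then to combine two ingredients: a Lagrangian version of the Kislev-Shelukhin inequality
$$d_{\text{bot}}\!\left(\hat{B}(L',L_0),\,\hat{B}(\varphi_H^1(L'),L_0)\right)\;\leq\;\gamma(H;L'),$$
together with a $C^0$-continuity estimate $\gamma(H;L')\leq C\cdot d_{C^0}(\eta,\id)$ for the Lagrangian spectral norm, generalizing the Hamiltonian estimate of Buhovsky-Humili\`ere-Seyfaddini. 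The delicate point is that $H$ cannot simply be the generator of a path from $\id$ to $\eta$ in $\Symp$, whose Hofer norm could be much larger than $d_{C^0}(\eta,\id)$; rather, I would adapt Seyfaddini's $\epsilon$-shift construction and pick $H$ supported in a Weinstein tubular neighborhood of $L'$ whose thickness is comparable to $d_{C^0}(\eta,\id)$, so that an energy-capacity argument yields the desired linear bound with a constant $C=C(L',L_0)$.

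\textbf{Extension and main obstacle.} Once local Lipschitz continuity on $\Symp$ is established, the extension to $\overline{\Symp}(M,\omega)$ is formal: every $\varphi\in\overline{\Symp}(M,\omega)$ is the $C^0$-limit of some sequence $(\varphi_n)\subset\Symp$, the local Lipschitz estimate makes $\bigl(\hat{B}(\varphi_n(L'),L)\bigr)_n$ Cauchy in the bottleneck distance, and completeness of the space of barcodes delivers a well-defined, sequence-independent limit. The hard part is the core estimate above: adapting and combining the $C^0$-continuity of spectral invariants of Buhovsky-Humili\`ere-Seyfaddini and the bottleneck-versus-spectral bound of Kislev-Shelukhin, both originally developed for Hamiltonian Floer theory on closed symplectic manifolds, to the Lagrangian Floer setting in a Liouville domain, and doing so in a way that only the $C^0$-size of $\eta$ enters the final bound. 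This is precisely the technical programme the paper announces in its abstract.
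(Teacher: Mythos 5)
Your outline has the right overall shape and agrees with the paper's strategy at the top level: reduce to $\eta=\varphi_0^{-1}\varphi$ close to $\id$, bound the bottleneck distance of barcodes by the Lagrangian spectral norm $\gamma(L',\eta(L'))$ in the spirit of Kislev--Shelukhin, and bound $\gamma$ by the $C^0$-distance in the spirit of Buhovsky--Humili\`ere--Seyfaddini. The reduction, the $C^0$-inequality $d_{C^0}(\eta,\id)\leq d_{C^0}(\varphi,\varphi_0)$, and the final extension-by-completeness step are all correct.

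However, the core estimate contains a genuine gap. You claim that $L'$ and $\eta(L')$ are joined by an exact Lagrangian isotopy coming from a smooth path in $\Symp_0(M,\omega)$ from $\id$ to $\eta$, and hence that $\eta(L')=\varphi_H^1(L')$ for some compactly supported Hamiltonian $H$. But $\eta=\varphi_0^{-1}\varphi$ with $\varphi,\varphi_0$ merely $C^0$-close symplectomorphisms; there is no reason for $\eta$ to lie in $\Symp_0(M,\omega)$ (the $C^{\infty}$ identity component), nor for $\eta(L')$ to be Hamiltonian isotopic to $L'$. Asserting such an isotopy once $\eta(L')$ sits in a Weinstein neighbourhood of $L'$ is essentially the Nearby Lagrangian Conjecture, which is open in the generality required here. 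This is precisely what the paper's argument is built to avoid: it does \emph{not} produce a Hamiltonian isotopy, but instead uses Abouzaid--Kragh's theorem (the paper's \autoref{thm NLC}) to obtain only chain-level quasi-isomorphisms between $CF(\eta(L'),K)$ and $CF(L',K)$ compatible with the $\mu^2$-product. Those quasi-isomorphisms supply two Floer cohomology classes $[x]\in HF(L',\eta(L'))$ and $[y]\in HF(\eta(L'),L')$ whose products give the filtered interleaving maps, yielding the Kislev--Shelukhin-type bound
$$d_{\text{bot}}\bigl(\hat{B}(L',L_0),\,\hat{B}(\eta(L'),L_0)\bigr)\leq \tfrac{1}{2}\gamma(L',\eta(L'))$$
without ever choosing a Hamiltonian $H$. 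The paper even flags in a remark that your simpler route \emph{does} work when $\eta$ is Hamiltonian, but fails to apply to general symplectomorphisms. A secondary point you do not address: Abouzaid--Kragh's theorem lives in $T^*L'$, so one must first show that the filtered Floer cohomology of the pair $(\eta(L'),L')$ computed inside $M$ agrees (up to an overall action shift) with the one computed inside $T^*L'$; the paper does this by a Liouville-retraction and monotonicity argument localizing holomorphic curves near $L'$, and this step is needed to make the interleaving maps live where they should.
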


To prove this theorem, we will adapt the proof of some recent continuity results to our context. The first useful result comes from a work of Kislev-Shelukhin. In \cite{KS18}, they proved that, in the case of a Lagrangian submanifold together with a Hamiltonian function, the aforementioned barcodes are continuous with respect to the Lagrangian spectral norm $\gamma(L,H)$.

The second result is the one we mentioned before: Buhovsky-Humilière-Seyfaddini \cite{BHS19} proved that action selectors (in the Hamiltonian case) are locally $C^0$-Lipschitz. This allows to extend these objects and the different spectral invariants to the $C^0$-closure, i.e. to Hamiltonian homeomorphisms. This provides invariants that will be used to study these objects.

\subsection*{Organisation}

In the first section, we  give some notations and conventions that will be used in the rest of this paper. The second section is a short presentation of the theory of persistence modules and barcodes focusing on the properties we are interested in. We also prove some small topological observations on this set, both completeness and connectivity results. We then give the definition the barcodes for Lagrangian Floer cohomology. We then prove that the product operations in Floer cohomology respects the filtration. We also present the action selectors for a pair of Lagrangian and define the spectral distance in the case of a pair exact Lagrangians non-necessarily Hamiltonian isotopic, together with some properties. Note that the same definition also appears in Shelukhin's work \cite{Shel19}.

The third section is the proof of our main tool-theorem used to get our results on the Dehn-Seidel twist. We prove \autoref{thmx loc lip}. Using this theorem, we also prove the two points of \autoref{thm barcodes connected}. The first point is a connectivity result while the second one  associates a continuous path of barcodes to a continuous path in $\overline{\Symp}$.

Finally, in the last section, we state and prove our main results, \autoref{thm A} and its corollaries.

We present proofs, without the use of barcode, of weaker results for dimension $4$ in Appendix \ref{sec dim 4}.

\subsection*{Acknowledgments}

This paper results from the author's PhD thesis at the Institut Mathématiques de Jussieu-Paris Rive Gauche (IMJ-PRG) and he is utterly grateful to his wonderful PhD advisors, Vincent Humilière and Alexandru Oancea, for all their support and mentoring. He is thankful for the partial support by the ANR project ``Microlocal" ANR-15-CE40-0007. It was finished during a post-doc position supported by the FNS and so the author thanks Université de Neuchâtel for its hospitality. The author also thanks Sobhan Seyfaddini and Felix Schlenk for beautiful insights, Michael Usher and Jean-François Barraud for their so precious advice and Côme Dattin for many hours of deeply profitable conversations. The author is deeply thankful to Ailsa Keating for useful discussions and suggestions, in particular for \autoref{prop rk cohom Dehn twist}.
\section{Preliminaries}\label{sec preli}
All the following notions in this section are originally due to Floer \cite{Floer88}. One can also refer to e.g. Auroux \cite{Aur14}, Oh \cite{Oh95}, Seidel \cite{Sei08}...

Let $(M,\omega)$ be a Liouville domain, with $d\lambda=\omega$, and let $L$ and $L'$ be two closed connected exact Lagrangian submanifolds in $M$. We denote $f_L:L\rightarrow \R$ and $f_{L'}:L'\rightarrow \R$ the functions satisfying $df_L=\lambda_{|L}$ and $df_{L'}=\lambda_{|L'}$. We recall that these functions are well-defined up to a constant.
\begin{dfn}\label{def action} In our context, the \emph{action functional} on the space of paths from $L$ to $L'$ $\mathcal{P}(L,L')$ is the map $\calA_{L,L'}:\mathcal{P}(L,L')\rightarrow\R$ defined by the expression
$$\calA_{L,L'}(\gamma)=\int \gamma^*\lambda + f_L(\gamma(0))-f_{L'}(\gamma(1)),$$
with $\gamma\in \mathcal{P}(L,L')$.
\end{dfn}

\begin{rk}
This definition of the action presents an unusual choice regarding the classical conventions used in cohomology. Indeed the differential in cohomology decreases this action. This choice does not fundamentally matter but it makes the definitions of persistence modules and barcodes easier as our setting thus matches with the usual definitions of these objects.
\end{rk}

The critical points of $\calA_{L,L'}$ are the intersection points between $L$ and $L'$. At such a point $p$, we have
$$\calA_{L,L'}(p)=f_L(p)-f_{L'}(p).$$
We denote $\Spec(L,L')$ the set of critical values of $\calA_{L,L'}$ and $\chi(L,L')$, the intersection points between $L$ and $L'$. These are the generators of the chain complex of Floer cohomology.
We can associate an action to a set of intersection points. Let $p_1,...p_k$, for $k\in\N$ be points in $\chi(L,L')$. The action of the formal sum of these points is the maximum of the different actions, i.e.
$$\calA_{L,L'}(p_1+...+p_k)=\max\{\calA_{L,L'}(p_1),...,\calA_{L,L'}(p_k)\}.$$

Since the energy of a Floer strip connecting $p$ to $q$ is always strictly positive, the differential strictly decreases the action, i.e. 
$$\calA_{L,L'}(p)>\calA_{L,L'}(\partial p),$$
for all $p$ in $\chi(L,L')$, with $\partial$ denoting the Floer differential.

To achieve the transversality and compactness of the moduli spaces as well as the transversality of the intersections required to define Floer cohomology, we need to consider Hamiltonian and (time dependant) almost-complex structure perturbations, which we will denote by the pair $(H,J_t)$ or simply $(H,J)$. The generators of the Floer complex are then the flow lines $\gamma:[0;1]\rightarrow M$ such that
$$\dot{\gamma}(t)=X_H(t,\gamma(t)),$$
$$\gamma(0)\in L,\gamma(1)\in L'.$$
We will denote $\chi_H(L,L')$ these generators of the Floer complex. When we define the action in this context, we have to take into account the Hamiltonian perturbation. The Hamiltonian action of a path $\gamma$ from $L$ to $L'$ is then defined as
\begin{equation}\label{formule action hamiltonienne}
    \calA_{L,L'}^H(\gamma)=\int_0^1\gamma^*\lambda-H(\gamma)dt+f_L(\gamma(0))-f_{L'}(\gamma(1)).
\end{equation}
We denote by $\Spec(L,L';H)$ the set of critical values of this action functional. The critical points are the above mentioned generators of the Floer complex. We now get the Floer complex
$$CF(L,L';H,J),$$
and then the Floer cohomology $HF(L,L';H,J)$.

\begin{rk}\label{rk perturbation petite}
If the Lagrangian submanifolds $L$ and $L'$ were transverse, we could of course choose $H=0$. Moreover, for two given Lagrangian submanifolds, the Hamiltonian perturbation to achieve transversality can be chosen as small as desired.
\end{rk}

\begin{rk}\label{rk eg action intersection}
The generators can be seen as intersection points between $\phi_H^1(L)$ and $L'$ and so we have an identification of the Floer complexes $CF(L,L';H)$ and $CF(\phi_H^1(L),L';0)$. Using for example Proposition 9.3.1 in \cite{MS3rd}, one can show that up to a constant, the action defined with Equation (\ref{formule action hamiltonienne}) corresponds to $f_{\phi_H^1(L)}-f_{L'}$.
\end{rk}

In the following Sections, we will either consider the Lagrangian submanifolds $L$ and $L'$ as Lagrangian submanifolds in $M$ or as Lagrangian submanifolds in $T^*L$. We will denote $HF(L,L';H,J,M)$ when the Floer cohomology is computed in $M$ and $HF(L,L';H,J,T^*L)$ when the Floer cohomology is computed in $T^*L$.

\bigskip

Let $(M,\omega)$ and $(M',\omega')$ be two Liouville domains, together with two pairs of closed exact Lagrangian submanifolds $(L_0,L_1)\subset M$ and $(L_0',L_1')\subset M'$. Let us recall that we are working with $\Z/2$-coefficients. Then, there is a Künneth-type formula
\begin{equation}\label{formule def Kunneth}
HF(L_0,L_1;H,J)\otimes HF(L_0',L_1';H',J')\cong HF(L_0\times L_0',L_1\times L_1';H\oplus H',J\oplus J').    
\end{equation}
This isomorphism is natural, resulting from the fact that a pseudo holomorphic curve $v$ in $(M\times M',J\oplus J')$ can be written as $v=(u,u')$, where $u$ is pseudo-holomorphic curve in $M$ and $u'$ in $M'$. At the chain level, for $(p,p')\in\chi_H(L_0,L_1)\times \chi_{H'}(L_0',L_1')$, the isomorphism is simply defined by
$$(p,p')\mapsto (p,p')\in\chi(L_0\times L_0',L_1\times L_1').$$

\bigskip

To choose some conventions, we point out that the case when $L$ and $L'$ and actually assume that $L'=L$. This choice is not restrictive thanks to the Hamiltonian invariance of the Floer cohomology.

In this case, it is indeed easier to work with more general conditions on the Lagrangian submanifolds considered. Due to Weinstein's neighbourhood theorem and energy estimates, choosing to work in the cotangent bundle $T^*L$ of the Lagrangian $L$ will not be restrictive. A longer and more detailed discussion on this subject will be held in Section \ref{sec barcode cotangent}.

Let $\varepsilon>0$ and choose a $\varepsilon$-small Morse function $f:L\rightarrow\R$. We extend this function to $T^*L$ by setting
\begin{equation}\label{formule Ham pour Morse}
    H=f\circ\pi:T^*L\rightarrow\R,
\end{equation}
where $\pi:T^*L\rightarrow L$ is the natural projection. The exact Lagrangian submanifold $\phi_H(L)$ is the graph of $df$ and intersects $L$ transversely. Note that if we work in a symplectic manifold $M$ instead of $T^*L$, the cotangent bundle of $L$, we have to multiply $H$ by a cut-off function equal to $1$ near $L$.

With this perturbation, a critical point $p$ of $f$ is exactly an intersection point between $L$ and $\phi_H(L)$. We then obtain
\begin{equation}\label{formule action Morse Floer}
    \calA^H_{L,\phi_H(L)}(p)=-H(p).
\end{equation}

For a good choice of almost-complex structure $J$ and of shift in the definition of the degree of the intersection points, the matching associates a generator of the Floer cochain complex $CF(L,L;H,J)$ of degree $i$ to a critical point of Morse index $n-i$, i.e a generator of the Morse cochain complex $CM(L,H)$ of index $i$ \cite{Flo89}.

This identification is associated to a correspondence between the moduli spaces. The Floer cochain complex $CF(L,L;H,J)$ is then identified with the Morse cochain complex $CM(L,H)$. Together with the Hamiltonian invariance of Floer cohomology, it implies the following proposition.
\begin{prop}[Floer \cite{Floer88}]\label{prop lien entre HF et H}
Let $L$ and $L'$ be two Lagrangian submanifolds which are Hamiltonian isotopic to each other, such that $[\omega]\cdot \pi_2(M,L)=[\omega]\cdot \pi_2(M,L')=0$, then
$$HF^*(L,L')\cong HF(L,L)\cong H^*(L;\Z/2).$$
\end{prop}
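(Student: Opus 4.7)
The plan is to establish the two isomorphisms separately, beginning with the Hamiltonian-invariance isomorphism and then specializing the diagonal case $HF(L,L)$ to Morse cohomology via a small Hamiltonian perturbation.

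First I would show $HF^*(L,L') \cong HF^*(L,L)$ using Hamiltonian invariance of Lagrangian Floer cohomology. By assumption there is a Hamiltonian $K$ with $\varphi_K^1(L) = L'$. One then defines continuation morphisms associated to a homotopy of Hamiltonian-almost-complex-structure pairs $(H_s,J_s)$ interpolating between a regular pair computing $HF^*(L,L')$ and the pair $(K\#H', \varphi_K^*J')$ computing $HF^*(L, \varphi_K^1(L)) = HF^*(L,L)$. The aspherical assumptions $[\omega]\cdot\pi_2(M,L)=[\omega]\cdot\pi_2(M,L')=0$ rule out disk and sphere bubbling, so the standard compactness/gluing machinery gives that continuation maps are chain maps inducing isomorphisms on cohomology, independent of the choice of homotopy.

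Next I would prove $HF^*(L,L) \cong H^*(L;\Z/2)$ via Floer's original PDE-to-ODE argument. Using Weinstein's neighbourhood theorem, identify a neighbourhood of $L$ in $M$ with a neighbourhood of the zero-section in $T^*L$. Pick an $\varepsilon$-small Morse function $f$ on $L$ and take $H = \rho \cdot (f\circ\pi)$ with $\rho$ a cutoff equal to $1$ near $L$, as in formula (\ref{formule Ham pour Morse}). For this choice, $\varphi_H^1(L)$ is $C^1$-close to $L$ and meets it transversely at $\mathrm{Crit}(f)$, so the generators of $CF(L,L;H,J)$ are in bijection with critical points of $f$; equation (\ref{formule action Morse Floer}) identifies the action filtration with $-f$. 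For the differential, one shows that when $\varepsilon$ is small enough and $J$ is chosen compatible with the cotangent structure along $L$, every Floer strip must remain inside a fixed small neighbourhood of $L$. The standard a priori energy estimate bounds the energy by the oscillation of $H$, which is $O(\varepsilon)$; combined with the monotonicity lemma for $J$-holomorphic curves and the exactness/asphericity hypothesis (which prevents disk bubbling), this confines the strips. A rescaling of the time variable by $\varepsilon$ then shows that the Floer equation on such small strips reduces, in the $\varepsilon\to 0$ limit, to the negative gradient equation for $f$ with respect to a metric induced by $J$. Regularity and a standard compactness-implicit function theorem argument give a bijection between Floer trajectories and Morse trajectories for generic data, identifying $CF^*(L,L;H,J)$ with the Morse cochain complex $CM^*(L,f)$.

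The conclusion then follows by combining the two isomorphisms: $HF^*(L,L') \cong HF^*(L,L) \cong HM^*(L,f) \cong H^*(L;\Z/2)$, the last isomorphism being the standard Morse-to-singular cohomology identification. The main obstacle is the second step, specifically the confinement argument showing that for sufficiently small $H$ all Floer strips lie inside the Weinstein neighbourhood and degenerate to Morse flow lines; this is where the asphericity hypothesis $[\omega]\cdot\pi_2(M,L)=0$ plays its essential role in ruling out the formation of $J$-holomorphic disks that would otherwise obstruct both compactness and the Floer-Morse correspondence.
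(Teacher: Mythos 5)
Your proposal is correct and follows essentially the same route as the paper: Hamiltonian invariance (continuation maps) reduces to the diagonal case $HF(L,L)$, and the diagonal case is identified with Morse cohomology of $L$ by taking the Hamiltonian to be a small Morse function pulled back from $L$ in a Weinstein neighbourhood, exactly as in formula (\ref{formule Ham pour Morse}) and the surrounding discussion. The paper only sketches this and cites Floer; your fuller account of the confinement and Floer-to-Morse degeneration is the standard argument being invoked.
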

We assume in this statement that the choice of shift in the definition of the degree for the generators of the Floer complexes make the degree equal to the Morse index.

\begin{rk}\label{rk filtration HF et HM}
Both the Floer cochain complex and the Morse cochain complex carry a natural filtration that will be discussed in details in Section \ref{chap barcode Floer}. The filtration for the Floer complex is given by the action functional. The filtration for the Morse complex is given by the Morse function $f$.

However, with our choice of action for Floer cohomology, the identification between these two complexes does not respect these natural filtrations. Indeed the differential decreases the action functional in Floer cohomology while the differential increases the action in Morse cohomology. Consequently we have to consider the filtration given by $-f$. We denote $CF(L,L;H,J;\calA^H_{L,L})$ the Floer cochain complex with the filtration given by $\calA^H_{L,L}$ and $CM(L,f;-f)$ the Morse cochain complex with the filtration given by $-f$.

Together with the formula (\ref{formule action Morse Floer}), this leads, for the $\varepsilon$-small Hamiltonian defined in the formula \ref{formule Ham pour Morse}, to
$$CF(L,L;H,J;\calA^H_{L,L})\cong CM(L,H;-H).$$
\end{rk}

\begin{rk}\label{rk representant classe fondamentale unique}
We can choose the Morse function $f$ to have a unique maximum and a unique minimum on $L$. This implies that there is a unique generator of $CM^0(L,H)$ and a unique generator of $CM^n(L,H)$. With the previously mentioned good choice of grading, this implies that there is also a unique generator of $CF^0(L,L;H,J)$ and a unique generator of $CF^n(L,L;H,J)$.
\end{rk}
\bigskip

In the following sections, we will not be interested in the Hamiltonian or almost-complex structure perturbation, we will just want these Hamiltonian perturbations to be $\varepsilon$-small, for a given $\varepsilon>0$. Thus, using Kislev-Shelukhin's notations \cite{KS18}, we will denote the Floer complex of $L$ and $L'$
$$CF^*(L,L';\mathcal{D}),$$
where $\mathcal{D}$ denotes the data perturbation, i.e. the pair $(H,J)$. The set of generators will then be denoted $\chi_{\mathcal{D}}(L,L')$. The perturbation data is said to be $\varepsilon$-small if the Hamiltonian is $\varepsilon$-small. When not needed, we will just write $CF^*(L,L')$, and assume that there is a suitable perturbation data implied.

\bigskip

We now have to make some remarks concerning the relation between action and energy, when there is a data perturbation $\calD$. As we will later only be concerned about $C^2$-small perturbations, we will only describe this situation here. However, if one wants to compute Floer cohomology for a particular Hamiltonian $H$, this Hamiltonian term has to be taken into account when defining the action of the generators of the Floer complex. We can choose a perturbation data to achieve transversality everywhere and conduct the same argument as the following.

Let $p,q$ be two perturbed intersection points in $\chi_{\mathcal{D}}(L,L')$ together with $u$, a $J$-holomorphic strip from $p$ to $q$. When computing the energy $E(u)$, one has to take into account the perturbation data.
So the energy writes as
$$E(u)=\calA_{L,L'}(p)-\calA_{L,L'}(q)+ f_{\calD}(p,q),$$
where $f_{\calD}$ is a function depending smoothly on $\calD$ and such that $f_{\calD}$ converges to zero when the Hamiltonian part of the perturbation data $\calD$ goes to zero.
According to \autoref{rk perturbation petite}, this perturbation data can be chosen as small as wished, so that, for all $\varepsilon>0$, we can find $\calD$ such that
\begin{equation}\label{energie action avec perturbation}
    E(u)\leq \calA_{L,L'}(p)-\calA_{L,L'}(q) + \varepsilon,
\end{equation}
and thus $$\calA_{L,L'}(q)\leq\calA_{L,L'}(p)+\varepsilon.$$
This last remark will one of the key arguments in Section \ref{sec Floer barcode} to define persistence modules and barcodes associated to Lagrangian Floer cohomology.

\bigskip

The Floer cochain complex can be equipped with product operations. We will only give the basic ideas, for more details see e.g. Auroux presentation in \cite{Aur14} or the books of Oh \cite{Oh15} and Seidel \cite{Sei08}.

Let $L_0,L_1$ and $L_2$ be three Lagrangian submanifolds of a symplectic manifold $(M,\omega)$. Under suitable assumptions, we define a product operation from the Floer complexes $CF(L_1,L_2,\calD)$ and $CF(L_0,L_1,\calD')$ to $CF(L_0,L_2,\calD'')$ for a suitable choice of perturbation data collection, i.e. a linear map
$$ CF(L_1,L_2,\calD) \otimes CF(L_0,L_1,\calD) \rightarrow CF(L_0,L_2,\calD),$$
which induces a well-defined product
$$HF(L_1,L_2,\calD) \otimes HF(L_0,L_1,\calD) \rightarrow HF(L_0,L_2,\calD).$$
We recall that we can in fact define, given $k+1$ exact Lagrangian submanifolds $L_0,...,L_k$ in a Liouville domain $(M,\omega)$, a map
$$\mu^k:CF(L_{k-1},L_k,\calD)\otimes\cdots\otimes CF(L_0,L_1,\calD)\rightarrow CF(L_0,L_k,\calD).$$
This map is $(2-k)$-graded when it is possible to define a grading on the Floer complexes.

Moreover, we have the following property \cite{Sei08}.
\begin{prop}
Let $L$ and $L'$ be two closed exact Lagrangian submanifolds in $M$. The product 
$$CF(L',L,\calD)\otimes CF(L',L',\calD)\rightarrow CF(L',L,\calD)$$
is cohomologically unital. This unit is given by the image of the fundamental class $[L']$ of $L'$ in $HF(L',L')$.
\end{prop}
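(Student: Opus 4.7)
The plan is to exhibit the fundamental class $[L']$ as a cohomological unit for the product $\mu^{2}$ by reducing the count of pseudoholomorphic triangles to a count of strips.

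Paragraph 1 (setup). Fix perturbation data $\calD$ such that on the pair $(L',L')$ we use a $C^{2}$-small Morse Hamiltonian $H=f\circ\pi$ as in (\ref{formule Ham pour Morse}), with $f:L'\to\R$ Morse and carrying a \emph{unique} minimum $p_{0}$ (this is possible by \autoref{rk representant classe fondamentale unique}). By \autoref{prop lien entre HF et H} and the Floer--Morse identification recorded in \autoref{rk filtration HF et HM}, $e:=[p_{0}]\in HF^{0}(L',L';\calD)$ corresponds, under the canonical isomorphism $HF^{*}(L',L')\cong H^{*}(L';\Z/2)$, to the cohomological fundamental class $[L']\in H^{0}(L';\Z/2)$. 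Thus it suffices to show that $\mu^{2}(e,\,\cdot\,)$ acts as the identity on $HF(L',L;\calD)$.

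Paragraph 2 (moduli and degeneration). By definition, for $x\in\chi_{\calD}(L',L)$,
$$\mu^{2}(e,x)=\sum_{y}\#\calM(p_{0},x;y)\cdot y,$$
where $\calM(p_{0},x;y)$ is the $0$-dimensional moduli space of rigid $J$-holomorphic triangles with boundary on $L',L',L$ and corners asymptotic to $p_{0}$, $x$, and $y$. To compute this count I would rescale the Morse perturbation to $\varepsilon H_{f}$ and let $\varepsilon\to 0$. By a standard neck-stretching and gluing analysis in the spirit of Seidel \cite{Sei08} and Auroux \cite{Aur14}, for $\varepsilon$ sufficiently small the rigid triangles are in bijection with pairs $(u,z)$ where $u$ is a pseudoholomorphic strip on $(L',L)$ from $x$ to $y$ and $z$ is a marked point on the $L'$-boundary of $u$ constrained to lie in the descending manifold of $p_{0}$ for $-\nabla f$.

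Paragraph 3 (conclusion). Because $p_{0}$ is the unique minimum of $f$, its descending manifold is all of $L'$, so the point constraint at $z$ is vacuous. An index computation then shows that only constant strips contribute to the $0$-dimensional reduced moduli space, giving $\#\calM(p_{0},x;y)=\delta_{x,y}$ on the chain level, up to contributions from the boundary of the $1$-dimensional parametrized moduli that produce terms of the form $\partial h+h\partial$ for a chain homotopy $h$. Passing to cohomology, $\mu^{2}(e,[x])=[x]$ for every $[x]\in HF(L',L)$, which is the desired cohomological unitality.

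Paragraph 4 (main difficulty and alternative). The technical crux is the compactness/gluing statement underlying the $\varepsilon\to 0$ degeneration: exactness of $L$ and $L'$ rules out disc and sphere bubbling, so the only degenerations to exclude are the expected strip-breakings and triangle-breakings, which are absorbed into the chain homotopy. A more algebraic alternative, avoiding this analysis, is to promote the isomorphism of \autoref{prop lien entre HF et H} to a ring isomorphism $HF^{*}(L',L')\cong H^{*}(L';\Z/2)$ via a PSS-type construction sending $1\mapsto e$, and then use the $A_\infty$-relations together with naturality of the module action to transport the identity $1\cdot[x]=[x]$ from singular cohomology to $HF(L',L)$. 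Either route relies on the same input: the explicit Morse-model representative of $[L']$ provided in paragraph 1.
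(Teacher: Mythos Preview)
The paper does not actually prove this proposition: it is stated with a reference to Seidel \cite{Sei08} and used as a black box. There is therefore nothing to compare your argument against in the paper itself.

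That said, your sketch is the standard route (Morse--Bott/pearl degeneration of the triangle moduli space), and is essentially what one finds in Seidel's book or in \cite{BC07}, which the paper invokes for the closely related \autoref{prop identite mult z}. One point to correct: with the paper's grading convention (Floer degree $i$ corresponds to Morse index $n-i$, see the paragraph before \autoref{prop lien entre HF et H}), the class $[L']\in HF^{0}(L',L')$ is represented by the unique \emph{maximum} of $f$, not the minimum; accordingly, it is the unstable manifold of the maximum under $-\nabla f$ that sweeps out all of $L'$ and makes the point constraint vacuous. Your Paragraph~3 has the right geometric mechanism but the wrong critical point, so as written the ``descending manifold of the minimum is all of $L'$'' claim is false. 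Once you swap minimum for maximum (and adjust the stable/unstable terminology consistently), the argument goes through.
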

Adequate choices of Hamiltonian perturbations together with Morse-Bott theory make it possible to have an isomorphism on the level of cochain complexes which leads to the following proposition (see \cite{BC07}).
\begin{prop}\label{prop identite mult z}
Let $L$ and $L'$ be two closed exact Lagrangian submanifolds in $M$  and $\varepsilon>0$. Let $f$ a Hamiltonian perturbation for $(L',L')$ defined as in \autoref{rk representant classe fondamentale unique} and $H$ a Hamiltonian perturbation for $(L',L)$. Assume that $f$ and $H$ are $\varepsilon$-small. Then for $\varepsilon$ small enough, the following map is an isomorphism: 
$$\mu^2(\cdot,z):CF(L',L;H)\rightarrow CF(L',L;H_f),$$
where $z$ is the unique representative of the image (\autoref{prop lien entre HF et H}) of the fundamental class $[L']$ in $CF(L',L';f)$ and $H_f=f\sharp H$, the perturbation of $H$ by the Hamiltonian $f$.
\end{prop}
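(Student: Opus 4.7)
The plan is to show that, for $\varepsilon$ small enough, the map $\mu^2(\cdot,z)$ agrees, up to action-decreasing corrections, with a continuation map $c_{H,H_f}\colon CF(L',L;H)\to CF(L',L;H_f)$, and then to exploit the fact that such a continuation map is a chain-level isomorphism (not merely a quasi-isomorphism) when the two perturbation data are sufficiently $C^2$-close. That multiplication by the unit $[L']$ is homotopic to the identity is standard at the cohomology level; the content of the proposition is to upgrade this to an actual chain-level isomorphism, which is precisely where the $\varepsilon$-smallness hypothesis enters.

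First, I would verify that the continuation map itself is a chain-level isomorphism in the $\varepsilon$-small regime. Since $H$ and $H_f=f\sharp H$ are both $\varepsilon$-small and $C^\infty$-close, the perturbed intersection sets $\chi_H(L',L)$ and $\chi_{H_f}(L',L)$ are in canonical bijection for $\varepsilon$ small enough, by the implicit function theorem; under this bijection, corresponding generators have actions differing by $O(\varepsilon)$ and share the same grading. The energy inequality (\ref{energie action avec perturbation}), applied to continuation strips rather than Floer strips, then shows that $c_{H,H_f}$ expressed in this canonical basis is upper triangular with respect to the action filtration and carries the identity on the diagonal, the diagonal contribution being that of short, near-constant continuation solutions. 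Hence $c_{H,H_f}$ is a triangular map with units on the diagonal, and so is invertible as a chain map.

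Second, I would identify $\mu^2(\cdot,z)$ with $c_{H,H_f}$ modulo action-decreasing terms, via a Morse-Bott degeneration in the spirit of Biran-Cornea \cite{BC07}. Concretely, consider the moduli space of pseudo-holomorphic triangles with inputs $p\in\chi_H(L',L)$, $z\in\chi_f(L',L')$ and output $q\in\chi_{H_f}(L',L)$, and stretch the neck at the $z$-input puncture. Because $z$ is the unique generator representing the unit $[L']$ (by \autoref{rk representant classe fondamentale unique}), the broken configurations appearing in the limit consist of a trivial Morse flow-line at $z$ glued to a continuation strip from $p$ to $q$, and the algebraic count of such broken objects reproduces exactly $c_{H,H_f}$. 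Contributions from genuine non-degenerate triangles have strictly positive energy and therefore strictly decrease action. This yields a decomposition $\mu^2(\cdot,z)=c_{H,H_f}+N$ with $N$ strictly action-decreasing; $\mu^2(\cdot,z)$ consequently remains upper triangular with the identity on the diagonal in the action-adapted basis, and is therefore a chain-level isomorphism.

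The main obstacle is the rigorous Morse-Bott and gluing analysis of the second step: matching the degenerating moduli space of triangles with the continuation moduli space requires a coherent choice of perturbation data, transversality at the broken strata of the Gromov compactification, and a quantitative gluing theorem. This argument is, however, essentially carried out by Biran-Cornea in \cite{BC07} in a closely related setting, and the exactness of the Lagrangian submanifolds together with the Liouville structure of $M$ exclude disk and sphere bubbling, so their analysis transfers to our setup with only cosmetic modifications.
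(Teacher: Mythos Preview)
Your proposal is correct and follows the same approach the paper indicates: the paper does not give a detailed proof of this proposition but simply attributes it to Morse--Bott theory and cites \cite{BC07}, which is precisely the machinery you invoke for the identification of $\mu^2(\cdot,z)$ with the continuation map. Your additional step of first checking that the continuation map itself is a chain-level isomorphism (upper triangular with units on the diagonal in the $\varepsilon$-small regime) is a helpful elaboration, but the overall strategy---degenerate the triangle moduli space to recover the continuation strip---is exactly what the paper gestures at.
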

\section{Barcodes and action selectors in symplectic topology}\label{chap barcode Floer}
\subsection{Persistence modules and barcodes}\label{sec pers mod barcodes}

Persistence module over a field $\K$ is a family $(V^t)_{t\in\R}$ of finite dimensional vector spaces over $\K$ equipped with a doubly-indexed family of linear maps, called structure maps, $i_t^s:V^s\rightarrow V^t$, for all $s\leq t\in \R$ satisfying:
\begin{enumerate}
    \item $V^t=0$ for $t\ll0$,
    \item for all $s,t,r\in\R$, such that $r\leq s\leq t$, we have $i_t^s\circ i_s^r=i^r_t$ and $i_s^s=\Id_{V^s}$,
    \item for all $r\in\R$, there is $\varepsilon>0$ such that $i^s_t$ are isomorphisms for all $r-\varepsilon<s\leq t \leq r$,
    \item there is a finite set of points $S(V)\subset\R$ such that for all $r\in\R\setminus S(V)$, there exists $\varepsilon>0$ such that $i^s_t$ are isomorphisms for all $r-\varepsilon<s\leq t<r+\varepsilon$.
    \end{enumerate}
We will denote the persistence module $V$ or $(V,i)$. The set $S(V)$ is called the spectrum of $V$. We will denote by $V^{\infty}$ the direct limit
$V^{\infty}=\varinjlim\limits_{ t\to + \infty}V^t,$ together with $i^s:V^s\rightarrow V^{\infty}$ the natural map.

Let $(V,i)$ and $(V',i')$ be two persistence modules. A morphism of persistence modules $h:(V,i)\rightarrow (V',i')$ is a family of morphisms $h^t:V^t\rightarrow V'^t, t\in\R$ such that $h^ti^s_t=i'^s_th^s$ for $s<t$.

Let $(V,i)$ be a persistence module, and $\delta\geq 0$. The $\delta$-shifted persistence module $(V[\delta],i[\delta])$ is the persistence module with vector spaces $V[\delta]^t=V^{t+\delta}$ and maps $i[\delta]_t^s=i^{s+\delta}_{t+\delta}$. We will denote $sh(\delta)_V:V\rightarrow V[\delta]$ the natural shift morphism of persistence modules given by
$$sh(\delta)^t_V= i^{t}_{t+\delta}:V^t\rightarrow V^{t+\delta}.$$
A morphism of persistence modules $h:V\rightarrow V'$ naturally induces a shifted morphism of shifted persistence modules $h[\delta]: V[\delta]\rightarrow V'[\delta]$.
For $\delta\leq 0$, we denote $V[\delta]$ the persistence module such that $V[\delta][-\delta]\cong V$.

Given $V$ and $V'$ be two persistence modules together with $\delta,\varepsilon\geq0$, we say that they are \emph{$(\delta,\varepsilon)$-interleaved} if there exist two morphisms of persistence modules $f:V\rightarrow V'[\delta]$ and $g:V'\rightarrow V'[\varepsilon]$ such that 
$$g[\delta]\circ f = sh(\delta+\varepsilon)_V \text{ and } f[\varepsilon]\circ g = sh(\delta+\varepsilon)_{V'}.$$
The pair $(f,g)$ is called a $(\delta,\varepsilon)$-interleaving. If $\varepsilon=\delta$, it is a \emph{$\delta$-interleaving}, and $V,V'$ are $\delta$-interleaved. The interleaving distance between $V$ and $V'$ is then
$$d_{inter}(V,V')=\inf\{\delta ~~| ~~ V, V'\text{ are }\delta\text{-interleaved}\}.$$
This interleaving distance satisfies the triangle inequality. Let $U,V$ and $W$ be three persistence modules, then
\begin{equation}\label{tri ineq}
    d_{inter}(U,W)\leq d_{inter}(U,V) + d_{inter}(V,W).
\end{equation}

Let us now introduce the closely related notion of barcodes.

Let $J$ be a non-empty interval in $\R$ of the form $(a,b]$ or $(a,+\infty)$, with $a$ and $b$ in $\R$. The \emph{interval module} $I=\K^J$ is the persistence module with vector spaces
\begin{align*}
  I^t =
  \begin{cases}
    \K, \text{ if }  t\in J\\
    0, \text{ otherwise,}
  \end{cases}
\end{align*}
and structure maps
\begin{align*}
  i^s_t =
  \begin{cases}
    \Id, \text{ if }  s,t \in J,\\
    0, \text{ otherwise.}
  \end{cases}
\end{align*}

The following structure theorem, proven in \cite{CB15}, relates barcodes and persistence modules.
\begin{thm}
For any persistence module $V$, there is a unique collection of pairwise distinct intervals $(J_i)_{i\in \mathcal{I}}$ of the form $(a_i,b_i]$ or $(a_i,+\infty)$, with $a_i,b_i\in S(V)$, and multiplicity $m_i\in\N$ such that
$$V\cong \bigoplus_{i\in \mathcal{I}} (\K^{J_i})^{m_i}.$$
\end{thm}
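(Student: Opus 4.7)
The plan is to reduce the continuous persistence module $V$ to a finite-dimensional representation of a type-$A$ quiver via its finite spectrum, then invoke Gabriel's classification together with the Krull--Schmidt theorem, and finally transport the decomposition back to the persistence module side.

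First, I would discretize. Write $S(V) = \{s_1 < \cdots < s_N\}$ and choose auxiliary points $t_0 < s_1 < t_1 < s_2 < \cdots < s_N < t_N$. The combination of conditions (1), (3) and (4) forces the structure map $V^t \to V^{t'}$ to be an isomorphism whenever no spectrum point lies strictly between $t$ and $t'$: condition (4) handles open intervals between consecutive spectrum points, while condition (3) extends the isomorphism up to the next spectrum point on the right. Consequently $V$ is canonically reconstructed from the finite sequence
\begin{equation*}
0 = V^{t_0} \to V^{t_1} \to \cdots \to V^{t_N} = V^{\infty},
\end{equation*}
with arrows the structure maps, and morphisms of persistence modules correspond bijectively to morphisms between their associated skeletons. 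Formally, I would establish an equivalence of categories between persistence modules satisfying (1)--(4) with spectrum contained in $\{s_1, \ldots, s_N\}$ and representations of the linear $A_{N+1}$-quiver whose leftmost vector space vanishes.

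Second, I would apply Gabriel's theorem for type-$A$ quivers: the indecomposable finite-dimensional representations are precisely the interval representations $I[p, q]$, with $\K$ at each vertex $p \le k \le q$, zero elsewhere, and identity maps between consecutive nonzero vertices. Combined with Krull--Schmidt (applicable because the endomorphism ring of each interval representation is the local ring $\K$), this produces a decomposition of any such representation into interval summands with positive integer multiplicities, unique up to reordering and isomorphism. Transporting each summand back through the categorical equivalence, $I[p, q]$ corresponds to the interval persistence module $\K^{(s_p,\, s_{q+1}]}$ under the convention $s_{N+1} = +\infty$, producing the rays. All bar endpoints then lie in $S(V)$ as stated, and finiteness of the indexing set $\mathcal{I}$ follows from the finite dimensionality of the $V^{t_i}$.

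The step demanding the most care is the categorical equivalence in the first paragraph: one must check that a diagram on the discretized skeleton extends uniquely to a persistence module on all of $\R$ satisfying (1)--(4), and conversely that every morphism of persistence modules is determined by its values at the $t_i$. This is essentially a bookkeeping argument using condition (4) to spread the data constantly between consecutive spectrum points, and condition (3) to pin down the value at each spectrum point $s_i$ as the common value on $(s_{i-1}, s_i)$. Once this equivalence is in place, both the existence and uniqueness parts of the theorem follow at once from Gabriel's theorem and Krull--Schmidt.
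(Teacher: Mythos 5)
The paper does not actually prove this theorem; it cites Crawley--Boevey for it. Your argument is therefore a genuinely different (and in this setting more elementary) route: you exploit the finiteness baked into the paper's definition of a persistence module --- finite spectrum and finite-dimensional fibers --- to reduce to a finite-dimensional representation of a linear $A_{N+1}$ quiver and then invoke Gabriel's theorem plus Krull--Schmidt. This is the classical Zomorodian--Carlsson approach. Crawley--Boevey's theorem is strictly more general (pointwise finite-dimensional modules over an arbitrary totally ordered set, no tameness or finite-spectrum hypothesis) and its proof is a functorial-filtration argument that avoids quiver theory entirely; it is what one must use if the finiteness hypotheses are dropped, but for the paper's definition your argument is sufficient and more self-contained. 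The identification of $I[p,q]$ with $\K^{(s_p,\,s_{q+1}]}$, including the convention $s_{N+1}=+\infty$ and the vanishing of the leftmost vertex (so $p\geq 1$ and all left endpoints land in $S(V)$), is correctly set up, and $V^{t_0}=0$ does follow from condition (1) together with local constancy on $(-\infty,s_1]$.

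One small imprecision worth fixing: the criterion you state for $V^t\to V^{t'}$ to be an isomorphism --- that no spectrum point lies \emph{strictly} between $t$ and $t'$ --- is false as written. Take $V=\K^{(0,1]}$, $t=1$, $t'=2$: there is no spectrum point in $(1,2)$, yet $V^1=\K\to V^2=0$ is not an isomorphism. The correct statement, which your explanatory clause actually captures, is that $V$ is constant on each half-open interval $(s_{i-1},s_i]$ (and on $(s_N,+\infty)$): condition (4) gives local constancy on the open pieces, and condition (3) closes this up at the right endpoint $s_i$. Equivalently, $i^t_{t'}$ is an isomorphism iff $[t,t')\cap S(V)=\emptyset$. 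Since your skeleton points $t_0,\dots,t_N$ are deliberately placed away from the spectrum, the slip does not propagate into the construction or the categorical equivalence, but the opening claim should be corrected so the reader is not misled.
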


From this theorem, we can associate a barcode associated to $V$.
A \emph{multiset} is a pair $B=(S,m)$ where $S$ is a set and $m:S\rightarrow \N\cup\{+\infty\}$ is the multiplicity function. This function tells how many times each $s\in S$ occurs in $B$.

\begin{dfn}
We denote by $B(V)$ the multiset containing $m_J$ copies of each interval $J$ appearing in the structure theorem, and $\calI(B(V))$ the set of intervals $J_i$ without multiplicity. $B(V)$ is called the barcode associated to $V$, and the intervals $J_i$ are called bars. We will denote
$$B(V)=\bigoplus_{J\in\calI(B(V))}J^{m_J}.$$
\end{dfn}

We can equip the set of barcodes with a distance, which is called the bottleneck distance.
\begin{dfn}
Let $I$ be a non-empty interval of the form $(a,b]$ or $(a,+\infty)$, and $\delta\in\R$ such that $2\delta<b-a$. We denote $I^{-\delta}$ the interval $(a-\delta,b+\delta]$ or $(a-\delta,+\infty)$. Let $B$ and $B'$ be two barcodes, and $\delta\geq 0$. They admit a $\delta$-matching if we can delete in both of them some bars of length smaller than $2\delta$ to get two barcodes $\bar{B}$ and $\bar{B}'$ and find a bijection $\phi:\bar{B}\rightarrow \bar{B}'$ such that if $\phi(I)=J$, then
$$I\subset J^{-\delta} \text{ and } J\subset I^{-\delta}.$$
\end{dfn}
As it was the case for persistence modules, the bottleneck-distance between the barcodes $B$ and $B'$ is then defined as
$$d_{bottle}(B,B')= \inf\{\delta |~~B \text{ and }B' \text{ admit a $\delta$-matching}\}.$$
The bottleneck distance is non-degenerate: if $B$ and $B'$ are two barcodes such that $d_{bottle}(B,B')=0$, then $B=B'$.

The two notions of interleaving and bottleneck distance are closely related: an \emph{isometry theorem} \cite{BL14} states that for $V,V'$ two persistence modules,
$$d_{inter}(V,V')=d_{bottle}(B(V),B(V')).$$
As for persistence modules, given a barcode $B$ and $\delta \in\R$, we will denote $B[\delta]$ the barcode obtained from $B$ by an overall shift of $\delta$. If $B$ is a barcode associated to a persistence module $V$, then $B[\delta]$ is the barcode associated with the persistence module $V[\delta]$.

\subsection{A bit of topology}\label{sec topology barcodes}

One of the main objectives of this work is to obtain new information concerning $C^0$-symplectic topology using the technology of barcodes applied to Floer homology. We will be working on cases where the number of generators of the chain complex is finite.

\begin{dfn}
A barcode is said to be \emph{finite} if it contains finitely many intervals counted with multiplicity. We will denote $\calB_f$ the set of finite barcodes.
\end{dfn}

Since we study $C^0$ objects in a world of smoothness, we need, at some point, to take limits, and hence limits of finite barcodes which are not necessarily finite. Thus, the question of closedness and completeness naturally arise.
This will be achieved through the set-up given in the following definition.

\begin{dfn}\label{def Q-tame barcodes}
We denote by $\calB$ the set of barcodes satisfying the following condition: for all $\varepsilon>0$, the number of bars of length greater or equal to $\varepsilon$ is finite.
\end{dfn}
\begin{rk}
In \cite{Chazal&al16,BV18} such barcodes are referred to as ``$q$-tame barcodes".
\end{rk}

The following proposition, proved by Bubenik and Vergili \cite{BV18}, justifies the introduction of the set $\calB$.
\begin{prop}\label{prop barcode complet}
The space $\calB$ is complete.
\end{prop}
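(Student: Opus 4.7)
The plan is to show that any Cauchy sequence $(B_n)_{n\in\N}$ in $(\calB,d_{bottle})$ converges to some $B\in\calB$, by building $B$ from matchings of long-enough bars and then checking both $q$-tameness and convergence.

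First I would extract a fast subsequence, say $(B_{n_k})$ with $d_{bottle}(B_{n_k},B_{n_{k+1}})<2^{-k}$, so that for each fixed $\varepsilon>0$ I can choose $k_0$ with $\sum_{k\ge k_0}2^{-k}<\varepsilon/4$. For every $\varepsilon>0$ and every $k$, a $2^{-k}$-matching between $B_{n_k}$ and $B_{n_{k+1}}$ matches bars of length $>2\cdot 2^{-k}$ with bars of comparable length, and the endpoint discrepancy is at most $2^{-k}$. Starting from a bar $I\in B_{n_{k_0}}$ of length $>\varepsilon$, I would follow its matched images in $B_{n_k}$ for $k\ge k_0$: since the total endpoint drift is bounded by $\sum_{k\ge k_0}2^{-k}<\varepsilon/4$, the sequence of endpoints is Cauchy in $\R$, hence converges. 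This defines a limit bar $I_\infty$ of length $\ge \varepsilon/2$. Letting $\varepsilon$ shrink along a sequence $\varepsilon_m\to 0$ and using uniqueness of matchings up to short bars, these partial constructions are compatible, and I collect all the resulting limit bars into a multiset $B$.

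Next I would verify that $B\in\calB$. By the $q$-tameness of each $B_{n_k}$, there are only finitely many bars in $B_{n_{k_0}}$ of length $>\varepsilon/2$, hence finitely many limit bars of length $>\varepsilon$. So for every $\varepsilon>0$, $B$ has only finitely many bars of length $\ge\varepsilon$, which is the condition defining $\calB$ in \autoref{def Q-tame barcodes}. To establish $d_{bottle}(B_{n_k},B)\to 0$, I would fix $\varepsilon>0$, take $k$ large enough that the matchings used to produce $B$ have accumulated drift $<\varepsilon$ from stage $k$ onward, and exhibit an $\varepsilon$-matching between $B_{n_k}$ and $B$ by pairing each long-enough bar of $B_{n_k}$ with its limit bar in $B$ and discarding the bars of length $<2\varepsilon$ on either side. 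Finally, since the original sequence $(B_n)$ is Cauchy and a subsequence converges to $B$, the whole sequence converges to $B$.

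The main obstacle is the bookkeeping for the matchings as $\varepsilon\to 0$: one must make sure that short-lived bars in $B_{n_k}$ that later become long bars (when $k$ grows) are not lost, and conversely that long bars which shrink to zero in the limit are correctly discarded. Phrased abstractly, one needs the diagonal process to produce a consistent global bijection between eventually-stable bars rather than just coherent partial matchings at each scale; the $q$-tame hypothesis is exactly what prevents pathological accumulation of bars and makes this diagonalization possible.
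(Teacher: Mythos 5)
The paper does not actually prove this proposition; it defers to Bubenik--Vergili \cite{BV18}, so any correct direct argument here is a genuine addition rather than a reproduction of the paper's reasoning. Your approach --- pass to a fast Cauchy subsequence, chase bars through chains of matchings, take limits of endpoints, and verify both membership in $\calB$ and convergence --- is the standard ``bar-chasing'' route and does lead to a proof, but the step you flag yourself is a real gap as currently phrased. ``Uniqueness of matchings up to short bars'' is not a theorem: a $\delta$-matching may pair two nearby bars of comparable length in several inequivalent ways, so if you re-choose matchings at each scale $\varepsilon_m$, the chains at scale $\varepsilon_{m+1}$ need not extend those at scale $\varepsilon_m$, and the collection of ``limit bars'' is not well-defined.

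The fix is to \emph{not} re-choose matchings as $\varepsilon$ shrinks: after extracting a subsequence with $d_{bottle}(B_{n_k},B_{n_{k+1}})<2^{-k}$, fix once and for all a $2^{-k}$-matching $\phi_k\colon B_{n_k}\rightharpoonup B_{n_{k+1}}$ for each $k$, and let $B$ be the multiset of limit bars of \emph{infinite forward chains} through the fixed $\phi_k$'s (two chains being identified when one is a tail of the other). Since each $\phi_k$ is a partial bijection, distinct chains passing through $B_{n_k}$ meet it in distinct bars, which is exactly what makes the construction and the $q$-tameness estimate work: any limit bar of length $\geq\varepsilon$ must, for $K_0$ with $2^{2-K_0}\leq\varepsilon/2$, be represented in $B_{n_{K_0}}$ by a bar of length $\geq\varepsilon/2$, and $q$-tameness of $B_{n_{K_0}}$ bounds the count. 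The $\varepsilon$-matching between $B_{n_k}$ and $B$ for large $k$ is then the one induced by the chains themselves, with the unmatched bars on both sides bounded in length by a constant times $2^{-k}$. Note also that everything must be carried out degree by degree since the bottleneck distance on graded barcodes is a supremum over $\Z$, though this is routine. For comparison, \cite{BV18} proves completeness at the level of persistence modules and the interleaving distance, building the limit module abstractly rather than tracking bars; your approach is more elementary but demands this careful bookkeeping.
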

Aside from completeness, the following proposition explains why the set $\calB$ is of particular interest for us, as we will be working with limits of sequences of finite barcodes.
\begin{prop}
The set $\calB_f$ is dense in $\calB$ for the topology induced by the bottleneck distance.
\end{prop}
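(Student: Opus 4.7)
The plan is a direct truncation argument. Given $B \in \calB$ and $\delta > 0$, I define a finite barcode $B_\delta$ by retaining each bar of $B$ whose length is at least $2\delta$ (together with its multiplicity, and including all infinite-length bars) and discarding every other bar. Applying the defining property of $\calB$ with $\varepsilon = 2\delta$ guarantees that only finitely many bars survive, so $B_\delta \in \calB_f$.

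The next step is to verify the estimate $d_{bottle}(B, B_\delta) \leq \delta$, which I would do by exhibiting an explicit $\delta$-matching. On the side of $B$, delete exactly the bars of length strictly less than $2\delta$, which is precisely what the definition of a $\delta$-matching permits; on the side of $B_\delta$, delete nothing, since by construction every bar there has length at least $2\delta$. The two truncated barcodes $\overline{B}$ and $\overline{B_\delta}$ are then identical as multisets of intervals, so the identity serves as a bijection $\phi$, and the inclusions $I \subset \phi(I)^{-\delta}$ and $\phi(I) \subset I^{-\delta}$ are trivially satisfied.

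Density then follows immediately: for any $B \in \calB$ and any $\varepsilon > 0$, choosing $\delta = \varepsilon/2$ produces a finite barcode $B_\delta \in \calB_f$ with $d_{bottle}(B, B_\delta) \leq \varepsilon/2 < \varepsilon$.

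I do not anticipate a real obstacle. The only subtle point to check is that the strict inequality in the matching definition (deleting bars of length \emph{smaller} than $2\delta$) is complementary to the cutoff used to build $B_\delta$ (keeping bars of length \emph{at least} $2\delta$), so that every discarded bar on the $B$ side is legally removable. Infinite-length bars cause no trouble since they exceed any finite threshold and are automatically retained in $B_\delta$, and the q-tame hypothesis bounds their number.
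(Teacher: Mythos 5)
Your proof is correct and is essentially the same truncation argument as the paper's: the paper fixes a sequence $B_n$ obtained by keeping bars of length $\geq 1/n$ and notes $d_{bottle}(B_n,B) = \tfrac{1}{2n}$, which is exactly your cutoff with $2\delta = 1/n$. Your version is slightly more careful in spelling out why the identity bijection yields a valid $\delta$-matching, but there is no substantive difference.
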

\begin{proof}
Let us pick $B\in\calB$. We set $(B_n)_{n\in\N}$ a sequence of barcodes defined by
$$B_n=\bigoplus\limits_{\substack{I\in \calI(B) \\ l(I)\geq \frac{1}{n}}}I^{m_I},$$
where $l(I)$ is the length of the interval $I$, and $m_I$ its multiplicity in $B$. By definition of $\calB$, for all $n\in\N$, $B_n$ is a finite barcode, and for all $n\in\N$, $B_n$ satisfies
$$d_{bottle}(B_n,B)=\frac{1}{2n}.$$
This implies that $(B_n)_{n\in\N}\subset\calB_f$ converges to $B$ for the bottleneck distance.
\end{proof}

When studying homology or cohomology, the presence of a $\Z$-grading is important. We can easily incorporate this notion to obtain those of persistence modules of $\Z$-graded vector spaces such that the structure maps respect the grading. For instance, if we have a family of persistence modules $V_r$ indexed by the integers, the persistence module
$$(\oplus_{r\in\Z} V_r,\oplus_{r\in\Z} i_r)$$
has such a structure. We can then define an interleaving distance as
$$d_{inter}(V,V')=\max\limits_{r\in\Z}\{d_{inter}(V_r,V_r')\},$$
where $V=\oplus_r V_r$ and $V'=\oplus_r V'_r$ are two $\Z$-graded persistence modules.

We can incorporate this notion in the same way for barcodes. A $\Z$-graded barcode is a family of barcodes $(B_r)_{r\in\Z}$. We denote
$$B=\bigoplus\limits_{r\in \Z}B_r,$$
the $\Z$-graded barcode $B$ associated to the family $(B_r)_{r\in\Z}$.
Then, as for persistence modules, the bottleneck distance for graded barcodes is defined by
$$d_{bottle}(B,B')=\max_{r\in\Z}\{d_{bottle}(B_r,B'_r)\},$$
where $B=\bigoplus\limits_{r\in \Z}B_r$ and $B'=\bigoplus\limits_{r\in \Z}B'_r$.

\begin{rk}
Let $B=\bigoplus\limits_{r\in \Z}B_r$ be a $\Z$-graded barcode and let $I$ be a bar in $B$. We call \emph{index} of $I$, denoted $\mathrm{Ind}(I)$, the integer $r\in\Z$ such that $I$ is a bar of $B_r$.
\end{rk}

A finite graded barcode $B=(B_r)_{r\in\Z}$ is a graded barcode such that there is finitely many bars in the whole family $(B_r)_{r\in\Z}$. Since we will always consider graded barcodes, we also denote $\calB_f$ the set of finite graded barcodes. By abuse of notations, we also denote $\calB$ the set of graded barcodes $B=(B_r)_{r\in\Z}$ such that for all $\varepsilon>0$ there is a finite number of bars of length greater than $\varepsilon$ in the whole family $(B_r)_{r\in\Z}$. From now on, when using the notation $\calB$ or $\calB_f$, we will always refer to their graded version.

\begin{rk}
Let $B=(B_r)_{r\in\Z}$, if $B$ is finite, then $B_r$ has more than $0$ bars for only finitely many $r\in\Z$. In the same way, if $B\in\calB$, then for all $\varepsilon>0$, $B_r$ has more than $0$ bars of length greater than $0$ for only finitely many $r\in\Z$.
\end{rk}

With these graded barcodes, we still have
$$\overline{\calB_f}=\calB,$$
and for the same reason as in the non-graded case, $\calB$ is complete.
\bigskip

Before moving on and defining barcodes for objects of real interest, we have to make some observations regarding the connectedness of $\calB$.

First of all, let us introduce the map that counts the number of semi-infinite bars in each degree.
\begin{dfn}\label{def compte de barres semi-infinies}
We define $\sigma^\infty:\calB\rightarrow\N^\Z$ by 
$$\sigma^\infty(B)=(\sigma^n)_{n\in \Z}\text{ with } \forall n\in\Z,~~\sigma^n=\sum\limits_{\substack{I\in \calI(B) \\ l(I)=+\infty \\ \mathrm{Ind}(I)=n}}m_I.$$
\end{dfn}
This map will be very useful. Indeed the following property shows that its relation with the bottleneck distance is quite straightforward.

\begin{prop}
For all $B,B'\in \calB$, $$d_{bottle}(B,B')<+\infty \iff \sigma^\infty(B)=\sigma^\infty(B').$$
\end{prop}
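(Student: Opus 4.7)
The plan is to prove both implications by unpacking the definition of a $\delta$-matching and paying attention to how semi-infinite bars are forced to behave.

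For the forward direction, I will assume that $d_{bottle}(B,B') = \delta_0 < +\infty$ and fix some $\delta > \delta_0$, so there exists a $\delta$-matching between $B$ and $B'$. The key observation is that a semi-infinite bar has infinite length, so it can never be among the bars deleted (which must have length $< 2\delta$). Thus every semi-infinite bar of $B$ must be matched with some bar of $B'$, and vice versa. Moreover, a semi-infinite interval $I=(a,+\infty)$ cannot be matched with a finite interval $J=(a',b']$: indeed, the condition $I\subset J^{-\delta}=(a'-\delta,b'+\delta]$ would force $+\infty\leq b'+\delta$, which is impossible. Since matchings respect the $\Z$-grading (the bottleneck distance on graded barcodes is the max over degrees), the induced bijection between semi-infinite bars preserves the index. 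This yields $\sigma^\infty(B)=\sigma^\infty(B')$.

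For the reverse direction, assume $\sigma^\infty(B)=\sigma^\infty(B')$. The first step is to observe that because $B,B'\in\calB$ and semi-infinite bars have length $+\infty\geq 1$, each barcode contains only finitely many semi-infinite bars overall. The equality $\sigma^\infty(B)=\sigma^\infty(B')$ lets me fix, for each index $n$, a bijection between the semi-infinite bars of $B_n$ and $B'_n$. For each matched pair $((a,+\infty),(a',+\infty))$, the condition of being $\delta$-matched is simply $|a-a'|\leq\delta$, so the finite number $\delta_1:=\max|a-a'|$ over all such pairs is a valid bound for this part.

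It remains to dispose of the finite bars, which I plan to delete rather than match. This requires their lengths to be bounded. Using the q-tame condition with $\varepsilon=1$, only finitely many bars of $B$ and $B'$ have length $\geq 1$; among those, the finite ones have some finite maximum length $L$. All other finite bars have length $<1$, so the supremum of lengths of finite bars in $B\cup B'$ is $\max(L,1)<+\infty$. Setting $\delta:=\max(\delta_1,\max(L,1))+1$, we can then delete all finite bars (each of length $<2\delta$) and match all semi-infinite bars within tolerance $\delta$, producing a valid $\delta$-matching. Hence $d_{bottle}(B,B')\leq\delta<+\infty$.

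The only delicate step is the q-tameness argument bounding the lengths of finite bars: without the condition defining $\calB$, finite bars could be arbitrarily long and the construction would fail. Everything else is a direct reading of the definitions of $\delta$-matching and of $\sigma^\infty$.
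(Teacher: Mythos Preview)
Your proof is correct and proceeds exactly as the paper intends: the paper calls the argument ``straight-forward'' and omits it entirely, referring the reader to the author's thesis, and your direct unpacking of the definition of a $\delta$-matching together with the q-tameness condition is precisely the expected verification.
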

Since the proof is straight-forward, we leave it to the reader to check that this lemma indeed holds. For a complete proof, one can refer to the author's thesis.

\begin{rk}
The barcode $B^\infty$ introduced in the proof strongly relates to what we defined as $V^\infty$ at the beginning of Subsection \ref{sec pers mod barcodes}. The number of bars in each degree $r\in\Z$ is equal to the dimension of the degree $r$ component of $V^\infty$.
\end{rk}
This proposition immediately implies the following corollary, which is topologically really useful.
\begin{cor}
$\sigma^\infty$ is locally constant.
\end{cor}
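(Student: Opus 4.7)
The plan is to deduce the corollary directly from the preceding proposition, which characterizes when two barcodes are at finite bottleneck distance in terms of $\sigma^\infty$. Since $\sigma^\infty$ takes values in the discrete set $\N^{\Z}$, to show it is locally constant at every point $B\in\calB$ it suffices to exhibit, around each such $B$, an open neighbourhood on which $\sigma^\infty$ does not change.

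Concretely, I would fix an arbitrary $B\in\calB$ and consider the open ball of radius $1$ (any finite radius would do),
\[
U_B=\{B'\in\calB \ : \ d_{bottle}(B,B')<1\}.
\]
By definition, every $B'\in U_B$ satisfies $d_{bottle}(B,B')<+\infty$. Applying the preceding proposition in the direction
\[
d_{bottle}(B,B')<+\infty \ \Longrightarrow \ \sigma^\infty(B)=\sigma^\infty(B'),
\]
one concludes that $\sigma^\infty$ is constant on $U_B$, equal to $\sigma^\infty(B)$.

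Since $B$ was arbitrary, this shows that every point of $\calB$ admits an open neighbourhood on which $\sigma^\infty$ is constant, which is precisely the definition of being locally constant. No obstacle is expected: the whole content of the corollary is packaged inside the preceding proposition, and the argument is a one-line consequence of the fact that $\N^{\Z}$ has no nontrivial connected subsets, so any locally-finite-distance condition on a metric space forces constancy on balls. The only remark worth making in the write-up is that, as a consequence, $\sigma^\infty$ is constant on every connected component of $\calB$, which will be useful later when extracting invariants that are preserved by continuous deformations of barcodes (for instance along the continuous paths of barcodes provided by Theorem~\ref{thmx loc lip} applied to continuous isotopies in $\overline{\Symp}(M,\omega)$).
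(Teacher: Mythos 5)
Your proof is correct and matches the paper's intent: the paper simply asserts that the corollary "immediately" follows from the preceding proposition, and your argument — fixing any $B$, taking the open ball of radius $1$ around it, and invoking the implication $d_{bottle}(B,B')<+\infty \Rightarrow \sigma^\infty(B)=\sigma^\infty(B')$ — is precisely that immediate deduction, spelled out.
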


Thanks to this corollary and \autoref{def compte de barres semi-infinies} of $\sigma^\infty$, we can now state the following proposition.
\begin{prop}\label{prop connected components barcodes}
The connected components of $\calB$ are indexed by the graded number of semi-infinite bars, i.e. two barcodes belong to the same connected component of $\calB$ if and only if they have the same number of semi-infinite bars in each degree. Moreover the connected components are path-connected.
\end{prop}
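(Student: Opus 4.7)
The plan is to prove the two directions separately. The ``only if'' direction is an immediate consequence of the preceding corollary: the map $\sigma^\infty : \calB \to \N^\Z$ is locally constant, hence constant on each connected component of $\calB$, so two barcodes lying in the same connected component must share the same graded count of semi-infinite bars.

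For the converse, and simultaneously for the path-connectedness statement, I would exhibit, for any $B, B' \in \calB$ with $\sigma^\infty(B) = \sigma^\infty(B')$, an explicit continuous path from $B$ to $B'$ in $\calB$. Two preliminary observations about the tameness condition defining $\calB$ will be used throughout. First, since $B$ has only finitely many bars of length $\geq \varepsilon$ for each $\varepsilon > 0$, the lengths of its finite bars are uniformly bounded by some $L_B < +\infty$. Second, every semi-infinite bar has length $+\infty$, so $B$ contains only finitely many semi-infinite bars in total, and likewise for $B'$.

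The path is then built in three stages. In the first stage, I define $B_t$ for $t \in [0,1]$ by shrinking each finite bar $(a,b]$ of $B$ linearly to $(a, a + (1-t)(b-a)]$, while keeping every semi-infinite bar fixed; at $t = 1$, all finite bars vanish and one is left with the barcode $B^\infty$ consisting only of the semi-infinite bars of $B$. Matching each bar to its deformed version and each semi-infinite bar to itself yields the estimate $d_{bottle}(B_t, B_{t'}) \leq L_B\,|t-t'|$ for $t, t' \in [0,1)$, while continuity at $t = 1$ is obtained by matching semi-infinite bars to themselves and deleting all finite bars of $B_t$, whose lengths are bounded by $(1-t)L_B$; this yields $d_{bottle}(B_t, B^\infty) \leq (1-t)L_B/2$. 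The second stage performs the analogous deformation for $B'$, producing a continuous path from $B'$ to $B'^\infty$. In the third stage, the equality $\sigma^\infty(B) = \sigma^\infty(B')$ ensures that for each degree $r \in \Z$ the finite multisets of left endpoints of semi-infinite bars of $B^\infty$ and $B'^\infty$ in degree $r$ have the same cardinality; I choose, degree by degree, any bijection between them and linearly interpolate the corresponding left endpoints, obtaining a path from $B^\infty$ to $B'^\infty$ which is Lipschitz in the bottleneck distance with constant the maximal displacement, a finite quantity since only finitely many semi-infinite bars are involved. Concatenating the first stage, the third, and the reverse of the second produces the desired continuous path from $B$ to $B'$ inside $\calB$.

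The main obstacle will be checking that every intermediate barcode really lies in $\calB$ and that $t \mapsto B_t$ is $C^0$-continuous for the bottleneck distance at the ``critical'' time $t = 1$ where infinitely many finite bars may disappear at once; this is exactly where the tameness hypothesis (via boundedness of $L_B$ and finiteness of the set of semi-infinite bars) is genuinely used. Once this is verified, path-connectedness of each level set of $\sigma^\infty$ and the indexation of connected components by $\sigma^\infty$ both follow immediately.
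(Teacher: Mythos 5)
Your proof is correct and follows the same basic strategy as the paper: deform the barcode continuously to eliminate the finite bars, then account for the semi-infinite bars, using the tameness condition to control the bottleneck distance along the way. The difference is in the shape of the deformation. The paper scales the \emph{whole} barcode by $(1-t)$, so finite and semi-infinite bars contract simultaneously and every $B$ with a given $\sigma^\infty$ flows to a single canonical target $B_0(B) = \bigoplus_{r}\bigoplus_{i\in\mathcal I_B^r}(0,+\infty)$, making the ``if'' direction immediate. You instead shrink each finite bar in place (keeping its left endpoint fixed) to reach $B^\infty$, which still depends on $B$, and then add a third stage interpolating the left endpoints of the semi-infinite bars between $B^\infty$ and $B'^\infty$. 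Your variant gives cleaner estimates (the in-place shrinking is Lipschitz with constant $L_B$, whereas the global rescaling moves bars at an unbounded rate when the spectrum of $B\in\calB$ is unbounded, so the paper's continuity check implicitly also needs the tameness bound to delete small bars far out), at the cost of an extra interpolation stage. Both arguments are valid; the paper's is shorter because it targets a canonical endpoint, yours is more explicit about why the intermediate barcodes stay in $\calB$ and why the path is continuous at $t=1$.
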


\begin{proof}
Since the map $\sigma^\infty$ is locally constant, it is constant on the connected components of $\calB$. This means that if two barcodes $B,C\in\calB$ are in the same connected component, then $\sigma^\infty(B)=\sigma^\infty(C)$, i.e. $B$ and $C$ have the same number of semi-infinite bars in each degree.

Conversely, let $B$ be a barcode in $\calB$. With $r\in\Z$ denoting the degree, we write $B=\bigoplus_{r\in\Z}B^r$ and denote
$$B^r=\bigoplus_{i\in\mathcal{I}_B^r}(a_i,+\infty)\oplus\bigoplus_{i\in\mathcal{J}_r}(a_j,b_j].$$
We define for all $t\in[0,1]$
$$B^r_t=\bigoplus_{i\in\mathcal{I}_B^r}((1-t)a_i,+\infty)\oplus\bigoplus_{i\in\mathcal{J}_B^r}((1-t)a_j,(1-t)b_j],$$
and $B_t=\bigoplus_{r\in\Z}B^r_t$. The path $(B_t)_{t\in[0,1]}$ is a continuous path of barcodes from $B$ to $$B_0(B)=\bigoplus_{r\in\Z}\bigoplus_{i\in\mathcal{I}_B^r}(0,+\infty).$$
Let $B$ and $C$ be two barcodes in $\calB$ such that they have the same number of semi-infinite bars in each degree. Then for all $r\in\Z$, $\calI_B^r=\calI_C^r$ so $B_0(B)=B_0(C)$.

This implies that the two barcodes $B$ and $C$ are isotopic and thus in the same connected component of $\calB$ which concludes the proof of this proposition.
\end{proof}

The following corollary is a direct and obvious consequence of the previous \autoref{prop connected components barcodes}, but its formulation will be useful later.

\begin{cor}\label{prop path inf bar}
Let $(B^t)_{t\in[0;1]}$ be a continuous path of graded barcodes. Then for all $t\in[0;1]$ and for all $k$, the number of semi-infinite bars of $B_k^t$ is constant with respect to the parameter $t$.
\end{cor}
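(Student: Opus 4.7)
The plan is to derive this corollary directly from the locally constant nature of the map $\sigma^\infty$ established just above in the excerpt. The key observation is that the statement exactly asserts that the composition $t\mapsto \sigma^\infty(B^t)$ is constant in $t$, and this composition maps the connected set $[0,1]$ into the discrete set $\N^\Z$.

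First I would recall that by the corollary preceding Proposition \ref{prop connected components barcodes}, the map $\sigma^\infty:\calB\to\N^\Z$ is locally constant with respect to the bottleneck distance. Given a continuous path $t\mapsto B^t$, the composition $\sigma^\infty\circ B^{\bullet}:[0,1]\to\N^\Z$ is therefore a locally constant function. Since $[0,1]$ is connected and $\N^\Z$ is equipped with the discrete topology (or equivalently, since the preimages of points partition $[0,1]$ into open subsets), any locally constant function out of $[0,1]$ is constant. Hence $\sigma^\infty(B^t)=\sigma^\infty(B^0)$ for every $t\in[0,1]$.

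By Definition \ref{def compte de barres semi-infinies}, the $k$-th component of $\sigma^\infty(B^t)$ is precisely the number (counted with multiplicity) of semi-infinite bars of index $k$ in $B^t$. Therefore this quantity is independent of $t$, which is exactly the conclusion of the corollary.

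There is essentially no obstacle here: the corollary is a repackaging of Proposition \ref{prop connected components barcodes} in the convenient form that will be used later. The only minor point to state cleanly is the standard topological fact that a locally constant map on a connected domain is constant, but this requires no real work. If one preferred to avoid invoking $\sigma^\infty$ directly, one could equivalently argue: the image of $[0,1]$ under the continuous path lies in a single connected component of $\calB$, and by Proposition \ref{prop connected components barcodes} such a component is characterized by the graded tuple of numbers of semi-infinite bars, yielding the same conclusion.
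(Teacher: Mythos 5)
Your proposal is correct and matches what the paper intends: the paper offers no explicit proof, calling the corollary a ``direct and obvious consequence'' of \autoref{prop connected components barcodes}, and both of your suggested routes (composing with the locally constant $\sigma^\infty$, or noting that the path stays in a single connected component characterized by the graded semi-infinite bar count) are just unpackings of that same observation.
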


Let us now introduce another space of barcodes which will allow us to get our desired results.

\begin{dfn}\label{def barcodes overall shift}
We define $\hat{\calB}$ as the set of barcodes $\calB$ quotiented by the action by overall shift of $\R$ on $\calB$, i.e. $B$ and $B'$ represent the same class in $\hat{\calB}$ if and only if there is $c\in\R$ such that $B=B'[c]$.
\end{dfn}

Since the action of $\R$ by an overall shift on $\calB$ is free and proper, all the above mentioned topological properties also hold for $\hat{\calB}$.

The only remaining question is the completeness of $\hat{\calB}$. The distance on $\hat{\calB}$ is given by the Hausdorff distance between the equivalence classes which will be denoted $\delta$.

\begin{lemma}
The set $\hat{\calB}$ is complete for the distance $\delta$.
\end{lemma}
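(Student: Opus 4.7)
The plan is to reduce completeness of $\hat{\calB}$ to the known completeness of $\calB$ by carefully lifting a Cauchy sequence $([B_n])_{n \in \N}$ in $\hat{\calB}$ to a Cauchy sequence in $\calB$ via a clever choice of representatives.

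First I would unpack the Hausdorff distance. Since the action of $\R$ by overall shift is by isometries on $(\calB, d_{bottle})$ (shifting every bar by the same constant preserves all pairwise distances between barcodes), the Hausdorff distance between two orbits reduces to
\[
\delta([B],[B']) \;=\; \inf_{c \in \R} d_{bottle}(B, B'[c]).
\]
In particular, for any representatives $B \in [B]$ and any $\varepsilon > 0$, one can find a representative $B'_\varepsilon \in [B']$ with $d_{bottle}(B, B'_\varepsilon) < \delta([B],[B']) + \varepsilon$.

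Next I would extract a subsequence $([B_{n_k}])_{k \in \N}$ with $\delta([B_{n_k}], [B_{n_{k+1}}]) < 2^{-k-1}$ and build representatives inductively. Set $\tilde B_1 := B_{n_1}$, and given $\tilde B_k$ representing $[B_{n_k}]$, use the previous remark to choose a shift $c_k \in \R$ so that the representative $\tilde B_{k+1} := B_{n_{k+1}}[c_k] \in [B_{n_{k+1}}]$ satisfies $d_{bottle}(\tilde B_k, \tilde B_{k+1}) < 2^{-k}$. By the triangle inequality in $\calB$, for $m > k$,
\[
d_{bottle}(\tilde B_k, \tilde B_m) \;\leq\; \sum_{j=k}^{m-1} d_{bottle}(\tilde B_j, \tilde B_{j+1}) \;<\; 2^{-k+1},
\]
so $(\tilde B_k)_{k \in \N}$ is a Cauchy sequence in $\calB$. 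By \autoref{prop barcode complet}, it converges to some $B \in \calB$ in the bottleneck distance.

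Finally I would push the convergence down to $\hat{\calB}$ and upgrade it from the subsequence to the full sequence. Since $\delta([\tilde B_k], [B]) \leq d_{bottle}(\tilde B_k, B) \to 0$ and $[\tilde B_k] = [B_{n_k}]$, the subsequence $([B_{n_k}])$ converges to $[B]$ in $\hat{\calB}$. A Cauchy sequence with a convergent subsequence converges to the same limit, hence $[B_n] \to [B]$ in $(\hat{\calB}, \delta)$. The only subtlety — which is not really an obstacle but is the key point — is the careful lifting step: without readjusting representatives by shifts, arbitrary lifts $B_n \in [B_n]$ need not form a Cauchy sequence in $\calB$, and it is precisely the freedom to pick shifts that matches the infimum defining $\delta$ with an actual bottleneck approximation in $\calB$.
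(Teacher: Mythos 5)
Your proof is correct and follows essentially the same strategy as the paper's: extract a geometrically fast Cauchy subsequence, inductively choose shifted representatives so the lift becomes Cauchy in $\calB$, invoke completeness of $\calB$ (\autoref{prop barcode complet}), and descend the limit to $\hat{\calB}$. The only cosmetic difference is that you make explicit the observation that the shift action is by isometries (so the Hausdorff distance is $\inf_c d_{bottle}(B,B'[c])$), a fact the paper also uses but records only as a parenthetical remark mid-proof.
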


\begin{proof}
Let $(\hat{b}_n)_{n\in\N}$ be a Cauchy sequence in $\hat{\calB}$. There is a strictly increasing sequence $(N_p)_{p\in N}$ such that
$$\forall k\in\N,\quad \delta(\hat{b}_{N_p}-\hat{b}_{N_p+k})\leq \frac{1}{2^p}.$$
Let us choose $b_0\in\calB$ a representative of $\hat{b}_{N_0}$ and $b'_1\in\calB$ a representative of $\hat{b}_{N_1}$. Then, by definition of the equivalence classes, there exists $c_1\in\R$ such that $$d_{bottle}(b_0,b'_1[c_1])\leq \frac{1}{2}.$$
Indeed, for all $c\in\R$ and all $b,b'\in\calB$, we have $d_{bottle}(b,b')=d_{bottle}(b[c],b'[c])$.
Now set $b_1=b'_1[c_1]$. We will inductively construct a sequence $(b_p)_{p\in\N}$ such that for all $p$, the barcode $b_p$ is a representative of $\hat{b}_{N_p}$ and $d_{bottle}(b_p,b_{p+1})\leq \tfrac{1}{2^{p+1}}$. Let $p_0\in\N$ and assume that for all $p\in\{0,...,p_0\}$, the barcode $b_p$ is constructed.

The barcode $b_{p_0}$ represents the class of $\hat{b}_{N_{p_0}}$. Let us fix $b'_{p_0+1}$ representing the class of $\hat{b}_{N_{p_0}+1}$. Since $\delta(\hat{b}_{N_{p_0}},\hat{b}_{N_{p_0}+1})\leq \frac{1}{2^{p_0+1}}$, there exists $c_{p_0+1}$ such that $$d_{bottle}(b_{p_0},b'_{p_0+1}[c_{p_0+1}])\leq \frac{1}{2^{p_0+1}}.$$
We define $b_{p_0+1}=b'_{p_0+1}[c_{p_0+1}]$. And thus we obtain our sequence $(b_p)_{p\in\N}$ inductively.

By the triangle inequality \ref{tri ineq} and a classical high school result, we obtain for all $p,k\in\N$
$$d_{bottle}(b_p,b_{p+k})\leq \frac{1}{2^{p}}.$$
Consequently $(b_p)_{p\in\N}$ is a Cauchy sequence which converges to a barcode $b\in\calB$ since $\calB$ is complete. This straightforwardly implies that $(\hat{b}_n)_{n\in\N}$ converges to $\hat{b}$, the equivalence class of $b$, and so $\hat{\calB}$ is complete.
\end{proof}

\subsection{Barcodes for Lagrangian Floer cohomology}\label{sec Floer barcode}

Let $(M,\omega=d\lambda)$ be a Liouville domain, and $L,L'$ two closed exact Lagrangian submanifolds intersecting transversely, together with two primitive functions $f_L:L\rightarrow \R$ and $f_{L'}:L'\rightarrow \R$ such that $df_L=\lambda_{|L}$ and $df_{L'}=\lambda_{|L'}$. We assume that the Floer cohomology is well defined for some Hamiltonian perturbation $H$ and some almost complex structure $J_t$. This is generically satisfied. We assume throughout this subsection that all the Floer cohomologies are well-defined. For all $\kappa\in\R$, we define
$$CF^{*,\kappa}(L,L';J_t,H)=\mathrm{span}_{\Z/2}\left\{z\in\chi(L,L'),~~\calA^H_{L,L'}(z)<\kappa\right\}\subset CF^*(L,L';J_t,H).$$
Let us recall that, for all $x\in CF^{*,\kappa}(L,L';J_t,H)$, we have
$$\calA^H_{L,L'}(\partial x)<\calA^H_{L,L'}(x)<\kappa.$$
This means that $CF^{*,\kappa}(L,L';J_t,H)$ is in fact a subcomplex of $CF^{*}(L,L';J_t,H)$, and consequently we can define:
$$HF^{*,\kappa}(L,L';J_t,H)=H^*(CF^{*,\kappa}(L,L';J_t,H)).$$
Moreover, the inclusions of cochain complexes, i.e. $\forall \kappa'<\kappa\in\R$,
$$CF^{*,\kappa'}(L,L';J_t,H)\subset CF^{*,\kappa}(L,L';J_t,H)$$
induce maps $i^{\kappa'}_\kappa$ in cohomology which commute for $\kappa_1<\kappa_2<\kappa_3$, thus satisfying the property required for structure maps. Finally, $((HF^{*,\kappa}(L,L';J_t,H))_{\kappa\in\R},i)$ has the structure of a finite $\Z$-graded persistence module. We denote its associated graded barcode
$$B(L,L';J_t,H)=B\left((HF^{*,\kappa}(L,L';J_t,H))_{\kappa\in\R},i\right).$$
Since $\chi_H(L,L')$ is finite, $B(L,L';J_t,H)$ is a finite barcode. We will denote $\hat{B}(L,L';J_t,H)$ its image in $\hat{\calB}$.

It is easy to recover the cohomology from the barcode. Indeed, by definition
$$\lim\limits_{\underset{\kappa\to\infty}{\rightarrow}}CF^{*,\kappa}(L,L';J_t,H)=CF^{*}(L,L';J_t,H)$$ and then
$$\lim\limits_{\underset{\kappa\to\infty}{\rightarrow}}HF^{*,\kappa}(L,L';J_t,H)=HF^{*}(L,L';J_t,H).$$
This means that $HF^{*}(L,L';J_t,H)$ corresponds to the bars that survive when $\kappa$ goes to infinity, i.e.
\begin{rk}\label{prop homologie barres infinies}
The graded rank of $HF(L,L';J_t,H)$ is equal to the graded number of semi-infinite bars in $B(L,L';J_t,H)$.
\end{rk}
We can now recall the definition of selectors. This selector, denoted by $l(\cdot,L,L';J_t,H)$, can be understood as the action selector of the persistence module $HF^\kappa(L,L';J_t,H)$. Let us give an explicit definition.
\begin{dfn}\label{def selc action lag}
To any $\alpha\in HF^*(L,L';J_t,H)\setminus\{0\}$, we associate
\begin{equation}\label{eqn action selectors}
l(\alpha,L,L';J_t,H)=\inf\{\kappa\in\R,~~ \alpha\in \im i^\kappa: HF^{*,\kappa}(L,L';J_t,H)\rightarrow HF^{*}(L,L';J_t,H)\}.
\end{equation}
\end{dfn}

These numbers are exactly all the different starting points of the semi-infinite bars, i.e. each semi-infinite bar corresponds to some non-zero $\alpha\in HF^*(L,L';J_t,H)$, and the starting point of this particular semi-infinite bar is given by $l(\alpha, L,L';J_t,H)$.

\bigskip

The following proposition gives classical properties of these action selectors as found in \cite{Vit92,Sch00,Oh04,Lec07}.
\begin{prop}\label{prop propriétés action selectors}
For every pair of closed exact Lagrangian submanifolds in a Liouville domain, and every non-zero class $\alpha\in HF(L,L';J_t,H)$, the action selector $l(\alpha,L,L';J_t,H)$ satisfies:
\begin{itemize}
    \item $l(\alpha,L,L';J_t,H)<+\infty$,
    \item $l(\alpha,L,L';J_t,H)\in Spec(L,L';H)$,
    \item $l(\alpha,L,L';J_t,H)$ does not depend on $J_t$ hence will be denoted $l(\alpha,L,L';H)$,
    \item $|l(\alpha,L,L';H)-l(\alpha,L,L';H')|\leq \|H-H'\|$, where $\|\cdot\|$ denotes the Hofer norm.
\end{itemize}
\end{prop}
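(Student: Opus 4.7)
The plan is to treat the four claims via two standard Floer-theoretic mechanisms: the finite-dimensionality of $CF^*(L,L';J_t,H)$ (for the first two bullets), and filtered continuation maps (for the last two bullets). For the first two bullets, I would first observe that, $L$ and $L'$ being closed in the Liouville domain $M$, the Hamiltonian perturbation can be chosen so that $\chi_H(L,L')$ is finite; hence $CF^*(L,L';J_t,H)$ is finite-dimensional and $\Spec(L,L';H)$ is a finite subset of $\R$. For any $\kappa$ strictly greater than the maximal generator action, the inclusion $CF^{*,\kappa}\hookrightarrow CF^*$ is an equality, so $\alpha\in\im(i^\kappa)$; this gives $l(\alpha,L,L';J_t,H)<+\infty$. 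Moreover, $CF^{*,\kappa}$ is constant in $\kappa$ on each connected component of $\R\setminus\Spec(L,L';H)$, hence so is $\im(i^\kappa)$, and therefore the infimum in (\ref{eqn action selectors}) is attained at a point of $\Spec(L,L';H)$.

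Next, to prove independence from $J_t$, I would take two regular families $J^0_t,J^1_t$ and a generic homotopy $(J^s_t)_{s\in\R}$ between them, and count rigid solutions of the associated parameterized Floer equation with the fixed Hamiltonian $H$ to build a chain map $\Phi:CF^*(L,L';J^0_t,H)\to CF^*(L,L';J^1_t,H)$. Since $H$ is constant in $s$, the standard energy identity gives $E(u)=\calA^H_{L,L'}(p)-\calA^H_{L,L'}(q)\geq 0$ for every connecting strip $u$ running from $p$ to $q$, so $\Phi(CF^{*,\kappa})\subset CF^{*,\kappa}$. The reverse continuation, together with the standard chain-homotopy argument, upgrades this to a filtered isomorphism on every $HF^{*,\kappa}$. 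Being an isomorphism of persistence modules, it preserves action selectors, so $l(\alpha,L,L';J_t,H)$ depends only on $\alpha$ and $H$.

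For the Hofer Lipschitz bullet I would repeat the continuation construction with an interpolation $H^s$ from $H$ to $H'$, producing $\Psi:CF^*(L,L';J,H)\to CF^*(L,L';J',H')$. The standard energy estimate for the parameterized Floer equation gives
$$0\leq E(u)=\calA^H_{L,L'}(p)-\calA^{H'}_{L,L'}(q)-\int_{-\infty}^{+\infty}\!\int_0^1 (\partial_s H^s)(t,u(s,t))\,dt\,ds,$$
and the last integral is bounded above by $\eta:=\int_0^1\max_{x\in M}(H'-H)\,dt$. Hence $\Psi$ carries $HF^{*,\kappa}$ into $HF^{*,\kappa+\eta}$; combined with the symmetric inequality from reversing the homotopy, the two maps form an interleaving of the two persistence modules bounded by the Hofer norm $\|H-H'\|$, and the easy fact that action selectors are $\delta$-close under a $\delta$-interleaving gives the desired estimate. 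The main obstacle here is to arrange the continuation data so that the parameterized moduli spaces are simultaneously regular and produce the energy estimate with the claimed shift, while absorbing the auxiliary perturbation error flagged in (\ref{energie action avec perturbation}); once this is done, the rest is routine bookkeeping.
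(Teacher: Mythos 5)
Your proof is correct, and it follows essentially the standard route that the paper implicitly relies on. The paper itself does not supply a proof of this proposition: it cites \cite{Vit92,Sch00,Oh04,Lec07} and offers only the one-line remark that the first three bullets ``directly follow from the definition'' and the fourth ``is a direct consequence of the construction of continuation maps.'' Your write-up is a full version of exactly that: finite-dimensionality of the Floer complex (hence finiteness of $\Spec(L,L';H)$ and local constancy of $\im i^{\kappa}$ off the spectrum) for the first two bullets, $J$-parameterized continuation maps with $H$ held fixed for the third, and $H$-parameterized continuation maps together with the curvature-term energy estimate and an interleaving argument for the fourth. In particular, your explicit continuation-map argument for $J$-independence is the right way to make rigorous the paper's slightly elliptic assertion that this ``follows from the definition.''

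One small sign slip in the Hofer step: from
$$0\leq E(u)=\calA^H_{L,L'}(p)-\calA^{H'}_{L,L'}(q)-\int_{-\infty}^{+\infty}\!\!\int_0^1 (\partial_s H^s)\,dt\,ds,$$
you need a \emph{lower} bound on the curvature integral, not an upper one, to deduce an upper bound on $\calA^{H'}_{L,L'}(q)$. With $H^s=(1-\beta(s))H+\beta(s)H'$ and $\beta$ monotone, the integral is $\geq \int_0^1\min_M(H'-H)\,dt=-\int_0^1\max_M(H-H')\,dt$, so the forward map $\Psi$ shifts the filtration by $\int_0^1\max_M(H-H')\,dt$, while the reverse continuation shifts by $\int_0^1\max_M(H'-H)\,dt$. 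The two constants still sum to the Hofer norm $\|H-H'\|$, so the final bound $|l(\alpha,L,L';H)-l(\alpha,L,L';H')|\leq\|H-H'\|$ is unaffected; only the labeling of the two shifts should be swapped.
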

The second property is called the spectrality property, and the fourth one the Lipschitz continuity property. These are classical results when studying action selectors and thus we will not prove them here. However, we can say that the first three properties directly follow from the definition. The fourth one is a direct consequence of the construction of continuation maps used to prove that the cohomology does not depend on the choice of the Hamiltonian perturbation.

These action selectors satisfy the so-called Lagrangian splitting formula which is a direct consequence of the Künneth formula (\ref{formule def Kunneth}); see for example \cite{EP09} or \cite{HLS16}.

\begin{prop}\label{prop formule de Kunneth}
Let $(M,\omega)$ and $(M',\omega')$ be symplectic manifolds as before, and $(L_0,L_1)\subset M$, $(L_0',L_1')\subset M'$ two pairs of closed exact Lagrangian submanifolds. Let $H$ and $H'$ be two Hamiltonian perturbations to achieve transversality. Then, for $\alpha\in HF(L_0,L_1;J_t,H)$ and $\alpha'\in HF(L_0',L_1';J'_t,H')$ two non-zero cohomology classes,
$$l(\alpha\otimes\alpha';L_0\times L_0',L_1\times L_1';H\oplus H')=l(\alpha,L_0,L_1;H)+l(\alpha',L_0',L_1';H'),$$
where $\alpha\otimes\alpha'$ is defined by the Künneth formula (\ref{formule def Kunneth}).
\end{prop}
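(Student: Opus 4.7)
My plan is to upgrade the chain-level Künneth isomorphism (\ref{formule def Kunneth}) to a filtered statement by exploiting the additivity of the Hamiltonian action on generators of the product complex. The first step is to verify that for a generator $(p, p') \in \chi_{H \oplus H'}(L_0 \times L_0', L_1 \times L_1')$ one has
\[
\calA^{H \oplus H'}_{L_0 \times L_0', L_1 \times L_1'}(p, p') = \calA^H_{L_0, L_1}(p) + \calA^{H'}_{L_0', L_1'}(p').
\]
This follows directly from (\ref{formule action hamiltonienne}): the Liouville primitives on the product Lagrangians are sums of the primitives on each factor, the Hamiltonian splits by construction, and the integral $\int \gamma^*\lambda$ decouples along the two factors. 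As a consequence, the action filtration on $CF^*(L_0 \times L_0', L_1 \times L_1'; H \oplus H')$ matches, under (\ref{formule def Kunneth}), the natural tensor-product filtration on $CF^*(L_0, L_1; H) \otimes CF^*(L_0', L_1'; H')$.

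The upper bound $l(\alpha \otimes \alpha'; H \oplus H') \leq l(\alpha; H) + l(\alpha'; H')$ then follows easily. Given $\varepsilon > 0$, \autoref{def selc action lag} produces cycles $z \in CF^{*,<l(\alpha)+\varepsilon}(L_0, L_1; H)$ and $z' \in CF^{*,<l(\alpha')+\varepsilon}(L_0', L_1'; H')$ representing $\alpha$ and $\alpha'$ respectively. The Leibniz rule gives $\partial(z \otimes z') = 0$, and by naturality of (\ref{formule def Kunneth}) this tensor represents $\alpha \otimes \alpha'$. Its action is at most $l(\alpha) + l(\alpha') + 2\varepsilon$ by the preceding paragraph, and letting $\varepsilon \to 0$ concludes.

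The main obstacle is the reverse inequality: one must rule out that some non-decomposable cycle in the product complex could represent $\alpha \otimes \alpha'$ at a strictly lower filtration than $l(\alpha) + l(\alpha')$. My preferred route is to apply an Usher-type orthogonal decomposition (a ``singular value'' normal form for filtered chain complexes over a field) to each factor $CF^*(L_0, L_1; H)$ and $CF^*(L_0', L_1'; H')$. In such a basis, the barcode of each factor can be read off directly, and the semi-infinite bars correspond to basis elements whose filtration level is precisely the action selector of the class they represent. Tensoring these two bases produces a basis of $CF^*(L_0 \times L_0', L_1 \times L_1'; H \oplus H')$ adapted to the tensor-product filtration, and by the additivity of the first paragraph the class $\alpha \otimes \alpha'$ is represented in this basis by the tensor of the two generators associated with $\alpha$ and $\alpha'$, whose filtration level is exactly $l(\alpha) + l(\alpha')$. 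Reading off the action selector from this decomposition completes the lower bound.
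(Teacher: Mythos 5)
The paper does not give a self-contained argument here; it simply cites the Lagrangian Künneth formula from \cite{EP09,HLS16}. In those sources the usual proof establishes the upper bound $l(\alpha\otimes\alpha')\leq l(\alpha)+l(\alpha')$ by tensoring good representatives (as you do) and then obtains the lower bound by Poincaré duality: one applies the same upper bound to the dual classes in $CF(L_1,L_0)$ and $CF(L_1',L_0')$ and uses the identity $l(\alpha;L_0,L_1)=-l(\alpha^{\vee};L_1,L_0)$ coming from $\calA_{L_0,L_1}=-\calA_{L_1,L_0}$. Your route to the lower bound is genuinely different and arguably cleaner: you invoke an Usher/Barannikov singular-value normal form for each filtered complex, observe that the tensor of two such bases is again an orthogonal basis for the product complex (the only piece of bookkeeping being that a pair$\,\otimes\,$pair block splits into two pairs contributing nothing to cohomology, so only singleton$\,\otimes\,$singleton blocks survive to $H^\infty$), and then read off the selector. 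The duality route needs less structure theory but requires the Poincaré-duality identity for selectors; your route exposes the full filtered structure and yields the conclusion for every nonzero class directly. One small point should be tightened in your last paragraph: ``the tensor of the two generators associated with $\alpha$ and $\alpha'$'' tacitly assumes $\alpha$ and $\alpha'$ are single basis vectors. In general $\alpha=\sum_i c_i[z_i]$ and $\alpha'=\sum_j c_j'[z_j']$, so $\alpha\otimes\alpha'=\sum_{i,j}c_ic_j'[z_i\otimes z_j']$, and what you should extract from the orthogonal basis is $l(\alpha\otimes\alpha')=\max\{a_i+a_j':c_ic_j'\neq 0\}=\max\{a_i:c_i\neq 0\}+\max\{a_j':c_j'\neq 0\}=l(\alpha)+l(\alpha')$, the crucial fact being that the index set $\{(i,j):c_ic_j'\neq 0\}$ is a product. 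With this fix the argument is complete.
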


The continuation maps in Floer cochain complexes give the continuity of the barcodes with respect to the Hofer distance:
\begin{prop}\label{prop barcode distance Hofer}
Let $L,L'$ be two closed exact Lagrangian submanifolds in a Liouville domain, and let $H,K$ be two Hamiltonians together with time dependent almost-complex structure $J$ and $J'$ such that the graded barcodes $B^*(L,L';J_t,H)$ and $B^*(L,L';J'_t,K)$ are well-defined. Then,
$$d_{bottle}(B(L,L';J_t,H),B(L,L';J'_t,K))\leq \|H-K\|,$$
where $\|.\|$ denotes the Hofer distance.
\end{prop}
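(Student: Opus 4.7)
The plan is to construct continuation morphisms between the two filtered Floer complexes with controlled action shifts, and then invoke the isometry theorem of \autoref{sec pers mod barcodes} relating interleaving and bottleneck distances.

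I would fix a smooth $s$-dependent interpolation $(H^s, J^s_t)_{s\in\R}$ equal to $(H, J_t)$ for $s \ll 0$ and to $(K, J'_t)$ for $s \gg 0$, and count rigid finite-energy solutions of the corresponding $s$-dependent Floer equation with boundary on $L$ and $L'$. This yields a degree-preserving chain map $\Phi : CF^*(L,L';J_t,H) \to CF^*(L,L';J'_t,K)$. The key input is the usual energy identity, carried out in the spirit of \eqref{energie action avec perturbation} but keeping track of the $s$-dependence of $H^s$: for a continuation strip $u$ from a perturbed intersection $p \in \chi_H(L,L')$ to $q \in \chi_K(L,L')$, one obtains
$$0 \leq E(u) = \calA_{L,L'}^H(p) - \calA_{L,L'}^K(q) + \int_\R\!\int_0^1 \partial_s H^s_t(u(s,t))\, dt\, ds.$$
Bounding the last integrand by $\delta_+ := \int_0^1 \max_x (K_t - H_t)(x)\, dt$ gives $\calA_{L,L'}^K(q) \leq \calA_{L,L'}^H(p) + \delta_+$, so that $\Phi$ restricts to $CF^{*,\kappa}(L,L';H) \to CF^{*,\kappa+\delta_+}(L,L';K)$ for every $\kappa$ and induces a graded persistence-module morphism $\Phi_*$ of shift $\delta_+$.

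Swapping the roles of $H$ and $K$ produces a reverse continuation $\Psi_*$ of shift $\delta_- := \int_0^1 \max_x (H_t - K_t)(x)\, dt$. The standard homotopy-of-homotopies argument then shows that $\Psi_* \circ \Phi_*$ coincides with the internal shift morphism $sh(\delta_+ + \delta_-)$ of the persistence module associated to $(H, J_t)$, and similarly on the other side. We therefore obtain a $(\delta_+, \delta_-)$-interleaving between the two persistence modules; composing either morphism with an internal shift of amplitude $|\delta_+ - \delta_-|$ on the smaller side upgrades this to a symmetric $\max(\delta_+,\delta_-)$-interleaving. The isometry theorem then yields
$$d_{bottle}\bigl(B(L,L';J_t,H),\, B(L,L';J'_t,K)\bigr) \leq \max(\delta_+, \delta_-) \leq \delta_+ + \delta_- = \|H-K\|.$$

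The main technical obstacle is the chain-level energy identity above: since the action functional adopted in \autoref{def action} carries both the $\lambda$-integral and the primitive corrections $f_L - f_{L'}$, one has to check that the Hamiltonian $s$-derivative contribution is exactly $\int\!\int \partial_s H^s_t(u)\, dt\, ds$, with the sign convention matching the fact that in our setup the differential \emph{decreases} the action. Everything else, namely compactness of the moduli spaces of continuation strips and the homotopy-of-homotopies construction, is entirely standard in Lagrangian Floer theory, so the whole argument reduces to this energy estimate and the filtration inequality it implies.
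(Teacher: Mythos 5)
Your continuation-morphism argument is correct and is exactly the mechanism behind the result the paper cites: the paper gives no proof of this proposition but simply defers to Polterovich--Shelukhin \cite{PolShel14} and Usher--Zhang \cite{UZ15}, whose proofs are the same interpolation-plus-energy-identity argument you outline, filtered version of the standard homotopy-of-homotopies included. The only thing worth double-checking in the write-up is the sign in the $s$-derivative term, since with the paper's cohomological conventions (action decreasing under the differential, Definition \ref{def action}) the roles of $\delta_+$ and $\delta_-$ may swap; but as you note $\delta_+ + \delta_- = \|H-K\|$ in either case, so the final bound is unaffected.
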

Note that this bound does not depend on choice of the almost complex structures $J$ and $J'$.

The proof of this proposition is a straightforward translation to our context of a well-known result proven by Polterovich-Shelukhin \cite{PolShel14} and Usher-Zhang \cite{UZ15} in full generality.

In the following sections, we do not really care about the Hamiltonian perturbation. The fact that given any two closed exact Lagrangian submanifolds, the Hamiltonian perturbation can be made as small as one wishes by \autoref{rk perturbation petite}, together with the \autoref{prop propriétés action selectors} allows us to define, for $a$ a non-zero class in $HF(L,L';J_t,H)$
$$l(a;L,L')=\lim\limits_{H\in\mathcal{H}\to 0}l(a;L,L';H),$$
where $\mathcal{H}$ is the set of Hamiltonians satisfying the transversality requirements. If $L$ and $L'$ intersect transversely, it equals the action selector defined in \autoref{def selc action lag}.

We can also use for barcodes the perturbation data notation as in Section \ref{sec preli}, i.e. denoting $\calD$ the pair $(H,J)$ where $H$ is the Hamiltonian perturbation and $J$ the regular almost complex structure, the barcode can be written
$$B(L,L';\calD).$$
We will denote $\hat{B}(L,L';\calD)$ its image in $\hat{\calB}$.

Following \autoref{prop barcode distance Hofer}, given two closed exact Lagrangian submanifolds $L,L'$ in a Liouville domain $(M,\omega=d\lambda)$ with two primitive functions $f_L:L\rightarrow \R$ and $f_{L'}:L'\rightarrow \R$ such that $df_L=\lambda_{|L}$ and $df_{L'}=\lambda_{|L'}$, the map
$$H\mapsto\calB(L,L';H,J)$$
is continuous with respect to the Hofer distance. Since the space of barcodes is complete by \autoref{prop barcode complet},
we can take the limit of $B(L,L';\calD)$ as the Hamiltonian part of the perturbation goes to zero and thus define
$$B(L,L')=\lim\limits_{H\to 0}B(L,L';H,J).$$

For two exact Lagrangian submanifolds $L$ and $L'$, we denote $\hat{B}(L,L')$ the image of $\hat{B}(L,L')$ in $\hat{\calB}$.

\subsection{Product in filtered Lagrangian Floer cohomology}\label{sec product barcode}

In this section, we focus on the action for a product on Floer complexes. Regarding the degree, results are the same as those in non-filtered Floer cohomology. However we need to understand precisely how we can bound the shift of action in order to define this structure on filtered Floer cohomology.

\bigskip

Let $(M,\omega)$ be a $2n$-dimensional exact symplectic manifold. Let $L_0$, $L_1$, $L_2$ be three pairwise transverse closed exact Lagrangian submanifolds in $M$. We assume that the product is well defined.

Since these Lagrangian submanifolds are exact, they come with three primitive functions (defined up to a constant) $f_i:L_i\mapsto \R$, such that $df_i=\lambda_{|L_i}$ for $i\in\{0,1,2\}$.
Let $p_1\in \chi(L_0,L_1)$, $p_2\in \chi(L_1,L_2)$ and $z\in CF^*(L_0,L_2)$ such that $\mu^2(p_2,p_1)=z$. Note that $z$ is a formal sum of $(q_j)_j\in\chi(L_0,L_2)$.

\begin{figure}[ht]
    \centering
    \includegraphics[scale=1.2]{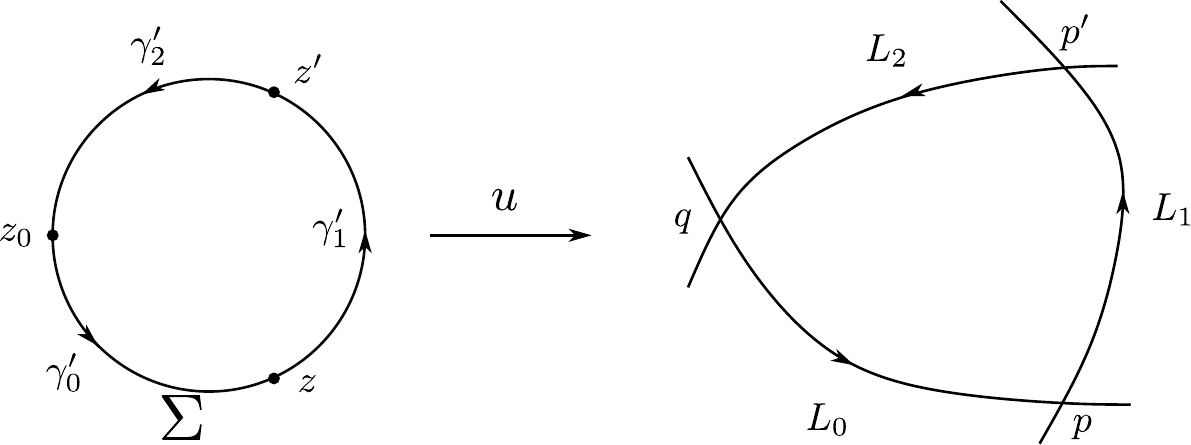}
    \caption{Product in Floer cohomology}
    \label{fig product filtered Floer}
\end{figure}

Let us recall that
$$\calA_{L_0,L_1}(p_1)=f_0(p_1)-f_1(p_1),$$
$$\calA_{L_1,L_2}(p_2)=f_1(p_2)-f_2(p_2),$$
$$\calA_{L_0,L_2}(q_i)=f_0(q_i)-f_2(q_i).$$

Let $u:\Sigma\rightarrow (M;L_0,L_1,L_2)$ be a pseudo-holomorphic curve with punctures asymptotic to $(p_1,p_2,q_j)$ as classically defined for the product in Lagrangian Floer cohomology. Let us denote for $i\in\{0;1;2\}$ the paths $\gamma_i :[0;1]\rightarrow L_i$ such that $\gamma_i([0;1])= u(\D^2,\partial\D^2)\cap L_i$. We set the orientations of the $\gamma_i$ for $i\in\{0;1;2\}$ such that their concatenation $\gamma_0\sharp\gamma_1\sharp\gamma_2$ turns counterclockwise as in \autoref{fig product filtered Floer}.

Since $\omega$ is exact, equal to $d\lambda$, Stokes' theorem gives
\begin{eqnarray*}
\mathrm{Area}(u)&=& \int_{\D^2}u^*\omega\\
&=&\int_{\gamma_1}\lambda_{L_1}+\int_{\gamma_2}\lambda_{L_2}+\int_{\gamma_0}\lambda_{L_0}.
\end{eqnarray*}
Moreover, all the $L_i$ being exact Lagrangian submanifolds, with associated functions $f_i$, we get:

$$\forall i\in \{0,1,2\}, \int_{\gamma_i}\lambda_i=f_i(\gamma_i(1))-f_i(\gamma_i(0)).$$
Then,
\begin{eqnarray*}
\mathrm{Area}(u)&=&f_0(p_1)-f_0(q_j)+f_1(p_2)-f_1(p_1)+f_2(q_j)-f_2(p_2)\\
&=&\calA_{L_0,L_1}(p_1)+\calA_{L_1,L_2}(p_2)-\calA_{L_0,L_2}(q_j).
\end{eqnarray*}

Since the area of $u$ is positive, we have
$$\calA_{L_2,L_0}(q_j)<\calA_{L_0,L_1}(p_1)+\calA_{L_1,L_2}(p_2).$$
Let us recall that
$$\calA_{L_2,L_0}(z)=\max\limits_{j}\{\calA_{L_2,L_0}(q_j)\}.$$
We immediately get
$$\calA_{L_2,L_0}(z)<\calA_{L_0,L_1}(p_1)+\calA_{L_1,L_2}(p_2).$$

As done in previous sections, we now have to discuss the case where we do not assume the transversality properties, and hence where we need a perturbation data $\calD$. The argument is exactly the same as for Inequality (\ref{energie action avec perturbation}), as the perturbation data has to be taken into account in the same way when computing $E(u)$. Since the perturbation data $\calD$ can be chosen as small as desired, as before, for all $\varepsilon>0$, we can find $\calD$ such that all our cohomologies are well-defined and
$$E(u)\leq\calA_{L_0,L_1}(p_1)+\calA_{L_1,L_2}(p_2)-\calA_{L_0,L_2}(z)+\varepsilon.$$
We then straightforwardly obtain
$$\calA_{L_2,L_0}(z)\leq\calA_{L_2,L_1}(p_1)+\calA_{L_1,L_0}(p_2)+\varepsilon,$$
for $p_1\in \chi_{\mathcal{D}}(L_0,L_1)$, $p_2\in \chi_{\mathcal{D}}(L_1,L_2)$ and $z\in CF^*(L_0,L_2;\calD)$ with $\mu^2(p_2,p_1)=z$.

This means that the product preserves the filtration and immediately implies the following lemma which will be essential for the upcoming discussions.

\begin{lemma}\label{lem mult pers mod}
Let $L_0$, $L_1$, $L_2$ be three closed exact Lagrangian submanifolds in $(M,\omega)$ exact, together with a $\varepsilon$-small perturbation data collection $\calD$, and let $p_2\in CF^k(L_1,L_2;\calD)$, with action $b$. Let us assume that the product $\mu^2(p_2,\cdot):CF^*(L_0,L_1;\calD)\mapsto CF^{*+k}(L_0,L_2;\calD)$ is well defined.

Then, we have a morphism of persistence modules:
$$\mu^2(p_2,\cdot):CF^{*,t}(L_0,L_1;\calD)\rightarrow CF^{*+k,t+b+\varepsilon}(L_0,L_2;\calD),\quad \forall t\in\R$$
$$\mu^2(p_2,\cdot):CF^{*}(L_0,L_1;\calD)\rightarrow CF^{*+k}(L_0,L_2;\calD)[b+\varepsilon].$$
\end{lemma}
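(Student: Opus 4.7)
The statement is essentially a packaging of the action estimate derived in the paragraphs just above, so the plan is to assemble that estimate into the language of persistence modules. First, by $\Z/2$-linearity of $\mu^2(p_2,\cdot)$, I only need to control the action on a single generator $p_1\in\chi_{\calD}(L_0,L_1)$ with $\calA^{\calD}_{L_0,L_1}(p_1)<t$, and then extend by linearity using the convention that the action of a formal sum is the max of the actions.

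Next, expand $\mu^2(p_2,p_1)=\sum_j q_j$ as a $\Z/2$-sum of generators $q_j\in\chi_{\calD}(L_0,L_2)$, each arising from a $J$-holomorphic triangle $u$ with boundary on $L_0\cup L_1\cup L_2$ and punctures asymptotic to $(p_1,p_2,q_j)$. Apply the Stokes computation already carried out in the excerpt: because all three Lagrangians are exact with primitives $f_0,f_1,f_2$, the area of such a triangle equals $\calA_{L_0,L_1}(p_1)+\calA_{L_1,L_2}(p_2)-\calA_{L_0,L_2}(q_j)$ in the unperturbed case, and the Hamiltonian perturbation contributes at most $\varepsilon$ (as in inequality~(\ref{energie action avec perturbation})). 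Positivity of the energy $E(u)\geq 0$ therefore yields
\[
\calA^{\calD}_{L_0,L_2}(q_j)\;\leq\;\calA^{\calD}_{L_0,L_1}(p_1)+\calA^{\calD}_{L_1,L_2}(p_2)+\varepsilon
\;<\;t+b+\varepsilon
\]
for every $j$. Taking the max over $j$ shows $\mu^2(p_2,p_1)\in CF^{*+k,\,t+b+\varepsilon}(L_0,L_2;\calD)$, which is exactly the required filtration-level inclusion.

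It remains to check that, as $t$ varies, these maps assemble into a morphism of persistence modules, i.e.\ that they commute with the structure maps. This is automatic: the structure maps of $CF^{*,\bullet}$ are the inclusions of subcomplexes coming from $\calA<t$, and $\mu^2(p_2,\cdot)$ is defined on the whole complex, so its restrictions to different action sublevels clearly commute with the inclusions. The second displayed morphism in the lemma is then simply the first one repackaged through \autoref{def barcodes overall shift} and the definition of the shifted persistence module $V[b+\varepsilon]$.

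I do not foresee a genuine obstacle: the only subtlety is making the perturbation bookkeeping honest, namely that the function $f_{\calD}$ measuring the discrepancy between area and energy in the presence of $(H,J)$ can be made uniformly smaller than $\varepsilon$ simultaneously for all three pairs $(L_i,L_j)$ and for the triangle moduli space, provided $\calD$ is $\varepsilon$-small. This is exactly \autoref{rk perturbation petite} applied to the (finite) collection of pairs at play, and it is this choice which justifies the single universal loss of $\varepsilon$ in the statement.
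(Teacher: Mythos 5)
Your proof is correct and follows essentially the same route as the paper: the paper derives the energy/action estimate $\calA_{L_0,L_2}(q_j)\leq\calA_{L_0,L_1}(p_1)+\calA_{L_1,L_2}(p_2)+\varepsilon$ in the paragraphs preceding the lemma and then asserts that it "immediately implies" the lemma, which is exactly what you have unwound, including the linearity/max convention for formal sums and the observation that restrictions of a chain map to action sublevels automatically commute with the inclusion structure maps. One small slip: the citation of \autoref{def barcodes overall shift} (the quotient space $\hat{\calB}$) is not what is needed to interpret the second displayed map; the relevant notion is the $\delta$-shifted persistence module $V[\delta]$ defined in the unnumbered text at the start of Section \ref{sec pers mod barcodes}, which you do also invoke correctly.
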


\begin{lemma}\label{lem associativite barcodes}
Let $L_0$, $L_1$, $L_2$ be three closed exact Lagrangian submanifolds in $(M,\omega)$ exact together with a perturbation data $\varepsilon$-small $\calD$. Let $p_1\in CF^k(L_0,L_1;\calD)$, with action $a$, $p_2\in CF^k(L_1,L_0;\calD)$, with action $b$.
The following maps obtained by composition
$$\mu^2(p_2,\mu^2(p_1,\cdot)):CF(L_2,L_0;\calD)\rightarrow CF(L_2,L_0;\calD)[a+b+3\varepsilon],$$
$$\mu^2(p_1,\mu^2(p_2,\cdot)):CF(L_2,L_1;\calD)\rightarrow CF(L_2,L_1;\calD)[a+b+3\varepsilon],$$
are well-defined and filtered chain homotopic to the maps
$$\mu^2(\mu^2(p_2,p_1),\cdot):CF(L_2,L_0;\calD)\rightarrow CF(L_2,L_0;\calD)[a+b+3\varepsilon],$$
$$\mu^2(\mu^2(p_1,p_2),\cdot):CF(L_2,L_1;\calD)\rightarrow CF(L_2,L_1;\calD)[a+b+3\varepsilon].$$
\end{lemma}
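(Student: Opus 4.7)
The plan is to deduce both claims from the quadratic $A_\infty$-relation satisfied by the higher operations $\mu^k$ on Lagrangian Floer cochains (cf.\ Section~\ref{sec preli}), combined with an area/action Stokes computation for $\mu^3$ analogous to the one already carried out in this subsection for $\mu^2$. That the four displayed compositions are well-defined filtered maps of shift at most $a+b+3\varepsilon$ will follow at once by iterating \autoref{lem mult pers mod}: for instance $\mu^2(p_2,\mu^2(p_1,\cdot))$ shifts by $(a+\varepsilon)+(b+\varepsilon)=a+b+2\varepsilon$; and for $\mu^2(\mu^2(p_2,p_1),\cdot)$ the Stokes computation recalled just before \autoref{lem mult pers mod} gives $\calA(\mu^2(p_2,p_1))\le a+b+\varepsilon$, whence a further application of \autoref{lem mult pers mod} again yields shift $a+b+2\varepsilon$, both bounded by $a+b+3\varepsilon$. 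The symmetric argument handles $\mu^2(p_1,\mu^2(p_2,\cdot))$ and $\mu^2(\mu^2(p_1,p_2),\cdot)$.

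Next, I would invoke the quadratic $A_\infty$-relation on inputs $(p_2,p_1,c)$, which over $\Z/2$ reads
\begin{equation*}
\mu^2(\mu^2(p_2,p_1),c) + \mu^2(p_2,\mu^2(p_1,c)) = \partial\mu^3(p_2,p_1,c) + \mu^3(p_2,p_1,\partial c) + \mu^3(\partial p_2,p_1,c) + \mu^3(p_2,\partial p_1,c).
\end{equation*}
Letting $c$ vary exhibits $H:=\mu^3(p_2,p_1,\cdot)$ as a chain homotopy between the two compositions; the two correction terms $\mu^3(\partial p_2,p_1,\cdot)$ and $\mu^3(p_2,\partial p_1,\cdot)$ sit at strictly smaller action, since $\partial$ strictly decreases the action functional, and therefore fit inside the same filtration bound. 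The analogous $A_\infty$-relation on $(p_1,p_2,c)$ provides the second homotopy.

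The remaining and main point is to establish that $H=\mu^3(p_2,p_1,\cdot)$ is itself a filtered map of shift at most $a+b+3\varepsilon$. For this I would repeat, for the four-punctured disk parametrising $\mu^3$, the Stokes/exactness calculation already used for $\mu^2$: given a pseudo-holomorphic quadrilateral $u$ with boundary on the four relevant Lagrangian arcs (labelled $L_2,L_0,L_1,L_0$) and punctures asymptotic to the inputs $p_2,p_1,c$ and the output $r\in\chi_\calD(L_2,L_0)$, exactness of $\omega=d\lambda$ together with primitivity of $\lambda$ on each $L_i$ yields
\begin{equation*}
0 \le \mathrm{Area}(u) = \calA(p_2) + \calA(p_1) + \calA(c) - \calA(r) + \eta_\calD,
\end{equation*}
where the perturbation error $\eta_\calD$ is bounded by $3\varepsilon$ (one $\varepsilon$ per Lagrangian pair appearing along the boundary of $u$, using that the Hamiltonian parts of $\calD$ are $\varepsilon$-small). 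Positivity of the area then gives $\calA(r)\le\calA(c)+a+b+3\varepsilon$, as required. The only slightly delicate ingredient is the existence of a consistent collection of perturbation data $\calD$ for which $\mu^2$ and $\mu^3$ are simultaneously well-defined and compatible with the action filtration bound; this is the standard Fukaya-category construction (see e.g.\ \cite{Sei08,Aur14}).
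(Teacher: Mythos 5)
Your proposal is correct and follows essentially the same route as the paper: well-definedness and the $a+b+3\varepsilon$ filtration bound from iterating \autoref{lem mult pers mod}, the quadratic $A_\infty$-relation providing $\mu^3(p_2,p_1,\cdot)$ as the chain homotopy, and the Stokes/exactness computation for the four-punctured disk (as for $\mu^2$) together with the fact that $\partial$ decreases action to control the filtration shift of all terms in the relation. Your accounting for the $3\varepsilon$ error and the explicit iteration of the shift bounds are slightly more spelled out than in the paper, but the argument is the same.
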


\begin{proof}
The composition maps are well-defined and filtered by the preceding lemma. Since the product in Lagrangian Floer cohomology is associative (following from the next equality), we only have to check that the associator behaves correctly with respect to the filtration. Let us recall that for our chain complexes we have
\begin{eqnarray*}
\mu^2(\mu^2(p_2,p_1),q)+\mu^2(p_2,\mu^2(p_1,q))&  = \partial \mu^3(p_2,p_1,q)
 +\mu^3(\partial p_2,p_1,q)\\ &+\mu^3(p_2,\partial p_1,q)+\mu^3(p_2,p_1,\partial q),
\end{eqnarray*}
where $q$ is an element of $CF(L_2,L_0)$.
Then, the exact same computation as for $\mu^2$ gives us
$$\calA_{L_2,L_0}(\mu^3(p_2,p_1,q))\leq \calA_{L_0,L_1}(p_1)+\calA_{L_1,L_0}(p_2)+\calA_{L_2,L_0}(q)+3\varepsilon.$$
Moreover, the differential decreases the action, so that
\begin{eqnarray*}
\max\{\calA_{L_2,L_0}(\partial \mu^3(p_2,p_1,q)),\calA_{L_2,L_0}(\mu^3(\partial p_2,p_1,q)),\\ \calA_{L_2,L_0}(\mu^3(p_2,\partial p_1,q)),\calA_{L_2,L_0}(\mu^3(p_2,p_1,\partial q))\}\\
\leq \calA_{L_0,L_1}(p_1)+\calA_{L_1,L_0}(p_2)+\calA_{L_2,L_0}(q)+3\varepsilon.
\end{eqnarray*}
This means that the homotopy defined from $\mu^3$ between the two different compositions preserves the filtration, which concludes the proof of this lemma.
\end{proof}

The following lemma will be a key argument in the proof of Section \ref{sec bottleneck}.
\begin{lemma}\label{lem mult id barcodes}
Let $L,L'$ be two closed exact Lagrangian submanifolds in $(M,\omega)$ together with $\varepsilon$-small perturbation data $f$ and let $H$ behave as in \autoref{prop identite mult z}. Denote $H_f=f\sharp H$. Let $z\in CF^0(L',L';f,J)$ be as in the same proposition.
The multiplication map
$$m_2(\cdot,z):CF^*(L',L;H)\rightarrow CF^*(L',L;H_f)[2\varepsilon]$$
are filtered chain-homotopic to the standard inclusion and hence induce $2\varepsilon$-shift maps on the persistence modules.
\end{lemma}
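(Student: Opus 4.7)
The overall strategy is to combine the action-shift bound (derived from the area estimate of \autoref{sec product barcode}, exactly as in \autoref{lem mult pers mod}) with the cohomological unit property of $z$ (\autoref{prop identite mult z}), and then to upgrade the resulting cohomological identity to a filtered chain-level statement via a $\mu^3$-type chain homotopy in the spirit of \autoref{lem associativite barcodes}.

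For the action bound, I would apply the symmetric variant of \autoref{lem mult pers mod} in which the second tensor factor is held fixed (the Stokes area computation of \autoref{sec product barcode} is symmetric in the two inputs), taking that factor to be $z \in CF^0(L', L'; f)$. This immediately yields a morphism of persistence modules
$$\mu^2(\cdot, z) : CF^{*, t}(L', L; H) \longrightarrow CF^{*, t + \calA(z) + \varepsilon}(L', L; H_f).$$
Combining \autoref{rk representant classe fondamentale unique} with the Morse--Floer correspondence of \autoref{rk filtration HF et HM}, the cycle $z$ corresponds to the unique minimum of $-f$ on $L'$, so $\calA(z) = -f$ evaluated there. Since $f$ is $\varepsilon$-small, $|\calA(z)| \leq \varepsilon$, giving a total shift bounded by $2\varepsilon$.

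For the homotopy to the ``standard inclusion'', which I interpret as the continuation quasi-isomorphism $c : CF^*(L', L; H) \to CF^*(L', L; H_f)$ (filtered of shift at most $\|f\| \leq \varepsilon$ by the continuation-map estimate underlying \autoref{prop barcode distance Hofer}), the argument is as follows. Both $c$ and $\mu^2(\cdot, z)$ induce the same map on unfiltered cohomology: $c$ is the canonical Hamiltonian-invariance isomorphism, whereas $\mu^2(\cdot, z)$ is multiplication by the unit $[L']$, hence the identity, by \autoref{prop identite mult z}. To promote this to a filtered chain homotopy I would feed the $A_\infty$-associator $\mu^3$ a triple of the form $(z, \cdot, \cdot)$ in which one of the dots is a chain interpolating between the ``trivial'' continuation data and the full multiplication-by-$z$ data; the Stokes area computation of \autoref{sec product barcode} applied to $\mu^3$, exactly as done in \autoref{lem associativite barcodes}, shows that the resulting homotopy operator $h$ is filtered of shift at most $2\varepsilon$, and satisfies $\partial h + h\partial = \mu^2(\cdot, z) - c$.

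The main technical obstacle is precisely this upgrade from a cohomological equality to a filtered chain homotopy: \autoref{prop identite mult z} by itself controls only the unfiltered cohomology, so a filtered null-homotopy has to be constructed via a $\mu^3$-type moduli interpolation whose area bound is carefully tracked. The $2\varepsilon$ budget is tight and exploits both the $\varepsilon$-smallness of $f$ and that of the perturbation data $\calD$ entering \autoref{lem mult pers mod}. Once this filtered homotopy is in place, the two chain maps induce the same morphism on the associated persistence modules, namely the canonical $2\varepsilon$-shift, which is exactly what the lemma asserts.
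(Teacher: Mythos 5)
Your action-shift estimate is correct and matches the paper: \autoref{lem mult pers mod} applied with $p_2 = z$, together with the bound $\calA(z) \leq \varepsilon$ coming from the $\varepsilon$-smallness of $f$, gives the $2\varepsilon$ shift. That part of the argument is the right one.

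The chain-homotopy part, however, is based on a misreading of \autoref{prop identite mult z}. You write that ``\autoref{prop identite mult z} by itself controls only the unfiltered cohomology,'' and therefore propose a $\mu^3$-type interpolation to upgrade a cohomological equality to a filtered chain homotopy. But \autoref{prop identite mult z} is stated at the \emph{cochain level}: it asserts that $\mu^2(\cdot, z): CF(L',L;H) \to CF(L',L;H_f)$ is an isomorphism of cochain complexes, not just a quasi-isomorphism. (This is precisely what the Biran--Cornea Morse--Bott argument cited there produces, given the careful choice of perturbation data in \autoref{rk representant classe fondamentale unique}.) The paper's proof therefore never needs to construct a nontrivial homotopy operator: it simply observes that the chain isomorphism from \autoref{prop identite mult z} \emph{is} the standard identification of the two complexes, and then invokes the energy estimate to control the filtration shift. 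Your proposed $\mu^3$ construction is also not well-posed as stated: the $A_\infty$-associator $\mu^3$ mediates between different bracketings of $\mu^2$'s, whereas what you would need is a homotopy between $\mu^2(\cdot, z)$ and a continuation map, which are not two bracketings of the same product. ``A chain interpolating between the trivial continuation data and the multiplication-by-$z$ data'' conflates a point of the moduli parameter space with a Floer cochain, and there is no obvious moduli space that realizes this. In short: the paper's argument is much simpler than yours because \autoref{prop identite mult z} already does the chain-level work you thought was missing.
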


\begin{proof}
Let us recall that \autoref{prop identite mult z} tells us the multiplication by $z$ is an isomorphism of cochain complexes and hence induces the standard inclusion of persistence modules. We now just need the energy estimate.

Since the Hamiltonian part of the perturbations are $\varepsilon$-small, the action of $z$ is smaller than $\varepsilon$ and $H_f$ is $\varepsilon$ close to $H$. Consequently, using the same argument as the one implying \autoref{lem mult pers mod}, this map induces a $\varepsilon+\varepsilon=2\varepsilon$-shift of action. This concludes the proof of this lemma.
\end{proof}

\subsection{Spectral norm and exact Lagrangians in a cotangent bundle}\label{sec gamma}

Given a closed exact Lagrangian submanifold $L$ together with a non-degenerate Hamiltonian $H$, the spectral norm $\gamma_L(H)$ is defined as 
$$\gamma_L(H)=l([L],L,L;H)+l([L],L,L;\overline{H}),$$
where $[L]$ denotes the image of the fundamental class $[L]$ through the isomorphism of \autoref{prop lien entre HF et H}.
It is equal to the diameter of the spectrum $\Spec(L,L;H)$. This is called the Lagrangian spectral norm or Viterbo norm as its first version was introduced by Viterbo in \cite{Vit92}. A similar version also exists in Hamiltonian Floer homology.

Let $L$ and $L'$ be two closed exact Lagrangian submanifolds in a symplectic manifold $M$ as before, together with a Hamiltonian perturbation $H$. 
Then, in the same spirit, we set
$$\gamma(L,L';H)=\mathrm{Diam}(\Spec^*(L,L';H)),$$
where $\Spec^*(L,L';H)$ is the set of action selectors for $HF(L,L';H,J)$. We denote by $\mathrm{Diam}(\cdot)$ the diameter (i.e. $\max-\min$). Note that this definition is only interesting when the cohomology $HF(L,L';H,J)$ has rank at least 2.
By \autoref{prop formule de Kunneth}, and with the same notations we immediately get
\begin{equation}\label{eq gamma kunneth}
\gamma(L_0\times L_0',L_1\times L_1';H\oplus H')=\gamma(L_0,L_1;H)+\gamma(L_0',L_1';H').
\end{equation}
Consequently, if $M=M'$, $L_0=L_0'$, $L_1=L_1'$ and $H=H'$,
\begin{equation}
\gamma(L_0\times L_0,L_1\times L_1;H\oplus H)=2\gamma(L_0,L_1;H).    
\end{equation}

\begin{rk}\label{rk majoration gamma fonction}
Let $L$ and $L'$ be two closed exact Lagrangian submanifolds in a Liouville domain $(M,\omega)$ together with a Hamiltonian $H$ and a function $f:M\rightarrow \R$. The fourth point of \autoref{prop propriétés action selectors} together with the definition of $\gamma$ tells us that
$$|\gamma(L,L';H+f)-\gamma(L,L';H)|\leq 2(\max f -  \min f).$$
\end{rk}

Note that we do not need any transversality assumptions for the intersections between $L$ and $L'$. Indeed, by the continuity of the spectral invariants, $\gamma(L,L';H)$ is defined for all $H$. So we can set
$$\gamma(L,L')=\lim\limits_{H\in\mathcal{H}\to 0}\mathrm{Diam}(\Spec^*(L,L';H)),$$
where $\mathcal{H}$ is the set of Hamiltonians satisfying the transversality requirements.

\begin{rk}
Given two Lagrangian submanifolds $L$ and $L'$, we can actually define $\gamma$ directly since the spectrum is defined without any transversality assumptions.
\end{rk}

The question of the continuity of $\gamma$ with respect to the $C^0$-distance is a fundamental one. This has been proved for specific symplectic manifolds in \cite{Vit92,Sey13,BHS19,Kaw19,Shel19}. We will also prove its continuity in our context in Section \ref{sec gamma C0}, and thus we will not discuss it more here.

\bigskip

An important question is the relation between two $C^0$-close Lagrangian submanifolds. It is related to Arnold's famous Nearby Lagrangian Conjecture. The results on this conjecture will be useful for both another definition of $\gamma$ and for the arguments of Section \ref{sec bottleneck}. Let us start by stating this conjecture.
\begin{conj}
Let $M$ be a closed connected manifold. Any closed exact connected Lagrangian submanifold in $T^*M$ is Hamiltonian isotopic to the zero section.
\end{conj}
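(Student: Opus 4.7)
The plan, since this is Arnold's Nearby Lagrangian Conjecture and hence open in full generality, is to describe the Floer-theoretic strategy that underlies all known partial results and let that stand as a proof proposal. Given a closed exact connected Lagrangian $L' \subset T^*M$, the first step is to exploit the Lagrangian intersection Floer cohomology developed in the preliminaries. Because both $L'$ and the zero section are closed and exact in the Liouville domain $T^*M$, Floer's theorem (\autoref{thm HF(L,L)=H(L)} above) together with the Hamiltonian invariance of $HF$ implies $HF^*(L',M;\Z/2) \cong H^*(L';\Z/2)$, so in particular the intersection number forces the projection $\pi: L' \to M$ to have nonzero degree, hence be surjective.

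The second step would be to promote this to a homotopy equivalence, following the chain of results by Fukaya--Seidel--Smith, Abouzaid, Kragh, and Abouzaid--Kragh quoted in the introduction. The key input is the compact Fukaya category of $T^*M$: one shows that the zero section generates it (via Lefschetz fibration arguments on $T^*M$), so that $L'$ is quasi-isomorphic in this category to a shift of $M$. Comparing the resulting $A_\infty$-structure on $CF(L',M)$ with chains on the based loop space $C_*(\Omega M)$, via an Abbondandolo--Schwarz/Viterbo type isomorphism, one extracts enough control on the Whitehead torsion and characteristic classes of the map $\pi: L' \to M$ to conclude that it is a simple homotopy equivalence.

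The third and by far hardest step is to upgrade this smooth and homotopy-theoretic information to an \emph{ambient} Hamiltonian isotopy between $L'$ and the zero section. This is precisely where the conjecture remains open. In dimension $4$, Hind's argument for $T^*S^2$ exploits a foliation of $T^*S^2$ by holomorphic disks with boundary on the zero section together with Gromov compactness to produce the isotopy, and the Goodman--Ivrii--Rizell proof for $T^*\T^2$ uses an SFT-style neck-stretching analysis. The main obstacle in general is that Floer invariants detect Hamiltonian isotopy classes but do not canonically produce Hamiltonian paths; no general mechanism is known for converting the equivalence of $L'$ and $M$ in the Fukaya category into an isotopy of embeddings, and this is the bottleneck I would expect to meet first in any attempt on the conjecture.
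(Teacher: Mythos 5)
This is a conjecture, not a theorem: the paper does not prove it, and explicitly states that it remains open in general, using it only as motivation and citing the partial results of Hind for $T^*S^2$, Dimitroglou-Rizell--Goodman--Ivrii for $T^*\T^2$, and the chain Fukaya--Seidel--Smith / Abouzaid / Kragh / Abouzaid--Kragh for the homotopy-theoretic statement (\autoref{thm NLC}). You correctly recognize this and do not claim a proof; your three-step outline is an accurate summary of exactly the same body of work the paper references, and you correctly pinpoint the bottleneck, namely that Floer-theoretic quasi-isomorphism does not by itself produce an ambient Hamiltonian isotopy. There is therefore no genuine gap to report, since nothing is being proved on either side.

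One small caution on your first step: the conclusion $HF^*(L', M;\Z/2) \cong H^*(L';\Z/2)$ is not an immediate corollary of Floer's theorem $HF^*(L,L)\cong H^*(L)$ together with Hamiltonian invariance, because Hamiltonian invariance only gives $HF^*(L',M) \cong HF^*(\phi(L'),M)$ for Hamiltonian $\phi$; identifying this with $H^*(L')$ is itself one of the deep outputs of the Fukaya--Seidel--Smith/Abouzaid/Kragh line of work (and in the paper it enters as \autoref{thm NLC}). The surjectivity of the projection $\pi\colon L'\to M$ likewise predates and does not rest on the full degree-one statement, going back to Lalonde--Sikorav. These are matters of attribution, not of mathematical substance, since you are describing known results rather than deriving them.
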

Together with Weinstein's theorem, this conjecture implies that in a symplectic manifold $M$ together with a closed connected Lagrangian submanifold $L\subset M$, any exact closed connected Lagrangian submanifold $L'\subset M$ is Hamiltonian isotopic to $L$ if $L'$ is $C^0$-close enough to $L$.

For most cases, this conjecture is still open and subject to a lot of research. It has been fully proved in special cases. Hind \cite{H04} proved the following theorem:
\begin{thm}\label{thm NLC S2}
The Nearby Lagrangian Conjecture is true in $T^*S^2$.
\end{thm}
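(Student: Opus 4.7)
The plan is to follow the strategy pioneered by Hind in \cite{H04}, which is based on studying $J$-holomorphic foliations of $T^*S^2$ and constraining how an exact closed Lagrangian $L$ must sit with respect to them. Throughout, view $T^*S^2$ symplectomorphically as the affine quadric $Q=\{z_1^2+z_2^2+z_3^2=1\}\subset \C^3$, so that one has a natural compactification by adding a divisor at infinity and a rich supply of compatible almost complex structures.

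First, I would pin down the topology of $L$. Since $L$ is a closed exact Lagrangian surface in $T^*S^2$, exactness forces $H^1(L;\R)=0$, and the standard Floer-theoretic obstruction (or a direct argument using Seidel's long exact sequence applied to the surgery, or the unobstructedness of $HF(L,L)\cong H^*(L)$ from \autoref{prop lien entre HF et H}) shows that $L$ must be diffeomorphic to $S^2$. In particular $[L]\cdot[L_0]=\pm\chi(L)/2\cdot(\dots)$ computations and orientation arguments confirm that $L$ is homologous to the zero section.

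Second, I would construct the foliation. For a generic compatible $J$ adapted to the contact structure at infinity, there exists a one-parameter family of $J$-holomorphic spheres (arising as completions of cylinders in the Liouville filling of $ST^*S^2$) that foliate $T^*S^2\setminus L_0$, together with the zero section appearing as a nodal limit. Via neck-stretching along a hypersurface near infinity and a controlled SFT compactness argument, the moduli space of such curves can be analyzed and shown to form a global foliation parameterized by $S^2$. Positivity of intersections then forces each curve in this family to meet $L$ in a small, controlled number of points. The cornerstone calculation is to show that, after a further deformation of $J$ through almost complex structures compatible with $L$, each curve meets $L$ in exactly one transverse point; this yields a smooth map $L\to S^2$ (the parameter sphere) of degree one, hence a diffeomorphism.

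Third, I would promote this diffeomorphism to a Hamiltonian isotopy. The foliation provides a smooth isotopy $\psi_t$ from the zero section to $L$, which can be chosen so that $\psi_t^*\lambda_{\mathrm{can}} - \lambda_{\mathrm{can}}$ is exact on the zero section for every $t$ (using that $L$ is exact). By Moser's trick in the exact/Weinstein setting, this isotopy can be realized by ambient symplectic isotopy, and then by an ambient Hamiltonian isotopy because $H^1(S^2;\R)=0$. The \emph{main obstacle} is the SFT compactness and neck-stretching argument of the second step: one must exclude multiply covered bubbles and ensure that nodal degenerations of the foliating family do not destroy transverse intersection with $L$. Controlling these degenerations in the presence of an a priori wild exact Lagrangian is precisely where the delicacy of Hind's argument lies, and it is the step that has resisted straightforward generalization to higher-dimensional cotangent bundles.
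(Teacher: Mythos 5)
The paper does not prove this statement: it is quoted as Hind's theorem from \cite{H04} and used as a black box (most explicitly in Appendix \ref{sec dim 4}). Your sketch is a plausible reconstruction of Hind's strategy --- compactify $T^*S^2$, foliate by $J$-holomorphic spheres, neck-stretch, use positivity of intersections to force each leaf to meet $L$ once, and then pass to a Hamiltonian isotopy by a Moser argument --- so at the level of an outline it is consistent with the cited proof.

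One point in Step 1 is wrong as stated: exactness of $L$ (i.e.\ $\lambda_{|L}$ being exact) does not imply $H^1(L;\R)=0$; the implication goes the other way. A correct route to $L\cong S^2$ is: the Lagrangian self-intersection formula gives $[L]\cdot[L]=-\chi(L)$; in $H_2(T^*S^2;\Z)\cong\Z\langle[L_0]\rangle$ with $[L_0]\cdot[L_0]=-2$, writing $[L]=k[L_0]$ forces $\chi(L)=2k^2$, so either $L$ is a sphere or $L$ is a null-homologous torus, and the latter must be ruled out by a separate Floer-theoretic argument (alternatively, \autoref{thm NLC} already gives that the projection $L\to S^2$ is a homotopy equivalence, which settles the topology directly). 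Steps 2 and 3 are gestured at in the right spirit, but the heart of Hind's theorem is precisely the hard analysis you flag at the end --- excluding multiply covered bubbles and controlling nodal degenerations of the foliating family against an a priori wild exact Lagrangian under neck-stretching --- which cannot be elided in a self-contained proof; this is exactly why the paper appeals to Hind rather than reproving the result.
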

For $T^*S^1$, there is not much to discuss and it is also true. Dimitroglou-Rizell, Goodman and Ivrii \cite{RGI16} proved it for $T^*\T^2$.

In more general context, important progress has been made by Fukaya, Seidel and Smith \cite{FSI07}, proving that when the Maslov class vanishes, the projection
$$\pi:L'\rightarrow L\subset T^*L$$
induces an isomorphism on homology. This result was improved later by Abouzaid-Kragh in the following theorem \cite{AK18}.
\begin{thm}\label{thm NLC}
Let $L$ be a closed connected manifold together with $L'$ an exact closed connected Lagrangian submanifold in $T^*L$. Then, there exists an integer $i=i_{L'}\in\Z$ such that for every exact closed Lagrangian submanifold $K$ in $T^*L$, there are chain-level quasi-isomorphisms in both directions between $CF^*(L',K)$ and $CF^{*+i}(L,K)$ and between $CF^*(K,L')$ and $CF^{*-i}(K,L)$. These quasi-isomorphisms are compatible with the product structure in Floer cohomology.
\end{thm}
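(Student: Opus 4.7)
The plan is to follow the strategy of Abouzaid--Kragh, which reduces the problem to showing that $L'$ and $L$ are quasi-isomorphic, up to a degree shift $i_{L'}$, as objects of the (compact) Fukaya category of $T^*L$, and then to harvest the desired chain-level quasi-isomorphisms by applying $\hom(-,K)$ and $\hom(K,-)$.

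The first step would be to control the grading. By the Fukaya--Seidel--Smith projection argument (refined by Abouzaid and Kragh), the projection $\pi:L'\to L$ is a homotopy equivalence, and in particular the Maslov class of $L'$ in $T^*L$ vanishes, so that $L'$ admits a $\Z$-grading as a graded Lagrangian brane. A choice of such grading amounts to a choice of integer, and we let $i_{L'}$ be the relative grading between $L'$ and the canonical grading of the zero section $L$; once this choice is fixed, all Floer cohomologies involving $L'$ are $\Z$-graded in a way compatible with the pairing with any other graded exact Lagrangian $K$.

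The second and main step is to exhibit an isomorphism $L'\cong L[i_{L'}]$ in the derived compact Fukaya category $D\mc{F}(T^*L)$. Here one uses the fact that $L$ split-generates $\mc{F}(T^*L)$ (via wrapped versus compact comparison and Abouzaid's generation criterion), so that every closed exact Lagrangian brane is determined by its Yoneda module over $L$. One then computes $HF^*(L,L')$ and $HF^*(L',L)$ using the Abouzaid--Kragh machinery (Floer-theoretic twisting cochains plus the fact that $\pi:L'\to L$ is a simple homotopy equivalence) and identifies them with $H^*(L)$ up to a shift by $i_{L'}$; together with the fact that $L'$ is cohomologically indecomposable, this forces $L'\simeq L[i_{L'}]$ as objects of the derived Fukaya category. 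The hardest input is the identification of the relevant Floer product/module structure with the cup product, which is precisely where the Abouzaid--Kragh argument is substantial.

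The third step is to extract chain-level statements. The quasi-isomorphism $L'\simeq L[i_{L'}]$ in $\mc{F}(T^*L)$ yields, by functoriality of $\mu^2$-module structures over the $A_\infty$-endomorphism algebra of $K$, quasi-isomorphisms of $A_\infty$-modules $CF^*(K,L')\simeq CF^{*-i_{L'}}(K,L)$ and $CF^*(L',K)\simeq CF^{*+i_{L'}}(L,K)$, and these are automatically compatible with the triangle products $\mu^2$ since they come from an equivalence in the Fukaya category itself. After perturbing the Hamiltonian data as in Section \ref{sec preli} to achieve transversality for all Lagrangians simultaneously, one obtains the stated chain-level quasi-isomorphisms. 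The principal obstacle, as already flagged, is the construction of the equivalence in $\mc{F}(T^*L)$; once that is in hand, compatibility with the product is essentially automatic from the $A_\infty$-formalism.
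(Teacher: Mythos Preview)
The paper does not give a proof of this theorem: it is stated as a result of Abouzaid--Kragh \cite{AK18} and used as a black box throughout (see the sentence immediately preceding the statement). There is therefore no ``paper's own proof'' to compare your proposal against.

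That said, your outline is a faithful high-level summary of the Abouzaid--Kragh strategy: vanishing of the Maslov class and existence of a grading, split-generation of the compact Fukaya category by the zero section, identification of $L'$ with $L[i_{L'}]$ as an object of $D\mc{F}(T^*L)$ via the Yoneda embedding, and then reading off the chain-level quasi-isomorphisms compatible with $\mu^2$ by applying $\hom(K,-)$ and $\hom(-,K)$. As you yourself flag, the substantial work is hidden in the sentence ``together with the fact that $L'$ is cohomologically indecomposable, this forces $L'\simeq L[i_{L'}]$'': establishing that the Yoneda module of $L'$ over $CF^*(L,L)$ is quasi-isomorphic to the shifted diagonal bimodule, and that the module structure matches cup product, is precisely the content of \cite{AK18} (building on \cite{FSI07,Ab10,Kr13}) and cannot be reduced to a paragraph. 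So your proposal is a correct road map, but it is a sketch of a reference rather than a self-contained proof; for the purposes of this paper, simply citing \cite{AK18} is what is done and what is appropriate.
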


 Let $L_0$ and $L_1$ be two closed exact Lagrangian submanifolds in $(T^*L,\omega=d\lambda)$ exact, together with two primitives functions $f_{L_0}$ and $f_{L_1}$ such that $df_{L_i}=\lambda_{|L_i}$, for $i\in\{0,1\}$. As mentioned before, this theorem allows us to state another definition of $\gamma(L_0,L_1)$. Indeed, previous \autoref{thm NLC} and \autoref{prop lien entre HF et H} respectively tell us that we have the two following isomorphisms
$$HF^*(L,L)\xrightarrow[\sigma]{\sim} HF^*(L_0,L_1),$$
$$H_*(L) \xrightarrow[\theta]{\sim} HF^{n-*}(L,L).$$
By abuse of notation, we denote
$$[L]=\sigma\circ\theta([L])\in HF^0(L_0,L_1),$$
$$[pt]=\sigma\circ\theta([pt])\in HF^n(L_0,L_1).$$

It is known that for such Lagrangian submanifolds $L_0$ and $L_1$, the quantity $\gamma(L_0,L_1)$ admits the following alternative definition. This definition is actually the standard definition of $\gamma(L_0,L_1)$; see \cite{MVZ12,LZ18}.
\begin{dfn}\label{lem def gamma}
$$\gamma(L_0,L_1)=l\left([L],L_0,L_1\right)-l\left([pt],L_0,L_1\right).$$
\end{dfn}

Let us give the basic properties of $\gamma$. Following \autoref{def action}, we have $\calA_{L_0,L_1}=-\calA_{L_1,L_0}$. Together with the fact that the two complexes $CF(L_0,L_1)$ and $CF(L_1,L_0)$ are dual to each other, we have
$$l\left([pt],L_0,L_1\right)=-l\left(\sigma'\circ\theta([L]),L_1,L_0\right),$$
where $\sigma'$ is the isomorphism from $HF(L,L)$ to $HF(L_1,L_0)$ given by \autoref{thm NLC}. We thus obtain
$$\gamma(L_0,L_1)=l\left([L],L_0,L_1\right)+l\left(\sigma'\circ\theta([L]),L_1,L_0\right).$$
Consequently, for all $L_0$ and $L_1$ exact in $T^*L$, 
\begin{equation}\label{eq sym gamma}
\gamma(L_0,L_1)=\gamma(L_1,L_0).    
\end{equation}
Moreover, for all $L_0,L_1,L_2$ closed exact Lagrangian submanifolds in $T^*L$ with primitive functions $f_{L_0},f_{L_1},f_{L_2}$, it also satisfies the triangle inequality
\begin{equation}\label{eq gamma triang}
    \gamma(L_0,L_1)\leq\gamma(L_0,L_2)+\gamma(L_2,L_1).
\end{equation}
Indeed, if $x\in CF(L_2,L_1)$ and $y\in CF(L_0,L_2)$ both represent the fundamental class in their respective homology, so does $\mu^2(x,y)$ in $CF(L_0,L_1)$ (see Section \ref{sec bottleneck}). Together with \autoref{lem mult pers mod}, we immediately obtain this triangle inequality.
\section{Continuity of the barcode}\label{sec barcode continu}

\subsection{Results and idea of the proof}\label{sec result idea proof}
\subsubsection{Main theorem and consequences}\label{subsec intro proof}
The object of this section is to prove the following theorem, corresponding to \autoref{thmx loc lip} in the Introduction which will be the key to prove our results concerning the Dehn-Seidel twist. It shows a certain local Lipschitz continuity on barcodes associated to Lagrangian submanifolds. We will always assume that the considered Lagrangian submanifolds are connected.

\begin{thm}\label{prop contL barcode}
Let $M$ be a Liouville domain. Let $L$ and $L'$ be two closed exact Lagrangian submanifolds, and assume that $H^1(L',\R)=0$. Then there exist $K\geq0$ and $l>0$ such that for all $\varphi$ and $\psi$ in $\Symp(M,\omega)$, if $d_{C^0}(\varphi, \psi) \leq l$, we have $$d_{bottle}(\hat{B}(\varphi(L'),L),\hat{B}(\psi(L'),L))\leq K d_{C^0}(\varphi, \psi).$$
\end{thm}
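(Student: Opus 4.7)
\bigskip

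\noindent\textbf{Plan of proof.} The strategy is to combine two estimates: a Lagrangian analogue of the Kislev--Shelukhin bound from \cite{KS18} comparing the bottleneck distance of barcodes with the spectral norm $\gamma$, and a Weinstein--neighbourhood estimate of $\gamma$ in terms of the $C^0$-distance. The hypothesis $H^1(L',\R)=0$ plays no role in the first step but is crucial in the second. Setting $\chi:=\varphi^{-1}\psi$, symplectic invariance of $\gamma$ gives $\gamma(\varphi(L'),\psi(L'))=\gamma(L',\chi(L'))$, and a standard Lipschitz control of $\varphi^{-1}$ on the compact manifold $M$ (uniform on a $C^0$-neighbourhood of any fixed $\varphi$) gives $d_{C^0}(\chi,\mathrm{id})\leq K_1\,d_{C^0}(\varphi,\psi)$. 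This reduces the problem to bounding $\gamma(L',\chi(L'))$ when $\chi$ is $C^0$-close to the identity.

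\smallskip

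\noindent\emph{Kislev--Shelukhin-type step.} The goal is to prove
\[
d_{bottle}\bigl(\hat B(\varphi(L'),L),\hat B(\psi(L'),L)\bigr) \leq \gamma\bigl(\varphi(L'),\psi(L')\bigr),
\]
by constructing interleaving morphisms on the relevant filtered Floer complexes. Pick cocycles $e\in CF(\varphi(L'),\psi(L'))$ and $e'\in CF(\psi(L'),\varphi(L'))$ representing the respective fundamental-class units, with actions nearly realising the corresponding action selectors. Multiplications $\mu^2(e,\cdot)$ and $\mu^2(e',\cdot)$ are filtered maps between the persistence modules $CF^{*,\bullet}(\varphi(L'),L)$ and $CF^{*,\bullet}(\psi(L'),L)$ by \autoref{lem mult pers mod}, with shifts given by the actions of $e,e'$; and by \autoref{lem associativite barcodes} together with \autoref{lem mult id barcodes}, their compositions are filtered-chain-homotopic to the canonical shift maps on each side. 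Optimising the representatives, the magnitude of the resulting interleaving is controlled by $\gamma(\varphi(L'),\psi(L'))$.

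\smallskip

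\noindent\emph{Weinstein step.} Fix a Weinstein neighbourhood $U\subset M$ of $L'$, identified with a cotangent disk bundle $D^*_r L'$. For $d_{C^0}(\chi,\mathrm{id})$ smaller than some threshold $l>0$, the smooth Lagrangian $\chi(L')$ sits inside $U$ and is the graph of a closed $1$-form $\alpha\in\Omega^1(L')$. Since $H^1(L',\R)=0$, we may write $\alpha=df$ for a smooth function $f$ on $L'$, unique up to additive constant, and integrating $df$ along short paths on the compact connected manifold $L'$ yields $\max f-\min f \leq C_2\, d_{C^0}(\chi,\mathrm{id})$. A standard energy/localisation argument identifies $\gamma(L',\chi(L'))$ computed in $M$ with the same quantity computed inside $T^*L'$, and the Floer--Morse identification of \autoref{rk filtration HF et HM} reduces the latter to $\max f-\min f$. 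Chaining all estimates gives the claimed Lipschitz bound.

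\smallskip

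\noindent\emph{Main obstacle.} The technically hardest step is the Kislev--Shelukhin-type inequality. The original argument in \cite{KS18} is for Hamiltonian Floer homology, and porting it to a pair of Lagrangians related only by a symplectomorphism (not \emph{a priori} by a Hamiltonian isotopy) requires systematic use of the filtered product structure of Section \ref{sec product barcode}, careful tracking of the actions of the unit cocycles in both Floer complexes, and verification that the associator $\mu^3$ delivers genuine filtered chain homotopies, not merely unfiltered ones.
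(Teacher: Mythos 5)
Your overall two-step skeleton (a Kislev--Shelukhin-type interleaving bound by $\gamma$, followed by a Weinstein-neighbourhood control of $\gamma$ by the $C^0$-distance) matches the structure of the paper's proof, and your discussion of the filtered product structure, the associator $\mu^3$, and the need for Abouzaid--Kragh to handle non-Hamiltonian $\varphi$ is on target. However, there are two genuine problems.

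The fatal one is in the Weinstein step. You assert that if $d_{C^0}(\chi,\mathrm{id})$ is small then $\chi(L')$ is the graph of a closed $1$-form on $L'$, and then estimate $\gamma$ by $\max f - \min f$ where $\alpha = df$. This is false: $C^0$-smallness of $\chi$ only forces $\chi(L')$ into a thin cotangent disc bundle $D^*_r L'$, not to be graphical over $L'$. A symplectomorphism supported in an arbitrarily small ball meeting $L'$ has arbitrarily small $C^0$-norm, yet it can fold $\chi(L')$ over the zero section many times inside that ball, creating many transverse intersections and destroying the graph property. This is precisely the obstruction that makes \autoref{lem gamma C0} nontrivial; the paper's proof addresses it with the Buhovsky--Humilière--Seyfaddini machinery (Lemmas~\ref{lem Lip BHS avec B} and~\ref{lem B BHS}): one first bounds $\gamma$ under a ball-displacement condition $\varphi(L')\cap T^*B = 0_B$, and then uses a doubling trick $\Phi = \varphi\times\varphi^{-1}$ together with an auxiliary symplectomorphism $\Psi$ supported in $B\times B$ to reduce the general case to that one. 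Your graph-based argument bypasses this entirely and does not hold.

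A secondary, fixable issue is your reduction step. You invoke a Lipschitz constant for $\varphi^{-1}$, which varies with $\varphi$ and would therefore make $K$ and $l$ depend on $\varphi$ --- but the theorem asserts uniform constants valid over all of $\Symp(M,\omega)$. There is no need for any such Lipschitz control: since the $C^0$-distance used here is symmetric under inverse, one has directly $d_{C^0}(\psi^{-1}\circ\varphi,\mathrm{Id}_M)=\sup_x d(\psi^{-1}(x),\varphi^{-1}(x))\leq d_{C^0}(\varphi,\psi)$, which is exactly how the paper obtains a uniform Lipschitz constant $K=\tfrac{1}{2}\kappa$. You should also be aware that the "standard energy/localisation argument" identifying the filtered Floer data in $M$ with that in $T^*L'$ is itself the content of \autoref{lem egalite hom cotangent}, which the paper proves via a retraction by the negative Liouville flow and Sikorav's monotonicity lemma to push $J$-holomorphic curves into a Weinstein neighbourhood --- it is a real step rather than a formality, and it is also needed to make the quantity $\gamma(L',\chi(L'))$ independent of whether it is computed in $M$ or $T^*L'$.

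Finally, a cosmetic remark: your interleaving step loses a factor of $2$ compared to the paper, because the paper works in $\hat{\calB}$ and can absorb the antisymmetric part $\tfrac{1}{2}(\alpha-\bar\alpha)$ of the shift into the overall-shift quotient, yielding a $\tfrac{1}{2}\gamma$-interleaving rather than a $\gamma$-interleaving. This does not affect the truth of the theorem, only the constant.
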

The fact that we have a uniform Lipschitz continuity with respect to the $C^0$ distance immediately implies the following corollary.
\begin{cor}\label{Cor extension}
The map $\varphi \mapsto \hat{B}(\varphi(L'),L)$ continuously extends to a map $\overline{\mathrm{Symp}}(M,\omega)\rightarrow\hat{\calB}$.
\end{cor}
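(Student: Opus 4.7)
My plan is to use the Lagrangian spectral pseudo-norm $\gamma$ as a bridge between the $C^0$-distance on symplectomorphisms and the bottleneck distance on barcodes, following the philosophy of Buhovsky-Humilière-Seyfaddini for the $C^0$-continuity of spectral invariants and of Kislev-Shelukhin for the $\gamma$-control of the bottleneck distance. Writing $L_0 = \varphi(L')$ and $L_1 = \psi(L')$, the strategy is to prove successively
$$d_{bottle}\bigl(\hat{B}(L_0, L), \hat{B}(L_1, L)\bigr) \leq \gamma(L_0, L_1) \leq K \cdot d_{C^0}(\varphi, \psi).$$

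For the right-hand inequality (Hamiltonian control of $\gamma$), I fix $l > 0$ smaller than the Weinstein-neighbourhood radius of $L_1$. As soon as $d_{C^0}(\varphi, \psi) < l$, the submanifold $L_0$ lies in a Weinstein neighbourhood of $L_1$ and identifies there with a section of $T^*L_1 \to L_1$. Because $L_0$ is Lagrangian this section is closed, and the hypothesis $H^1(L', \R) = 0$ (equivalent to $H^1(L_1, \R) = 0$ since $\psi$ is a diffeomorphism) forces it to be exact: $L_0 = \mathrm{graph}(dg)$ for some $g : L_1 \to \R$. In Darboux coordinates $\|dg\|_{C^0}$ is bounded by a constant times $d_{C^0}(\varphi, \psi)$, and after fixing $g$ by a normalization, integration along paths controls $\|g\|_{C^0}$ similarly. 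A cutoff then produces a compactly supported Hamiltonian $G : M \to \R$ whose time-one flow maps $L_1$ to $L_0$ and satisfies $\|G\|_{\mathrm{Hofer}} \leq K_1 \cdot d_{C^0}(\varphi, \psi)$. The Hofer-Lipschitz bound on action selectors (\autoref{prop propriétés action selectors}) then yields $\gamma(L_0, L_1) \leq \|G\|_{\mathrm{Hofer}}$, hence the right-hand inequality with $K = K_1$.

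The main step is the left-hand inequality, which I establish by adapting Kislev-Shelukhin to the exact-Lagrangian setting. I pick cocycles $z_0 \in CF(L_0, L_1)$ and $z_1 \in CF(L_1, L_0)$ representing the images of the fundamental classes under the isomorphism of \autoref{prop lien entre HF et H} (valid since $L_0, L_1$ are Hamiltonian isotopic via the above $G$), with actions arbitrarily close to the spectral selectors $l([L_0], L_0, L_1)$ and $l([L_1], L_1, L_0)$. By \autoref{lem mult pers mod}, the Floer products $\mu^2(z_1, \cdot)$ and $\mu^2(z_0, \cdot)$ define morphisms of filtered persistence modules between $CF^*(L, L_0)$ and $CF^*(L, L_1)$, with filtration shifts bounded by the actions of $z_1$ and $z_0$ respectively. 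By \autoref{lem associativite barcodes}, the two compositions are filtered chain-homotopic to $\mu^2(\mu^2(z_0, z_1), \cdot)$ and $\mu^2(\mu^2(z_1, z_0), \cdot)$, and by \autoref{lem mult id barcodes} these multiplications by the fundamental class are themselves chain-homotopic to the identity up to the appropriate shift. This produces a $\delta$-interleaving of the two persistence modules with $\delta$ arbitrarily close to $\gamma(L_0, L_1)$; the absolute shifts coming from the choices of primitives of $\lambda$ are absorbed in the quotient defining $\hat{\calB}$, and the isometry theorem then yields the claimed bottleneck bound.

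Combining the two inequalities gives the uniform local Lipschitz estimate, and \autoref{Cor extension} follows from \autoref{prop barcode complet} (completeness of $\hat{\calB}$) by the standard extension-by-uniform-continuity argument. The main obstacle is the left-hand inequality: one must carefully track the filtration estimates through the triangle product, its associators, and the chain-homotopies of \autoref{lem associativite barcodes} and \autoref{lem mult id barcodes}, and verify that the chosen $z_0, z_1$ can indeed be taken to realize the spectral selectors $l([L_0], L_0, L_1)$ and $l([L_1], L_1, L_0)$, so that their sum matches $\gamma(L_0, L_1)$ up to an arbitrarily small loss coming from the perturbation data.
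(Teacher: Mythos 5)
The overall decomposition — control the bottleneck distance by the spectral norm $\gamma$, then control $\gamma$ by the $C^0$-distance, and conclude by completeness of $\hat{\calB}$ — is exactly the paper's strategy (Propositions \ref{cor maj bottleneck} and \ref{lem gamma C0}). However, your execution of the second step has a genuine gap, and this gap also undermines a claim you make in the first step.

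For the estimate $\gamma(L_0,L_1)\leq K\,d_{C^0}(\varphi,\psi)$ you argue that once $L_0$ lies in a Weinstein neighbourhood of $L_1$, it ``identifies there with a section of $T^*L_1\to L_1$'' and hence is $\mathrm{graph}(dg)$ with $\|dg\|_{C^0}$ controlled by $d_{C^0}(\varphi,\psi)$. This is false: $C^0$-proximity in $T^*L_1$ does not make $L_0$ a graph over the zero section, and even when $L_0$ happens to be a graph, the $C^0$-smallness of $L_0\subset T^*L_1$ controls $\|dg\|_{C^0}$ (the values of the fibre coordinate) but gives \emph{no} bound on $\|g\|_{C^0}$ and hence none on the Hofer norm of the generating Hamiltonian — $g$ could oscillate wildly on a small scale. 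This is precisely the difficulty that Buhovsky--Humili\`ere--Seyfaddini's argument is designed to overcome: the paper's proof of \autoref{lem gamma C0} does not attempt the naive graph estimate, but instead uses \autoref{lem Lip BHS avec B} (which only gives Lipschitz control of $\gamma$ under the extra hypothesis that $\varphi(L')$ misses a fixed cotangent ball $T^*B$) together with the doubling trick of \autoref{lem B BHS}, introducing the auxiliary map $\Phi=\varphi\times\varphi^{-1}$ on $M\times M$ and factoring it through symplectomorphisms whose images avoid suitable balls. The triangle inequality and K\"unneth formula for $\gamma$ then combine these pieces. Without some version of this (or another mechanism to bypass the lack of $C^1$-control), your right-hand inequality does not go through.

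This gap is not isolated: in your treatment of the bottleneck bound you invoke the isomorphism of \autoref{prop lien entre HF et H}, asserting that ``$L_0,L_1$ are Hamiltonian isotopic via the above $G$''. Since $G$ was not actually produced, this is circular, and in general a symplectomorphism close to the identity need not move $L'$ by a Hamiltonian isotopy at all. The paper avoids this by using Abouzaid--Kragh's \autoref{thm NLC} to produce the classes $[L']$ and $[\varphi(L')]$ in $HF(L',\varphi(L'))$ and $HF(\varphi(L'),L')$ for arbitrary exact Lagrangians in a cotangent bundle, which in turn requires the comparison of Floer cohomology in $M$ versus in $T^*L'$ established by \autoref{lem egalite hom cotangent} — a step you omit entirely. (A smaller point: the paper's interleaving argument yields the sharper constant $\tfrac12\gamma$ rather than $\gamma$, by absorbing the difference of selectors into the shift that $\hat{\calB}$ forgets; your version loses this factor, but that by itself would not be fatal.)
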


Since $L$ and $L'$ are closed, the number of semi-infinite bars of $B(\varphi(L'),L)$ stays finite for all $\varphi\in\Symp(M,\omega)$. This extension to the closure requires to work with $\calB$ as defined in \autoref{def Q-tame barcodes} which is the completion of the space of barcodes $\calB_f$ by \autoref{prop barcode complet}. As we will see in the proof, we will then have to work in the space of barcodes up to shift $\hat{\calB}$.

From \autoref{prop contL barcode}, we obtain the following theorem. It is direct consequence of the continuity of barcodes together with its \autoref{Cor extension}.
\begin{thm}\label{thm barcodes connected}
Let $M$ be a Liouville domain. Let $L$ and $L'$ be two exact compact Lagrangian submanifolds, and assume that $H^1(L',\R)=0$. Consider two symplectomorphisms $\varphi$ and $\psi$.
\begin{itemize}
    \item If these two symplectomorphisms are in the same connected component of $\overline{\Symp}(M,\omega)$, then the two barcodes $\hat{B}(\varphi(L'),L)$ and $\hat{B}(\psi(L'),L)$ are in the same connected component of $\hat{\calB}$.
    \item If these two symplectomorphisms are isotopic in $\overline{\Symp}(M,\omega)$, then there is a continuous path of barcodes from $\hat{B}(\varphi(L'),L)$ to $\hat{B}(\psi(L'),L)$.
\end{itemize}
\end{thm}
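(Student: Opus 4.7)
The plan is to derive both items as direct consequences of \autoref{prop contL barcode} through its \autoref{Cor extension}. Writing $\Phi : \overline{\Symp}(M,\omega) \to \hat{\calB}$ for the continuous extension sending $\varphi$ to $\hat{B}(\varphi(L'),L)$, both statements reduce to general topological facts about continuous maps between the two spaces $\overline{\Symp}(M,\omega)$ (with the $C^0$-topology) and $\hat{\calB}$ (with the quotient bottleneck distance $\delta$). So the content of the theorem lies entirely in \autoref{prop contL barcode}; the present argument is a packaging step.

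For the first item, I would invoke the standard fact that the continuous image of a connected set is connected. If $\varphi$ and $\psi$ lie in the same connected component $\calC \subset \overline{\Symp}(M,\omega)$, then $\Phi(\calC)$ is a connected subset of $\hat{\calB}$ containing both $\Phi(\varphi) = \hat{B}(\varphi(L'),L)$ and $\Phi(\psi) = \hat{B}(\psi(L'),L)$; hence these two barcodes lie in a common connected component of $\hat{\calB}$. In view of \autoref{prop connected components barcodes} (transported to $\hat{\calB}$, which inherits its topological properties from $\calB$ via a free and proper action), this component is classified by the graded count of semi-infinite bars, so in particular $\sigma^\infty$ agrees on the two barcodes.

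For the second item, assume $(\varphi_t)_{t\in[0,1]}$ is a continuous path in $\overline{\Symp}(M,\omega)$ from $\varphi$ to $\psi$. Then $t \mapsto \Phi(\varphi_t) = \hat{B}(\varphi_t(L'),L)$ is a composition of continuous maps and hence a continuous path in $\hat{\calB}$ from $\hat{B}(\varphi(L'),L)$ to $\hat{B}(\psi(L'),L)$, as required. The only point requiring care is that continuity of $\Phi$ is a priori only local Lipschitz (on $\Symp(M,\omega)$), so I would first remark that this local Lipschitz property extends to a uniform continuity statement on compacta after passing to the closure—exactly what \autoref{Cor extension} provides—and that continuity of a path is a local property so a covering of $[0,1]$ by finitely many intervals on which $\varphi_t$ varies by less than the $C^0$-threshold $l$ from \autoref{prop contL barcode} suffices to conclude.

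The only substantive obstacle is \autoref{prop contL barcode} itself, already proved in Section \ref{sec barcode continu}; once that theorem and its corollary are in hand, the present statement follows by the soft topological arguments above without further analytic input.
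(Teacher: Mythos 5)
Your argument matches the paper's: both items are derived as soft topological consequences of \autoref{prop contL barcode} and \autoref{Cor extension}, with the first via the image of a connected set and the second via composition of the continuous extension with the path in $\overline{\Symp}(M,\omega)$. The extra care you take about local Lipschitz continuity and covering $[0,1]$ is sound but already subsumed in \autoref{Cor extension}, so this is essentially the paper's proof.
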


This continuous path can also be directly constructed in the following way from \autoref{Cor extension}. Let us denote $(\phi^t)_{t\in[0,1]}$ the path in $\overline{\Symp}(M,\omega)$ from $\varphi$ to $\psi$. For each $t\in[0,1]$, \autoref{Cor extension} allows to associate a barcode $\hat{B}^t$ to $\phi^t$. The path of barcodes is then the path $(\hat{B}^t)_{t\in[0,1]}$.

The second point of \autoref{thm barcodes connected} can also be understood as a consequence of the first point as $\hat{\calB}$ is locally path-connected.

\begin{rk}
In smooth symplectic topology, the two points in \autoref{thm barcodes connected} would be equivalent. However, in $C^0$ symplectic topology we do not know whether $\overline{\Symp}(M,\omega)$ is locally path-connected, thus it is not known whether the connected components of $\overline{\Symp}(M,\omega)$ are path-connected. Consequently the first point implies the second one but the reciprocal implication is far from clear.
\end{rk}

\subsubsection{Structure of the proof of \autoref{prop contL barcode}}\label{subsec idea proof}

In order to prove \autoref{prop contL barcode}, we prove the two following propositions. The first one bounds the bottleneck distance by the spectral norm $\gamma$.
\begin{prop}\label{cor maj bottleneck}
Let $L$ and $L'$ be two closed exact Lagrangian submanifolds in a Liouville domain $(M,\omega)$. There exists $\delta>0$, independant of $L$, such that for all  $\varphi\text{ and }\psi\in\Symp(M,\omega)$ satisfying $d_{C^0}(\varphi,\psi)\leq\delta$, then
$$d_{bottle}(\hat{\calB}(\varphi(L'),L),\hat{\calB}(\psi(L'),L))\leq  \tfrac{1}{2}\gamma(L',\psi^{-1}\circ\varphi(L')).$$
\end{prop}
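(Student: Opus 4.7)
The plan is to use the product structure in filtered Lagrangian Floer cohomology to build an explicit interleaving of persistence modules whose magnitude is controlled by the spectral norm, and then invoke the isometry theorem.

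First I would reduce to a local problem near $L'$. Pulling back by $\psi$ and using the tautological isometry of Floer cohomology under symplectomorphisms, one obtains
\[
\hat B(\varphi(L'),L)=\hat B(\tilde L'_1,\tilde L),\qquad \hat B(\psi(L'),L)=\hat B(L',\tilde L),
\]
with $\tilde L'_1:=\psi^{-1}\circ\varphi(L')$ and $\tilde L:=\psi^{-1}(L)$. The symmetric definition of $d_{C^0}$ on $\Symp(M,\omega)$ yields $\sup_p d(p,\psi^{-1}\varphi(p))\leq d_{C^0}(\varphi,\psi)$ (for $q=\psi^{-1}\varphi(p)$ one has $\varphi^{-1}\psi(q)=p$, which uses the second term in the symmetric $C^0$-distance). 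Choosing $\delta>0$ smaller than the Weinstein radius of $L'$, a constant depending only on $L'$ and not on $L$, places $\tilde L'_1$ in a Weinstein neighbourhood $U\cong T^*L'$ of $L'$; since $H^1(L';\R)=0$, $\tilde L'_1$ is exact in $T^*L'$ and \autoref{thm NLC} applies to the pair $(L',\tilde L'_1)$.

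Next I would build the interleaving morphisms. For any $\varepsilon>0$, pick cocycles $z\in CF(\tilde L'_1,L')$ and $w\in CF(L',\tilde L'_1)$ representing, via the product-compatible quasi-isomorphisms of \autoref{thm NLC}, the fundamental class $[L']$, with actions bounded respectively by $a+\varepsilon$ and $b+\varepsilon$, where
\[
a:=l([L'],\tilde L'_1,L'),\qquad b:=l([L'],L',\tilde L'_1).
\]
By \autoref{lem mult pers mod}, the triangle products with $z$ and with $w$ produce filtered morphisms
\begin{align*}
\mu^2(\cdot,z)&\colon CF(L',\tilde L)\to CF(\tilde L'_1,\tilde L)[a+O(\varepsilon)],\\
\mu^2(\cdot,w)&\colon CF(\tilde L'_1,\tilde L)\to CF(L',\tilde L)[b+O(\varepsilon)].
\end{align*}
By associativity (\autoref{lem associativite barcodes}), the two compositions are filtered chain-homotopic to multiplication by $\mu^2(z,w)\in CF(L',L')$ and $\mu^2(w,z)\in CF(\tilde L'_1,\tilde L'_1)$ respectively, and the product-compatibility clause of \autoref{thm NLC} identifies these as cocycles representing the fundamental classes of the respective diagonals. \autoref{lem mult id barcodes} then identifies the induced maps with the canonical shift morphisms of persistence modules up to filtered chain homotopy. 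This produces an asymmetric $(a+O(\varepsilon),b+O(\varepsilon))$-interleaving of the persistence modules underlying $\hat B(L',\tilde L)$ and $\hat B(\tilde L'_1,\tilde L)$. Crucially, working in the shift-invariant quotient $\hat{\calB}$ (\autoref{def barcodes overall shift}), I may rescale one module by $(a-b)/2$ to turn this into a symmetric $\tfrac{a+b}{2}$-interleaving. The isometry theorem then gives $d_{bottle}\leq \tfrac{a+b}{2}+O(\varepsilon)$; letting $\varepsilon\to 0$ and using the duality $l([pt],L_0,L_1)=-l([L'],L_1,L_0)$ (which yields $\gamma(L',\tilde L'_1)=a+b$) concludes the estimate.

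The principal obstacle I anticipate is ensuring at the chain level that $\mu^2(z,w)$ coincides, up to filtered chain homotopy, with the canonical fundamental cocycle $z_0\in CF(L',L')$ of \autoref{prop identite mult z}, so that \autoref{lem mult id barcodes} applies to it and not merely to $z_0$. This is precisely what the product-compatibility clause of \autoref{thm NLC} is designed to provide, supplemented by a filtered version of the standard fact that cohomologous cocycles representing the unit induce the same map on persistence modules up to an arbitrarily small shift. A secondary technicality is verifying that $CF(\tilde L'_1,L')$ computed in $M$ agrees with the one computed in the Weinstein model $T^*L'$, which follows from standard energy confinement for Floer strips between $C^0$-close exact Lagrangians combined with \autoref{rk perturbation petite}.
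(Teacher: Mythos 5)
Your proposal follows essentially the same route as the paper: reduce to $\psi^{-1}\circ\varphi$ versus the identity, pass to a Weinstein neighbourhood $T^*L'$, use the Abouzaid–Kragh quasi-isomorphisms to produce fundamental-class cocycles, and use \autoref{lem mult pers mod}, \autoref{lem associativite barcodes} and \autoref{lem mult id barcodes} to obtain a $\tfrac{a+b}{2}$-interleaving which symmetrizes in $\hat{\calB}$. However, the two "technicalities" you flag at the end are precisely where the paper has to work, and your proposed resolutions of them are not quite correct.

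For the chain-level identity, the mechanism is \emph{not} that cohomologous cocycles representing the unit always induce close shifted morphisms: a primitive $u$ with $\partial u = \mu^2(z,w)-z_0$ has, \emph{a priori}, action larger than $\calA(\mu^2(z,w))\approx a+b$ (because the differential decreases action), so the chain homotopy $\mu^2(\cdot,u)$ lives at a shift far beyond $a+b$ and does not produce the needed filtered identification. What actually saves the argument is \autoref{rk choix perturbation data}: with the perturbation for $(L',L')$ chosen as a small Morse function with a unique minimum and unique maximum, $CF^0(L',L';\calD)$ is \emph{one-dimensional} (and $CF^{-1}=0$), so the non-zero cocycle $\mu^2(z,w)$ is forced on the nose to equal the canonical representative $z_0$, and \autoref{lem mult id barcodes} then applies directly. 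The ``filtered version of a standard fact'' you invoke is not a valid general statement; only the uniqueness forced by the Morse model makes the step go through.

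For the comparison between $CF(\tilde L'_1,L';M)$ and $CF(\tilde L'_1,L';T^*L')$, $C^0$-closeness of $\tilde L'_1$ to $L'$ does \emph{not} by itself bound the areas of Floer strips: the spectrum diameter of $\Spec(\tilde L'_1,L')$ is governed by the primitives $f_{\tilde L'_1}$ and $f_{L'}$, which can differ substantially even for $C^0$-close Lagrangians, so Sikorav-type monotonicity cannot be applied directly. The paper's Liouville-retraction argument (\autoref{lem egalite hom cotangent}: retract $\tilde L'_1$ by the negative Liouville flow to shrink the spectrum geometrically, confine the strips, then undo the retraction) is the ingredient that makes the energy-confinement argument applicable, and it is not replaceable by "\autoref{rk perturbation petite} plus standard confinement". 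Both gaps are repairable, but they are the actual technical content of the proof and should not be dismissed as routine.
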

This proposition is an adaptation to our context of a similar statement proved by Kislev and Shelukhin \cite{KS18}. In their case, $L=L'$ is a weakly monotone Lagrangian submanifold in a closed symplectic manifold and $\varphi$ and $\psi$ are Hamiltonian diffeomorphisms.

This proposition will be proven in Subsection \ref{sec bottleneck}.
The second proposition asserts that $\gamma(L',\varphi(L'))$ goes to zero, as $\varphi$ goes to identity and will be proven in Subsection \ref{sec gamma C0}.
\begin{prop}\label{lem gamma C0}
There exist constants $l\geq 0$ and $\kappa\geq0$ such that and for all $\varphi\in\Symp(M,\omega)$ satisfying $d_{C^0}(\varphi,\Id_M)\leq l$, we have
$$\gamma(L',\varphi(L'))\leq \kappa d_{C^0}(\varphi,\Id_M).$$
\end{prop}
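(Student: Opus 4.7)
The plan is to localize the problem to a Weinstein neighborhood, realize $\varphi(L')$ as an exact Lagrangian close to the zero section of $T^*L'$, and bound $\gamma$ via a symplectic area estimate (or, alternatively, via a Hofer-norm bound for a Hamiltonian isotopy taking $L'$ to $\varphi(L')$).

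First, by Weinstein's neighborhood theorem there is a symplectic embedding $\Psi\colon V\hookrightarrow M$ of a neighborhood $V$ of the zero section in $T^*L'$, with $\Psi$ restricting to the identity on $L'$. I would choose $l>0$ small enough that $d_{C^0}(\varphi,\Id_M)\leq l$ forces $\varphi(L')\subset\Psi(V)$ and, after the identification, $\tilde{L}:=\Psi^{-1}(\varphi(L'))$ lies in $\{|p|\leq C_1\cdot d_{C^0}(\varphi,\Id_M)\}$ for a constant $C_1$ depending only on $\Psi$ and a fixed auxiliary metric on $L'$. The hypothesis $H^1(L',\R)=0$ (inherited from \autoref{prop contL barcode}) forces $\tilde{L}$ to be exact, so it admits a primitive $f_{\tilde{L}}$, while the zero section has the trivial primitive $f_{L'}\equiv 0$.

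Second, the action spectrum of the pair $(L',\tilde{L})$ is the set of values $-f_{\tilde{L}}(p)$ for $p\in L'\cap\tilde{L}$, and $\gamma(L',\tilde L)$ is bounded by the diameter of this spectrum. For two intersection points $p_0,p_1$ and any $2$-disk $D\subset T^*L'$ bounded by the concatenation of a path $\alpha\subset\tilde{L}$ from $p_0$ to $p_1$ and a path $\beta\subset L'$ from $p_1$ back to $p_0$, Stokes' theorem gives $f_{\tilde{L}}(p_1)-f_{\tilde{L}}(p_0)=\int_D\omega$. Choosing $\beta$ to be a minimizing geodesic in $L'$ (length $\leq\mathrm{diam}(L')$) and $\alpha$ so that its projection follows $\beta$---made possible by Abouzaid--Kragh's theorem \cite{AK18} that $\pi|_{\tilde{L}}\colon\tilde{L}\to L'$ is a homotopy equivalence---I would construct a vertically-foliated disk $D$ of area at most the vertical thickness $C_1 d_{C^0}(\varphi,\Id_M)$ times $\mathrm{diam}(L')$. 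This would give the claim with $\kappa=C_1\cdot\mathrm{diam}(L')$. Finally one checks that the Weinstein reduction does not spoil the estimate: the Floer strips computing $\gamma(L',\varphi(L'))$ in $M$ can be confined to $\Psi(V)$ by a standard monotonicity/energy argument, and the discrepancy between $\Psi^*\lambda$ and $\lambda_{\mathrm{can}}$ on $V$ is an exact $1$-form whose contribution cancels out of the difference $\gamma$.

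The main obstacle is the second step: constructing the lift $\alpha$ of $\beta$ with controlled geometry when $\tilde{L}$ is only $C^0$-close (not $C^1$-close) to the zero section, so that $\tilde L$ need not be a graph. The projection $\pi|_{\tilde L}$ is only guaranteed to be a homotopy equivalence, and a naive lift of $\beta$ may wander, causing the vertically-foliated disk to self-intersect or acquire large area. A cleaner route, and probably the one actually used, is to avoid the disk-area argument entirely and instead invoke the $C^0$-continuity machinery of Buhovsky--Humili\`ere--Seyfaddini \cite{BHS19}, adapted to the Lagrangian setting via the K\"unneth formula (\autoref{prop formule de Kunneth}) and Seyfaddini's $\varepsilon$-shift trick \cite{Sey13}: one bounds each action selector $l([L],L',\varphi(L'))$ and $l([\mathrm{pt}],L',\varphi(L'))$ separately by a local $C^0$-Lipschitz estimate, so that their difference $\gamma$ inherits the same estimate. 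This replaces the area estimate by a Floer-theoretic shift argument, at the cost of more involved but established technical apparatus.
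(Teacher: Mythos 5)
Your primary argument (the vertically-foliated disk and Stokes' theorem) is not the paper's approach, and it has a gap that you yourself correctly identify: since $\varphi(L')$ is only $C^0$-close to $L'$, it need not be a graph over $L'$, the projection $\pi|_{\tilde L}$ is merely a homotopy equivalence, and there is no controlled lift $\alpha$ of the geodesic $\beta$. Consequently the ``vertical thickness $\times$ diameter'' bound on $\int_D\omega$ cannot be extracted; a $C^0$-small Lagrangian can fold back and accumulate large symplectic area, which is precisely why a naive area estimate does not give $C^0$-continuity and why the whole BHS machinery exists. So the first route does not constitute a proof.

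Your fallback gesture toward \cite{BHS19} points in the right direction, but the description ``bound each action selector $l([L],\cdot)$ and $l([\mathrm{pt}],\cdot)$ separately by a local $C^0$-Lipschitz estimate'' is not what the paper does, and I do not think it can be made to work directly: an individual selector is only defined up to the normalization of the primitives, and there is no reason for it to be $C^0$-Lipschitz in $\varphi$ on its own. The paper's actual argument is structurally different. It proceeds via two lemmas imported from \cite{BHS19}: a ball-avoidance lemma (if $\varphi(L')\cap T^*B$ equals the zero section over some fixed ball $B\subset L'$, then $\gamma(L',\varphi(L'))\leq C\, d_{C^0}(\Id,\varphi)$ — this is where Seyfaddini's $\varepsilon$-shift trick enters), and a factorization lemma in the product $M\times M$ saying that for $\Phi=\varphi\times\varphi^{-1}$ one can find $\Psi\in\Symp(M\times M)$ supported in $B\times B$ with $\Phi\circ\Psi$ supported off a smaller ball $B'\times B'$, both with $C^0$-norms controlled by $d_{C^0}(\varphi,\Id)$. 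The proof then combines the K\"unneth identity $\gamma(L'\times L',\Phi(L'\times L'))=2\gamma(L',\varphi(L'))$, the triangle inequality and symmetry of $\gamma$, and two applications of the ball-avoidance lemma (to $\Psi$ and to $\Phi\circ\Psi$). None of these structural steps — the coordinate doubling, the support-displacement, the triangle-inequality decomposition of $\gamma$ — appear in your proposal, so the fallback sketch is still missing the key ideas needed to close the argument.
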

This proposition is an adaptation to our context of a lemma of Buhovsky-Humilière-Seyfaddini \cite{BHS19}. In their paper, they proved the same result for a Lagrangian submanifold Hamiltonian isotopic to the zero section in a cotangent bundle.

\begin{proof}[Proof of \autoref{prop contL barcode}]

Let  $\varphi$ and $\psi$ be in $\Symp(M,\omega)$ such that $d_{C^0}(\varphi, \psi) \leq l$. We can assume without loss of generality that $l\leq\delta$. (See the choice of $l$ in Section \ref{sec gamma C0}.)

Indeed we have
\begin{eqnarray*}
d_{bottle}(\hat{\calB}(\varphi(L'),L),\hat{\calB}(\psi(L'),L)) & \leq & \tfrac{1}{2}\gamma(L',\psi^{-1}\circ\varphi(L'))\\
&\leq & \tfrac{1}{2} \kappa d_{C^0}(\psi^{-1}\circ\varphi,\Id_M)\\
&=&  \tfrac{1}{2} \kappa \sup\limits_{x\in M} d(\psi^{-1}(x),\varphi^{-1}(x))\\
&\leq&  \tfrac{1}{2} \kappa d_{C^0}(\psi,\varphi).
\end{eqnarray*}
Setting $K=\tfrac{1}{2}\kappa$, this proves \autoref{prop contL barcode}.
 
\end{proof}

\bigskip

Let us now briefly sketch the proof of \autoref{cor maj bottleneck} and \autoref{lem gamma C0} and set up some conventions. \autoref{cor maj bottleneck} will be implied by the case where $\psi=\Id_M$.

Let us fix $\varepsilon_0>0$, $\varepsilon'\ll\varepsilon_0$ and assume that all the Hamiltonian parts of the perturbation data at stake in this proof are of $C^2$-norm smaller than $\varepsilon'$.

Let us fix such a perturbation data collection $\calD$ such that $HF^t(\varphi(L'),L; \calD)$,  $HF^t(L',L;\calD)$, $HF^t(\varphi(L'),L'; \calD)$, $HF^t(L',L';\calD)$ and $HF^t(\varphi(L'),\varphi(L');\calD)$ are well defined.

\begin{rk}\label{rk choix perturbation data}
 In the case of $HF(L',L';\calD)$, we require that the Hamiltonian perturbation is defined in the following way (see also \autoref{rk representant classe fondamentale unique}). Let $f$ be a $\varepsilon'/2$-small Morse function defined on $L'$ with a unique maximum and a unique minimum. We extend it to a Hamiltonian $H$ which is supported on a $\varepsilon_0$-small tubular neighbourhood of $L'$. This construction implies that there is only one element in $CF^n(L',L';\calD)$ and only one in $CF^0(L',L';\calD)$. We perform the same construction in the case of $HF(\varphi(L'),\varphi(L');\calD)$. 
\end{rk}

We aim to find two morphisms of persistence modules $A=\{A^t\}_{t\in\R}$ and $B=\{B^t\}_{t\in\R}$ together with $\delta,\delta'\in\R$:
$$A^t:CF^t(\varphi(L'),L;\calD)\longmapsto CF^{t+\delta}(L',L;\calD),$$
$$B^t:CF^t(L',L;\calD)\longmapsto CF^{t+\delta'}(\varphi(L'),L;\calD),$$
such that these maps are filtered and their compositions are chain homotopic to shifts of persistence modules:
$$sh_{\varphi(L')}:\calB(\varphi(L'),L;\calD)\longmapsto\calB(\varphi(L'),L;\calD)[\delta+\delta'+\varepsilon']$$
$$sh_{L'}:\calB(L',L)\longmapsto\calB(L',L)[\delta+\delta'+\varepsilon'].$$
If they indeed satisfy the above conditions, these maps $A$ and $B$ provide a $\delta+\delta'+\varepsilon'$-matching. Then, to achieve the proof, we will only have to bound the shift $\delta+\delta'+\varepsilon'$ by the $C^0$ distance between $\varphi$ and $\Id_M$. We will prove that this shift is in fact equal to $\frac{1}{2}\gamma(L';\varphi(L');\calD)+\varepsilon'$, and use this to get the bound. This is the purpose of Section \ref{sec bottleneck}. Proving that this bound goes to zero when $\varphi$ $C^0$-converges to the identity is the purpose of the last Section \ref{sec gamma C0}.

Following Kislev and Shelukhin's idea \cite{KS18}, these maps $A$ and $B$ will come from the multiplication in Floer cohomology:
\begin{itemize}
\item $A$ corresponds to the multiplication by a specific class $[x]$ in $HF(L',\varphi(L');\calD)$.
\item $B$ corresponds to the multiplication by a specific class $[y]$ in $HF(\varphi(L'),L';\calD)$.
\end{itemize}
These choices will be achieved using Abouzaid-Kragh's \autoref{thm NLC} \cite{AK18}. This result requires the Lagrangian submanifolds to be in a cotangent space. To obtain this requirement, we will consider a symplectomorphism $\varphi$ $C^0$-close enough to the identity so that $\varphi(L')$ is included in a Weinstein neighbourhood of $L'$. We thus obtain two cohomologies which could be different: the one computed in $M$ and the one computed in $T^*L'$. Consequently, for the sake of our argument, we will first prove that we have the isomorphisms $$HF(L',\varphi(L');\calD,M)\cong HF(L',\varphi(L');\calD,T^*L'),$$ $$HF(\varphi(L'),L';\calD,M)\cong HF(\varphi(L'),L';\calD,T^*L').$$ Of course we will also prove that these isomorphisms respect the filtration. We will in fact only give the details for one of these isomorphisms since the proofs are identical for both. By abuse of notation, we denote by $\calD$ both the perturbation data in $M$ and its image in $T^*L$. This is the purpose of the following Section \ref{sec barcode cotangent}.

\begin{rk}\label{rk exacteness}
Now that the proof is sketched, we can explain the conditions required for the two Lagrangian submanifolds $L'$ and $L$ in \autoref{prop contL barcode}. These are both exactness conditions. In the previous chapters, to define Lagrangian Floer cohomology, the product and the action filtration, we require the considered Lagrangian submanifolds to be exact. This exactness condition is also required for \autoref{thm NLC} that will be used to construct the maps $A$ and $B$.

The condition $H^1(L',\R)=0$ guarantees that, for any symplectomorphism $\varphi\in \Symp(M,\omega)$, $\varphi(L')$ is an exact Lagrangian submanifold as well. With these conditions, we are sure that all the above mentioned objects used in the following proof will be well defined.

This implies that, when working with $\varphi\in\Ham(M,\omega)$, we can drop the condition $H^1(L',\R)=0$ for the weaker condition that $L'$ is exact. Indeed, the image of an exact Lagrangian submanifold by a Hamiltonian diffeomorphism is always exact.
\end{rk}

The following Subsections \ref{sec barcode cotangent} and \ref{sec bottleneck} are dedicated to the proof of \autoref{cor maj bottleneck} and Subsection \ref{sec gamma C0} to the proof of \autoref{lem gamma C0}.

\subsection{Equality of the barcodes in $M$ and in $T^*L'$}\label{sec barcode cotangent}

If the symplectomorphism $\varphi$ is $C^0$-close enough to the identity, then $\varphi(L')$ is included in a Weinstein neighbourhood of $L'$. This will contribute to the definition of the constants $\delta\text{ and }l\in\R$ at stake in \autoref{cor maj bottleneck} and \autoref{lem gamma C0}. By abuse of notation, we also denote $L',\varphi(L')$ their respective images in $T^*L'$ by a Weinstein embedding.  Denoting $\calD$ a perturbation data in $T^*L'$, we also denote $\calD$ its pull-back by the chosen Weinstein embedding. Let us recall that $HF^s(\varphi(L'),L';\calD,M)$ is the filtered cohomology computed in $M$ and $HF^s(\varphi(L'),L';\calD,T^*L')$ the filtered cohomology computed in $T^*L'$. We aim to prove that these two cohomologies are isomorphic and that this isomorphism respects the filtration.

This section is thus dedicated to the proof of the following \autoref{lem egalite hom cotangent}. The idea for this is to localize the Floer trajectories near $L'$. Indeed, this will imply that the Floer trajectories in $M$ and $T^*L'$ are in $1:1$ correspondence, and thus the two cochain complexes are isomorphic.

\begin{lemma}\label{lem egalite hom cotangent}
If $\varphi$ is $C^0$-close to the identity, then for an arbitrary choice of data there exists $C\in\R$ such that for all $s\in\R$ $$HF^s(\varphi(L'),L';\calD,M)\cong HF^s(\varphi(L'),L';\calD,T^*L')[C].$$
We can actually choose the primitive functions of the $1$-forms $\lambda_M$ and $\lambda_{T^*L'}$ restricted to the Lagrangian submanifolds such that the shift $C$ is equal to $0$.
\end{lemma}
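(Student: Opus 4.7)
The plan is to confine all Floer strips to a Weinstein neighborhood of $L'$, so that the Floer chain complexes computed in $M$ and in $T^*L'$ are identified via the Weinstein chart, and then to show that the two action filtrations coincide up to an overall constant that can be absorbed into the choice of primitives.

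First I would fix a Weinstein neighborhood $U \cong D_r T^*L'$ of $L'$ in $M$. If $\varphi$ is $C^0$-close enough to the identity (precisely $d_{C^0}(\varphi,\mathrm{Id}) < r/2$, say), then $\varphi(L') \subset U$. I can also arrange that the $C^2$-small Hamiltonian part of $\calD$ is supported inside $U$, by multiplying by a cutoff on $U$; this does not cost us anything because the perturbation is already chosen as small as desired (see \autoref{rk perturbation petite} and \autoref{rk choix perturbation data}). The Weinstein chart then identifies $\varphi(L'), L', \calD$ inside $U \subset M$ with $\widetilde{\varphi(L')}, \widetilde{L'}, \widetilde\calD$ inside $T^*L'$, and the generators of both Floer complexes (perturbed intersection points) are in canonical bijection.

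Second I would choose a compatible almost complex structure $J$ on $M$ which is of contact type on $\partial U$ (equivalently, extended from one on $T^*L'$ which is of contact type outside a compact set containing $\widetilde{\varphi(L')} \cup \widetilde{L'}$). By the standard integrated maximum principle for $J$-holomorphic strips with boundary in the interior of a pseudoconvex domain, every Floer strip with boundary on $\varphi(L')$ and $L'$ must remain inside $U$, both when we compute in $M$ and when we compute in $T^*L'$. Hence the same holomorphic strips are counted on both sides and the Weinstein identification yields an isomorphism of chain complexes
\[
CF^*(\varphi(L'), L'; \calD, M) \;\cong\; CF^*(\varphi(L'), L'; \widetilde\calD, T^*L').
\]

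Third I would compare the action filtrations. Under the Weinstein chart the two Liouville forms $\lambda_M$ and $\lambda_{T^*L'}$ differ by $dg$ on $U$ for some smooth function $g$. A direct application of formula \eqref{formule action hamiltonienne} shows that, for any path $\gamma$ from $\varphi(L')$ to $L'$ inside $U$, the two Hamiltonian actions $\calA^{H,M}_{\varphi(L'),L'}(\gamma)$ and $\calA^{H,T^*L'}_{\varphi(L'),L'}(\gamma)$ differ by
\[
\bigl[g(\gamma(1))-g(\gamma(0))\bigr] + \bigl[f^M_{L'}(\gamma(1))-f^{T^*L'}_{L'}(\gamma(1))\bigr] - \bigl[f^M_{\varphi(L')}(\gamma(0))-f^{T^*L'}_{\varphi(L')}(\gamma(0))\bigr].
\]
Since the primitives on each Lagrangian are determined up to a constant by $df^{T^*L'}_{\bullet} = df^M_{\bullet} + dg|_{\bullet}$, this difference collapses to a constant $C$ which depends only on the chosen constants of integration for $f_{L'}$ and $f_{\varphi(L')}$. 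Thus the filtered cohomologies agree up to an overall shift by $C$, and an appropriate adjustment of the primitives makes $C=0$.

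The only real obstacle is Step~2, the localization of Floer strips. Once the maximum principle is guaranteed by a careful choice of $J$ (contact type near $\partial U$), the rest is bookkeeping with exact primitives. A small point of care is that the identification also respects the Hamiltonian perturbation term $\int H(\gamma)\,dt$, which is automatic because $H$ has the same pullback on both sides; and that the chosen $J$ on $T^*L'$ needs to be admissible for the definition of Floer cohomology there (of contact type at infinity in $T^*L'$), which can be done by extending the compactly-supported $J$ coming from $M$ by any standard cylindrical structure outside $U$.
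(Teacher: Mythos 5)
Your proof is correct, but it takes a genuinely different route from the paper's argument. The paper avoids committing to a special almost complex structure. Instead it writes $\psi^*\lambda_{T^*L'}-\lambda_M = dF$ on a Weinstein chart $W$ (using $H^1(L',\R)=0$), cuts off $F$ to get a globally defined modification of the Liouville form on $M$, and uses the associated negative Liouville flow---which on $U$ matches the fibrewise contraction of $T^*L'$---to shrink $\varphi(L')$ toward the zero section. After flowing for time $T$ the action spectrum has shrunk by the factor $e^{-T}$, so all Floer strips have small energy, and Sikorav's monotonicity estimate (Proposition 4.3.1 of \cite{sik94}) confines them to $W$; the chain-level identification is then transported back to $\varphi(L')$ via the Liouville flow. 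Your approach---choosing $J$ of contact type near $\partial U$ (with respect to the modified primitive $\lambda_M+dg$ coinciding with $\psi^*\lambda_{T^*L'}$ on $U$) and invoking the integrated maximum principle---is shorter and bypasses both the Liouville-flow rescaling and the quantitative monotonicity estimate. The trade-off is that confinement is automatic only for a contact-type $J$, so to recover the statement ``for an arbitrary choice of data'' you should add the (standard) observation that filtered Floer cohomology is independent of the almost complex part of $\calD$: a continuation with fixed Hamiltonian and varying $J$ costs no energy and hence respects the action filtration. Both arguments handle the shift $C$ in essentially the same way, tracing it to the exact difference $\psi^*\lambda_{T^*L'}-\lambda_M$ on $U$ and absorbing the resulting constants into the primitives $f_{L'}$ and $f_{\varphi(L')}$.
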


\begin{proof}
The idea here is to retract the Lagrangian submanifold $\varphi(L')$ by the negative Liouville flow. This will decrease the diameter of the spectrum and thus allow to have small enough energy estimates on the moduli spaces.

 Let us choose two Weinstein's tubular neighbourhoods $U$ and $W$ of $L'$ such that $U\Subset W$.  We denote $\psi:W\rightarrow T^*L'$, the symplectic embedding provided by Weinstein's theorem. Let us suppose that $\varphi$ is close to $\Id_M$, so that we have $\varphi(L')$ included in $U$. We have two Liouville forms on $W$. The first one is the Liouville form $\lambda_M$ restricted to $W$. The second one is the Liouville form obtained from the Liouville form $\lambda_{T
^*L'}$ on $T^*L'$: $\psi^*\lambda_{T^*L'}$. Let us recall that $\psi^*\lambda_{T^*L'}-\lambda_M$ is closed  on $W$. Since $H^1(L',\R)=0$ we have $H^1(W,\R)=0$. Then $\psi^*\lambda_{T^*L'}-\lambda_M$ is exact on $W$ and consequently there exists a function $F:W\rightarrow\R$ such that $\psi^*\lambda_{T^*L'}=(\lambda_M)_{|W}+dF$.

Let us pick a cut-off function $\beta:W\rightarrow \R$ such that $\beta$  is constant, equal to $1$ on $U$ and equal to $0$ near the boundary of $W$. By abuse of notation, we denote $F$ the function defined on $M$ equal to $\beta F$ on $W$ and continuously extended by $0$ outside of $W$. The $1$-form $(\lambda_M+dF)$ is a Liouville form on $M$ equal to $\psi^*\lambda_{T^*L'}$ on $U$. We thus obtain a globally defined negative Liouville flow (i.e. the flow of the negative Liouville vector field) on $M$ which preserves $U$ and matches with the negative Liouville flow on $T^*L'$.

In $T^*L'$, let us denote $\varphi^t_{-\calL}$ the negative Liouville flow. When we apply this flow to $\psi(\varphi(L'))$ for $t\in\R^+$, we obtain a smooth path $(L'_t)_{t\in\R^+}$ of Lagrangian submanifolds in $T^*L'$. We can now consider the smooth path of Lagrangian submanifolds in $M$ given by $(L_t)_{t\in\R^+}=(\psi^{-1}(L'_t))_{t\in\R^+}$.

\begin{lemma}\label{lem égalité spectres}
For all $t\in\R^+$ we have $\Spec(L'_t,L';\mathcal{D}, T^*L')=\Spec(L_t,L';\mathcal{D}, M) + C_t$, where $C_t\in \R$. Moreover, we can choose the primitive functions of the $1$-forms $\lambda_M$ and $\lambda_{T^*L'}$ restricted to the Lagrangian submanifolds such that $C_t$ is equal to $0$ for all $t$.
\end{lemma}

From now on, assume that the primitives on the Lagrangian submanifolds are chosen so that for all $t\in\R$, $C_t=0$. Since in $T^*L'$ we have $(\varphi^t_{-\calL})^*\omega = e^{-t}\omega$, and $(\varphi^t_{-\calL})$ is equal to the identity on $L'$, we get
\begin{equation}\label{spectre flot de Liouville}
\Spec(L'_t,L';\mathcal{D}, T^*L')=e^{-t} \Spec(L'_0,L';\mathcal{D}, T^*L').    
\end{equation}

\begin{figure}[h]
    \centering
    \includegraphics[scale=1.5]{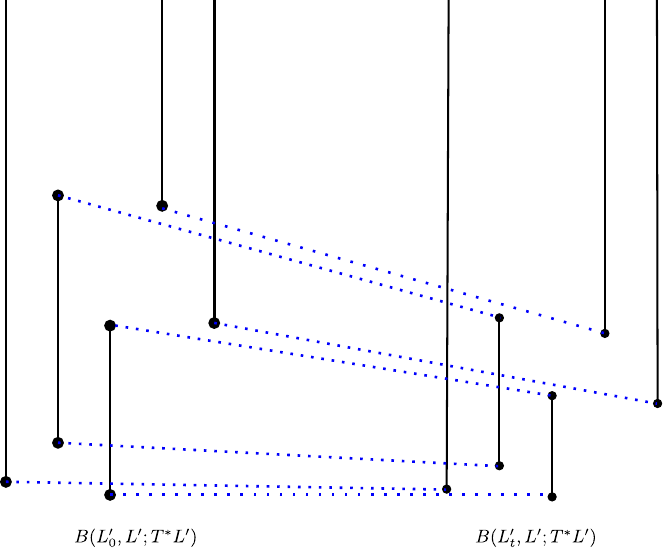}
    \caption{Evolution of the barcode during the Liouville retraction}
    \label{fig barcodes Liouville}
\end{figure}

\begin{lemma}\label{lem hom equ}
For $T$ large enough, there is a canonical identification between the cochain complexes
$CF(L'_T,L';\calD,T^*L')$ and $CF(L'_T,L';\calD,M)$ given by the Weinstein's neighbourhood embedding.
\end{lemma}
\begin{cor}
For $T$ large enough there is a canonical isomorphism
$$HF^s(L'_T,L';\calD,T^*L')\cong HF^s(L'_T,L';\calD,M)$$
holding for all $s\in\R$.
\end{cor}

\begin{rk}
Let us recall that if we are working in an exact symplectic manifold $(M,d\lambda)$ and the path $(L_t)_{t\in[0,1]}$ is a smooth path of exact Lagrangian submanifolds, there is a smooth path $(\phi_t)_{t\in[0,1]}$ in $\Ham(M,d\lambda)$ such that
\begin{equation}\label{chemin Lag Ham}
    \forall t\in[0,1],\quad \phi_t(L_0)=L_t.
\end{equation}
Consequently, since the paths $(L_t)_{t\in\R}$ and $(L'_t)_{t\in\R}$ are smooth, the associated barcode paths are continuous according to the previous expression (\ref{chemin Lag Ham}) and \autoref{prop barcode distance Hofer}. Let us denote $B_t=B(L_t,L';\calD,M)$ and $B'_t=B(L'_t,L';\calD,T^*L')$. The previous lemma tells that for $T$ large enough, $B_T=B'_T$. 

Moreover let $(B_t)_{t\in[0;1]}$ be a continuous path of barcodes such that there is a positive continuous function $f:\R\rightarrow\R$ which satisfies
$$\forall t\in[0;1],\Spec(B_t)=f(t)\Spec(B_0).$$
Since there is no bifurcation in the spectrum, $B_t$ is a dilation by $f(t)$ of $B_0$.
\end{rk}

\begin{lemma}\label{lem eg Liouville}
Let $L$ be a closed exact Lagrangian submanifold in a Weinstein neighbourhood $U$ of $L'$ with associated embedding $\psi$. For all $t\in[-T,0]$  let us denote $L'^t=\varphi_{-\calL}^{t}\circ \psi (L)$. Assume that for all $t\in[-T,0]$ $L'^t\subset U$. Let us denote $L^t=\psi^{-1}(L'^t)$. Then for all $s\in\R$
$$HF^s(L'^t,L';(\varphi_{-\calL}^{t})_*\calD,T^*L')\cong HF^{se^{-t}}(\psi(L),L';\calD,T^*L'),$$
$$HF^s(L^t,L';(\varphi_{-\calL}^{t})_*\calD,M)\cong HF^{se^{-t}}(L,L';\calD,M).$$
\end{lemma}
Applying this lemma to $L_T$ together with \autoref{lem hom equ} and the fact that $\varphi_{-\calL}^{-T}(L'_T)=\varphi(L')$, we finally obtain
$$HF^s(\varphi(L'),L';\calD,M)\cong HF^s(\varphi(L'),L';\calD,T^*L')$$
for all $s\in\R$. This concludes the proof of \autoref{lem egalite hom cotangent}.
\end{proof}

Let us now prove the Lemmas used in the proof of \autoref{lem egalite hom cotangent}.

\begin{proof}[Proof of \autoref{lem égalité spectres}]
\autoref{rk eg action intersection} tells us that the complexes
$CF(L'_t,L';\mathcal{D}, T^*L')$ and $CF(L_t,L';\mathcal{D}, M)$ can respectively be seen as the complexes $CF({L'}_t^{\mathcal{D}},{L'};T^*L')$ and \\ $CF(L_t^{\mathcal{D}},{L'}; M)$ where ${L'}_t^{\mathcal{D}}$ and $L_t^{\mathcal{D}}$ respectively denote the images of $L'_t$ and $L_t$ by the time-$1$ of the Hamiltonian perturbation as explained in \autoref{rk eg action intersection}. The actions of the original complexes and those of the new ones are equal up to a shift by constants respectively $c$ and $c'$. We assume, without any loss of generality, that we have chosen a good almost complex structure.

Fix $t\in \R^+$. Let $x$ be in $\chi(L_t^{\mathcal{D}},L')\subset M$, with action $\calA_{L_t^{\mathcal{D}},L'}(x)$. Denote $x'=\psi(x)$ which is consequently in $\chi({L'}_t^{\mathcal{D}},L')\subset T^*L'$ with action $\calA_{{L'}_t^{\mathcal{D}},L'}(x')$. Set $C_t=\calA_{{L'}_t^{\mathcal{D}},L'}(x')-\calA_{L_t^{\mathcal{D}},L'}(x)$.

For any other $y\in \chi(L_t^{\mathcal{D}}, L')$ together with $y'=\psi(y)\in \chi({L'}_t^{\mathcal{D}}, L')$, let us denote $\gamma_1$ a path from $x$ to $y$ in $L_t^{\mathcal{D}}$ and $\gamma_2$ a path from $y$ to $x$ in $L'$. We denote $\gamma_1'$ and $\gamma_2'$ their respective images by $\psi$. From \autoref{def action} and the fact that the differential decreases the action, we have
$$\calA_{L_t^{\mathcal{D}},L'}(y)-\calA_{L_t^{\mathcal{D}},L'}(x)=\int_{\gamma_1}\lambda_M+\int_{\gamma_2}\lambda_M,$$
$$\calA_{{L'}_t^{\mathcal{D}},L'}(y')-\calA_{{L'}_t^{\mathcal{D}},L'}(x')=\int_{\gamma'_1}\lambda_{T^*L'}+\int_{\gamma'_2}\lambda_{T^*L'}.$$
Denoting $\gamma_1\sharp\gamma_2$ the concatenation of $\gamma_1$ and $\gamma_2$ we get
\begin{eqnarray*}
\calA_{{L'}_t^{\mathcal{D}},L'}(y') & = & \calA_{{L'}_t^{\mathcal{D}},L'}(x') + \int_{\gamma_1'\sharp\gamma_2'}\lambda_{T^*L'}\\
& = & \calA_{{L'}_t^{\mathcal{D}},L'}(x') + \int_{\gamma_1\sharp\gamma_2}\psi^*\lambda_{T^*L'}\\
& = & \calA_{{L'}_t^{\mathcal{D}},L'}(x') + \int_{\gamma_1\sharp\gamma_2}\lambda_{M}+dF\\
& = & \calA_{{L'}_t^{\mathcal{D}},L'}(x') + \int_{\gamma_1\sharp\gamma_2}\lambda_{M}\quad\text{ since $\gamma_1\sharp\gamma_2$ is a loop}\\
& = & \calA_{{L'}_t^{\mathcal{D}},L'}(x') + \calA_{L_t^{\mathcal{D}},L'}(y)-\calA_{L_t^{\mathcal{D}},L'}(x) \\
& = & \calA_{L_t^{\mathcal{D}},L'}(y) + C_t.
\end{eqnarray*}
Since this is true for any $t\in \R^+$ and any pair $(y,y')$ such as before, we can conclude that
$$\forall t\in\R^+,\quad \Spec({L'}_t^{\mathcal{D}},L';T^*L')=\Spec(L_t^{\mathcal{D}},L';M) + C_t.$$
So
$$\forall t\in\R^+,\quad \Spec({L'}_t,L';\mathcal{D},T^*L')=\Spec(L_t,L';\mathcal{D},M) + C_t+c'-c.$$
Now, for all $t$, choosing two primitive functions on $L'_t$ and $L_t$ such that $\calA_{{L'}_t^{\mathcal{D}},L'}(x')-c=\calA_{{L}_t^{\mathcal{D}},L'}(x)-c'$ gives $C_t+c'-c=0$, which finishes this proof. 

\end{proof}

\begin{proof}[Proof of \autoref{lem hom equ}]
These two cochain complexes are generated by the perturbed intersection points, which are identified by Weinstein's neighbourhood embedding. To prove this lemma, we thus have to show that for $T$ large enough, the differential is the same, i.e. that the $J$-holomorphic curves between two intersection points agree. To do so we will show that if $T$ is large enough, no such $J$-holomorphic curves can go outside of $W$.

 Since we are working with a Liouville domain, which is always tame, Sikorav's proposition $4.3.1$ and its corollary in \cite{sik94} are verified. Consequently, there exists a constant $\kappa\in\R$, such that for any compact subset $K$, any compact connected $J$-holomorphic curve $u$ such that $u\cap K\neq\emptyset$, and $\partial u\subset K$ satisfies 
 \begin{equation}\label{inclusion Audin Lafontaine}
     u\subset U(K,\kappa \calA(u)),
 \end{equation}
 where $U(K,\kappa \calA(u))$ is the $\kappa \calA(u)$-neighbourhood of $K$.
 Let us fix $\delta>0$ small enough such that we can find a compact neighbourhood $K$ of $L'$ such that $ U(K,\delta)\subset W$.

Let us denote $\Gamma_t$, the diameter of the spectrum $\Spec(L'_t,L';\mathcal{D}, T^*L')$, which is equal by \autoref{lem égalité spectres} to the diameter of the spectrum $\Spec(L'_t,L';\mathcal{D}, M)$. From the previous equality \ref{spectre flot de Liouville}, we have $\Gamma_t=e^{-t}\Gamma_0$. Set $t_\delta=\ln(\frac{\Gamma_0\kappa}{\delta})$. We then have
$$\forall t \geq t_\delta, \Gamma_t\leq \frac{\delta}{\kappa}.$$
Let us recall that, the area of a $J$-holomorphic strip between two intersection points is equal to the difference of action between the two intersection points. This area is thus bounded by the diameter of the spectrum $\Gamma_t$. Let us fix $T>t_\delta$. A $J$-holomorphic strip $u$ between two generators of $CF(L'_T,L';\calD,M)$ satisfies $\calA(u)\leq \frac{\delta}{\kappa}$. Inclusion \ref{inclusion Audin Lafontaine} then becomes
$$u\subset U(K,\kappa \calA(u))= U(K,\delta)\subset W.$$
This means that the $J$-holomorphic strips defining the differential of the chain complex $CF(L'_T,L';\calD,M)$ stay in $W$. They are identified by the embedding $\psi$ to the $J$-holomorphic strips defining the differential of the chain complex $CF(L'_T,L';\calD,T^*L')$. Consequently the differential of the two chain complexes behave well with respect to the embedding $\psi$. This concludes the proof of this lemma.
\end{proof}

\begin{rk}
In this proof, we only dealt with $J$-holomorphic curves computing the differential. However, we can conduct the exact same proof with other moduli spaces. This implies that the $\mu^k$-operations are preserved by the isomorphism given by \autoref{lem hom equ}.
\end{rk}

\begin{proof}[Proof of \autoref{lem eg Liouville}]
The symplectic invariance of Floer cohomology tells that there is a function $f:\R\rightarrow\R$ such that
$$HF^s(L'^t,\varphi_{-\calL}^{t}(L');(\varphi_{-\calL}^{-t})_*\calD,T^*L')\cong HF^{f(s)}(\psi(L),L';\calD,T^*L'),$$
$$HF^s(L^t,\psi^{-1}\circ\varphi_{-\calL}^{t}(L');(\varphi_{-\calL}^{t})_*\calD,M)\cong HF^{f(s)}(L,L';\calD,M).$$
We can indeed write the second isomorphism since the negative Liouville flow has been globally defined on $M$.
Since $\varphi_{-\calL}^t(L')=L'$ for all $t$, we have
$$HF^s(L'^t,L';(\varphi_{-\calL}^{t})_*\calD,T^*L')\cong HF^{f(s)}(\psi(L),L';\calD,T^*L'),$$
$$HF^s(L^t,L';(\varphi_{-\calL}^{t})_*\calD,M)\cong HF^{f(s)}(L,L';\calD,M).$$
Moreover Equality \ref{spectre flot de Liouville} tells that $f(s)=se^{-t}$. We then have
$$HF^s(L'^t,L';(\varphi_{-\calL}^{t})_*\calD,T^*L')\cong HF^{se^{-t}}(\psi(L),L';\calD,T^*L').$$
The same computation as in \autoref{lem égalité spectres} gives
$$HF^s(L^t,L';(\varphi_{-\calL}^{t})_*\calD,M)\cong HF^{se^{-t}}(L,L';\calD,M).$$
\end{proof}

Let us assume, once and for all that $\varphi$ is sufficiently close to identity, so that $\varphi(L')$ is inside the Weinstein neighbourhood of $L'$ and reciprocally.

\subsection{Bounding the bottleneck distance by the spectral norm}\label{sec bottleneck}
In this section, we will bound the bottleneck distance by the spectral norm $\gamma$. \autoref{cor maj bottleneck} is a consequence of the following proposition.
\begin{prop}\label{prop majoration par bottleneck}
Let $L$ and $L'$ be two closed exact Lagrangian submanifolds in a Liouville domain $(M,\omega)$. There exists $\delta>0$, independant of $L$, such that for all  $\varphi\in\Symp(M,\omega)$ satisfying $d_{C^0}(\varphi,\Id_M)\leq\delta$, then there exists $C\in\R$ such that
$$d_{bottle}(B(L',L),B(\varphi(L'),L)[C])\leq \tfrac{1}{2}\gamma(L',\varphi(L')).$$
\end{prop}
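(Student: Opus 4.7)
The plan is to construct, for $\varphi$ sufficiently $C^0$-close to $\Id_M$, an interleaving between the persistence modules associated to $CF^{*,t}(L',L;\calD)$ and $CF^{*,t}(\varphi(L'),L;\calD)$ whose shift is controlled by $\gamma(L',\varphi(L'))$, following the strategy outlined in Section \ref{subsec idea proof} and adapting the Kislev-Shelukhin multiplicative approach \cite{KS18} from the Hamiltonian Lagrangian setting to our pair of (possibly non-Hamiltonian-isotopic) exact Lagrangians. First I would choose $\delta>0$ so small that whenever $d_{C^0}(\varphi,\Id_M)\le\delta$ the Lagrangian $\varphi(L')$ is contained in a fixed Weinstein neighbourhood of $L'$ (and vice versa), which lets me invoke \autoref{lem egalite hom cotangent} and compute all the relevant Floer complexes either in $M$ or in $T^*L'$ with matching filtrations; this is what opens the door to \autoref{thm NLC} of Abouzaid-Kragh.

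Next I would fix a $\varepsilon'$-small perturbation datum $\calD$ as in \autoref{rk choix perturbation data}, and apply \autoref{thm NLC} inside $T^*L'$ to obtain preferred fundamental-class images $[x]\in HF^0(L',\varphi(L');\calD)$ and $[y]\in HF^0(\varphi(L'),L';\calD)$. I would then select cocycle representatives $x\in CF^0(L',\varphi(L');\calD)$ and $y\in CF^0(\varphi(L'),L';\calD)$ whose actions $\calA(x)$ and $\calA(y)$ are within $\varepsilon'$ of the respective action selectors $l([x],L',\varphi(L'))$ and $l([y],\varphi(L'),L')$ (such representatives exist by \autoref{def selc action lag}). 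The two desired interleaving maps are then
\[
A=\mu^2(\,\cdot\,,x)\colon CF^{*,t}(\varphi(L'),L;\calD)\longrightarrow CF^{*,t+\calA(x)+\varepsilon'}(L',L;\calD),
\]
\[
B=\mu^2(\,\cdot\,,y)\colon CF^{*,t}(L',L;\calD)\longrightarrow CF^{*,t+\calA(y)+\varepsilon'}(\varphi(L'),L;\calD),
\]
whose filtered character is exactly the content of \autoref{lem mult pers mod}.

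The third step is to verify that the compositions $A\circ B$ and $B\circ A$ are the expected shift maps, up to filtered chain homotopy. By \autoref{lem associativite barcodes}, $A\circ B=\mu^2(\mu^2(\,\cdot\,,y),x)$ is homotopic to $\mu^2(\,\cdot\,,\mu^2(y,x))$ and the compatibility of the Abouzaid-Kragh quasi-isomorphisms with products ensures that $\mu^2(y,x)$ represents the unit $[L']\in HF(L',L')$, so \autoref{lem mult id barcodes} identifies $A\circ B$ with the tautological $\bigl(\calA(x)+\calA(y)+O(\varepsilon')\bigr)$-shift; the analogous argument with $\mu^2(x,y)\in CF(\varphi(L'),\varphi(L'))$ handles $B\circ A$. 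Writing $\delta_1=\calA(x)+\varepsilon'$ and $\delta_2=\calA(y)+\varepsilon'$, one concludes that $(A,B)$ is a $(\delta_1,\delta_2)$-interleaving. Shifting one barcode by $C=\tfrac{1}{2}(\delta_1-\delta_2)$ symmetrises this into a $\tfrac{1}{2}(\delta_1+\delta_2)$-interleaving, and the isometry theorem yields
\[
d_{bottle}\bigl(B(L',L;\calD),\,B(\varphi(L'),L;\calD)[C]\bigr)\le \tfrac{1}{2}\bigl(\calA(x)+\calA(y)\bigr)+\varepsilon'.
\]
Finally, using the duality $l([L'],\varphi(L'),L')=-l([pt],L',\varphi(L'))$ explained just before \eqref{eq sym gamma} together with the infimum-defining minimality of $\calA(x),\calA(y)$, the right-hand side is bounded by $\tfrac{1}{2}\gamma(L',\varphi(L'))+O(\varepsilon')$; letting $\varepsilon'\to 0$ using continuity in the Hofer norm (\autoref{prop barcode distance Hofer}) gives the stated estimate.

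The main obstacle I anticipate is the book-keeping required to make sure the filtered associativity and unit-multiplication identifications can be arranged simultaneously with a single perturbation datum $\calD$, since the naive choices of data for the five different complexes at play need not be mutually compatible. In practice this will be handled by working with a family of sufficiently small Morse-type perturbations as in \autoref{rk choix perturbation data} and invoking the freedom afforded by Inequality \eqref{energie action avec perturbation} to absorb the discrepancies into the $\varepsilon'$ that is taken to $0$ at the end. A secondary subtlety is ensuring that the action selectors $l([x],L',\varphi(L'))$ used above are truly attained by cycles (up to $\varepsilon'$) in the perturbed complex; this follows from the spectrality property of \autoref{prop propriétés action selectors}.
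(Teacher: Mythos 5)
Your proposal follows essentially the same route as the paper's proof: the same localization to $T^*L'$ via \autoref{lem egalite hom cotangent}, the same use of Abouzaid--Kragh to produce the cycles $x,y$, the same filtered multiplication maps $\mu^2(\cdot,x)$, $\mu^2(\cdot,y)$ as interleaving morphisms, the same invocations of \autoref{lem mult pers mod}, \autoref{lem associativite barcodes} and \autoref{lem mult id barcodes}, the same symmetrizing shift $C=\tfrac12(\calA(x)-\calA(y))$, and the same $\varepsilon'\to 0$ limiting argument. The subtleties you flag at the end (compatibility of a single perturbation datum across all five complexes, spectrality guaranteeing representatives near the selector values, and chain-level identification of $\mu^2(y,x)$ with the canonical cycle $z$) are precisely the points the paper resolves via \autoref{rk choix perturbation data} and the uniqueness of the top/bottom generators for the Morse-type perturbation, so your account is faithful to the argument.
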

In \cite{KS18}, Kislev and Shelukhin proved a similar statement in a different setting. In their case, $L=L'$ is a weakly monotone Lagrangian submanifold in a closed symplectic manifold and $\varphi$ is a Hamiltonian diffeomorphism. The following proof of \autoref{prop majoration par bottleneck} is an adaptation of their proof to our setting.

We choose $\delta>0$ so that, for all $\varphi\in\Symp(M,\omega)$, if $d_{C^0}(\varphi,\Id_M)\leq\delta$ then $\varphi(L')$ is included in a Weinstein neighbourhood of $L'$. We will denote this Weinstein neighbourhood $W(L')$.

We can now prove \autoref{cor maj bottleneck} required to prove \autoref{prop contL barcode}.

\begin{proof}[Proof of \autoref{cor maj bottleneck}]
To prove this proposition, we will apply \autoref{prop majoration par bottleneck} to the symplectomorphism $\psi^{-1}\circ\varphi$. As in \autoref{prop majoration par bottleneck}, we choose $\delta>0$ so that, for all $\phi\in\Symp(M,\omega)$, if $d_{C^0}(\phi,\Id_M)\leq\delta$ then $\phi(L')$ is included in $W(L')$, a Weinstein neighbourhood of $L'$. Let us assume that $d_{C^0}(\varphi,\psi)\leq\delta$.
\begin{eqnarray*}
d_{C^0}(\varphi,\psi)&=& \max\left\{\sup\limits_{x\in M}d(\varphi(x),\psi(x)),\quad\sup\limits_{x\in M}d(\varphi^{-1}(x),\psi^{-1}(x))\right\}\\
&\geq& \sup\limits_{x\in M}d(\varphi^{-1}(x),\psi^{-1}(x))\\
&=& \sup\limits_{x\in M}d(\psi^{-1}\circ\varphi(x),x)\\
&=& d_{C^0}(\psi^{-1}\circ\varphi,\Id_M).
\end{eqnarray*}
So we get $$d_{C^0}(\psi^{-1}\varphi,\Id_M)\leq\delta.$$
We introduced the set of barcodes quotiented by an overall shift $\hat{\calB}$ to get rid of the shift in the inequality of \autoref{prop majoration par bottleneck}.  Indeed, when working with the barcodes in $\hat{\calB}$, this inequality becomes
$$d_{bottle}(\hat{B}(L',L),\hat{B}(\varphi(L'),L))\leq \tfrac{1}{2}\gamma(L',\varphi(L')).$$
By invariance of the barcode under the action of a symplectomorphism, we have
$$d_{bottle}(\hat{B}(\varphi(L'),L),\hat{B}(\psi(L'),L))=d_{bottle}(\hat{B}(L',\psi^{-1}(L)),\hat{B}(\psi^{-1}\circ\varphi(L'),\psi^{-1}(L))).$$
By the previous inequality and \autoref{prop majoration par bottleneck}, we then have
$$d_{bottle}(\hat{B}(L',\psi^{-1}(L)),\hat{B}(\psi^{-1}\circ\varphi(L'),\psi^{-1}(L)))\leq \tfrac{1}{2}\gamma(L',\psi^{-1}\circ\varphi(L')),$$
which concludes the proof of this proposition.

\end{proof}

Let us now prove \autoref{prop majoration par bottleneck} and the desired bound. We start by introducing the interleaving maps.

\bigskip

\noindent\textbf{Set up to define the interleaving maps}

As explained in \autoref{rk exacteness}, the condition $H^1(L',\R)=0$ guarantees that for all $\varphi\in \Symp(M,\omega)$, $\varphi(L')$ is an exact Lagrangian submanifold. Hence, we can apply Abouzaid-Kragh's \autoref{thm NLC} \cite{AK18}, thus obtaining two isomorphisms
$$ HF(L',L';\calD,T^*L')\xrightarrow[\sigma]{\sim} HF(L',\varphi(L');\calD,T^*L'),$$
$$HF(\varphi(L'),\varphi(L');\calD,T^*L')\xrightarrow[\sigma']{\sim} HF(\varphi(L'),L';\calD,T^*L').$$
Moreover, by \autoref{prop lien entre HF et H} applied to $L'$, together Poincaré duality there is an isomorphism $$\theta:H_*(L')\rightarrow HF^{n-*}(L',L';\calD,T^*L').$$
By symplectic invariance of Floer cohomology, we have $$HF^*(\varphi(L'),\varphi(L');\phi^*\calD,T^*L')\cong HF^*(L',L';\calD,T^*L').$$
As above, we also have an isomorphism
$$\theta':H_*(L')\rightarrow HF^{n-*}(\varphi(L'),\varphi(L');\calD,T^*L').$$

Let us choose $c\in HF(L',L';\calD,T^*L')$ to be the class $\theta([L'])$, and $c'\in HF(\varphi(L'),\varphi(L');\calD,T^*L')$ the class $\theta'([L'])$. Moreover assume that the gradings are chosen so that $c$ and $c'$ are both of degree $0$.

\autoref{lem egalite hom cotangent} provides two isomorphisms $\zeta$ and $\zeta'$ between $HF(L',\varphi(L');T^*L')$ and $HF(L',\varphi(L');M)$ and between $HF(\varphi(L'),L';T^*L')$ and $HF(\varphi(L'),L';M)$.
We can now choose two cycles $x\in CF(L',\varphi(L');M)$ and $y\in CF(\varphi(L'),L';M)$ such that
$$[x]=\zeta(\sigma(c))$$
$$[y]=\zeta'(\sigma'(c')).$$

Let us choose two primitive functions $f':L'\rightarrow \R$ and $g:\varphi(L')\rightarrow \R$ such that $df'=\lambda_{|L'}$, $dg=\lambda_{|\varphi(L')}$ and such that we can find
\begin{itemize}
    \item $z$ such that $[z]=c\in HF(L',L';\calD)$ with $\calA(z)\leq \varepsilon'/2\ll\varepsilon_0$
    \item $z'$ such that $[z']=c'\in HF(\varphi(L'),\varphi(L');\calD)$ with $\calA(z')\leq \varepsilon'/2\ll\varepsilon_0.$
\end{itemize}

According to the previous choices of degree, we actually have $[z]\in HF^0(L',L';\calD)$ and $[z']\in HF^0(\varphi(L'),\varphi(L');\calD)$.

\begin{lemma}\label{lem mult identité}
The multiplication maps
$$m_2(\cdot,z):CF^*(L',L;\calD)\rightarrow CF^*(L',L;\calD)[\varepsilon']$$
$$m_2(\cdot,z'):CF^*(\varphi(L'),L;\calD)\rightarrow CF^*(\varphi(L'),L;\calD)[\varepsilon']$$
are filtered chain-homotopic to the standard inclusions and hence induce the $\varepsilon'$-shift maps on the persistence modules.
\end{lemma}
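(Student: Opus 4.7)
The plan is to reproduce, in the present slightly modified setup, the strategy already used for \autoref{lem mult id barcodes}. Two ingredients carry the argument: \autoref{prop identite mult z} provides the required chain-level isomorphism, while \autoref{lem mult pers mod} gives the quantitative bound on the action shift. Combining them shows that $m_2(\cdot,z)$ and $m_2(\cdot,z')$ realize the $\varepsilon'$-shift morphisms at the level of persistence modules.

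Concretely, I would first treat $m_2(\cdot,z) : CF^*(L',L;\calD) \to CF^*(L',L;\calD)[\varepsilon']$. The perturbation data for $(L',L')$ has been chosen in \autoref{rk choix perturbation data} to come from an $\varepsilon'/2$-small Morse function $f$ on $L'$ with a unique maximum and a unique minimum, so the hypotheses of \autoref{prop identite mult z} are met. That proposition then yields a chain-level isomorphism $\mu^2(\cdot,z) : CF(L',L;H) \to CF(L',L;H_f)$ which, under the standard identifications coming from the continuation map between the perturbations $H$ and $H_f = f\sharp H$, agrees with the standard inclusion. To control the filtration shift, I would then invoke \autoref{lem mult pers mod}: since $\calA(z) \leq \varepsilon'/2$ and the perturbation data is $\varepsilon'/2$-small, the multiplication shifts action by at most $\calA(z) + \varepsilon'/2 \leq \varepsilon'$. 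Hence $m_2(\cdot,z)$ is filtered chain-homotopic to the $\varepsilon'$-shift morphism and therefore induces it on the persistence module.

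For the second map $m_2(\cdot,z') : CF^*(\varphi(L'),L;\calD) \to CF^*(\varphi(L'),L;\calD)[\varepsilon']$, the argument is completely analogous, with the pair $(L',L')$ replaced by $(\varphi(L'),\varphi(L'))$. The Hamiltonian perturbation for this pair was constructed in exactly the same way in \autoref{rk choix perturbation data}, the bound $\calA(z') \leq \varepsilon'/2$ plays the same role, and \autoref{lem mult pers mod} again yields a filtered shift of at most $\varepsilon'$. The only delicate point — and what I expect to be the main obstacle to verify carefully — is the bookkeeping of perturbation data: we are simultaneously dealing with cochain complexes for four pairs of Lagrangian submanifolds, and one must check that the common choice of $\varepsilon'$-small data $\calD$ is indeed compatible with the hypotheses of \autoref{prop identite mult z} for both pairs $(L',L')$ and $(\varphi(L'),\varphi(L'))$. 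Once this compatibility is in place, both statements of the lemma follow immediately from the two cited results.
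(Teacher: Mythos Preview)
Your proposal is correct and follows essentially the same approach as the paper. The paper's proof is a single line invoking \autoref{lem mult id barcodes} (applied with $\varepsilon = \varepsilon'/2$, so that the resulting $2\varepsilon$-shift becomes the claimed $\varepsilon'$-shift); you have simply unpacked that lemma's argument by citing \autoref{prop identite mult z} and \autoref{lem mult pers mod} directly, which is exactly how \autoref{lem mult id barcodes} itself is proved.
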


\begin{proof}
This lemma is an immediate consequence of \autoref{lem mult id barcodes}.
\end{proof}

By abuse of notation, to make the following expressions clearer, we denote $[L']=\zeta\circ\sigma\circ\theta ([L'])\in HF^0(L',\varphi(L');\calD,M)$ and $[\varphi(L')]=\zeta'\circ\sigma'\circ\theta'([L'])\in HF^0(\varphi(L'),L';\calD,M)$.
Now, let us choose $x\in CF^0(L',\varphi(L');\calD)$ and $y\in CF^0(\varphi(L'),L';\calD)$ as above such that:
$$l([L'];L',\varphi(L');\calD) \leq \calA(x)=a \leq l([L'];L',\varphi(L');\calD)+\varepsilon',$$
$$l([\varphi(L')];\varphi(L'),L';\calD) \leq \calA(y)=b \leq l([\varphi(L')];\varphi(L'),L';\calD) + \varepsilon',$$
which is possible by definition of $l([L'];L',\varphi(L');\calD)$ and $l([\varphi(L')];\varphi(L'),L';\calD)$.

Moreover, by definition of $x,y$, we have $[\mu^2(y,x)]=[z]\in HF^0(L',L';\calD)$. Indeed, up to the appropriate isomorphisms, the cycles $x,y,z$ all represent the same class $[L]$ in their respective cochain complexes. With our particular choice of perturbation data for the pair $(L',L')$ as explained in \autoref{rk choix perturbation data}, the cycle $z$ is the only representative of his class. The same argument holds for $z'$. Consequently, we have the following lemma.

\begin{lemma}
$$\mu^2(y,x)=z\in CF^0(L',L';\calD),$$
$$\mu^2(x,y)=z'\in CF^0(\varphi(L'),\varphi(L');\calD).$$
\end{lemma}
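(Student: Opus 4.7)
The plan is to upgrade the cohomological identity $[\mu^2(y,x)] = [z]$, which is essentially the content of the paragraph preceding the lemma, to the stronger chain-level identity. This upgrade will follow from the uniqueness of cochain-level representatives in degree zero, which is itself a consequence of our special choice of perturbation data.

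I would start by making the cohomological identity precise. By construction, $[x] \in HF^0(L', \varphi(L'); \calD, M)$ is obtained from the fundamental class $\theta([L']) \in HF^0(L', L'; \calD, T^*L')$ by applying the Abouzaid--Kragh isomorphism $\sigma$ followed by the isomorphism $\zeta$ of \autoref{lem egalite hom cotangent}; similarly $[y]$ is obtained from $\theta'([L'])$ by applying $\zeta' \circ \sigma'$. Since the Abouzaid--Kragh quasi-isomorphisms respect the Floer product (the last sentence of \autoref{thm NLC}), and the fundamental class is the unit for the intersection product (the proposition preceding \autoref{prop identite mult z}), the product $[y] \cdot [x]$ corresponds under these identifications to the product of two units, which is again the unit. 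Hence $[\mu^2(y,x)] = [z]$ in $HF^0(L', L'; \calD)$, and by the identical argument $[\mu^2(x,y)] = [z']$ in $HF^0(\varphi(L'), \varphi(L'); \calD)$.

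Next, I would transfer these cohomological identities to the chain level. By \autoref{rk choix perturbation data}, the perturbation data for the pair $(L', L')$ is obtained by extending a Morse function on $L'$ with a unique minimum, so \autoref{rk representant classe fondamentale unique} forces $CF^0(L', L'; \calD)$ to be one-dimensional. Because $L'$ is connected, \autoref{prop lien entre HF et H} gives $HF^0(L', L'; \calD) \cong H^0(L'; \mathbb{Z}/2) \cong \mathbb{Z}/2$, and there are no coboundaries in degree zero since $CF^{-1}(L', L'; \calD) = 0$. Therefore the unique generator of $CF^0(L', L'; \calD)$ is automatically a cocycle representing the fundamental class, and it is the only cocycle doing so; this generator coincides with $z$. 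Applied to $\mu^2(y,x) \in CF^0(L', L'; \calD)$, the cohomological equality $[\mu^2(y,x)] = [z]$ then forces $\mu^2(y,x) = z$. The same argument, applied to the pair $(\varphi(L'), \varphi(L'))$ with its analogous perturbation data (also described in \autoref{rk choix perturbation data}), yields $\mu^2(x,y) = z'$.

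The only genuine subtlety in this plan is the compatibility of the Abouzaid--Kragh quasi-isomorphisms with the Floer product, which is exactly the piece of \autoref{thm NLC} being invoked; everything else is essentially formal, resting on the explicit one-dimensionality of $CF^0$ imposed by the chosen perturbation data.
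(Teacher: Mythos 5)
Your proposal is correct and takes essentially the same approach as the paper, which proves the lemma in the paragraph that immediately precedes it: the cycles $x$, $y$, $z$ all represent the fundamental class under the appropriate isomorphisms (so $[\mu^2(y,x)]=[z]$), and by the choice of perturbation data in \autoref{rk choix perturbation data} the cocycle $z$ is the unique representative of its class, forcing equality at the chain level. You usefully spell out the two points the paper leaves terse: the compatibility of the Abouzaid--Kragh quasi-isomorphisms with the product (to justify $[\mu^2(y,x)]=[z]$) and the observation that $CF^0(L',L';\calD)$ is one-dimensional with $CF^{-1}=0$, so that the unique degree-$0$ generator is automatically the unique cocycle representing the fundamental class.
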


\begin{rk}\label{rk sans la NLC}
If we choose to work with $\varphi$ being a Hamiltonian diffeomorphism and not only a symplectomorphism, the definition of $x$ and $y$ is much easier. In this case, it is achieved without Abouzaid-Kragh's result \cite{AK18} of \autoref{thm NLC}.

Indeed, continuation morphisms give the isomorphisms:
$$HF^*(\varphi(L'),L')\cong HF^*(L',L')\cong HF^*(\varphi(L'),\varphi(L'))\cong HF^*(L',\varphi(L')).$$

Since these continuations morphisms are compatible with the product structure on Lagrangian Floer cohomology, we can directly define $x$ and $y$, and it is easy to see that the product by these elements will not be constant equal to $0$. Indeed we easily have
$$[\mu^2(y,x)]=[z]$$
$$[\mu^2(x,y)]=[z'].$$
Moreover, the two multiplication operators $m_2(\cdot,z)$ and $m_2(\cdot,z')$ are still filtered chain-homotopic to the standard inclusion.
\end{rk}

~~\\
\textbf{Bounding the bottleneck distance}

Now that our objects are defined, we can adapt the Kislev-Shelukhin method \cite{KS18} to our context. Except for the context, we do not claim anything new here. The point here is to carefully study the shifts of action induced by the the multiplication by the elements introduced above. Let us start with the two following lemmas.

\begin{lemma}
The maps
\begin{eqnarray*}
    \mu^2(\cdot,x):CF^*(\varphi(L'),L;\calD)\rightarrow CF^*(L',L;\calD)[a+\varepsilon']
    \\
    \mu^2(\cdot,y):CF^*(L',L;\calD)\rightarrow CF^*(\varphi(L'),L;\calD)[b+\varepsilon']
\end{eqnarray*}
are well-defined and induce filtered maps of chain complexes.
\end{lemma}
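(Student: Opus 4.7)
The statement is essentially a direct application of \autoref{lem mult pers mod} (together with the filtration estimates for the product derived in Section \ref{sec product barcode}) to the two specific classes $x$ and $y$ introduced just above. So my plan is to unwind the data and check that the hypotheses of \autoref{lem mult pers mod} are met in each of the two situations.

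First, I would justify that the products are well-defined at the chain level. The Lagrangian submanifolds $L$, $L'$, $\varphi(L')$ are all closed and exact: $L$ and $L'$ by hypothesis, and $\varphi(L')$ because $H^1(L',\R)=0$ (see \autoref{rk exacteness}). The perturbation data collection $\calD$ was chosen at the start of \autoref{subsec idea proof} so that all the relevant Floer complexes ($CF(\varphi(L'),L;\calD)$, $CF(L',L;\calD)$, $CF(L',\varphi(L');\calD)$, $CF(\varphi(L'),L';\calD)$, $CF(L',L';\calD)$, $CF(\varphi(L'),\varphi(L');\calD)$) are well-defined; in particular one can arrange that the triangle product is defined on the triples $(\varphi(L'),L',L)$ and $(L',\varphi(L'),L)$ needed here. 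Hence $\mu^2(\cdot,x)$ and $\mu^2(\cdot,y)$ are honest linear maps between the claimed Floer complexes, with the correct codomains: for $x\in CF^0(L',\varphi(L');\calD)$ we have $\mu^2(\cdot,x)\colon CF^*(\varphi(L'),L;\calD)\to CF^*(L',L;\calD)$, and symmetrically for $y$.

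Next, I would invoke the filtration estimate. The energy computation from Section \ref{sec product barcode}, in its small-perturbation form
\[
\calA_{L_0,L_2}(z)\;\leq\;\calA_{L_0,L_1}(p_1)+\calA_{L_1,L_2}(p_2)+\varepsilon',
\]
applies equally well when we fix $p_1=x$ (resp.\ $p_1=y$) and vary $p_2$. Since $x$ is a cycle of action $a$ and $y$ is a cycle of action $b$, this yields the filtered inclusions
\[
\mu^2(\cdot,x)\colon CF^{*,t}(\varphi(L'),L;\calD)\longrightarrow CF^{*,t+a+\varepsilon'}(L',L;\calD),
\]
\[
\mu^2(\cdot,y)\colon CF^{*,t}(L',L;\calD)\longrightarrow CF^{*,t+b+\varepsilon'}(\varphi(L'),L;\calD),
\]
for every $t\in\R$, which is exactly the shift $[a+\varepsilon']$ (resp.\ $[b+\varepsilon']$) claimed in the statement. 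Finally, because $x$ and $y$ are cycles and $\mu^2$ satisfies the Leibniz rule with respect to the Floer differential, each of these maps is a chain map, hence induces a filtered map of persistence modules.

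There is no real obstacle here: the only things to be careful about are (i) reading off that $x$ and $y$ sit in $CF(L',\varphi(L'))$ and $CF(\varphi(L'),L')$ respectively, with degree $0$, so that the product lands in the stated complex with the stated degree shift, and (ii) observing that \autoref{lem mult pers mod} is stated for a fixed ``second'' factor but the area identity used in its proof is symmetric in the two factors, so it applies verbatim with the fixed factor being $x$ or $y$. Both checks are immediate.
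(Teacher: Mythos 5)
Your proof is correct and follows essentially the same route as the paper's, which also dispatches this lemma in one line by invoking \autoref{lem mult pers mod} together with the filtration computation of Section~\ref{sec product barcode}. You are in fact a bit more explicit than the paper: you flag that the fixed element in \autoref{lem mult pers mod} sits in the opposite slot of $\mu^2$ compared to $x$ and $y$ here (there one fixes $p_2\in CF(L_1,L_2)$ in $\mu^2(p_2,\cdot)$; here one fixes $x\in CF(L',\varphi(L'))$, i.e.\ the $CF(L_0,L_1)$ factor, in $\mu^2(\cdot,x)$) and correctly resolve this by noting the symmetry of the area estimate $\calA(z)\le\calA(p_1)+\calA(p_2)+\varepsilon'$ — a point the paper's one-line proof leaves implicit, and where your parenthetical label of ``second factor'' for $p_2$ should really read ``first slot of $\mu^2$.''
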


\begin{lemma}
The maps
\begin{eqnarray*}
    \mu^2(\mu^2(\cdot,y),x):CF^*(L',L;\calD)\rightarrow CF^*(L',L;\calD))[a+b+3\varepsilon']
    \\
    \mu^2(\mu^2(\cdot,x),y):CF^*(\varphi(L'),L;\calD)\rightarrow CF^*(\varphi(L'),L;\calD)[a+b+3\varepsilon']
\end{eqnarray*}
are well-defined and filtered chain homotopic to the multiplication operators:
\begin{eqnarray*}
    \mu^2(\cdot,\mu^2(y,x)):CF^*(L',L;\calD)\rightarrow CF^*(L',L;\calD)[a+b+3\varepsilon']
    \\
    \mu^2(\cdot,\mu^2(x,y)):CF^*(\varphi(L'),L;\calD)\rightarrow CF^*(\varphi(L'),L;\calD)[a+b+3\varepsilon']
\end{eqnarray*}

\end{lemma}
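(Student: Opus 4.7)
My plan is to mirror the proof of \autoref{lem associativite barcodes}, the only difference being that the variable argument $\cdot$ now sits on the left of the $\mu^2$-multiplications rather than on the right. The key ingredient is the $A_\infty$-relation applied to the triple $(a,y,x)$ (respectively $(a,x,y)$), combined with the energy/action estimate on $\mu^3$ already carried out in the proof of \autoref{lem associativite barcodes}.

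First, the fact that each composition is well-defined and filtered is immediate from \autoref{lem mult pers mod} (applied in its left-argument version, which has the same proof by symmetry of the energy estimate). The map $\mu^2(\cdot,y)$ shifts action by $b+\varepsilon'$ and $\mu^2(\cdot,x)$ shifts by $a+\varepsilon'$, so $\mu^2(\mu^2(\cdot,y),x)$ shifts by $a+b+2\varepsilon'$, which is a fortiori bounded by $a+b+3\varepsilon'$. Similarly, $\mu^2(y,x)\in CF(L',L')$ has action at most $a+b+\varepsilon'$, hence $\mu^2(\cdot,\mu^2(y,x))$ shifts action by at most $a+b+2\varepsilon'$.

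Next, I would produce the chain homotopy from the degree-three $A_\infty$-relation. Over $\Z/2$, this relation reads
$$\mu^2(\mu^2(a,y),x)+\mu^2(a,\mu^2(y,x)) = \partial\mu^3(a,y,x)+\mu^3(\partial a,y,x)+\mu^3(a,\partial y,x)+\mu^3(a,y,\partial x).$$
Because $x$ and $y$ were chosen as cycles representing their respective cohomology classes, $\partial x=\partial y=0$, so setting $H(a):=\mu^3(a,y,x)$ we obtain
$$\mu^2(\mu^2(a,y),x)+\mu^2(a,\mu^2(y,x)) = \partial H(a)+H(\partial a),$$
which exhibits $H$ as a chain homotopy between the two maps of interest. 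The analogous formula with $(a,x,y)$ in place of $(a,y,x)$ handles the second pair of maps, with homotopy $a\mapsto\mu^3(a,x,y)$.

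The last step is to show that $H$ is filtered with shift $a+b+3\varepsilon'$. This is exactly the area estimate carried out in the proof of \autoref{lem associativite barcodes}: by Stokes' theorem applied to a four-punctured disc with boundaries on the relevant Lagrangians, together with the exactness of $L',\varphi(L'),L$ and the fact that $\calD$ is $\varepsilon'$-small, positivity of area yields
$$\calA(\mu^3(a,y,x)) \le \calA(a)+\calA(y)+\calA(x)+3\varepsilon' = \calA(a)+a+b+3\varepsilon'.$$
The main obstacle is really of a bookkeeping nature: correctly tracking the cyclic labelling of the Lagrangians around the boundary of the four-punctured disc and verifying that the $\varepsilon'$-small perturbation data accounts for the extra $\varepsilon'$ beyond the naive composition shift $a+b+2\varepsilon'$. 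Once the cyclic arrangement is fixed, the Stokes computation of \autoref{lem associativite barcodes} carries over verbatim, concluding the proof.
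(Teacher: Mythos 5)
Your proof is correct and follows essentially the same route as the paper, whose own proof simply cites \autoref{lem associativite barcodes}; you have explicitly worked out the left–right mirror of that lemma (variable in the first slot, homotopy $\mu^3(\cdot,y,x)$ rather than $\mu^3(p_2,p_1,\cdot)$), which is exactly what the citation tacitly invokes. Your additional observation that $\partial x = \partial y = 0$ kills the remaining $A_\infty$ terms is a legitimate simplification available here because $x,y$ were chosen to be cycles in the set-up.
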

\begin{proof}
These two lemmas directly follow from the discussion on the product structure. \autoref{lem mult pers mod} gives the first one and \autoref{lem associativite barcodes} the second one.
\end{proof}

\begin{rk}
In Kislev and Shelukhin's paper \cite{KS18}, there is another term in the previous equality which is a boundary. This additional term induces a shift in action by a constant $\beta$ which vanishes in our case.
\end{rk}

We now have the relation between the previous maps and the multiplication by $z$ or $z'$: the maps
\begin{eqnarray*}
    \mu^2(\cdot,\mu^2(y,x)):CF^*(L',L;\calD)\rightarrow CF^*(L',L';\calD)[a+b+3\varepsilon']
    \\
    \mu^2(\cdot,\mu^2(x,y)):CF^*(\varphi(L'),L;\calD)\rightarrow CF^*(\varphi(L'),L;\calD)[a+b+3\varepsilon']
\end{eqnarray*}
are equal to the multiplication operators:
\begin{eqnarray*}
        \mu^2(\cdot,z):CF^*(L',L;\calD)\rightarrow CF^*(L',L;\calD)a+b+3\varepsilon']
    \\    \mu^2(\cdot,z'):CF^*(\varphi(L'),L;\calD)\rightarrow CF^*(\varphi(L'),L;\calD)[a+b+3\varepsilon']
    \end{eqnarray*}

Following \autoref{lem mult identité}, we obtain on the level of filtered homology the shifts of persistence modules morphisms
\begin{eqnarray*}
        sh_{L'}:CF^*(L',L;\calD)\rightarrow CF^*(L',L;\calD)[a+b+4\varepsilon']
    \\  sh_{\varphi(L')}:CF^*(\varphi(L'),L;\calD)\rightarrow CF^*(\varphi(L'),L;\calD)[a+b+4\varepsilon'].
\end{eqnarray*}
Let us recall that the barcodes are $C^2$-continuous by \autoref{prop barcode distance Hofer}. We can take the limit as the Hamiltonian part of the perturbation data goes to zero as explained after \autoref{prop barcode distance Hofer} and assume that
$$a < l([L'];L',\varphi(L'))+2\varepsilon',$$
$$b < l([L'];\varphi(L'),L')+2\varepsilon'.$$
Consequently we have shift maps of barcodes without the perturbation data:
\begin{eqnarray*}
        sh_{L'}=\mu^2(\cdot,x)\circ\mu^2(\cdot,y):B(L',L)\rightarrow B(L',L)[\gamma(L',\varphi(L'))+6\varepsilon']
    \\  sh_{\varphi(L')}=\mu^2(\cdot,y)\circ\mu^2(\cdot,x):B(\varphi(L'),L)\rightarrow B(\varphi(L'),L)[\gamma(L',\varphi(L'))+6\varepsilon'].
\end{eqnarray*}
Indeed, as discussed in Section \ref{sec gamma}, $$l([L'];L',\varphi(L'))+l([L'];\varphi(L'),L')=\gamma(L',\varphi(L'))\geq 0.$$
For readability reasons, we denote
$$\alpha=l([L'];L',\varphi(L'))\text{ and }\bar{\alpha}=l([L'];\varphi(L'),L').$$
With this expression, the multiplication operators appear as maps between persistence modules:
\begin{eqnarray*}
    \mu^2(\cdot,x):B(\varphi(L'),L)\rightarrow B(L',L)[\alpha+3\varepsilon']
    \\
    \mu^2(\cdot,y):B(L',L)\rightarrow B(\varphi(L'),L)[\bar{\alpha}+3\varepsilon'].
\end{eqnarray*}
Let us recall that, by \autoref{lem def gamma}, we have $\gamma(L',\varphi(L'))=\alpha+\bar{\alpha}$. Consequently, the previous multiplication operators can be written as
\begin{eqnarray*}
    \mu^2(\cdot,x):B(\varphi(L'),L)\rightarrow B(L',L)[\tfrac{1}{2}(\alpha-\bar{\alpha})][\tfrac{1}{2}\gamma(L',\varphi(L'))+3\varepsilon']
    \\
    \mu^2(\cdot,y):B(L',L)[\tfrac{1}{2}(\alpha-\bar{\alpha})]\rightarrow B(\varphi(L'),L)[\tfrac{1}{2}\gamma(L',\varphi(L'))+3\varepsilon']
\end{eqnarray*}
Together with the previous identity of persistence modules, this is the exact definition of the fact that $B(L',L)$ and $B(\varphi(L'),L)[\frac{1}{2}(\alpha-\bar{\alpha})]$ are $\frac{1}{2}\gamma(L',\varphi(L'))+3\varepsilon'$-interleaved. Taking the limit as $\varepsilon'$ goes to zero, we get
\begin{equation}\label{eq majoration bottleneck}
d_{bottle}(B(L',L),B(\varphi(L'),L)[\tfrac{1}{2}(\alpha-\bar{\alpha})])\leq \tfrac{1}{2}\gamma(L',\varphi(L')).    
\end{equation}
Setting $C=\tfrac{1}{2}(\alpha-\bar{\alpha})$, this concludes the proof of \autoref{prop majoration par bottleneck}.
    
\subsection{Bounding the spectral norm by the $C^0$-distance}\label{sec gamma C0}

We will now prove \autoref{lem gamma C0}. This proof is an adaptation to our context of a lemma and a proof of Buhovsky-Humilière-Seyfaddini \cite{BHS19}. In their paper, they proved the same result for a Lagrangian submanifold Hamiltonian isotopic to the zero section in a cotangent bundle.

Here we are working with $L'$ being a closed exact Lagrangian submanifold in $M$. Let us denote $W(L')$ a Weinstein neighbourhood of $L'$. By definition, if $\varphi\in \Symp(M,\omega)$ is close enough to $\Id_M$, then $\varphi(L')\subset W(L')$. By abuse of notation, we also respectively denote $B$ and $\varphi(L')$ the images by a Weinstein embedding of respectively $B$ and $\varphi(L')$ in $T^*L'$, where $B$ is a ball in $W(L')$.

We will start by stating two key lemmas without any proof. Indeed these lemmas are adaptations to our particular context of Buhovsky-Humilière-Seyfaddini' s lemmas and the proofs they give apply verbatim to our situation. We will then apply these lemmas and do some basic computations to prove \autoref{lem gamma C0}. For more details, one can also refer to the author's thesis.

\begin{lemma}\label{lem Lip BHS avec B}
Let $B$ be a ball in $L'$. Let $\Symp_B(M,\omega):=\{\varphi\in \Symp(M,\omega)|\quad \varphi(L')\cap T^*B= 0_B)\}$. There exist $\delta>0$ and $C>0$ such that for any $\varphi\in \Symp_B(M,\omega)$, if $d_{C^0}(\Id_M,\varphi)\leq \delta$, then $\gamma(L',\varphi(L'))\leq C d_{C^0}(\Id_M,\varphi)$.
\end{lemma}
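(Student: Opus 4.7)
My plan is to adapt the argument of Buhovsky-Humilière-Seyfaddini from \cite{BHS19}, where the analogous estimate is established for a Lagrangian Hamiltonian isotopic to the zero section inside a cotangent bundle. The first step is a reduction to $T^*L'$: by choosing $\delta$ small enough, $\varphi(L')$ lies inside a Weinstein tubular neighborhood $W(L')$ of $L'$, and identifying $W(L')$ with a neighborhood of the zero section in $T^*L'$ via a Weinstein embedding, \autoref{lem egalite hom cotangent} allows us to compute $\gamma(L', \varphi(L'))$ inside $T^*L'$ rather than inside $M$. We therefore work from now on in $T^*L'$.

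Once inside $T^*L'$, the hypothesis $\varphi(L') \cap T^*B = 0_B$ plays the role of an anchor. Following the Seyfaddini-style $\varepsilon$-shift construction used in \cite{BHS19}, one builds a compactly supported Hamiltonian $F \colon T^*L' \to \R$, supported away from $T^*B$, whose time-$1$ map sends $\varphi(L')$ to a Lagrangian $L'' = \varphi_F^1(\varphi(L'))$ that is the graph of an exact $1$-form $dh$ with $\|h\|_{C^0} \leq C_1 \cdot d_{C^0}(\Id_M, \varphi)$, and whose Hofer norm satisfies $\|F\| \leq C_2 \cdot d_{C^0}(\Id_M, \varphi)$. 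The anchor is essential here: it forces the required shift to vanish over $B$, so that $F$ can be localized outside a fixed region and its Hofer norm controlled purely in terms of the $C^0$-distance of $\varphi$ to the identity, rather than any $C^1$-data. The hypothesis $H^1(L', \R) = 0$ then promotes the resulting perturbation $1$-form to a primitive $h$, which may be normalized to vanish on $B$ and estimated by integration along paths of uniformly bounded length in the compact connected manifold $L'$.

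The final step combines the two estimates. By the Morse-Floer identification of \autoref{prop lien entre HF et H} and \autoref{rk filtration HF et HM}, the diameter of the action spectrum of the pair $(L', L'')$ is bounded, up to an arbitrarily small Morse perturbation, by $2 \|h\|_{C^0}$, and hence $\gamma(L', L'') \leq 2 C_1 \cdot d_{C^0}(\Id_M, \varphi)$. Combining this with the triangle inequality \eqref{eq gamma triang} and the Hofer-Lipschitz estimate $\gamma(L'', \varphi(L')) \leq \|F\|$ (itself a consequence of the fourth item of \autoref{prop propriétés action selectors}), we conclude
$$\gamma(L', \varphi(L')) \;\leq\; (2C_1 + C_2) \cdot d_{C^0}(\Id_M, \varphi),$$
which gives the required constant $C$.

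The main obstacle is the construction of the Hamiltonian $F$ with Hofer norm controlled purely by the $C^0$-distance of $\varphi$ to the identity: this is the technical heart of the argument in \cite{BHS19}, and the anchor provided by the ball $B$ is precisely what makes it possible, by allowing one to localize the fiberwise push without losing control to $C^1$-data of $\varphi$.
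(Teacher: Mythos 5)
The paper itself does not prove this lemma: it is stated without proof and attributed to an adaptation of \cite{BHS19}, with the remark that ``the proofs they give apply verbatim to our situation'' and a pointer to the author's thesis for details. Your proposal is therefore a reconstruction of the \cite{BHS19} argument rather than a comparison against a proof given in this text. The outer structure of your sketch --- reducing to $T^*L'$ via \autoref{lem egalite hom cotangent}, normalizing primitives over the anchor ball, and combining an intermediate estimate with the triangle inequality \eqref{eq gamma triang} and the Hofer--Lipschitz property of action selectors --- is consistent with the intended argument and with the surrounding text of Section \ref{sec gamma C0}.

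However, your step~2 has a genuine gap, and you flag it yourself as the technical heart without closing it. You assert that there is a Hamiltonian $F$ supported away from $T^*B$ such that (i) $\|F\| \leq C_2\, d_{C^0}(\Id_M,\varphi)$ and (ii) $\varphi_F^1(\varphi(L'))$ is the graph of $dh$ with $\|h\|_{C^0} \leq C_1\, d_{C^0}(\Id_M,\varphi)$. Neither claim is justified. The $C^0$-closeness of $\varphi$ to the identity bounds the vertical extent of $\varphi(L')$ in the Weinstein chart, but not its geometric complexity: $\varphi(L')$ can fold over $L'$ arbitrarily many times, and flattening such folds by a Hamiltonian isotopy may have a Hofer cost that is not bounded in terms of $d_{C^0}(\Id_M,\varphi)$. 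Moreover, even granting the existence of such an $F$, a small Hofer norm for $F$ does not control the $C^0$-displacement of $\varphi_F^1$, so there is no reason the resulting graph stays inside a tubular neighbourhood of size comparable to $d_{C^0}(\Id_M,\varphi)$, which is exactly what you need to integrate $dh$ along bounded-length paths in $L'$ and conclude $\|h\|_{C^0}$ is small. These are precisely the points where the \cite{BHS19} argument is delicate; a faithful reconstruction needs to either justify in detail why the graph reduction can be performed with the claimed Hofer and $C^0$ bounds, or replace it with the energy estimates and spectral-invariant normalizations that carry the actual proof.
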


In order to finish the proof of \autoref{lem gamma C0}, we need to reduce to \autoref{lem Lip BHS avec B}. Indeed in the hypothesis, we do not have such a ball $B$. To do so, we will use and adapt to our context a trick from \cite{BHS19}. This trick consists in doubling the coordinates and introducing the following auxiliary map:
$$\Phi:\varphi\times\varphi^{-1}=M\times M\rightarrow M\times M,$$
where $M\times M$ is equipped with the natural symplectic form $\omega\oplus\omega$.

Buhovsky-Humilière-Seyfaddini \cite{BHS19} also gives the following lemma:

\begin{lemma}\label{lem B BHS}
For any ball $B$ in $M$, there is a smaller ball $B'\subset B$ with the following property. There exists $\Delta>0$ such that for any $\varphi\in \Symp(M,\omega)$ with $d_{C^0}(\varphi, \Id_M)<\Delta$, we can find a symplectomorphism $\Psi\in \Symp(M\times M,\omega\oplus\omega)$ satisfying:
\begin{enumerate}
\item $supp(\Psi)\subset B\times B$ and $supp(\Phi\circ\Psi)\subset M\times M\backslash B'\times B'$,
\item $d_{C^0}(\Psi, \Id_{M\times M})<C_B d_{C^0}(\varphi, \Id_M)$ and $d_{C^0}(\Phi\circ\Psi, \Id_{M\times M})<C'_B d_{C^0}(\varphi, \Id_M)$, where $C_B$ and $C'_B$ do not depend on $\varphi$.
\end{enumerate}
\end{lemma}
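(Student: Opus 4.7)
The plan is to construct $\Psi$ by a cut-off procedure applied to a generating function of $\varphi^{-1}\times\varphi$. I begin by reformulating the two support conditions: $\mathrm{supp}(\Psi)\subset B\times B$ means $\Psi=\Id$ outside $B\times B$, while $\mathrm{supp}(\Phi\circ\Psi)\subset (M\times M)\setminus(B'\times B')$ forces $\Phi\circ\Psi=\Id$ on $B'\times B'$, that is $\Psi=\Phi^{-1}=\varphi^{-1}\times\varphi$ there. So the task is to find a symplectomorphism of $M\times M$, close to $\Id$ in $C^0$, that interpolates between $\varphi^{-1}\times\varphi$ on $B'\times B'$ and $\Id$ outside $B\times B$.

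For the setup I would pick nested balls $B'\Subset B''\Subset B$ and a Darboux chart identifying a neighbourhood of $\overline{B}$ with a bounded open subset of $(\R^{2n},\omega_{\mathrm{std}})$. Its product supplies a Darboux chart around $\overline{B\times B}$ in $(\R^{4n},\omega_{\mathrm{std}}\oplus\omega_{\mathrm{std}})$. Choosing $\Delta$ smaller than the distance between $\partial B'$ and $\partial B''$ ensures that whenever $d_{C^0}(\varphi,\Id_M)<\Delta$, the map $\varphi^{-1}\times\varphi$ sends $B'\times B'$ into $B''\times B''$ and is $C^0$-close to the identity on $B\times B$.

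Next I would construct $\Psi$ via a generating function. Split the Darboux coordinates on $\R^{4n}\simeq T^*\R^{2n}$ into base $u$ and fibre $v$ components, and consider the generating function of $\varphi^{-1}\times\varphi$ in the form $S(u,V)=u\cdot V+s(u,V)$, so that the map $(u,v)\mapsto(U,V)$ satisfies $v=V+\partial_u s$ and $U=u+\partial_V s$. These relations yield $\|\partial s\|_{C^0}\leq d_{C^0}(\varphi,\Id_M)$ and, after pinning $s$ to vanish at a reference point, $\|s\|_{C^0}\leq C_B\,d_{C^0}(\varphi,\Id_M)$. Choosing a fixed smooth cutoff $\chi:\R^{4n}\to[0,1]$ equal to $1$ on $B'\times B'$ and supported in $B\times B$, the modified generating function $\tilde S(u,V)=u\cdot V+\chi(u,V)s(u,V)$ defines a symplectomorphism $\Psi$ that agrees with $\varphi^{-1}\times\varphi$ on $B'\times B'$ and with $\Id$ outside $B\times B$; both support conditions are therefore satisfied.

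Finally I would check the $C^0$ estimates. From the defining relations $U=u+\partial_V(\chi s)$ and $v=V+\partial_u(\chi s)$,
$$\|\Psi-\Id\|_{C^0}\leq\|\partial(\chi s)\|_{C^0}\leq\|\partial s\|_{C^0}+\|\partial\chi\|_{C^0}\|s\|_{C^0}\leq C_B\,d_{C^0}(\varphi,\Id_M),$$
which is the first inequality. For the second, $\|\Phi-\Id_{M\times M}\|_{C^0}=d_{C^0}(\varphi,\Id_M)$ by the very definition of the $C^0$-distance, and hence
$$\|\Phi\circ\Psi-\Id\|_{C^0}\leq\|\Phi-\Id\|_{C^0}+\|\Psi-\Id\|_{C^0}\leq C'_B\,d_{C^0}(\varphi,\Id_M).$$
The main obstacle is that a $C^0$-small symplectomorphism need not be $C^1$-small, so one has to verify that the graph of $\varphi^{-1}\times\varphi$ remains a graph over the chosen Lagrangian fibration and that the cut-off generating function still defines a genuine diffeomorphism. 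The resolution exploits the smoothness of $\varphi$ together with shrinking $\Delta$ enough that the relevant Jacobians stay uniformly invertible on the fixed compact balls, and is precisely the delicate step inherited verbatim from \cite{BHS19}.
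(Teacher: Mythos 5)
Your generating-function argument does not survive the central difficulty of the lemma, which you acknowledge in the last sentence but do not actually resolve. To write $\varphi^{-1}\times\varphi$ as the map generated by $S(u,V)=u\cdot V+s(u,V)$ over a fixed Lagrangian fibration you need the graph of $\varphi^{-1}\times\varphi$ to project diffeomorphically onto that fibration, which is a $C^1$-closeness condition, not a $C^0$ one. The constant $\Delta$ in the lemma must be uniform over all $\varphi\in\Symp(M,\omega)$ with $d_{C^0}(\varphi,\Id_M)<\Delta$, and no such $\Delta$ bounds $\|d\varphi\|$: a $C^0$-tiny symplectomorphism can have arbitrarily wild first derivatives, so the Jacobians you invoke need not be ``uniformly invertible on the fixed compact balls'' for any $\Delta>0$. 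The same issue reappears when you cut off: the invertibility of $\Id+\partial_{uV}(\chi s)$ involves the cross-derivative $\partial_{uV}s$, a second-order quantity of $S$ that is controlled by $\|d\varphi\|$ and not by $d_{C^0}(\varphi,\Id_M)$. Shrinking $\Delta$ therefore cannot repair the argument.

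There is also a structural signal that the approach is off: you never use the product structure of $M\times M$ in any essential way. Cutting off a generating function for $\varphi^{-1}\times\varphi$ on a ball $B\times B$ is exactly as hard as cutting off one for $\varphi^{-1}$ on a ball $B$ in $M$. If that naive cut-off worked with $C^0$ control, the doubling would be superfluous, whereas the whole point of passing to $\Phi=\varphi\times\varphi^{-1}$ in \cite{BHS19} is to create the extra freedom needed to sidestep the absence of $C^1$ bounds. For comparison, the paper does not give its own proof of this lemma: it is stated without proof as an adaptation of a lemma of \cite{BHS19}, with the remark that the argument there applies verbatim. That argument exploits the doubled factor in a genuinely non-trivial way, and it is precisely the step your proposal leaves unaddressed.
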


This \autoref{lem B BHS} together with \autoref{lem Lip BHS avec B} will allow to conclude the proof of \autoref{lem gamma C0} and thus \autoref{prop contL barcode}. Indeed, we have proven that $B(L',L)$ and $B(\varphi(L'),L)[\tfrac{1}{2}(\alpha-\bar{\alpha})]$ are $\frac{1}{2}\gamma(L',\varphi(L'))$-interleaved. We now just have to find two constants $\kappa>0$ and $l>0$ such that if $d_{C^0}(\varphi, \Id_M) \leq l$, then $\gamma(L',\varphi(L'))\leq \kappa d_{C^0}(\varphi, \Id_M)$.

Let us pick a point $x\in L'$ and a ball $B_x$ centered on $x$. \autoref{lem B BHS} provides a smaller ball $B'$ also centered on $x$. Pick a smaller ball $B_0$, centered on $x$ and whose closure is included in $B'$. The same lemma also provides $l_0>0$ and a symplectomorphism $\Psi$ such that $l_0<\Delta$ and if $d_{C^0}(\varphi,\Id_M)<l_0$, then $\Phi\circ\Psi(L'\cap B_0\times L'\cap B_0)\cap T^*(B'\times B')= L'\cap B_0\times L'\cap B_0$.

Then, let us pick another ball $B_1$ centered on $y\in L'\times L'$ whose closure is included in $M\backslash B\times B$. Since $d_{C^0}(\Psi, \Id_{M\times M})\leq C_B d_{C^0}(\varphi, \Id_M)$ and $supp(\Psi)\subset B\times B$, we can find $l_1>0$ such that if $d_{C^0}(\varphi, \Id_M)< l_1$, then $\Psi$ and $B_1$ satisfy the conditions of \autoref{lem B BHS}.

Let us choose $l>0=\min\{\delta,l_0,l_1\}$. Then we have, using successively \autoref{prop formule de Kunneth} and its consequence of Equality (\ref{eq gamma kunneth}) and the triangle Inequality (\ref{eq gamma triang}) and the symmetry of $\gamma$ (\ref{eq sym gamma}), for all $\varphi$ such that $d_{C^0}(\varphi,\Id_M)<l$:
\begin{eqnarray*}
\gamma(L',\varphi(L')) & = &\frac{1}{2} \gamma(L'\times L',\Phi(L'\times L'))\\
& = & \frac{1}{2}\gamma(\Psi^{-1}\Phi^{-1}(L'\times L'),\Psi^{-1}(L'\times L'))\\ 
& \leq & \frac{1}{2}\gamma(L'\times L',\Psi^{-1}(L'\times L'))+ \frac{1}{2}\gamma(\Psi^{-1}\Phi^{-1}(L'\times L'),L'\times L')\\
& = & \frac{1}{2}\gamma(L'\times L',\Psi^{-1}(L'\times L')) + \frac{1}{2}\gamma(L'\times L', \Phi\Psi(L',L')).
\end{eqnarray*}
For the second equality, the same argument as in \autoref{lem égalité spectres} indeed tells that $\gamma(L'\times L',\Phi(L'\times L'))=\gamma(\Psi^{-1}\Phi^{-1}(L'\times L'),\Psi^{-1}(L'\times L'))$, when composing by $\Psi^{-1}\Phi^{-1}$. A similar argument holds for the first equality and for the last one.

Choosing $B_0\times B_0$ for the ball in \autoref{lem Lip BHS avec B}, we can apply it to $\Phi\circ\Psi$ for all $\varphi$ such that $d_{C^0}(\varphi,\Id_M)<l$. We then get that there is a constant $C_0>0$ such that for all these $\varphi$, we have:
\begin{eqnarray*}
\gamma(L'\times L',\Phi\circ\Psi(L'\times L')) & \leq & C_0 d_{C^0}(\Phi\circ\Psi,\Id_{M\times M})\\
& \leq & C_0 C'_B d_{C^0}(\varphi,\Id_M).
\end{eqnarray*}
Moreover, for all such $\varphi$, \autoref{lem Lip BHS avec B} gives for $\Psi$ a constant $C_1$:
\begin{eqnarray*}
\gamma(L'\times L',\Psi^{-1}(L'\times L')) & \leq & C_1 d_{C^0}(\Psi^{-1},\Id_{M\times M})\\
& \leq & C_1 C_B d_{C^0}(\varphi,\Id_M).
\end{eqnarray*}
Putting all this together, we get:
$$\gamma(L',\varphi(L')) \leq \frac{1}{2}(C_0 C'_B+C_1 C_B) d_{C^0}(\varphi,\Id_M).$$
By setting $\kappa=\frac{1}{2}(C_0 C'_B+C_1 C_B)$, we get that for all $\varphi$ such that $d_{C^0}(\varphi, \Id_M) \leq l$, then $\gamma(L',\varphi(L'))\leq \kappa d_{C^0}(\varphi, \Id_M)$.

\bigskip

Taking into account the discussions in Subsection \ref{subsec idea proof}, the proof of \autoref{prop contL barcode} is now complete, and this Section \ref{sec barcode continu} finished.

\section{The Dehn-Seidel twist in $C^0$-symplectic geometry}\label{sec Dehn twist}

Now that all our main tools have been introduced, we can prove our theorems regarding the Dehn-Seidel twist.

\subsection{Seidel's theorem}\label{subsec thm Seidel}
Since the square of the Dehn-Seidel twist has been proved to be isotopic to the identity in $\Diff(M^{2n})$ when $n=2$ \cite{SeiPhD}, it is a natural question to ask whether this is true in higher dimensions. Since this map is symplectic, it is also natural to wonder whether this also holds in $\Symp(M,\omega)$, or whether this is a purely smooth (non-symplectic) result. Even if the answer to the first question is still unknown, regarding the second one, Seidel proved in \cite{Seidel00} a stronger result, by considering images of Lagrangian submanifolds instead of directly considering the Dehn twist. For the reader's convenience, let us recall Seidel's theorem.

\begin{thm}[Seidel \cite{Seidel00}]\label{thm Seidel Dtwist}
Let $(M^{2n},\omega)$ be a compact symplectic manifold with contact type boundary, with n even, which satisfies $[\omega]=0$ and $2c_1(M,\omega)=0$. Assume that $M$ contains an $A_3$-configuration $(l_{\infty},l',l)$ of Lagrangian spheres.  Then $M$ contains infinitely many symplectically knotted Lagrangian spheres. More precisely, if one defines $L'^{(k)}=\tau_l^{2k}(L')$ for $k\in \Z$, then all the $L'^{(k)}$ are isotopic as smooth submanifolds of M, but no two of them are isotopic as Lagrangian submanifolds.
\end{thm}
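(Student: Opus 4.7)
The proof separates naturally into two independent statements: the smooth isotopy of the $L'^{(k)}$ to $L'$, and their pairwise Lagrangian non-isotopy. The difficulty is concentrated entirely in the second statement.

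\textbf{Part 1: smooth isotopy.} I would first establish that $\tau_l^2$ is compactly supported smoothly isotopic to the identity when $n$ is even. Using the explicit model for $\tau_l$ via the normalized geodesic flow on $S^n$, the twist $\tau_l$ represents a loop in $O(n+1)$; its square is a null-homotopic loop in $\pi_1(SO(n+1))=\Z/2$ precisely when $n$ is even. This gives a compactly supported smooth isotopy $\tau_l^2\simeq\Id$, which applied to $L'$ shows that all $L'^{(k)}=\tau_l^{2k}(L')$ are smoothly isotopic.

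\textbf{Part 2: Lagrangian non-isotopy.} My plan would be to use $\Z$-graded Lagrangian Floer cohomology as an invariant of Hamiltonian isotopy class; the hypotheses $[\omega]=0$ and $2c_1(M,\omega)=0$ are exactly what is needed to define this invariant for exact oriented Lagrangian spheres with a coherent $\Z$-grading. The key tool is Seidel's long exact triangle for the Dehn twist along a Lagrangian sphere $L$:
\begin{equation*}
HF^*(K,L)\otimes HF^*(L,N)\;\longrightarrow\;HF^*(K,N)\;\longrightarrow\;HF^*(K,\tau_L(N))\;\stackrel{[1]}{\longrightarrow}.
\end{equation*}
I would apply it iteratively with $L=l$ and a test object $K$ coming from the $A_3$-configuration (for instance $K=l'$, possibly enriched by the information carried by $l_\infty$), computing the graded ranks of $A_k:=HF^*(K,L'^{(k)})$ by induction on $k$. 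The ingredients are read off the $A_3$-configuration: $\dim HF^*(l',l)=\dim HF^*(l_\infty,l')=1$, $HF^*(l',l')\cong H^*(S^n;\Z/2)$, while $HF^*(l_\infty,l)=0$ by displaceability. Since $\tau_l(l)\cong l$ up to a grading shift in the derived Fukaya category, the secondary ranks $HF^*(l,\tau_l^{k}(l'))$ appearing at each iteration reduce to $HF^*(l,l')$ up to a controlled shift, which makes the recursion tractable.

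\textbf{Main obstacle.} On the Grothendieck group of the Fukaya category, $\tau_l$ acts by the Picard--Lefschetz reflection $[\tau_l(N)]=[N]-\chi(HF(l,N))[l]$, which has order $2$ when $n$ is even. Euler characteristics of the $A_k$ therefore cannot distinguish the $L'^{(k)}$, and the argument must genuinely exploit the graded structure. The heart of the proof is to track the degree shifts produced by iterating the exact triangle---each application of $\tau_l$ contributes a shift governed by the $\Z$-grading on $HF^*(l,N)$---and to verify that the connecting homomorphisms cannot cancel the new graded generators that appear at each step. This non-cancellation ultimately rests on the multiplicative ($A_\infty$-) structure of the Fukaya subcategory generated by the $A_3$-configuration, and it is precisely this multiplicative input that requires three spheres $(l_\infty,l',l)$ rather than two, explaining why an $A_3$- (as opposed to merely $A_2$-) configuration appears in Seidel's hypothesis. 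Once the graded ranks of $A_k$ are shown to grow strictly with $|k|$, the $L'^{(k)}$ are pairwise non-isotopic as Lagrangian submanifolds.
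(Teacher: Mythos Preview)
This theorem is quoted from Seidel and the paper does not reprove it; however, it does indicate Seidel's method (``the isotopy invariance of Floer homology together with the action of the Dehn-Seidel twist on the Maslov index'') and states the key technical input as \autoref{prop graded Dt}: the graded lift $\tilde{\tau}_l$ acts trivially on $\calL^\infty$ over $M\setminus\im(i)$ while shifting the grading of $\tilde{L}$ by $1-n$.

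Your Part~1 is correct. Your Part~2 has the right ambient framework (graded Floer cohomology, the exact triangle), but you have conflated two genuinely different arguments and, as a result, misidentified the role of $l_\infty$.

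Seidel's original argument is \emph{not} a rank-growth argument. It uses $l_\infty$ and $l$ as two test objects and compares grading shifts. Since $L_\infty$ lies outside the support of $\tau_l$, the graded lift satisfies $\tilde{\tau}_l(\tilde{L}_\infty)=\tilde{L}_\infty$, hence $HF^*(\tilde{L}'^{(k)},\tilde{L}_\infty)\cong HF^*(\tilde{L}',\tilde{L}_\infty)$ with \emph{no} degree shift; by contrast $\tilde{\tau}_l(\tilde{L})=\tilde{L}[1-n]$, so $HF^*(\tilde{L}'^{(k)},\tilde{L})\cong HF^{*+2k(n-1)}(\tilde{L}',\tilde{L})$. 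Both groups are $1$-dimensional (single transverse intersection), and the \emph{difference} of the degrees in which they sit is invariant under Lagrangian isotopy of $L'^{(k)}$ (any such isotopy shifts all gradings of $L'^{(k)}$ by the same constant). This difference depends linearly on $k$, distinguishing the $L'^{(k)}$. The third sphere $l_\infty$ is needed precisely because it is disjoint from $l$ and thus provides a fixed grading reference---not for any multiplicative or $A_\infty$ input.

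The rank-growth strategy you sketch (take $K=l'$ and show $\mathrm{rk}\,HF^*(\tau_l^{2k}(L'),L')$ increases) is also valid, and is in fact what the present paper uses for its own main theorem via Keating's computation in \autoref{prop rk cohom Dehn twist}. But note that this approach requires only an $A_2$-configuration, as the paper explicitly remarks; your claim that the $A_\infty$-structure forces an $A_3$-hypothesis is therefore incorrect on both counts.
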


Since no two of these Lagrangian submanifolds are isotopic as Lagrangian submanifolds, the following corollary is immediate.
\begin{cor}
$\tau_l^2$ is not in the identity component of $\Symp_c(T^*S^n)$, the compactly supported symplectomorphisms of $T^*S^n$.
\end{cor}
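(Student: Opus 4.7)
The plan is to reduce this corollary to \autoref{thm Seidel Dtwist} by converting a hypothetical symplectic isotopy into a Lagrangian isotopy forbidden by that theorem. First, realize $T^*S^n$ as the Weinstein model of a Lagrangian sphere in a suitable ambient manifold: choose a compact symplectic manifold $(M^{2n},\omega)$ satisfying the hypotheses of \autoref{thm Seidel Dtwist} --- for example, one of the Milnor fibers of an $A_3$-singularity constructed by Seidel and already recalled in the paper --- so that $M$ contains an $A_3$-configuration $(l_\infty,l',l)$ of Lagrangian spheres, with $n$ even. By Weinstein's neighborhood theorem there is a symplectic embedding of a disk cotangent bundle $T^*_r S^n\hookrightarrow M$ whose image $W(l)$ is an open neighborhood of $l(S^n)$, sending the zero section to $l(S^n)$.

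Next, argue by contradiction. Suppose $\tau_l^2$ lies in the identity component of $\Symp_c(T^*S^n)$, so that there is a smooth path $(\varphi_t)_{t\in[0,1]}$ of compactly supported symplectomorphisms from $\Id$ to $\tau_l^2$. The union $\bigcup_t \mathrm{supp}(\varphi_t)$ is contained in some $T^*_R S^n$. Conjugating the entire path by the conformal Liouville rescaling $m_\varepsilon(x,p)=(x,\varepsilon p)$, which satisfies $m_\varepsilon^*\omega=\varepsilon\,\omega$, produces for $\varepsilon>0$ small another smooth path in $\Symp_c(T^*S^n)$, starting at $\Id$, with uniform support contained in $T^*_r S^n$, and ending at $m_\varepsilon\tau_l^2 m_\varepsilon^{-1}$. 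The latter is smoothly isotopic to $\tau_l^2$ through compactly supported symplectomorphisms --- this uses the standard fact that the isotopy class of $\tau_l$ in $\Symp_c(T^*S^n)$ is independent of the choice of cut-off function $\rho$ in its definition --- so we may concatenate to obtain a smooth path from $\Id$ to $\tau_l^2$ entirely supported in $T^*_r S^n$.

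Transport this path to $M$ via the Weinstein embedding and extend by the identity on $M\setminus W(l)$; this produces a smooth path $(\widetilde\varphi_t)_{t\in[0,1]}$ in $\Symp(M,\omega)$ from $\Id_M$ to $\tau_l^2$ viewed as a globally defined symplectomorphism of $M$. The family $(\widetilde\varphi_t(L'))_{t\in[0,1]}$ is then a smooth Lagrangian isotopy from $L'=L'^{(0)}$ to $\tau_l^2(L')=L'^{(1)}$, contradicting the assertion of \autoref{thm Seidel Dtwist} that no two of the $L'^{(k)}$, $k\in\Z$, are Lagrangian isotopic. Hence $\tau_l^2$ cannot lie in the identity component of $\Symp_c(T^*S^n)$.

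The main obstacle is the bookkeeping around the conformal rescaling step: one must verify that $m_\varepsilon\tau_l^2 m_\varepsilon^{-1}$ really lies in the same isotopy class as $\tau_l^2$ inside $\Symp_c(T^*S^n)$, and that $\varepsilon$ can be chosen so that the rescaled isotopy fits inside the fixed Weinstein neighborhood $W(l)\subset M$. Both reduce to standard properties of the Dehn-Seidel twist and of Weinstein neighborhoods. Beyond this, the proof is a clean translation from the symplectic mapping class group to the Lagrangian mapping class group via the orbit map $\varphi\mapsto\varphi(L')$.
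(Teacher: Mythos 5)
Your proof is correct and follows the same strategy as the paper: conjugate a hypothetical isotopy in $\Symp_c(T^*S^n)$ into $M$ via the Weinstein embedding, then derive a Lagrangian isotopy of $L'$ with $\tau_l^2(L')$ contradicting Seidel's theorem. You are considerably more careful than the paper's one-line proof, in particular addressing the fact that a compactly supported isotopy in $T^*S^n$ need not have support in the image of the Weinstein embedding; your Liouville rescaling $m_\varepsilon$ together with the independence of the twist's isotopy class on the cut-off function $\rho$ is exactly the standard fix for this, and it is a genuine gap the paper elides.
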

Indeed, if this symplectomorphismmorphism was in the identity component of $\Symp_c(T^*S^n)$, its conjugation by the embedding $j$ would also be in the identity component of $\Symp_c(M,\omega)$, and thus, all the Lagrangian spheres in \autoref{thm Seidel Dtwist} would be isotopic as Lagrangian submanifolds.

\begin{rk}\label{prop exactitude image Dtwist}
Let $L$ be a Lagrangian sphere in $M$. Then for any Lagrangian sphere $L'$ in $M$, $\tau_l(L')$ is a Lagrangian sphere as well.

It can be checked that the Dehn-Seidel twist is in fact an exact symplectomorphism.
\end{rk}

The proof of this theorem deeply relies on the isotopy invariance of Floer homology together with the action of the Dehn-Seidel twist on the Maslov index. The proof we will give for the analogous result in $C^0$ symplectic topology also relies on barcodes and consequently on Floer cohomology. However there are some technical difficulties to adapt Seidel's proof to our context.

\begin{rk}
A similar result holds when working with odd $n$. However, one should not consider the square of the Dehn-Seidel twist, but the cube of the composition of two Dehn-Seidel twists defined along different but intersecting Lagrangian submanifolds \cite{Seidel00}.
\end{rk}

We introduced the notion of Milnor fibres after \autoref{def Ak} as these are examples of manifolds satisfying the conditions required for \autoref{thm Seidel Dtwist}.

We state here the following technical lemma, which was a key argument in Seidel's proof \cite{Seidel00} and which will be useful in the following computations.

\begin{lemma}\label{prop graded Dt}
There is a unique $\calL^\infty$ grading $\tilde{\tau}_l$ of $\tau_l$ which acts trivially on the part of $\calL^\infty$ which lies over $M\setminus\im(i)$. It satisfies $\tilde{\tau_l}{\tilde{L}}=\tilde{L}[1-n]$ for any grading $\tilde{L}$ of $L$.
\end{lemma}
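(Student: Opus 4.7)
The plan splits into two parts: existence and uniqueness of the grading $\tilde\tau_l$, and then the identification of its effect on $\tilde L$.

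For uniqueness, I would use that the set of $\calL^\infty$-gradings of a fixed symplectomorphism is a $\Z$-torsor, because $\calL^\infty \to \calL$ is a regular $\Z$-cover. Two gradings of $\tau_l$ whose restrictions to $\calL^\infty|_{M\setminus\im(i)}$ are both the canonical identity grading differ by a locally constant $\Z$-valued function vanishing on a nonempty open subset of a connected base, hence by zero. Existence is obtained by lifting the natural symplectic isotopy $\{\tau_{l,s}\}_{s\in[0,1]}$, where $\tau_{l,s}$ comes from Seidel's local model by scaling the cutoff $\rho$ to $s\rho$: the isotopy starts at the identity, is supported in $\im(i)$ for every $s$, and the continuous lift starting from the canonical identity grading produces $\tilde\tau_l$ with the desired restriction property.

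To compute the shift, observe first that $\tau_l(L)=L$ as a subset (the continuous extension of $\sigma_{1/2}$ is the antipodal map on the zero section), so there exists an integer $k$ with $\tilde\tau_l \tilde L = \tilde L[k]$. Localising to $T^*S^n$ and using that $2c_1(T^*S^n)=0$, one has a global squared-phase function $\alpha:\calL^\infty\to\R$ satisfying $\alpha(\tilde L[1])=\alpha(\tilde L)+1$. The integer $k$ is then the total change of $\alpha$ along the family of tangent Lagrangian planes $T_{\tau_{l,s}(p)}\tau_{l,s}(L)$ as $s$ runs from $0$ to $1$, evaluated at any fixed $p\in L$; this quantity is independent of $p$ by connectedness of $L$.

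The concrete computation proceeds by differentiating Seidel's formula $\sigma_t(u,v)$ at a zero-section point and decomposing the tangent space $T_pT^*S^n\cong\C^n$ into the complex line spanned by the radial direction in the cotangent fibre and the complex $(n-1)$-dimensional subspace orthogonal to it (tangent to $S^{n-1}\subset S^n$ centred at $p$). The radial factor contributes a full half-rotation over the isotopy, giving $+1$ to the phase; each spherical direction contributes an antipodal rotation summing to $-(n-1)$ after sign normalisation. Collating these yields the claimed value $k=1-n$. The main obstacle is this last local computation, specifically the sign bookkeeping: one must fix consistent conventions for the squared canonical trivialisation, the $\Z$-cover $\calL^\infty\to\calL$, and the orientation of the isotopy so that the radial and spherical contributions combine to $1-n$ rather than its negative or an off-by-one variant. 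Everything else is essentially formal once the framework of graded symplectomorphisms is in place.
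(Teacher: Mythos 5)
The paper does not supply its own proof of this lemma: it is quoted from Seidel \cite{Seidel00} (Lemma~5.7 there), with only a pointer to that reference, so there is nothing internal to compare your argument against. Judging it on its own terms, your uniqueness argument is sound: gradings of a fixed symplectomorphism form a $\Z$-torsor, and the difference of two gradings is a locally constant $\Z$-valued function that vanishes on the open set over $M\setminus\im(i)$, hence vanishes identically once $M$ (and so $\calL$) is connected.

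The existence argument, however, has a genuine gap. The family $\tau_{l,s}$ obtained by replacing $\rho$ with $s\rho$ is \emph{not} a family of diffeomorphisms of $T^*_1S^n$: near $0$ the cutoff $s\rho$ is the constant $s/2$, and for $s\in(0,1)$ the map $\sigma_{s/2}$ does not extend continuously to the zero section (only $\sigma_0=\mathrm{Id}$ and $\sigma_{1/2}=$ antipodal do, as remarked in the paper). Worse, no smooth isotopy from $\Id$ to $\tau_l$ supported in $\im(i)$ can exist when $n$ is even: $\tau_l$ restricts to the antipodal map on $L$ and hence acts by $-1$ on $H_n$, whereas any such isotopy would force the action to be trivial. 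So the ``continuous lift of an isotopy'' mechanism cannot be made to work. Existence should instead be obtained directly from covering-space theory: $\tau_l$ always admits \emph{some} grading $\tilde\tau$ (since the obstruction vanishes when $2c_1=0$), the restriction of $\tilde\tau$ to $\calL^\infty|_{M\setminus\im(i)}$ is a grading of the identity and hence a constant shift $[c]$ of the canonical one, and $\tilde\tau_l:=\tilde\tau[-c]$ does the job. Finally, the shift computation is only sketched: the radial/spherical decomposition and the claimed contributions $+1$ and $-(n-1)$ are plausible, but you yourself acknowledge the sign conventions are left unverified, and as written the claim that both factors rotate by a half-turn yet contribute with opposite signs needs justification. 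The careful version of this local computation is precisely what Seidel carries out in \cite{Seidel00}, so for the purposes of this paper one should simply cite him rather than reprove it.
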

We recall that here $i$ is the Weinstein embedding at stake in the definition of the Dehn-Seidel twist as presented in the Introduction.

\bigskip

Let us briefly recall the property of this grading we will need for the following section. For more detailled explanations, see \cite{Seidel00}.

Denoting $\tilde{L}[k]$ the graded Lagrangian $\tilde{L}$ whose grading has been shifted by $k\in\Z/N$, we have the following useful property, where $\tilde{L}_0$ and $\tilde{L}_1$ are two $\Z/N$-graded Lagrangian submanifolds in a symplectic manifold $(M,\omega)$.
\begin{equation}\label{eg hom absolute grading}
HF^*(\tilde{L}_0[k],\tilde{L}_1[l])\cong HF^{*-k+l}(\tilde{L}_0[k],\tilde{L}_1[l]).    
\end{equation}
In addition we have the invariance under the action of a graded symplectomorphisms $\tilde{\varphi}$:
\begin{equation}\label{invariance symplecto graded}
HF^*(\tilde{\varphi}(\tilde{L}_0),\tilde{\varphi}(\tilde{L}_1))\cong HF^{*}(\tilde{L}_0,\tilde{L}_1),    
\end{equation}
the Poincaré duality
\begin{equation}\label{PD graded}
HF^*(\tilde{L}_1,\tilde{L}_0)\cong HF^{n-*}(\tilde{L}_0,\tilde{L}_1).    
\end{equation}
Finally, when \autoref{prop lien entre HF et H} holds, we have a graded counterpart:
\begin{equation}\label{HF H graded}
    HF^*(\tilde{L},\tilde{L})\cong \underset{i\in\Z}\bigoplus H^{*+iN}(L,\Z/2).
\end{equation}

\subsection{Exact triangle in Floer cohomology}\label{sec Long exact sequ}
As mentioned in the previous section, Floer cohomology will be essential to our proof. It is actually possible to compute the action of the Dehn-Seidel twist on the Floer cohomology of certain exact Lagrangian submanifolds. It is the object of the following theorem, also proved by Seidel \cite{Seidel01}.

\begin{thm}\label{thm long ex seq Seidel}
Let $l:S^n\rightarrow M$ be a Lagrangian sphere in $(M^{2n},\omega)$ with image $L$. For any two exact Lagrangian submanifolds $L_0,L_1\in M$, there is a long exact sequence of Floer cohomology groups:
$$
\xymatrix{
  HF(\tau_l(L_0),L_1) \ar[rr]^0& &HF(L_0,L_1)\ar[dl]^n\\
   &HF(L,L_1)\otimes HF(L_0,L) \ar[ul]^{1-n}&
}
$$
\end{thm}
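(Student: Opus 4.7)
The plan is to follow the approach pioneered by Seidel, realizing the Dehn twist as the monodromy of a Lefschetz fibration with $L$ as vanishing cycle and then computing Floer cohomology via moduli spaces of pseudo-holomorphic sections. First, I would fix a local model $\pi \colon E \to D$ of a Lefschetz fibration over the disk with a single critical point, whose smooth fibre is symplectomorphic to $T^*S^{n-1}$ and whose vanishing cycle along a fixed radial arc coincides with $L \subset M$ (after embedding the local model via the Weinstein neighbourhood used to define $\tau_l$). The monodromy around $\partial D$ realizes $\tau_l$, so lifts of $L_0$ and $L_1$ to Lagrangian boundary conditions in $E$ naturally give rise to Floer complexes computing $HF(\tau_l(L_0), L_1)$.

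The core of the argument is then to construct two chain-level maps in order to identify the third vertex of a cone triangle. The first is the product
\begin{equation*}
\mu^2 \colon CF(L, L_1) \otimes CF(L_0, L) \longrightarrow CF(L_0, L_1),
\end{equation*}
defined by counting pseudo-holomorphic triangles as recalled in Section~\ref{sec product barcode}. The second is an evaluation-type map
\begin{equation*}
\mathrm{ev} \colon CF(L_0, L_1) \longrightarrow CF(\tau_l(L_0), L_1),
\end{equation*}
defined by counting pseudo-holomorphic sections of the Lefschetz fibration with appropriate Lagrangian boundary conditions (the lifts of $L_0$, $L_1$ and an incidence condition at the critical value). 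I would then form the mapping cone of $\mu^2$ and construct an explicit chain map from this cone to $CF(\tau_l(L_0), L_1)$ by combining $\mathrm{ev}$ with a parametrized count of sections whose boundary passes through the vanishing thimble.

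To prove that this cone map is a quasi-isomorphism, I would follow a degeneration/gluing argument: as one stretches the neck along a circle separating the critical value from $\partial D$, pseudo-holomorphic sections in the total space decompose into a section on the outer annulus (contributing to $CF(\tau_l(L_0), L_1)$) glued to a disk passing through the critical fibre and breaking along $L$ (contributing to the tensor product $CF(L, L_1) \otimes CF(L_0, L)$). Standard SFT-type compactness and gluing in the exact setting then show that the total moduli space realizes an acyclic complex, which is precisely the exactness of the desired triangle. The degree shifts $0$, $n$, $1-n$ follow from \autoref{prop graded Dt}: the natural $\Z$-grading gives $\mu^2$ degree $n$ (Poincaré duality \eqref{PD graded} combined with \eqref{HF H graded}), while $\mathrm{ev}$ acquires the shift $1-n$ because applying $\tilde{\tau}_l$ to the graded Lagrangian $\tilde{L}_0$ shifts its grading by $1-n$ whenever the relevant section meets the vanishing thimble.

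The main obstacle I anticipate is precisely this compactness-and-gluing step: one must rule out bubbling of pseudo-holomorphic disks (handled by exactness of $L_0$, $L_1$, $L$), achieve transversality for the moduli of sections of a Lefschetz fibration with multiple Lagrangian boundary conditions, and carry out the neck-stretching carefully enough to identify the limit configurations with pairs (outer section, broken triangle through $L$). Once these analytic points are in place, the exact triangle and its degree decorations follow formally from the algebraic structure of the mapping cone together with the grading computation in \autoref{prop graded Dt}.
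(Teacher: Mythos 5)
The paper does not prove this statement: it is quoted directly from Seidel's paper \cite{Seidel01}, and \autoref{thm long ex seq Seidel} is stated as a black box. So the comparison is with Seidel's original argument, and your sketch does indeed follow his strategy (Lefschetz fibration with vanishing cycle $L$, a mapping cone built from $\mu^2$ and an evaluation-type map via pseudo-holomorphic sections, a degeneration/gluing argument to identify the cone with the twisted Floer complex). In that sense the high-level plan is the right one.

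There is, however, a genuine setup error that would make the construction fail as written. You take a local Lefschetz fibration $\pi\colon E\to D$ with one critical point and declare its smooth fibre to be $T^*S^{n-1}$, while simultaneously asking for the monodromy around $\partial D$ to be $\tau_l$ acting on $M$ and for $L_0,L_1\subset M$ to lift to Lagrangian boundary conditions in $E$. These two requirements are incompatible: the monodromy of a Lefschetz fibration is a compactly supported symplectomorphism of its \emph{fibre}, so to obtain $\tau_l\in\Symp(M)$ one needs the regular fibre to be $M$ itself (or at least the Weinstein neighbourhood $T^*_{\leq 1}S^n$ of $L$, not $T^*S^{n-1}$), and correspondingly $E$ must be $(2n+2)$-dimensional, not $2n$-dimensional. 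With fibre $T^*S^{n-1}$, the vanishing cycle is an $(n-1)$-sphere in $T^*S^{n-1}$, which cannot ``coincide with $L\subset M$'', and $L_0,L_1$ have nowhere to live. Relatedly, your grading claim that ``$\mu^2$ has degree $n$'' is not what the decorations in the theorem mean: $\mu^2$ has degree $0$, and the degree-$n$ arrow in the triangle $HF(L_0,L_1)\to HF(L,L_1)\otimes HF(L_0,L)$ is the coproduct-type connecting map running \emph{opposite} to $\mu^2$; the $n$ and $1-n$ shifts come from Poincaré duality and \autoref{prop graded Dt} respectively, once one fixes the cyclic orientation of the triangle correctly. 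Fixing the fibre dimension and re-deriving the degree decorations from that corrected picture would be the first priority before the degeneration/gluing analysis can be trusted.
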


Now that this theorem is stated, with Keating's work \cite{Kea12}, we make some computations of this long exact sequence, in order to use it in our context.
\begin{prop}\label{prop rk cohom Dehn twist}
Let $(M^{2n},\omega)$ be a connected Liouville domain, $n$ even, $2c_1(M,\omega)=0$. Assume that M contains an $A_2$-configuration of Lagrangian spheres $(l,l')$.
Then, for all $ m\in\N^*$, 
$$\mathrm{rk}(HF^*(\tau_l^{2m}(L'),L'))=2m.$$
\end{prop}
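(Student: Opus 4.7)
The plan is to iterate Seidel's exact triangle of \autoref{thm long ex seq Seidel} with $L_0 = \tau_l^k(L')$ and $L_1 = L'$, tracking the ranks degree-by-degree via the grading shift formula for $\tilde\tau_l$. The iterated triangle reads
$$HF(\tau_l^{k+1}(L'),L') \to HF(\tau_l^k(L'),L') \to HF(L,L')\otimes HF(\tau_l^k(L'),L)$$
with degree shifts $0$, $n$, $1-n$. First I would identify the rightmost ``correction'' term. Since $(l,l')$ is an $A_2$-configuration, $L\cap L'$ is a single transverse point, so both $HF^*(L,L')$ and $HF^*(L',L)$ are one-dimensional, supported in degrees $\alpha$ and $n-\alpha$ respectively by Poincaré duality (\ref{PD graded}). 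Because $\tau_l$ preserves $L$ setwise (it restricts to the antipodal map on $L$), the graded symplectic invariance (\ref{invariance symplecto graded}) combined with the shift formula $\tilde\tau_l(\tilde L)=\tilde L[1-n]$ from \autoref{prop graded Dt} gives $HF^*(\tau_l^k(L'),L)\cong HF^{*+k(n-1)}(L',L)$. Hence the correction term is one-dimensional and concentrated in degree $n-k(n-1)$.

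Next I would read the long exact sequence degree by degree. Outside two consecutive degrees $j_k:=-k(n-1)$ and $j_k+1$, the LES yields an isomorphism $HF^j(\tau_l^{k+1}(L'),L')\cong HF^j(\tau_l^k(L'),L')$; at these two exceptional degrees it specializes to a five-term exact sequence involving a single copy of $\mathbb{Z}/2$, which forces the total dimension to change by exactly $\pm 1$. Since $n$ is even, $n-1$ is odd, so the $j_k$ are pairwise distinct; moreover for $k\ge 1$ they lie in the strictly negative range, disjoint from the original support $\{0,n\}$ of $HF^*(L',L')\cong H^*(S^n;\mathbb{Z}/2)$ and, with a modest bookkeeping, from the degrees produced at previous steps.

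I would conclude by induction on $k$, starting from $\dim HF^*(L',L')=2$. For each $k\ge 1$, the careful disjointness above yields $HF^{j_k}(\tau_l^k(L'),L')=HF^{j_k+1}(\tau_l^k(L'),L')=0$, so the five-term sequence collapses to the appearance of a single new rank-one class in degree $j_k+1$ of $HF^*(\tau_l^{k+1}(L'),L')$; this is a net $+1$ on the total rank. The initial step $k=0\to 1$ is the one delicate case, in which the exceptional degrees $(0,1)$ overlap with the original support at $0$; a direct analysis of the five-term sequence there shows that the connecting morphism (identifiable with multiplication by the generator of $HF^*(L,L')$) is non-trivial, so the rank drops by $1$, yielding $\dim HF^*(\tau_l(L'),L')=1$. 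Accumulating the initial $-1$ and the subsequent $(2m-1)$ contributions of $+1$ gives $\dim HF^*(\tau_l^{2m}(L'),L')=2-1+(2m-1)=2m$.

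The main obstacle is the analysis of the initial step $k=0$: one must verify that the connecting map in the five-term sequence is non-zero, so that the rank decreases rather than increases. This can be done either by a direct geometric computation of $\tau_l(L')\cap L'$ in the local Weinstein model, or by invoking the explicit computations of Keating \cite{Kea12}, whom the paper credits for this proposition. A secondary point is to check that the disjointness argument for $k\ge 1$ still carries through in the low-dimensional case $n=2$, where consecutive exceptional pairs share a degree; this amounts to observing inductively that the degree introduced at step $k$ has value $0$ in $HF^*(\tau_l^k(L'),L')$ precisely because it was set to $0$ by the previous five-term sequence.
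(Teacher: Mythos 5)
Your approach---iterating Seidel's exact triangle once per power of $\tau_l$ and tracking the exceptional degrees $j_k=-k(n-1)$, $j_k+1$ at which the long exact sequence can alter the Betti numbers---is correct, and it is a genuinely different packaging of the same underlying machinery the paper uses. The paper instead invokes Keating's Proposition~6.4 as a closed-form result: it describes $HF^*(\tau_l^{2m}(L'),L')$ as the cohomology of an explicit $(2m+1)$-term twisted complex of total dimension $2+2m$, whose differential has rank one (coming from $\mu^2(a,a^\vee)=\varepsilon$), so the rank is read off in one stroke as $2+2m-2=2m$. Your induction is in effect a degree-by-degree re-derivation of the rank count encoded in that iterated cone, and it makes the mechanism more transparent: you correctly identify that for $n$ even the exceptional degrees with $k\ge 1$ are strictly negative and compatible with the inductive vanishing, including the delicate case $n=2$ where consecutive exceptional pairs overlap at a degree that the previous five-term sequence has already forced to zero. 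Crucially, both routes require exactly the same nontrivial input for the base case $k=0$, namely that the connecting map is nonzero on the fundamental class $[L']\in HF^0(L',L')$ (equivalently, $\mu^2(a,a^\vee)=\varepsilon\neq 0$); you correctly flag this as the gap remaining in your sketch and correctly attribute it to Keating, which is precisely the ingredient the paper also takes from \cite{Kea12}. With that input supplied, your proof is sound; the trade-off is that yours is longer and more mechanical, while the paper's is shorter but hides the rank cancellation inside the structure of the twisted complex.
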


\begin{proof}
This readily follows from Keating's Proposition $6.4$ in \cite{Kea12} coming from Seidel's \autoref{thm long ex seq Seidel} in \cite{Seidel01}. This proposition indeed states that $HF^*(\tau_l^{2m}(L'),L')$ is isomorphic to the cohomology of
$$CF(L',L')\oplus (a) \otimes CF(L',L)\oplus (a) \otimes (\varepsilon)\otimes CF(L',L)\oplus \cdots \leftarrow (a) \oplus \overbrace{(\varepsilon)\cdots (\varepsilon)}^{2m-1}\otimes CF(L',L),$$
where $(a)$ denotes the one dimensional vector space generated by $a$, the only intersection point between $L$ and $L'$and $(\varepsilon)$ the one dimensional vector space generated by $\varepsilon$, the generator corresponding to the top degree cohomology class in $HF(L',L')\cong H^*(L';\Z/2)$; the differential acts as $0$ on the first summand, as $\mu^2$ on the second one and on the $r^{th}$ summand by
$$\sum_{\substack{r=i+j\\j>1}}(\mu^i\otimes \Id^{\otimes j}+\Id^{\otimes j}\otimes\mu^i).$$
As it can be seen as the dual of $a$, we will denote $a^{\vee}$ the generator of $CF(L',L)$. Note that all the summands except the first one are one dimensional. For all $k\neq 2$, $\mu^k = 0$ and $\mu^2$ vanishes on all summands except the second one. The differential is then equal to zero on each summand \cite{Sei08}, except on the second one where we have $\mu^2(a,a^{\vee})=\varepsilon$. Consequently, we immediately deduce that the rank of $HF^*(\tau_l^{2m}(L'),L')$ is $2m$, which concludes the proof of this proposition.

\end{proof}

This will allow us to distinguish all the different powers of the Dehn-Seidel twist, except for the identity and the square of the Dehn-Seidel twist. Nethertheless, this will not be an issue.

\subsection{Connected components of the powers of the Dehn-Seidel twist}
Using \autoref{prop contL barcode} and its \autoref{Cor extension} and \autoref{prop rk cohom Dehn twist}, we will now prove \autoref{thm A} stated in the introduction. For the reader's convenience, we repeat it here.

\bigskip

\begin{thm}\label{thm tau not in Symp0}

Let $(M^{2n},\omega)$ be a $2n$-dimensionnal Liouville domain, $n$ even, $2c_1(M,\omega)=0$. Assume that M contains an $A_2$-configuration of Lagrangian spheres $(l,l')$.

Then, for all $k\in\Z\setminus \{0\}$, $\tau_l^{2k}$ is not in the connected component of the identity in $\overline{\Symp}(M,\omega)$.
\end{thm}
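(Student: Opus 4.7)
The plan is to argue by contradiction, exploiting the fact that the identity component $G_0$ of $\overline{\Symp}(M,\omega)$ is a subgroup. Indeed, since $M$ is compact, $\overline{\Symp}(M,\omega)$ with the $C^0$-topology is a topological group, so the connected component of the identity is a (normal) subgroup. I would first assume that $\tau_l^{2k_0}\in G_0$ for some $k_0\in\Z\setminus\{0\}$, and then leverage this to produce an element $\tau_l^{2m}\in G_0$ with $m$ large enough to contradict the rank computation of \autoref{prop rk cohom Dehn twist}.

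Concretely, after choosing $N\in\{2,-2\}$ of the same sign as $k_0$ and setting $m:=Nk_0$, the iterate $\tau_l^{2m}=(\tau_l^{2k_0})^N$ still lies in $G_0$, and $m\geq 2$ is a positive integer. Since $L'\cong S^n$ with $n$ even (so $n\geq 2$), we have $H^1(L';\R)=0$, which allows me to invoke \autoref{thm barcodes connected} with $\varphi=\tau_l^{2m}$ and $\psi=\mathrm{Id}_M$, taking both Lagrangian arguments to be $L'$. This forces $\hat{B}(\tau_l^{2m}(L'),L')$ and $\hat{B}(L',L')$ to lie in the same connected component of $\hat\calB$. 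Combining \autoref{prop connected components barcodes} with the identification of semi-infinite bars given just before it, I would then extract the equality
\[
\mathrm{rk}\bigl(HF^*(\tau_l^{2m}(L'),L')\bigr)=\mathrm{rk}\bigl(HF^*(L',L')\bigr).
\]
By \autoref{prop lien entre HF et H}, the right-hand side equals $\mathrm{rk}\,H^*(S^n;\Z/2)=2$, whereas \autoref{prop rk cohom Dehn twist} gives $2m\geq 4$ for the left-hand side; this contradiction finishes the argument.

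The real work has been done in the earlier sections: what makes the proof run is the $C^0$ continuity and continuous extension of the Lagrangian barcode map $\varphi\mapsto\hat{B}(\varphi(L'),L')$ to $\overline{\Symp}(M,\omega)$ (\autoref{prop contL barcode} and \autoref{Cor extension}), together with Keating's rank computation \autoref{prop rk cohom Dehn twist}. The only mild technical point remaining is that the latter is phrased for positive exponents, which is why I pass from $k_0$ to $Nk_0$ with $N$ of matching sign; crucially, I do not need a finer graded-rank obstruction, because the subgroup property of $G_0$ lets me boost any nontrivial power into the range where the bare total rank already distinguishes $\tau_l^{2m}(L')$ from $L'$.
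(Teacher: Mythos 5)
Your proposal is correct and follows essentially the same strategy as the paper's proof: both invoke \autoref{thm barcodes connected} (hence \autoref{prop contL barcode}), \autoref{prop homologie barres infinies}, \autoref{prop connected components barcodes} and Keating's rank computation \autoref{prop rk cohom Dehn twist}, and both appeal to the fact that the identity component of $\overline{\Symp}(M,\omega)$ is closed under taking powers and inverses in order to reduce the exceptional cases ($k=1$ and $k<0$) to the case $m\geq 2$ where the total rank $2m>2$ already gives a contradiction. The only difference is organizational: you perform the boosting $k_0\mapsto m=Nk_0$ with $N\in\{\pm 2\}$ uniformly at the outset, whereas the paper first treats $k\geq 2$ directly and then dispatches $k=1$ and $k<0$ by the same subgroup observation.
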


\begin{proof}
Let $k\in\N \setminus\{1\}$ and assume that $\tau_l^{2k}$ is in the connected component of the identity in $\overline{\Symp}(M,\omega)$. The first point of \autoref{thm barcodes connected} implies then that $B^k=\hat{B}(\tau_l^{2k}(L'),L')$ and $B^0=\hat{B}(L',L')$ are in the same connected component in $\hat{\calB}$, i.e. they have their semi-infinite bars in the same degree.

Moreover, since $HF(L',L')=\Z/2^{[0]}\oplus\Z/2^{[n]}$, \autoref{prop homologie barres infinies} implies that $B^0=\hat{B}(L',L')$ has only two semi-infinite bars, one in degree $0$ and one in degree $n$. Let us also recall that \autoref{prop rk cohom Dehn twist} gives, for all $k\in\N^*$,
$$\mathrm{rk}(HF^*(\tau_l^{2k}(L'),L'))=2k.$$

This means that $B^k$ cannot have the semi-infinite bars in the same degree as $B^0$. This contradicts the fact that $\tau_l^{2k}$ and the identity are in the same connected component of $\overline{\Symp}(M,\omega)$. It is immediate that $\tau_l^2$ is not in the connected component of the identity in $\overline{\Symp}(M,\omega)$. Indeed it was the case, $\tau_l^{4}$ would be as well.

For the same reason, we get that  for all $k\in\Z$, $\tau_l^{2k}$ is not in the connected component ofthe identity in $\overline{\Symp}(M,\omega)$ and thus concludes the proof of this theorem.
\end{proof}

The following statements correspond to \autoref{cor thm A} and \autoref{cor thm A Ham} stated in the introduction. For the reader's convenience, we repeat them here. The first one is a straightforward consequence of \autoref{thm tau not in Symp0}.

\begin{cor}\label{cor Dt isotopic}
Let $(M^{2n},\omega)$ be a $2n$-dimensional Liouville domain, $n$ even, $2c_1(M,\omega)=0$. Assume that M contains an $A_2$-configuration of Lagrangian spheres $(l,l')$.

Then, $\tau_l^2$ is an element of infinite order in $\pi_0(\overline{\Symp}(M,\omega))$. In particular, $\mathrm{MCG}^\omega(M,C^0)$ is non-trivial.
\end{cor}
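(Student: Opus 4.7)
The plan is to deduce this corollary directly from \autoref{thm tau not in Symp0} by a short contradiction argument. Recall that an element $g$ of a group $G$ has infinite order precisely when no nontrivial power of $g$ equals the identity. Here $G = \pi_0(\overline{\Symp}(M,\omega))$ and the identity element is the connected component of $\Id_M$, so I want to show that for every $N \in \N^*$, the diffeomorphism $(\tau_l^2)^N = \tau_l^{2N}$ is not in the connected component of the identity in $\overline{\Symp}(M,\omega)$.

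First I would suppose, towards a contradiction, that $[\tau_l^2]$ has finite order $N \geq 1$ in $\pi_0(\overline{\Symp}(M,\omega))$. By definition this means that $\tau_l^{2N}$ lies in the same connected component as $\Id_M$. Setting $k = N$, we have $k \in \Z \setminus \{0\}$, so \autoref{thm tau not in Symp0} applies and asserts that $\tau_l^{2k} = \tau_l^{2N}$ is \emph{not} in the connected component of the identity in $\overline{\Symp}(M,\omega)$. This is the desired contradiction, so $[\tau_l^2]$ cannot have finite order; it is an element of infinite order in $\pi_0(\overline{\Symp}(M,\omega))$.

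For the ``in particular'' statement, by definition $\mathrm{MCG}^\omega(M,C^0) = \pi_0(\overline{\Symp}(M,\omega))$, and since this group contains the element $[\tau_l^2]$ of infinite order, it is in particular non-trivial.

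There is essentially no obstacle here: the corollary is an almost immediate packaging of \autoref{thm tau not in Symp0}. All the substantive work — establishing the local $C^0$-Lipschitz continuity of barcodes in \autoref{prop contL barcode}, the connectivity consequence in \autoref{thm barcodes connected}, and the rank computation of $HF^*(\tau_l^{2k}(L'),L')$ in \autoref{prop rk cohom Dehn twist} — has already been done to prove the theorem itself. The only conceptual point worth underlining is that ``infinite order'' and ``no nontrivial power lies in the identity component'' are equivalent formulations, which is what lets the theorem's hypothesis (quantified over all $k \neq 0$) translate directly into the corollary's conclusion.
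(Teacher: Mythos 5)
Your argument is correct and is exactly the intended ``straightforward consequence'' the paper alludes to without spelling out: assuming $[\tau_l^2]$ had finite order $N\geq 1$ would place $\tau_l^{2N}$ in the identity component of $\overline{\Symp}(M,\omega)$, contradicting \autoref{thm tau not in Symp0} with $k=N$. Nothing to add.
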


As mentionned in the introduction, this result is an interesting example of $C^0$ symplectic rigidity in the sense that it shows a different behaviour from what happens outside the symplectic world. It comes indeed in contrast with the fact that the Dehn-Seidel twist is known to be of order $2$ or $4$ in $\pi_0(\mathrm{Homeo_c}(T^*\mathbb{S}^n))$ for all $n$ even to the work of Kauffman-Krylov \cite{KK05} and Krylov \cite{K07}.

Let us recall that, according to the discussion held in the introduction, \autoref{cor Dt isotopic} does not imply \autoref{thm tau not in Symp0}. Indeed, whether $\overline{\Symp}(M,\omega)$ is locally path-connected remains an open question.

The following theorem is also a consequence of \autoref{thm tau not in Symp0}. Indeed the subspace $\overline{\Ham}(M,\omega)\subset\overline{\Symp}(M,\omega)$ is connected as it is the closure of the connected space $\Ham(M,\omega)$.

\begin{thm}
Let $(M^{2n},\omega)$ be a $2n$-dimensional Liouville domain, $n$ even, $2c_1(M,\omega)=0$. Assume that M contains an $A_2$-configuration of Lagrangian spheres $(l,l')$.

Then, $\forall k\in\Z^*, \tau_{l}^{2k}$ does not belong to $\overline{\Ham}(M,\omega)$.
\end{thm}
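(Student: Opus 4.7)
The plan is to deduce this statement from the previously established \autoref{thm tau not in Symp0} via a short topological argument, exploiting the fact that $\overline{\Ham}(M,\omega)$ is contained in the connected component of the identity in $\overline{\Symp}(M,\omega)$. The entire non-trivial content has already been carried out in the proof of the preceding theorem; what remains is a purely general-topology remark.

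First I would check that $\Ham(M,\omega)$ is connected: any Hamiltonian diffeomorphism $\varphi_H^1$ is joined to the identity by its generating isotopy $t\mapsto\varphi_H^t$, so $\Ham(M,\omega)$ is in fact path-connected. Taking closure in the topological group $\overline{\Symp}(M,\omega)$ (equipped with the $C^0$-topology), I would then invoke the general fact that the closure of a connected subset of any topological space is connected. Hence $\overline{\Ham}(M,\omega)$ is connected, and since it contains $\Id_M$, it must be entirely contained in the connected component of $\Id_M$ in $\overline{\Symp}(M,\omega)$.

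Finally, suppose for contradiction that for some $k\in\Z^*$ the diffeomorphism $\tau_l^{2k}$ lies in $\overline{\Ham}(M,\omega)$. By the previous paragraph it would then sit in the same connected component as the identity inside $\overline{\Symp}(M,\omega)$, directly contradicting \autoref{thm tau not in Symp0}. Therefore $\tau_l^{2k}\notin\overline{\Ham}(M,\omega)$ for every $k\neq 0$. There is no real obstacle in this proof: the whole rigidity content has been absorbed into \autoref{thm tau not in Symp0}, and the only ingredient added here is the elementary observation that $\overline{\Ham}(M,\omega)$ is connected.
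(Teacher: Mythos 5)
Your proof is correct and follows exactly the paper's argument: $\Ham(M,\omega)$ is connected, hence its closure $\overline{\Ham}(M,\omega)$ is connected and therefore contained in the identity component of $\overline{\Symp}(M,\omega)$, so the result is an immediate consequence of \autoref{thm tau not in Symp0}.
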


\appendix
\section{Appendix: remarks for dimension $4$}\label{sec dim 4}
We present here some results in dimension $4$ that can be obtain without the use of barcodes. The proofs rely on Hind's result regarding the nearby Lagrangian Conjecture of \autoref{thm NLC S2} in the case of $T^*S^2$ \cite{H04}. If this result were true in higher dimension, we could apply the same arguments as the following ones to these higher dimensions. However, we get some slightly weaker resuls: in order to use Seidel's \autoref{thm Seidel Dtwist} we require an $A_3$-configuration instead of an $A_2$-configuration and we get results dealing path connected components instead of connected components.

\begin{thm}\label{thm Ham dim 4}

Let $(M^{4},\omega)$ be a compact connected $4$-dimensional submanifold with contact type boundary, $[\omega]=0$, $2c_1(M,\omega)=0$. Assume that M contains an $A_3$-configuration of Lagrangian spheres $(l,l',l_{\infty})$.

Then, $\forall k\in\Z^*, \tau_l^{2k}$ does not belong to $\overline{\Ham}(M,\omega)$.
\end{thm}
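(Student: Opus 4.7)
My plan is to argue by contradiction and reduce to Seidel's \autoref{thm Seidel Dtwist} through Hind's Nearby Lagrangian result in $T^*S^2$. Suppose $\tau_l^{2k} \in \overline{\Ham}(M,\omega)$ for some nonzero $k$, and pick a sequence $(\varphi_n) \subset \Ham(M,\omega)$ converging to $\tau_l^{2k}$ in $C^0$. Then $\varphi_n(L')$ converges to $\tau_l^{2k}(L')$ in the Hausdorff distance, so for $n$ large enough $\varphi_n(L')$ sits inside a fixed Weinstein neighborhood $U$ of $\tau_l^{2k}(L')$. I identify $U$ via the Weinstein embedding with a neighborhood of the zero section in $T^*S^2$, so that $\tau_l^{2k}(L')$ corresponds to $0_{S^2}$. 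Since $H^1(S^2;\R)=0$ and $\varphi_n(L')$ is exact in $M$, it is automatically exact in $T^*S^2$ as well.

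The goal is then to produce a Lagrangian isotopy in $M$ joining $\varphi_n(L')$ to $\tau_l^{2k}(L')$. By Hind's \autoref{thm NLC S2}, there exists a Hamiltonian isotopy in $T^*S^2$ from $\varphi_n(L')$ to $0_{S^2}$; let $(L_t)_{t\in[0,1]}$ be the associated Lagrangian isotopy. A priori this isotopy leaves $U$, so it does not directly transport to $M$. To localize it, I exploit the negative Liouville flow $\phi^{-s}$ on $T^*S^2$: it preserves the Lagrangian property, fixes $0_{S^2}$, and contracts $\{|p|\le R\}$ into $\{|p|\le e^{-s}R\}$. Since $\bigcup_t L_t$ is compact, for $s$ large enough the family $t\mapsto \phi^{-s}(L_t)$ is a Lagrangian isotopy entirely contained in $U$, joining $\phi^{-s}(\varphi_n(L'))$ to $0_{S^2}$. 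Concatenating with the Liouville-contraction path $r\mapsto \phi^{-r}(\varphi_n(L'))$ for $r\in[0,s]$—which remains in $U$ by contractivity—yields a Lagrangian isotopy from $\varphi_n(L')$ to $0_{S^2}$ entirely inside $U$, hence, via the Weinstein embedding, a Lagrangian isotopy in $M$ from $\varphi_n(L')$ to $\tau_l^{2k}(L')$.

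Finally, combining this with the Hamiltonian isotopy from $L'$ to $\varphi_n(L')$ provided by $\varphi_n \in \Ham(M,\omega)$, I obtain a Lagrangian isotopy in $M$ from $L'$ to $\tau_l^{2k}(L')$. This contradicts Seidel's \autoref{thm Seidel Dtwist} applied to the $A_3$-configuration $(l,l',l_\infty)$, which is exactly the hypothesis of the theorem, and the proof is complete. The main technical point is the localization of Hind's Hamiltonian isotopy inside the Weinstein neighborhood $U$, which is handled above via the Liouville-contraction trick; this is also what restricts the argument to dimension $4$, since the Nearby Lagrangian Conjecture is currently unknown beyond $T^*S^1$ and $T^*S^2$.
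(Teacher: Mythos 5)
Your proof is correct and follows the same strategy as the paper: assume a sequence of Hamiltonian diffeomorphisms $\varphi_n$ $C^0$-converging to $\tau_l^{2k}$, place $\varphi_n(L')$ in a Weinstein neighbourhood of $\tau_l^{2k}(L')$ for $n$ large, invoke Hind's nearby-Lagrangian result for $T^*S^2$ to obtain a Lagrangian isotopy, and derive a contradiction with Seidel's theorem. Your Liouville-contraction argument addresses a genuine localization issue (Hind's Hamiltonian isotopy lives in $T^*S^2$ and may leave the Weinstein neighbourhood, so one must first shrink it before transporting to $M$) which the paper's proof leaves implicit, so on that technical point your write-up is actually more careful than the original.
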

Using Hind's \autoref{thm NLC S2} in the case of $T^*S^2$ \cite{H04}, the proof is quite straightforward.
\begin{proof}
Let us assume that there is a sequence of Hamiltonian diffeomorphisms $(\varphi_n)_{n\in\N}$ of $M$ which $C^0$-converges to $\tau_l^2$. Then, for $N$ large enough, we have that $\varphi_N(L')$ is included in a Weinstein neighbourhood of $\tau_l^2(L')$. 

Moreover, $\tau_l^2(L')$ is a Lagrangian sphere and so its Weinstein neighbourhood is, by definition, symplectomorphic to a neighbourhood of the zero section in $T^{*}\mathbb{S}^2$. Consequently, we are under the condition of application of the Nearby Lagrangian Conjecture as in \autoref{thm NLC S2}, in the case of $T^{*}\mathbb{S}^2$.

We get that $\varphi_N(L')$ is Lagrangian isotopic to $\tau_l^2(L')$, which contradicts Seidel's result in \autoref{thm Seidel Dtwist}.
\end{proof}

The following theorem corresponds to \autoref{cor Dt isotopic} in dimension $4$. It is slightly weaker than the result of this corollary but we present its proof here because, as for \autoref{thm Ham dim 4}, its proof does not require the use of barcodes.

\begin{thm}\label{thm Symp dim 4}
Let $(M^{4},\omega)$ be a compact connected $4$-dimensionnal submanifold with contact type boundary, $[\omega]=0$, $2c_1(M,\omega)=0$. Assume that M contains an $A_3$-configuration of Lagrangian spheres $(l,l',l_{\infty})$.

Then, for all $k\in\Z$, $\tau_l^{2k}$ is not isotopic to the identity in $\overline{\Symp}(M,\omega)$.
\end{thm}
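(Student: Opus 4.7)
The plan is to argue by contradiction, reducing to Seidel's \autoref{thm Seidel Dtwist} via Hind's nearby Lagrangian conjecture for $T^*\mathbb{S}^2$ (\autoref{thm NLC S2}). Suppose there is a continuous path $(\phi_t)_{t\in[0,1]}$ in $\overline{\Symp}(M,\omega)$ with $\phi_0=\Id$ and $\phi_1=\tau_l^{2k}$ for some $k\neq 0$. The goal is to build a Lagrangian isotopy from $L'$ to $\tau_l^{2k}(L')$, contradicting \autoref{thm Seidel Dtwist}.

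For each $t\in[0,1]$, since $\phi_t\in\overline{\Symp}(M,\omega)$ I would choose a genuine symplectomorphism $\eta_t$ with $d_{C^0}(\eta_t,\phi_t)$ as small as needed, taking $\eta_0=\Id$ and $\eta_1=\tau_l^{2k}$ (no approximation is necessary at the endpoints since both are already symplectomorphisms). Setting $L_t:=\eta_t(L')$, each $L_t$ is a smooth Lagrangian $2$-sphere, and because $H^1(\mathbb{S}^2,\R)=0$ it is automatically exact in $M$. Pick a Weinstein neighbourhood $W_t$ of $L_t$ (symplectomorphic to a neighbourhood of the zero section of $T^*\mathbb{S}^2$), and by $C^0$-continuity of $t\mapsto\phi_t$ choose $\delta_t>0$ so that $\phi_s(L')\subset W_t$ whenever $|s-t|<\delta_t$. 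Compactness of $[0,1]$ yields a finite partition $0=t_0<\cdots<t_N=1$ and indices $r_i$ with $[t_i,t_{i+1}]\subset(r_i-\delta_{r_i},r_i+\delta_{r_i})$. Making the finitely many approximants $\eta_{t_i}$ close enough to $\phi_{t_i}$, I arrange also that the smooth Lagrangians $L_{t_i},L_{t_{i+1}}$ lie in the common Weinstein neighbourhood $W_{r_i}$.

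Inside $W_{r_i}\cong T^*\mathbb{S}^2$, the ambient Liouville form and the cotangent Liouville form differ by a closed (hence exact, since $H^1(W_{r_i},\R)=0$) $1$-form, so $L_{t_i}$ and $L_{t_{i+1}}$ remain exact as Lagrangians in $T^*\mathbb{S}^2$. Applying Hind's \autoref{thm NLC S2}, both are Hamiltonian isotopic to the zero section $L_{r_i}$, and in particular Lagrangian isotopic to each other in $M$. Concatenating these isotopies for $i=0,\ldots,N-1$ produces a Lagrangian isotopy from $L_0=L'$ to $L_N=\tau_l^{2k}(L')$, contradicting \autoref{thm Seidel Dtwist}.

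The main subtlety in this plan is the approximation step: a symplectic homeomorphism $\phi_t$ need not send $L'$ to a smooth submanifold, so one must replace $\phi_t(L')$ by a smooth Lagrangian $L_t=\eta_t(L')$ coming from a true symplectomorphism, while controlling how close $L_t$ remains to $\phi_t(L')$ so that consecutive pieces of the partition indeed land in a common Weinstein neighbourhood. Reducing to finitely many approximants via compactness makes this tractable; the rest of the argument is then a direct application of Hind's and Seidel's theorems.
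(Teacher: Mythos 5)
Your proposal is correct and follows essentially the same approach as the paper: argue by contradiction, use compactness to replace the path by finitely many smooth symplectomorphic approximants, show consecutive images of $L'$ lie in a common Weinstein neighbourhood, apply Hind's nearby-Lagrangian theorem in each such neighbourhood, and concatenate to contradict Seidel's theorem. The paper streamlines the Weinstein-neighbourhood bookkeeping by fixing a single neighbourhood $W(L')$ of $L'$ with a uniform $\varepsilon$ and transporting it by the approximating symplectomorphisms (so that $\varphi_{n_i}^{t_i}(L')\subset\varphi_{n_{i-1}}^{t_{i-1}}(W(L'))$ via the $\varepsilon/3$-triangle-inequality estimate on $(\varphi_{n_{i-1}}^{t_{i-1}})^{-1}\circ\varphi_{n_i}^{t_i}$), which avoids having to coordinate a family of neighbourhoods $W_t$ that themselves depend on the choice of approximant $\eta_t$; but this is an organizational simplification, not a different argument.
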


Note that none of \autoref{thm Ham dim 4} and \autoref{thm Symp dim 4} imply the other.

\begin{proof}
As above for \autoref{thm Ham dim 4}, this proof heavily relies on the proof of the nearby Lagrangian conjecture for $T^*S^2$ as in \autoref{thm NLC S2} \cite{H04}.

Let $k\in\Z$ and assume that $\tau_l^{2k}$ is isotopic to the identity in $\overline{\Symp}(M,\omega)$. This means that we can find a continuous path $(\varphi^t)_{t\in[0;1]}\subset\overline{\Symp}(M,\omega)$ such that $\varphi^0=\Id$ and $\varphi^1=\tau_l^{2k}$.

Since for all $t\in[0;1]$, $\varphi^t$ is in $\overline{\Symp}(M,\omega)$, we can find sequences $\varphi_{n}^t\in \Symp(M,\omega)$ such that
$$\forall t \in (0;1), \lim_{n\to \infty} \varphi_{n}^t = \varphi^t.$$

Let us choose a Weinstein neighbourhood $W(L')$ of $L'$ together with $\varepsilon>0$ such that, for all $\varphi\in\Symp(M,\omega)$, if $d_{C^0}(\varphi,\Id_M)<\varepsilon$, then $\varphi(L')\subset W(L')$.

The path $\varphi^t$ being continuous, we can find a finite sequence $(\varphi^{t_i})_{i\in \llbracket 0;N \rrbracket }\subset \overline{\Symp}(M,\omega)$ such that $\varphi^{t_0}= \Id$, $\varphi^{t_N}=\tau_l^{2k}$ and
$$\forall i \in \llbracket 1;N \rrbracket,\quad d_{C^0}(\varphi^{t_{i-1}},\varphi^{t_i})< \tfrac{\varepsilon}{3}.$$
Moreover, for each $(t_i)_{i\in\llbracket 0;N-1 \rrbracket}$, we can find $n_i$ such that $d_{C^0}(\varphi^{t_i},\varphi_{n_i}^{t_i})<\tfrac{\varepsilon}{3}.$ We choose $\varphi_0^{t_0}=\Id$ and $\varphi_N^{t_N}=\tau_l^{2k}$.

Consequently, we get a sequence $(\varphi_{n_i}^{t_i})_{i\in \llbracket 0;N \rrbracket }\subset \Symp(M,\omega)$ such that $\varphi_{n_0}^{t_0}= \Id_M$, $\varphi_{n_N}^{t_N}=\tau_l^{2k}$ which satisfies
$\forall i \in \llbracket 1;N \rrbracket$,
\begin{eqnarray*}
d_{C^0}((\varphi_{n_{i-1}}^{t_{i-1}})^{-1}\circ\varphi_{n_i}^{t_i},\Id_M)&\leq& d_{C^0}(\varphi_{n_{i-1}}^{t_{i-1}},\varphi_{n_i}^{t_i})\\
&\leq&d_{C^0}(\varphi_{n_{i-1}}^{t_{i-1}},\varphi^{t_{i-1}})+d_{C^0}(\varphi^{t_{i-1}},\varphi^{t_i})+d_{C^0}(\varphi^{t_{i}},\varphi_{n_i}^{t_i})\\
&<&\tfrac{\varepsilon}{3}+\tfrac{\varepsilon}{3}+\tfrac{\varepsilon}{3}=\varepsilon.
\end{eqnarray*}
Then \begin{eqnarray*}
\varphi_{n_i}^{t_i}(L')&=&\varphi_{n_{i-1}}^{t_{i-1}}\circ(\varphi_{n_{i-1}}^{t_{i-1}})^{-1}\circ\varphi_{n_i}^{t_i}(L')\\
&\subset&\varphi_{n_{i-1}}^{t_{i-1}}(W(L'))\cong W(\varphi_{n_i}^{t_i}(L')),
\end{eqnarray*}
where $W(\varphi_{n_i}^{t_i}(L'))$ denotes a Weinstein neighbourhood of $\varphi_{n_i}^{t_i}(L')$.

Applying now Hind's \autoref{thm NLC S2}, we obtain that for all $i \in \llbracket 1;N \rrbracket$, $\varphi_{n_i,t_i}(L')$ is Lagrangian isotopic to $\varphi_{n_{i-1},t_{i-1}}(L')$. Gluing these paths together, we finally get that $\tau_l^{2k}(L')$ is Lagrangian isotopic to $L'$. This contradicts Seidel's result of \autoref{thm Seidel Dtwist} and concludes this proof.
\end{proof}

\begin{rk}
We could have also proved this result using the second point of \autoref{thm barcodes connected} to construct a continuous path of barcodes between $\hat{B}(\tau_l^{2k}(L')',L')$ and $\hat{B}(L',L')$. This path together with \autoref{prop path inf bar} telling that the degree of the semi-infinite bars cannot change along a continuous path leads to a contradiction.
\end{rk}

\bibliographystyle{siam}
\bibliography{Morceaux/Bibli}

\begin{thebibliography}{10}

\bibitem{Ab10}
{\sc M.~Abouzaid}, {\em Nearby {L}agrangians with vanishing {M}aslov class are
  homotopy equivalent}, Invent. Math., 189 (2012), pp.~251--313.

\bibitem{AK18}
{\sc M.~Abouzaid and T.~Kragh}, {\em Simple homotopy equivalence of nearby
  {L}agrangians}, Acta Math., 220 (2018), pp.~207--237.

\bibitem{Arn95}
{\sc V.~I. Arnold}, {\em Some remarks on symplectic monodromy of {M}ilnor
  fibrations}, in The {F}loer memorial volume, vol.~133 of Progr. Math.,
  Birkh\"{a}user, Basel, 1995, pp.~99--103.

\bibitem{Aur14}
{\sc D.~Auroux}, {\em A beginner's introduction to {F}ukaya categories}, in
  Contact and symplectic topology, vol.~26 of Bolyai Soc. Math. Stud.,
  J\'{a}nos Bolyai Math. Soc., Budapest, 2014, pp.~85--136.

\bibitem{Bar94}
{\sc S.~A. Barannikov}, {\em The framed {M}orse complex and its invariants}, in
  Singularities and bifurcations, vol.~21 of Adv. Soviet Math., Amer. Math.
  Soc., Providence, RI, 1994, pp.~93--115.

\bibitem{BL14}
{\sc U.~Bauer and M.~Lesnick}, {\em Induced matchings of barcodes and the
  algebraic stability of persistence}, in Computational geometry ({S}o{CG}'14),
  ACM, New York, 2014, pp.~355--364.

\bibitem{BC07}
{\sc P.~Biran and O.~Cornea}, {\em Quantum structures for {L}agrangian
  submanifolds}, arXiv:0708.4221,  (2007).

\bibitem{BV18}
{\sc P.~Bubenik and T.~Vergili}, {\em Topological spaces of persistence modules
  and their properties}, J. Appl. Comput. Topol., 2 (2018), pp.~233--269.

\bibitem{BHS18}
{\sc L.~Buhovsky, V.~Humili\`ere, and S.~Seyfaddini}, {\em A {$C^0$}
  counterexample to the {A}rnold conjecture}, Invent. Math., 213 (2018),
  pp.~759--809.

\bibitem{BHS19}
{\sc L.~Buhovsky, V.~Humili\`ere, and S.~Seyfaddini}, {\em An {A}rnold-type
  principle for non-smooth objects}, arXiv:1909.07081,  (2019).

\bibitem{BO16}
{\sc L.~Buhovsky and E.~Opshtein}, {\em Some quantitative results in
  {$\mathcal{C}^0$} symplectic geometry}, Invent. Math., 205 (2016), pp.~1--56.

\bibitem{Carlson&al04}
{\sc G.~Carlsson, A.~Zomorodian, A.~Collins, and L.~Guibas}, {\em Persistence
  barcodes for shapes.}, International Journal of Shape Modeling, 11 (2005),
  pp.~149--188.

\bibitem{Chazal&al16}
{\sc F.~Chazal, V.~de~Silva, M.~Glisse, and S.~Oudot}, {\em The structure and
  stability of persistence modules}, SpringerBriefs in Mathematics, Springer,
  [Cham], 2016.

\bibitem{Che08}
{\sc A.~V. Chernavski\u{\i}}, {\em Local contractibility of the homeomorphism
  group of {$\mathbb{R}^n$}}, Tr. Mat. Inst. Steklova, 263 (2008),
  pp.~201--215.

\bibitem{Cof05}
{\sc J.~Coffey}, {\em Symplectomorphism groups and isotropic skeletons}, Geom.
  Topol., 9 (2005), pp.~935--970.

\bibitem{CB15}
{\sc W.~Crawley-Boevey}, {\em Decomposition of pointwise finite-dimensional
  persistence modules}, J. Algebra Appl., 14 (2015), pp.~1550066, 8.

\bibitem{DGE14}
{\sc G.~Dimitroglou~Rizell and J.~D. Evans}, {\em Unlinking and unknottedness
  of monotone {L}agrangian submanifolds}, Geom. Topol., 18 (2014),
  pp.~997--1034.

\bibitem{RGI16}
{\sc G.~Dimitroglou~Rizell, E.~Goodman, and A.~Ivrii}, {\em Lagrangian isotopy
  of tori in {$S^2\times S^2$} and {$\mathbb{C}P^2$}}, Geom. Funct. Anal., 26
  (2016), pp.~1297--1358.

\bibitem{Edel&al00}
{\sc H.~Edelsbrunner, D.~Letscher, and A.~Zomorodian}, {\em Topological
  persistence and simplification}, in 41st {A}nnual {S}ymposium on
  {F}oundations of {C}omputer {S}cience ({R}edondo {B}each, {CA}, 2000), IEEE
  Comput. Soc. Press, Los Alamitos, CA, 2000, pp.~454--463.

\bibitem{El87}
{\sc Y.~M. Eliashberg}, {\em A theorem on the structure of wave fronts and its
  application in symplectic topology}, Funktsional. Anal. i Prilozhen., 21
  (1987), pp.~65--72, 96.

\bibitem{EP09}
{\sc M.~Entov and L.~Polterovich}, {\em Rigid subsets of symplectic manifolds},
  Compos. Math., 145 (2009), pp.~773--826.

\bibitem{EV11}
{\sc J.~D. Evans}, {\em Symplectic mapping class groups of some {S}tein and
  rational surfaces}, J. Symplectic Geom., 9 (2011), pp.~45--82.

\bibitem{Fat80}
{\sc A.~Fathi}, {\em Structure of the group of homeomorphisms preserving a good
  measure on a compact manifold}, Ann. Sci. \'{E}cole Norm. Sup. (4), 13
  (1980), pp.~45--93.

\bibitem{Flo87}
{\sc A.~Floer}, {\em Morse theory for fixed points of symplectic
  diffeomorphisms}, Bull. Amer. Math. Soc. (N.S.), 16 (1987), pp.~279--281.

\bibitem{Floer88}
\leavevmode\vrule height 2pt depth -1.6pt width 23pt, {\em Morse theory for
  {L}agrangian intersections}, J. Differential Geom., 28 (1988), pp.~513--547.

\bibitem{Flo89}
\leavevmode\vrule height 2pt depth -1.6pt width 23pt, {\em Witten's complex and
  infinite-dimensional {M}orse theory}, J. Differential Geom., 30 (1989),
  pp.~207--221.

\bibitem{FOOO09}
{\sc K.~Fukaya, Y.-G. Oh, H.~Ohta, and K.~Ono}, {\em Lagrangian intersection
  {F}loer theory: anomaly and obstruction. {P}art {I}}, vol.~46 of AMS/IP
  Studies in Advanced Mathematics, American Mathematical Society, Providence,
  RI; International Press, Somerville, MA, 2009.

\bibitem{FSI07}
{\sc K.~Fukaya, P.~Seidel, and I.~Smith}, {\em Exact {L}agrangian submanifolds
  in simply-connected cotangent bundles}, Invent. Math., 172 (2008), pp.~1--27.

\bibitem{H04}
{\sc R.~Hind}, {\em Lagrangian spheres in {$S^2\times S^2$}}, Geom. Funct.
  Anal., 14 (2004), pp.~303--318.

\bibitem{Hof90}
{\sc H.~Hofer}, {\em On the topological properties of symplectic maps}, Proc.
  Roy. Soc. Edinburgh Sect. A, 115 (1990), pp.~25--38.

\bibitem{HLS15}
{\sc V.~Humili\`ere, R.~Leclercq, and S.~Seyfaddini}, {\em Coisotropic rigidity
  and {$C^0$}-symplectic geometry}, Duke Math. J., 164 (2015), pp.~767--799.

\bibitem{HLS16}
\leavevmode\vrule height 2pt depth -1.6pt width 23pt, {\em Reduction of
  symplectic homeomorphisms}, Ann. Sci. \'{E}c. Norm. Sup\'{e}r. (4), 49
  (2016), pp.~633--668.

\bibitem{KK05}
{\sc L.~H. Kauffman and N.~A. Krylov}, {\em Kernel of the variation operator
  and periodicity of open books}, Topology Appl., 148 (2005), pp.~183--200.

\bibitem{Kaw19}
{\sc Y.~Kawamoto}, {\em On ${C}^0$-continuity of the spectral norm on
  non-symplectically aspherical manifolds}, arXiv:1905.07809,  (2019).

\bibitem{Kea12}
{\sc A.~M. Keating}, {\em Dehn twists and free subgroups of symplectic mapping
  class groups}, J. Topol., 7 (2014), pp.~436--474.

\bibitem{KovSeid00}
{\sc M.~Khovanov and P.~Seidel}, {\em Quivers, {F}loer cohomology, and braid
  group actions}, J. Amer. Math. Soc., 15 (2002), pp.~203--271.

\bibitem{KS18}
{\sc A.~Kislev and E.~Shelukhin}, {\em Bounds on spectral norms and barcodes},
  arXiv:1810.09865,  (2018).

\bibitem{Kr13}
{\sc T.~Kragh}, {\em Parametrized ring-spectra and the nearby {L}agrangian
  conjecture}, Geom. Topol., 17 (2013), pp.~639--731.
\newblock With an appendix by Mohammed Abouzaid.

\bibitem{K07}
{\sc N.~A. Krylov}, {\em Relative mapping class group of the trivial and the
  tangent disk bundles over the sphere}, Pure Appl. Math. Q., 3 (2007),
  pp.~631--645.

\bibitem{LaudSik94}
{\sc F.~Laudenbach and J.-C. Sikorav}, {\em Hamiltonian disjunction and limits
  of {L}agrangian submanifolds}, Internat. Math. Res. Notices,  (1994), pp.~161
  ff., approx. 8 pp.

\bibitem{LR10}
{\sc F.~Le~Roux}, {\em Simplicity of {${\rm
  Homeo}(\mathbb{D}^2,\partial\mathbb{D}^2,{\rm Area})$} and fragmentation of
  symplectic diffeomorphisms}, J. Symplectic Geom., 8 (2010), pp.~73--93.

\bibitem{LRSV18}
{\sc F.~Le~Roux, S.~Seyfaddini, and C.~Viterbo}, {\em Barcodes and
  area-preserving homeomorphisms}, 2018.

\bibitem{Lec07}
{\sc R.~Leclercq}, {\em Spectral invariants in {L}agrangian {F}loer theory}, J.
  Mod. Dyn., 2 (2008), pp.~249--286.

\bibitem{LZ18}
{\sc R.~Leclercq and F.~Zapolsky}, {\em Spectral invariants for monotone
  {L}agrangians}, J. Topol. Anal., 10 (2018), pp.~627--700.

\bibitem{LLW15}
{\sc J.~Li, T.-J. Li, and W.~Wu}, {\em The symplectic mapping class group of
  {$\mathbb CP^2\#n\overline{\mathbb {CP}^2}$} with {$n \le 4$}}, Michigan
  Math. J., 64 (2015), pp.~319--333.

\bibitem{MS3rd}
{\sc D.~McDuff and D.~Salamon}, {\em Introduction to symplectic topology},
  Oxford Graduate Texts in Mathematics, Oxford University Press, Oxford,
  third~ed., 2017.

\bibitem{MVZ12}
{\sc A.~Monzner, N.~Vichery, and F.~Zapolsky}, {\em Partial quasimorphisms and
  quasistates on cotangent bundles, and symplectic homogenization}, J. Mod.
  Dyn., 6 (2012), pp.~205--249.

\bibitem{Oh95}
{\sc Y.-G. Oh}, {\em Floer cohomology of {L}agrangian intersections and
  pseudo-holomorphic disks. {III}. {A}rnold-{G}ivental conjecture}, in The
  {F}loer memorial volume, vol.~133 of Progr. Math., Birkh\"{a}user, Basel,
  1995, pp.~555--573.

\bibitem{Oh04}
{\sc Y.-G. Oh}, {\em Construction of spectral invariants of {H}amiltonian paths
  on closed symplectic manifolds}, in The breadth of symplectic and {P}oisson
  geometry, vol.~232 of Progr. Math., Birkh\"{a}user Boston, Boston, MA, 2005,
  pp.~525--570.

\bibitem{Oh15}
\leavevmode\vrule height 2pt depth -1.6pt width 23pt, {\em Symplectic topology
  and {F}loer homology.}, vol.~28 of New Mathematical Monographs, Cambridge
  University Press, Cambridge, 2015.
\newblock Symplectic geometry and pseudoholomorphic curves.

\bibitem{MulOh07}
{\sc Y.-G. Oh and S.~M\"{u}ller}, {\em The group of {H}amiltonian
  homeomorphisms and {$C^0$}-symplectic topology}, J. Symplectic Geom., 5
  (2007), pp.~167--219.

\bibitem{PolShel14}
{\sc L.~Polterovich and E.~Shelukhin}, {\em Autonomous {H}amiltonian flows,
  {H}ofer's geometry and persistence modules}, Selecta Math. (N.S.), 22 (2016),
  pp.~227--296.

\bibitem{Sch00}
{\sc M.~Schwarz}, {\em On the action spectrum for closed symplectically
  aspherical manifolds}, Pacific J. Math., 193 (2000), pp.~419--461.

\bibitem{Seidel99}
{\sc P.~Seidel}, {\em Lagrangian two-spheres can be symplectically knotted}, J.
  Differential Geom., 52 (1999), pp.~145--171.

\bibitem{Seidel00}
\leavevmode\vrule height 2pt depth -1.6pt width 23pt, {\em Graded {L}agrangian
  submanifolds}, Bull. Soc. Math. France, 128 (2000), pp.~103--149.

\bibitem{Seidel01}
\leavevmode\vrule height 2pt depth -1.6pt width 23pt, {\em A long exact
  sequence for symplectic {F}loer cohomology}, Topology, 42 (2003),
  pp.~1003--1063.

\bibitem{Sei08}
\leavevmode\vrule height 2pt depth -1.6pt width 23pt, {\em Fukaya categories
  and {P}icard-{L}efschetz theory}, Zurich Lectures in Advanced Mathematics,
  European Mathematical Society (EMS), Z\"{u}rich, 2008.

\bibitem{SeiPhD}
{\sc P.~Seidel and D.~Phil}, {\em Floer homology and the symplectic isotopy
  problem}, PhD thesis, Citeseer, 1997.

\bibitem{ST01}
{\sc P.~Seidel and R.~Thomas}, {\em Braid group actions on derived categories
  of coherent sheaves}, Duke Math. J., 108 (2001), pp.~37--108.

\bibitem{Sey13}
{\sc S.~Seyfaddini}, {\em {$C^0$}-limits of {H}amiltonian paths and the
  {O}h-{S}chwarz spectral invariants}, Int. Math. Res. Not. IMRN,  (2013),
  pp.~4920--4960.

\bibitem{Shel18}
{\sc E.~Shelukhin}, {\em {V}iterbo conjecture for {Z}oll symmetric spaces},
  arXiv:1811.05552,  (2018).

\bibitem{Shel19}
\leavevmode\vrule height 2pt depth -1.6pt width 23pt, {\em {S}ymplectic
  cohomology and a conjecture of {V}iterbo}, arXiv:1904.06798,  (2019).

\bibitem{sik94}
{\sc J.-C. Sikorav}, {\em Some properties of holomorphic curves in almost
  complex manifolds}, in Holomorphic curves in symplectic geometry, vol.~117 of
  Progr. Math., Birkh\"{a}user, Basel, 1994, pp.~165--189.

\bibitem{Ush11}
{\sc M.~Usher}, {\em Boundary depth in {F}loer theory and its applications to
  {H}amiltonian dynamics and coisotropic submanifolds}, Israel J. Math., 184
  (2011), pp.~1--57.

\bibitem{Ush14}
\leavevmode\vrule height 2pt depth -1.6pt width 23pt, {\em Hofer's metrics and
  boundary depth}, Ann. Sci. \'{E}c. Norm. Sup\'{e}r. (4), 46 (2013),
  pp.~57--128 (2013).

\bibitem{UZ15}
{\sc M.~Usher and J.~Zhang}, {\em Persistent homology and {F}loer-{N}ovikov
  theory}, Geom. Topol., 20 (2016), pp.~3333--3430.

\bibitem{Vit92}
{\sc C.~Viterbo}, {\em Symplectic topology as the geometry of generating
  functions}, Math. Ann., 292 (1992), pp.~685--710.

\bibitem{Wu13}
{\sc W.~Wu}, {\em Exact {L}agrangians in {$A_n$}-surface singularities}, Math.
  Ann., 359 (2014), pp.~153--168.

\end{thebibliography}

\end{document}